\numberwithin{equation}{section}
\def\Z{\mathbb{Z}}
\def\N{\mathbb{N}}
\def\R{\mathbb{R}}
\def\k{\Bbbk}
\def\P{\mathcal{P}}
\def\pcl{P^+_{cl,k}}
\def\ssum{\textstyle\sum\limits}
\def\Hom{\operatorname{Hom}}
\def\max{\operatorname{max}}
\def\ev{\operatorname{ev}}
\def\deg{\operatorname{deg}}
\def\res{\operatorname{res}}
\def\ch{\operatorname{char}}
\newtheorem{Theorem}{Theorem}[section] %[chapter] theorem number will %continue
\newtheorem{theorem*}{Theorem}
\newtheorem{Lemma}[Theorem]{Lemma}
\newtheorem{Cor}[Theorem]{Corollary}
\newtheorem*{corollary}{COROLLARY}
\newtheorem{Prop}[Theorem]{Proposition}
\newtheorem{example}[Theorem]{Example}
\newtheorem{Defn}[Theorem]{Definition}
\newtheorem{rem}[Theorem]{Remark}
\theoremstyle{definition}
\newtheorem{claim}[Theorem]{Claim}
\newtheorem*{maintheorem}{MAIN THEOREM}
\begin{document}
\setlength{\baselineskip}{17pt}

\title[Representation type of cyclotomic quiver Hecke algebras of type $A_\ell^{(1)}$]
{Representation type of cyclotomic quiver Hecke algebras of type $A_\ell^{(1)}$}

\author{Susumu Ariki}
\address{Department of Pure and Applied Mathematics, Graduate School of Information Science and Technology, Osaka University, 1-5 Yamadaoka, Suita, Osaka, 565-0871, Japan.}
\email{ariki@ist.osaka-u.ac.jp}

\author{Linliang Song}
\address{School of Mathematical Science, Tongji University, Shanghai, 200092, China.}
\email{llsong@tongji.edu.cn}

\author{Qi Wang}
\address{Yau Mathematical Sciences Center, Tsinghua University, Beijing, 100084, China.}
\email{infinite-wang@outlook.com (cc:infinite-wang@tsinghua.edu.cn)}

\date{\today}

\thanks{2020 {\em Mathematics Subject Classification.} 
20C08, 16G60, 17B65, 16G20.
}

\keywords{
Dominant maximal weights, 
Ariki-Koike algebras, 
KLR algebras, 
Brauer graph algebras, 
representation type, 
derived equivalence.
}

\begin{abstract}
We first investigate a connected quiver consisting of all dominant maximal weights for an integrable highest weight module in affine type A. 
This quiver provides an efficient method to obtain all dominant maximal weights.
Then, we completely determine the representation type of cyclotomic Khovanov-Lauda-Rouquier algebras of arbitrary level in affine type A, by using the quiver we construct.
This result gives a complete classification for the representation type of blocks of cyclotomic Hecke algebras since cyclotomic KLR algebras of type $A^{(1)}_\ell$ form a one-parameter family and cyclotomic Hecke algebras occur at a special parameter, i.e., $t=-2$ if $\ell=1$ and $t=(-1)^{\ell+1}$ if $\ell\geq2$.
\end{abstract}

\maketitle

\tableofcontents
%%%%%%%%%%%%%%%%%%%%%%%%%%%%%%%%%%%%%%%%%%%%%%%%%%%%%%%%

\section{Introduction}
Cyclotomic Hecke algebras (\cite{AK-algebra, BM-Hecke-alg}) are generalizations of Iwahori-Hecke algebras of type $A$ and $B$, since they can be thought as Hecke algebras of complex reflection groups of type $G(k,1,n)$. 
This class of algebras has been actively studied in the past several decades by various authors, such as Brundan-Kleshchev \cite{BK-graded-decom-number, BK-block}, Dipper-James-Mathas \cite{DJM-cyclotomic-q-schur-alg}, Fayers \cite{F-ariki-koike-alg}, Lyle-Mathas \cite{LM-cyclotomic-hecke}, to name a few.
Nowadays, cyclotomic Hecke algebras are considered as examples of cyclotomic quiver Hecke (or KLR) algebras (introduced by Khovanov-Lauda in the same paper \cite{KL-diagram1} as they introduced quiver Hecke algebras, we may also mention Rouquier \cite{Ro-2kac} as another paper which introduced the latter affine version) whose Lie type is affine type $A$.
Research on cyclotomic quiver Hecke algebras in other Lie types has just begun in recent years.

When we study a given algebra, basic representation theoretic information on the algebra are the classification of irreducible modules, representation types, etc. 
Given a cyclotomic Hecke algebra, it was already found in \cite{Ar-decom-number} that the irreducible modules are labeled by a Kashiwara crystal and the labeling by its Misra-Miwa realization (\cite{MM-crystal}) coincides with the labeling arising from the cellular algebra theory (\cite{Ar-simple-mod}). 
However, it took time to study the representation type.
The purpose here is to give a complete classification of representation types for the blocks of cyclotomic Hecke algebras in a slightly larger class, that is, the class of cyclotomic quiver Hecke algebras in affine type $A$.

Let $\Lambda\in \pcl$ (see \eqref{equ:levelkdominant}) be a level $k$ dominant integral weight of type $A^{(1)}_\ell$.
We denote by $R^\Lambda(\beta)$ the cyclotomic quiver Hecke algebra associated with $\beta\in Q_+$, where $Q_+$ is the positive cone of the root lattice.
Let $P(\Lambda)$ (see \eqref{equ::weight-set}) be the weight system of the integrable highest weight module $V(\Lambda)$ associated with $\Lambda$. 
We will show in \eqref{equ::max-set} that $R^\Lambda(\beta)\neq0$ if and only if the intersection of the Weyl group orbit through $\Lambda-\beta\in P(\Lambda)$ and $\Lambda-\P^{\Lambda}-\Z_{\geq 0}\delta\subseteq P(\Lambda)$ is not empty, where $\Lambda-\P^{\Lambda}=\max^+(\Lambda)$ is the set of dominant maximal weights of $V(\Lambda)$.
Since the representation type of $R^\Lambda(\beta)$ depends only on the Weyl group orbit (see Subsection 4.1), we may assume $\beta\in O(\Lambda):=\P^\Lambda+\Z_{\geq0}\delta$, without loss of generality.
Then, the main result of this article is as follows. Here, the definitions of $\mathscr F(\Lambda)$ and $\mathscr T(\Lambda)$ are given in \eqref{equ::def-finite-tame}, and our terminology \emph{tame} means tame and of infinite representation type following \cite{Er-tame-block}.

\begin{maintheorem}
Suppose $\Lambda\in \pcl$ with $k\geq 3$. 
For any $\beta\in O(\Lambda)$, $R^{\Lambda}(\beta)$ is
\begin{enumerate}
\item of finite representation type if $\beta\in \mathscr F(\Lambda)$,
\item of tame representation type if one of the following holds:
\begin{itemize}
    \item $\beta=\delta$, $\Lambda=k\Lambda_i$, $\ell=1$ with $t\neq \pm 2$,
    \item $\beta=\delta$, $\Lambda=k\Lambda_i$, $\ell\geq 2$ with $t\neq (-1)^{\ell+1}$,
    \item $\beta\in \mathscr T(\Lambda)$.
\end{itemize}
\end{enumerate}
Otherwise, it is of wild representation type.
\end{maintheorem}

We point out that the representation type of $R^\Lambda(\beta)$ with $k=1,2$ has been determined by Ariki-Iijima-Park in \cite{AIP-rep-type-A-level-1} and by Ariki in \cite{Ar-rep-type}, respectively. 
Combining these with our new achievement in this article, the representation type of $R^\Lambda(\beta)$ for arbitrary level $k$ is now completely determined. 

One of the reasons why it took time was that one did not have an explicit description of $\max^+(\Lambda)$, i.e., $\Lambda-\P^{\Lambda}$.
Little information was known about $\max^+(\Lambda)$ until recently, but Kim, Oh and Oh introduced in \cite{KOO-sieving-phenomenon} the sieving equivalence relation $\sim$ on $\pcl$ and then obtained a bijection $\phi_\Lambda:  \max^+(\Lambda)\rightarrow\pcl(\Lambda)$, where $\pcl(\Lambda)$ is the set of equivalence classes of $\Lambda$ under $\sim$. 
We construct the inverse of $\phi_\Lambda$ to describe $\max^+(\Lambda)$ through $\pcl(\Lambda)$.  
This is Theorem \ref{theo::bijection-phi-inversion}.

Since there are infinitely many $R^\Lambda(\beta)$ with $\beta\in \P^\Lambda+\Z_{\geq0}\delta$, we need reduction to a finite number of cases.
In a series of articles by the first author and his collaborators, they introduced a scheme for $k=1,2$ to reduce the general cases to $\beta=\delta$ or some $\beta\in \P^\Lambda$. 
To apply this strategy for $k\ge 3$, we need to explore more structures of $\max^+(\Lambda)$. 
For that purpose, we define a connected quiver $\vec C(\Lambda)$ (Definition \ref{def::arrow}) whose vertex set is $\pcl(\Lambda)$, such that an arrow $\Lambda'\rightarrow \Lambda''\in \vec C(\Lambda)$ encodes an arrow $\beta_{\Lambda'}\rightarrow \beta_{\Lambda''}\in \P^\Lambda$ via the inverse of $\phi_\Lambda$ mentioned above. 
Then, we show that $R^\Lambda(\beta_{\Lambda''})$ is of infinite (resp. wild) representation type if $R^\Lambda(\beta_{\Lambda'})$ is of infinite (resp. wild) representation type when there exists a directed path from $\Lambda'$ to $\Lambda''$ in $\vec C(\Lambda)$. 
In this way, the strategy above is successfully applied to $k\ge 3$. 
Finally, the proof is completed after a few direct calculations for some subquivers of $\vec C(\Lambda)$ with $k=3,4,5,6$. 
This is explained in Subsection 3.2 and Subsection 4.1.

A well-known feature of cyclotomic quiver Hecke algebras in affine type $A$ is that $\{R^\Lambda(\beta)\mid\beta\in O(\Lambda)\}$ provides complete representatives of derived equivalence classes of $R^\Lambda(\beta)$ with $\beta\in Q_+$.
After finding the representation type of $R^\Lambda(\beta)$ with $\beta\in O(\Lambda)$, one may want to specify the Morita equivalence classes of representation-finite and tame $R^\Lambda(\beta)$'s with $\beta\in Q_+$ as was done by the first author in \cite{Ar-rep-type} and \cite{Ar-tame-block} for $k=1,2$. 
Here, we give an answer to the cases for $k\ge 3$.

\begin{corollary}
Suppose $\Lambda\in \pcl$, $k\geq 3$ and $\beta\in Q_+$.
\begin{enumerate}
    \item If $R^\Lambda(\beta)$ is of finite representation type, then $R^\Lambda(\beta)$ is Morita equivalent to either $\k[X]/(X^m)$ for some $m\geq 1$ or a Brauer tree algebra without exceptional vertex.
    \item If $R^\Lambda(\beta)$ is of tame representation type, then $R^\Lambda(\beta)$ is Morita equivalent to one of the local algebras (1)-(4) listed in Subsection 8.2, or a Brauer graph algebra whose Brauer graph is a tree with 3 vertices and all multiplicities are 2, or a Brauer graph algebra whose Brauer graph is a tree and all multiplicities are $k$, or a Brauer graph algebra whose Brauer graph is a tree, one multiplicity is $1$ and all others are $m$ for $2\le m\le k-1$. 
\end{enumerate}
In all of the cases, the Brauer graph is a straight line if $R^\Lambda(\beta)$ is a cellular algebra.
\end{corollary}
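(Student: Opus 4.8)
The plan is to transport the question to the representatives $R^\Lambda(\beta')$ with $\beta'\in O(\Lambda)$ and then read off the Morita class from a derived-equivalence classification. Recall that every $R^\Lambda(\beta)$ is a symmetric algebra, that the representation type and the number of isomorphism classes of simple modules are invariant under derived equivalence, and that $\{R^\Lambda(\beta)\mid\beta\in O(\Lambda)\}$ is a complete set of representatives of the derived equivalence classes in $\{R^\Lambda(\beta)\mid\beta\in Q_+\}$. Hence, for any $\beta\in Q_+$, the algebra $R^\Lambda(\beta)$ is derived equivalent to a unique $R^\Lambda(\beta')$ with $\beta'\in O(\Lambda)$ of the same representation type, and by the Main Theorem this representative satisfies $\beta'\in\mathscr F(\Lambda)$ in case (1), and either $\beta'=\delta$ with $\Lambda=k\Lambda_i$ (under the stated conditions on $t$) or $\beta'\in\mathscr T(\Lambda)$ in case (2). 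It therefore suffices to carry out two tasks: first, to identify the Morita class of each such representative explicitly; and second, to describe every Morita class occurring in its derived equivalence class.

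For the first task I would extract the basic algebra of each representative from the structural analysis underlying the Main Theorem, computing its quiver with relations, or at least its Cartan matrix, dimensions, and the Loewy structure of the projectives, and in the finitely many small cases $k=3,4,5,6$ via the direct computations already referred to in Subsection 3.2 and Subsection 4.1. In the finite case this presents each representative as a Brauer tree algebra: when there is a single simple module the algebra is local, hence of the form $\k[X]/(X^m)$ (the one-edge Brauer tree), and otherwise one finds all multiplicities equal to $1$, i.e.\ a Brauer tree algebra without exceptional vertex. In the tame case, the representatives $R^\Lambda(\delta)$ with $\Lambda=k\Lambda_i$ have a single simple module and are matched, by computing their quiver and relations, with the local tame symmetric algebras (1)--(4) of Subsection 8.2; the representatives with $\beta'\in\mathscr T(\Lambda)$ are matched with Brauer graph algebras whose graph is a tree, the multiplicity data being read off as one of the three listed patterns.

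For the second task I would invoke the derived-equivalence classifications of representation-finite and of tame symmetric algebras. In the finite case, any standard symmetric representation-finite algebra derived equivalent to a Brauer tree algebra is again Morita equivalent to a Brauer tree algebra, and the number of edges together with the exceptional multiplicity are complete derived invariants; since the representatives have multiplicity $1$ when not local, and a single edge when local, case (1) follows. In the tame case, a Brauer graph algebra remains, up to Morita equivalence, within the Brauer graph algebras under derived equivalence, while the genus of the graph, the number of edges, and the multiset of multiplicities are preserved; thus a tree stays a tree with the same edges and multiplicities, only its shape varying, which yields exactly the families listed in case (2). Finally, the local representatives $R^\Lambda(\delta)$ have one simple module, a derived invariant, so every algebra derived equivalent to them is again local and Morita equivalent to the corresponding algebra (1)--(4).

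The main obstacle is the first task: producing the explicit Morita-type identification of each representative, since pinning down the precise Brauer tree or Brauer graph algebra requires computing enough invariants to distinguish it inside its derived equivalence class, and this is exactly where the detailed structure of $\max^+(\Lambda)$ and the small-rank computations become indispensable. The cellularity clause then follows from this identification together with the fact that a Brauer graph algebra is cellular precisely when its Brauer graph is a straight line: if $R^\Lambda(\beta)$ is cellular then so is its basic algebra, forcing the associated Brauer tree or Brauer graph to be a straight line, whereas the genuinely branching trees arise only for the non-cellular instances.
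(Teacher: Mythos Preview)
Your overall strategy matches the paper's: reduce to $\beta'\in O(\Lambda)$ by derived equivalence, determine the basic algebra of each representative explicitly (this is carried out in the computations of Section~6), and then transport the Morita classification back using the Antipov--Zvonareva and Opper--Zvonareva results on Brauer graph algebras (Theorems~\ref{Theorem:brauer-graph-derived-closed} and~\ref{thm-brauer-graph-condition}, Corollary~\ref{cor::tree-to-tree}), together with the fact that derived equivalence of local algebras implies Morita equivalence.

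However, your matching of representatives to Morita types is wrong in the details. You claim that the representatives $R^{k\Lambda_i}(\delta)$ are all local and account for the algebras (1)--(4), while the representatives with $\beta'\in\mathscr T(\Lambda)$ give the non-local Brauer graph algebras. In fact only the case $\ell=1$ of $R^{k\Lambda_i}(\delta)$ is local (this is algebra (1)); for $\ell\ge2$ the basic algebra of $R^{k\Lambda_i}(\delta)$ has $\ell$ simple modules and is the Brauer graph algebra $\mathcal S_2$ whose Brauer graph is a line with $\ell+1$ vertices and all multiplicities $k$ (Lemma~\ref{lem::step-2-delta-2}). Conversely, the local algebras (2), (3), (4) arise not from $\delta$ but from $\beta'\in\mathscr T(\Lambda)_3$, $\mathscr T(\Lambda)_4$, $\mathscr T(\Lambda)_5$ respectively, while $\mathscr T(\Lambda)_1$ and $\mathscr T(\Lambda)_2$ give the non-local Brauer graph algebras $\mathcal S_3$ and $\mathcal S_1$. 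Your proposed method (compute the quiver and relations of each representative) would of course reveal this upon execution, so this is an error of prediction rather than of strategy, but as written the argument does not go through.

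A second minor point: your cellularity argument asserts that a Brauer graph algebra is cellular \emph{precisely} when its graph is a straight line. Only one implication is needed or proved: cellularity forces the quiver of the basic algebra to have symmetric shape (equal numbers of arrows $a\to b$ and $b\to a$), and for a Brauer graph algebra this excludes any vertex of valency at least~$3$, hence the tree must be a line (Corollary~\ref{cor::tree-to-tree}). The converse is not invoked.
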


In order to prove the above corollary, recent progress on Brauer graph algebras by Antipov-Zvonareva \cite{AZ-brauer-graph} and Opper-Zvonareva \cite{OZ-brauer-graph} play a crucial role. 
We refer to Subsection 2.7 for more details.

We mention here the relation between $R^\Lambda(\beta)$ and the blocks of cyclotomic Hecke algebra $H_n^\Lambda$.
Let $H^\Lambda(\beta)$ be the block algebra of $H_n^\Lambda$ indexed by $\beta\in Q_+$ with height $n$.
Then, it is proved by Brundan and Kleshchev in \cite{BK-block} that $H^\Lambda(\beta)\simeq R^\Lambda(\beta)$ when the parameter $t=-2$ if $\ell=1$ and $t=(-1)^{\ell+1}$ if $\ell\ge2$, in the definition of $R^\Lambda(\beta)$. Applying our main result above to this special parameter, one gets the representation type of $H^\Lambda(\beta)$. Since $H_n^\Lambda$ is known as a cellular algebra (see \cite{GL-cellular-alg}), its quiver has a symmetric shape (see, e.g., \cite[Proposition 2.8]{X-cellular-alg}), and hence, provides a nice behavior on the Morita equivalence classes of representation-finite and tame $H^\Lambda(\beta)$'s (e.g., the Brauer graph must be a straight line). Lastly, the decomposition matrices in some representation-finite and tame cases are obtained. 
This generalizes the results in \cite{AIP-rep-type-A-level-1}, \cite{Ar-rep-type} and \cite{Ar-tame-block}.  
We refer to Section 8 for details.

During the preparation of this article, we notice that Li and Qi posted a paper \cite{LQ-rep-type-cyclotomic-hecke-alg} on arXiv a short time ago, in which they characterize the representation-finite $H^\Lambda(\beta)$'s under the assumption $\ch \k\neq 2$, and classify the Morita equivalence classes of such $H^\Lambda(\beta)$'s. 
This is covered by our results above.

It is worth mentioning that the representation type of cyclotomic quiver Hecke algebras of level $1$ in types $A^{(2)}_{2\ell}$, $D^{(2)}_{\ell+1}$ and $C^{(1)}_\ell$ is determined in \cite{AP-rep-type-A_2-level-1}, \cite{AP-rep-type-D-level-1} and \cite{AP-rep-type-C-level-1}. 
We expect that a similar approach can be established in these types at higher levels.

The article is organized as follows. 
In Section 2, we recall the necessary backgrounds, such as Cartan datum, maximal dominant weights, cyclotomic (quiver) Hecke algebras, and Brauer graph algebras.
In Section 3, we first give an explicit description of $\max^+(\Lambda)$, and then construct the connected quiver $\vec C(\Lambda)$ as we mentioned above.
In section 4, we explain the main strategy to prove our main theorem, as well as give some useful reduction lemmas. 
Then, the proof is divided into 3 steps proved in Sections 5, 6, and 7, in which the contents are basically direct calculations.
In the last section, we will classify the Morita equivalence classes of representation-finite and tame blocks for cyclotomic (quiver) Hecke algebras. Also, some decomposition matrices are determined here.

\subsection*{Conventions}
Throughout, we set $\N:=\{1,2,\ldots\}$ and $\N_{0}:=\Z_{\geq0}=\{0,1,2,\ldots\}$.
Fix $\ell\in \N$.
We set $e:=\ell+1$ and call it the quantum characteristic.
For any $m,m'\in \Z$, we write $m \equiv_e m'$ if $e$ divides $m-m'$, and $m\not\equiv_e m'$ otherwise.

\section{Preliminaries}
In this section, we review some basic materials, including Cartan datum, maximal dominant weights, cyclotomic (quiver) Hecke algebras and Brauer graph algebras, etc.

\subsection{Cartan datum of type $A^{(1)}_{\ell}$}
Let $I=\{0,1,...,\ell\}$ be an index set.
Fix an \emph{affine Cartan datum} $(A,P,\Pi,P^{\vee},\Pi^{\vee})$ of type $A^{(1)}_{\ell}$, consisting of
\begin{enumerate}
\item a matrix $A=(a_{ij})_{i,j \in I}$ of corank $1$, which is called the \emph{affine Cartan matrix} of type $A^{(1)}_\ell$. 
If $\ell=1$, then
$$
A=\left(\begin{array}{cc}
2 & -2 \\
-2 & 2
\end{array}\right).
$$
If $\ell\geq 2$, then
$$
a_{ij}=\left\{\begin{array}{ll}
2  & \hbox{if } i=j, \\
-1 & \hbox{if } i-j \equiv_e \pm 1, \\
0  & \hbox{otherwise}.
\end{array}\right.
$$

\item a free abelian group $P=\bigoplus_{i=0}^{\ell} \Z \Lambda_i \oplus \Z \delta$, which is called the \emph{weight lattice},

\item a free abelian group $P^{\vee}=\Hom(P,\Z)$, which is called the \emph{coweight lattice},

\item a linearly independent set $\Pi=\{\alpha_i \mid i\in I \}\subset P$, which is called the set of \emph{simple roots}, and a linearly independent set $\Pi^{\vee}=\{ h_i \mid i\in I \} \subset P^{\vee}$, which is called the set of \emph{simple coroots}.
The elements in $\Pi$ and $\Pi^{\vee}$ satisfy
$$
\langle h_i, \alpha_j \rangle = a_{ij}
\quad \text{and} \quad
\langle h_i, \Lambda_j \rangle = \delta_{ij}
$$
for all $i,j\in I$.
\end{enumerate}
The null root $\delta$ and the canonical central element $c$ are
$$
\delta=\ssum_{i\in I}\alpha_i
\quad \text{and} \quad
c=\ssum_{i\in I} h_i.
$$
In particular, $\langle h_i, \delta \rangle=0$ for any $i\in I$.

The set of dominant integral weights is defined as $P^+:=\{\Lambda \in P\mid \langle h_i,\Lambda \rangle\in \Z_{\geq0}, i\in I\}$.
For any $k\in \N$, a weight $\Lambda \in P$ is said to be \emph{of level $k$} if $\langle c,\Lambda\rangle=k$.
Then, we define
$$
P^+_k:=\left \{\Lambda\in P^+ \mid \langle c,\Lambda \rangle =k \right \},
$$
the set of all level $k$ dominant integral weights.

The root lattice $Q$ and its positive cone $Q_+$ are defined as
$$
Q:=\ssum_{i\in I}\Z \alpha_i
\quad \text{and} \quad
Q_+:= \ssum_{i\in I} \Z_{\geq0} \alpha_i,
$$
respectively.
For any $\beta=\sum_{i\in I}k_i\alpha_i\in Q_+$, we define the \emph{height} of $\beta$ by $|\beta|=\sum_{i\in I}k_i$.

\subsection{Integrable highest weight modules}
Let $\mathfrak g$ be the affine Kac-Moody algebra associated with the affine Cartan datum $(A, P,\Pi, P^\vee, \Pi^\vee)$ and $U_v(\mathfrak g)$ its quantum group.
For any $\Lambda \in P^+$, $V(\Lambda)$ denotes the integrable highest weight $U_v(\mathfrak g)$-module with the highest weight $\Lambda$ and $P(\Lambda)$ denotes the set of weights of $V(\Lambda)$.

We call a weight $\mu \in P(\Lambda)$ \emph{maximal} if $\mu+\delta\notin P(\Lambda)$.
Let $\max(\Lambda)$ be the set of maximal weights in $P(\Lambda)$.
Then, we define
$$
\max^+(\Lambda):=\max(\Lambda)\cap P^+
$$
as the set of all \emph{dominant maximal weights} of $V(\Lambda)$.

Let $W$ be the Weyl group, which is generated by $\{r_i\}_{i\in I}$ with the defining relations
$$
r_i^2=1,\quad
r_ir_j=r_jr_i \ \text{if $i-j\not\equiv_e \pm 1$}, \quad
r_ir_jr_i=r_jr_ir_j \ \text{if $i-j\equiv_e \pm 1$}.
$$
Namely, $W$ is the affine symmetric group.
Since $V(\Lambda)$ is integrable, $W$ acts on $P(\Lambda)$ by
$r_i \mu =\mu -\langle h_i, \mu \rangle\alpha_i$, for any $r_i\in W$ and $\mu\in P(\Lambda)$.
It is known that $\max^+(\Lambda)$ is a finite set (e.g., \cite[Proposition 12.6]{K-Lie-alg})
and any maximal weight in $\max(\Lambda)$ is $W$-conjugate to a dominant maximal weight in $\max^+(\Lambda)$.
Moreover,
\begin{equation}\label{equ::weight-set}
P(\Lambda)= \bigsqcup_{\mu\in \max(\Lambda)}
\left \{\mu-m\delta\mid m\in \Z_{\geq 0}\right \}.
\end{equation}

\subsection{Sets in bijection with $\max^+(\Lambda)$}
Set $\mathfrak h:=\R\otimes_{\Z}P^\vee$.
There is a non-degenerate symmetric bilinear form $(\cdot,\cdot)$ on $\mathfrak h^*=\R\otimes_\Z P$ (see \cite[(6.2.2)]{K-Lie-alg}) such that for any $i,j\in I$,
\begin{equation}\label{equ::bilinear-form}
(\alpha_i,\alpha_j)=a_{ij}, \quad
(\alpha_i,\Lambda_0)=\delta_{i0}, \quad
(\delta, \Lambda_0)=1, \quad
(\delta, \delta)=(\Lambda_0,\Lambda_0)=0.
\end{equation}
Let $\mathfrak h_0$ be the subspace of $\mathfrak h$ spanned by $\{h_i\mid i\in I_0\}$, where $I_0:=I\setminus \{0\}$.
The orthogonal projection (see \cite[(6.2.7)]{K-Lie-alg}) $\bar{\ \ } : \mathfrak h^*=\mathfrak h^*_0\oplus \R\delta \oplus \R\Lambda_0\rightarrow \mathfrak h^*_0$ is given by
$$
\bar \mu= \mu -(\delta, \mu) \Lambda_0-(\Lambda_0,\mu)\delta
$$
for $\mu\in P$. 
Then, $\mathfrak h^*_0$ has basis $\{\bar \Lambda_i\mid i\in I_0\}$.
It is given in \cite[p.89]{K-Lie-alg} that $W$ can be realized as a group of affine transformations on $\mathfrak h^*_0$ and the fundamental alcove is defined as
$$
\mathcal{C}_{af}:=\left \{\mu\in \mathfrak h^*_0 \mid (\alpha_i,\mu)\geq 0 \text{ for } i\in I_0, (\mu,\delta-\alpha_0)\leq 1 \right \}.
$$

\begin{Prop}[{\cite[Propositions 12.5 and 12.6]{K-Lie-alg}}]\label{prop::bijection-kac}
For any $\Lambda\in P^+_k$, there is a bijection from $\max^+(\Lambda)$ to $k\mathcal{C}_{af}\cap (\bar\Lambda+\bar Q)$ given by $\mu\mapsto \bar\mu$, where
$$
k\mathcal{C}_{af}:=\left \{\mu\in \mathfrak h^*_0 \mid k^{-1}\mu\in \mathcal{C}_{af} \right \}
=\left \{\mu\in \mathfrak h^*_0\mid (\alpha_i, \mu)\geq 0 \text{ for } i\in I_0, (\mu, \delta-\alpha_0)\leq k \right \}.
$$
Moreover, $\max^+(\Lambda)=\{\mu\in P^+\mid \mu\leq \Lambda \text{ and } \Lambda-\mu-\delta\notin Q_+\}$.
\end{Prop}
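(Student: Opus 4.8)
The plan is to exploit the single structural fact that the orthogonal projection annihilates the null root, and then read off both assertions from it. First I would record that $\bar\delta=\delta-(\delta,\delta)\Lambda_0-(\Lambda_0,\delta)\delta=0$ by \eqref{equ::bilinear-form}, so that $\overline{\mu-m\delta}=\bar\mu$ for every $m$; thus $\mu\mapsto\bar\mu$ collapses each $\delta$-string $\{\mu-m\delta\}$ to a single point of $\mathfrak h^*_0$. Next I would translate dominance into alcove membership. Since type $A^{(1)}_\ell$ is simply-laced, $(\alpha_i,\mu)=\langle h_i,\mu\rangle$, and because $(\alpha_i,\Lambda_0)=0=(\alpha_i,\delta)$ for $i\in I_0$ one gets $(\alpha_i,\bar\mu)=\langle h_i,\mu\rangle$; likewise a short computation using $(\Lambda_0,\delta-\alpha_0)=0=(\delta,\delta-\alpha_0)$ gives $(\bar\mu,\delta-\alpha_0)=(\mu,\delta-\alpha_0)=k-\langle h_0,\mu\rangle$ whenever $\mu$ has level $k$. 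Hence for a level $k$ weight $\mu$ the inequalities $\langle h_i,\mu\rangle\ge 0$ for all $i\in I$ are exactly the defining inequalities of $k\mathcal C_{af}$ applied to $\bar\mu$; in other words $\mu\in P^+$ if and only if $\bar\mu\in k\mathcal C_{af}$.

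With this dictionary in hand I would first prove the ``Moreover'' characterization. For dominant $\mu$ it is standard that $\mu\in P(\Lambda)$ if and only if $\mu\le\Lambda$, so the weights of $V(\Lambda)$ lying in $P^+$ are precisely the dominant weights below $\Lambda$. Since $\langle h_i,\delta\rangle=0$ for all $i$, the weight $\mu+\delta$ is dominant whenever $\mu$ is; consequently $\mu+\delta\in P(\Lambda)$ is equivalent to $\mu+\delta\le\Lambda$, i.e. to $\Lambda-\mu-\delta\in Q_+$. Therefore $\mu$ is maximal, meaning $\mu+\delta\notin P(\Lambda)$, exactly when $\Lambda-\mu-\delta\notin Q_+$, which yields $\max^+(\Lambda)=\{\mu\in P^+\mid \mu\le\Lambda,\ \Lambda-\mu-\delta\notin Q_+\}$.

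For the bijection I would check that $\mu\mapsto\bar\mu$ is well-defined, injective, and surjective onto $k\mathcal C_{af}\cap(\bar\Lambda+\bar Q)$. Well-definedness and the target coset are immediate: $\bar\mu\in k\mathcal C_{af}$ by the dictionary above, and $\mu\le\Lambda$ gives $\bar\mu\in\bar\Lambda+\bar Q$. For injectivity, all weights of $V(\Lambda)$ have the same level $k$, and since $(\delta,\mu)$ equals the level of $\mu$, the equality $\bar\mu=\bar\nu$ forces the $\Lambda_0$-coefficient of $\mu-\nu$ to vanish, so $\mu-\nu\in\R\delta$; as $\mu-\nu\in Q$ it lies in $\Z\delta$, and by \eqref{equ::weight-set} two distinct maximal weights cannot sit in a common $\delta$-string, whence $\mu=\nu$. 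For surjectivity, given $x\in k\mathcal C_{af}\cap(\bar\Lambda+\bar Q)$ I would lift it to a genuine weight: writing $x=\bar\Lambda+\bar\gamma$ with $\gamma\in Q$, the element $\Lambda+\gamma$ projects to $x$ and may be adjusted by a multiple of $\delta$ so as to have level $k$; the dictionary makes the adjusted weight dominant, the standard theory places the top of its $\delta$-string in $P(\Lambda)$, and that top is the required maximal preimage.

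The main obstacle I anticipate is the surjectivity step, specifically showing that each lattice point of the alcove genuinely lifts to a weight of $V(\Lambda)$ rather than to a merely formal element of $\bar\Lambda+\bar Q$, and pinning down the unique maximal representative of its $\delta$-string. This is where the saturation description of $P(\Lambda)$ for dominant $\Lambda$ must be invoked carefully, matching the coset $\bar\Lambda+\bar Q$ against the actual weight support; the remaining work is the bookkeeping with \eqref{equ::bilinear-form} already sketched above.
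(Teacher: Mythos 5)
The paper offers no proof of this proposition at all: it is quoted verbatim from Kac's book (Propositions 12.5 and 12.6), so there is nothing internal to compare against. Your reconstruction is essentially the standard argument and its key computations are correct: $\bar\delta=0$, the identification $(\alpha_i,\bar\mu)=\langle h_i,\mu\rangle$ for $i\in I_0$ together with $(\bar\mu,\delta-\alpha_0)=k-\langle h_0,\mu\rangle$ (which tacitly uses $(\Lambda_i,\delta)=1$ for all $i$; this does follow from \eqref{equ::bilinear-form} because $\Lambda_i-\Lambda_0$ lies in $\sum_{j\in I_0}\R\alpha_j+\R\delta$), the saturation property of $P(\Lambda)$ for the ``Moreover'' part, and the injectivity argument via \eqref{equ::weight-set}. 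Two small corrections. First, the phrase ``adjusted by a multiple of $\delta$ so as to have level $k$'' is misplaced: $\Lambda+\gamma$ already has level $k$ since $\langle c,\gamma\rangle=0$ for $\gamma\in Q$, and adding $\delta$ never changes the level; the $\delta$-shift is needed instead to achieve $\mu\le\Lambda$ and maximality. Second, the surjectivity step you flag as the main obstacle closes immediately using your own ``Moreover'' characterization, with no further appeal to the weight support of $V(\Lambda)$: given $x=\bar\Lambda+\bar\gamma\in k\mathcal{C}_{af}\cap(\bar\Lambda+\bar Q)$ with $\gamma=\sum_{i\in I}c_i\alpha_i\in Q$, set $m=\max_i c_i$ and $\mu=\Lambda+\gamma-m\delta$; then $\bar\mu=x$, the dictionary gives $\mu\in P^+$, and $\Lambda-\mu=\sum_i(m-c_i)\alpha_i\in Q_+$ while $\Lambda-\mu-\delta=\sum_i(m-1-c_i)\alpha_i\notin Q_+$, so $\mu\in\max^+(\Lambda)$ by the characterization you have already established. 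With these repairs the proof is complete and self-contained, which is arguably more than the paper provides.
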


Since $\bar \delta=0$, we have $k\mathcal{C}_{af}\cap (\overline{\Lambda+m\delta}+\bar Q)=k\mathcal C_{af}\cap (\bar\Lambda+\bar Q)$.
Then, the authors of \cite{KOO-sieving-phenomenon} introduced the set 
\begin{equation}\label{equ:levelkdominant}
\pcl:=P^+_k/\Z\delta
\end{equation}
which we shall identify with $P^+_k\cap \sum_{i\in I}\Z_{\geq0}\Lambda_i$. 
We explore more details as follows.
Since $\bar\Lambda_0=0$ and $\bar \alpha_0=-\sum_{i\in I_0}\bar \alpha_i$, we have
$$
\bar P=\ssum_{i\in I_0}\Z \bar \Lambda_i
\quad \text{and} \quad
\bar Q=\ssum_{i\in I_0}\Z \bar \alpha_i.
$$
For any $\Lambda\in \pcl$, the authors of \cite{KOO-sieving-phenomenon} defined an injective map
$$
\begin{aligned}
\iota_\Lambda: k\mathcal{C}_{af}\cap (\bar\Lambda+\bar Q)
&\rightarrow \pcl=P^+_k\cap \textstyle \sum_{i\in I}\Z_{\geq0}\Lambda_i\\
\ssum_{i\in I_0}m_i\bar \Lambda_i
&\mapsto m_0\Lambda_0+\ssum_{i\in I_0}m_i\Lambda_i,
\end{aligned}
$$
where $m_0=k-\sum_{i\in I_0}m_i$.
Furthermore, an equivalence relation $\sim$ on $\pcl$ is introduced in \cite[(2.4)]{KOO-sieving-phenomenon} as
$$
\Lambda \sim \Lambda' 
\quad \text{if and only if} \quad 
k\mathcal{C}_{af}\cap (\bar\Lambda+\bar Q)=k\mathcal{C}_{af}\cap (\overline{\Lambda'}+\bar Q),
$$
which they call the sieving equivalence relation.
It is proved in \cite[Lemma 2.4]{KOO-sieving-phenomenon} that a complete set of pairwise inequivalent representatives of $\pcl/\sim$ is given by
\begin{equation}\label{equ::representatives}
{\rm DR}(\pcl):= \{(k-1)\Lambda_0+\Lambda_s\mid s\in I\}.
\end{equation}

Let $\pcl(\Lambda)$ be the equivalence class of $\Lambda$, which is a subset of $\pcl$.
Since $\Lambda \sim \Lambda'$ if and only if $\Lambda'\in\text{Im}\ \iota_\Lambda$ (see \cite[Lemma 2.3]{KOO-sieving-phenomenon}), the above map $\iota_\Lambda$ induces a bijective map
\begin{equation}\label{equ::bijection-iota}
\iota_\Lambda: k\mathcal{C}_{af}\cap (\bar\Lambda+\bar Q)\rightarrow \pcl(\Lambda)
\end{equation}
with the inverse given by $\bar{\ \ }$.

\begin{Theorem}[{\cite[Theorem 2.14]{KOO-sieving-phenomenon}}]\label{theo::equivalence-class}
For any $\Lambda\in \pcl$, we have
$$
\pcl(\Lambda)=\left \{\Lambda'\in \pcl\mid \ev(\Lambda) \equiv_e \ev(\Lambda')\ \right \},
$$
where $\ev(\Lambda)=\sum_{i\in I_0} \langle h_i,\Lambda\rangle i$, as in \cite[Convention 2.13]{KOO-sieving-phenomenon}.
\end{Theorem}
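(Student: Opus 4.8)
The plan is to recast the sieving relation $\sim$ as a congruence in the quotient $\bar P/\bar Q$, which in type $A_\ell$ is cyclic of order $e=\ell+1$. First I would replace the alcove condition by a coset condition. By \cite[Lemma 2.3]{KOO-sieving-phenomenon} together with the description of $\iota_\Lambda$ in \eqref{equ::bijection-iota}, whose inverse is the projection $\bar{\ }$, we have $\Lambda\sim\Lambda'$ exactly when $\bar{\Lambda'}\in k\mathcal C_{af}\cap(\bar\Lambda+\bar Q)$; since $\bar{\Lambda'}\in k\mathcal C_{af}$ holds automatically for every $\Lambda'\in\pcl$ (a direct check from $(\alpha_i,\bar{\Lambda'})=\langle h_i,\Lambda'\rangle\ge0$ for $i\in I_0$ and $(\bar{\Lambda'},\delta-\alpha_0)=\sum_{i\in I_0}\langle h_i,\Lambda'\rangle\le k$), this says precisely that
$$
\Lambda\sim\Lambda'\iff \bar\Lambda-\bar{\Lambda'}\in\bar Q .
$$
It then remains to identify the subgroup $\bar Q\subseteq\bar P$ and to show that membership $\bar\Lambda-\bar{\Lambda'}\in\bar Q$ is equivalent to $\ev(\Lambda)\equiv_e\ev(\Lambda')$.

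For the second step I would work in the basis $\{\bar\Lambda_i\mid i\in I_0\}$ of $\mathfrak h^*_0$. Writing $\Lambda=\sum_{i\in I}\langle h_i,\Lambda\rangle\Lambda_i$ and using $\bar\Lambda_0=0$ gives $\bar\Lambda=\sum_{i\in I_0}\langle h_i,\Lambda\rangle\bar\Lambda_i$, and likewise for $\Lambda'$, so $\bar\Lambda-\bar{\Lambda'}=\sum_{i\in I_0}(\langle h_i,\Lambda\rangle-\langle h_i,\Lambda'\rangle)\bar\Lambda_i$. Because $\langle h_i,\bar\Lambda_j\rangle=\delta_{ij}$ for $i,j\in I_0$, the finite coroots $\{h_i\}_{i\in I_0}$ and $\{\bar\Lambda_j\}_{i\in I_0}$ form dual bases, whence every $\mu\in\mathfrak h^*_0$ expands as $\mu=\sum_{i\in I_0}\langle h_i,\mu\rangle\bar\Lambda_i$. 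Applying this to $\bar\alpha_j$ and using $\langle h_i,\bar\alpha_j\rangle=a_{ij}$ for $i,j\in I_0$ yields $\bar\alpha_j=\sum_{i\in I_0}a_{ij}\bar\Lambda_i$, where $(a_{ij})_{i,j\in I_0}$ is the Cartan matrix of finite type $A_\ell$. Thus $\bar P=\sum_{i\in I_0}\Z\bar\Lambda_i$ and $\bar Q=\sum_{i\in I_0}\Z\bar\alpha_i$ are exactly the weight and root lattices of type $A_\ell$.

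The crux is then the standard computation of the fundamental group $\bar P/\bar Q\cong\Z/e\Z$. I would make the isomorphism explicit through the homomorphism $\psi:\bar P\to\Z/e\Z$ determined by $\bar\Lambda_i\mapsto i\pmod e$. One checks $\psi(\bar\alpha_j)=\sum_{i\in I_0}a_{ij}\,i\equiv0\pmod e$ for every $j\in I_0$: the interior columns and the $j=1$ column vanish on the nose, while the $j=\ell$ column gives $2\ell-(\ell-1)=\ell+1=e$, and for $\ell=1$ one has $\bar\alpha_1=2\bar\Lambda_1\mapsto2=e$. Hence $\bar Q\subseteq\ker\psi$ and $\psi$ factors through a surjection $\bar P/\bar Q\twoheadrightarrow\Z/e\Z$; since $[\bar P:\bar Q]=|\det(a_{ij})_{i,j\in I_0}|=\ell+1=e$, this surjection of groups of the same order $e$ is an isomorphism and therefore $\ker\psi=\bar Q$. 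Consequently $\bar\Lambda-\bar{\Lambda'}\in\bar Q$ if and only if $\psi(\bar\Lambda-\bar{\Lambda'})=\sum_{i\in I_0}(\langle h_i,\Lambda\rangle-\langle h_i,\Lambda'\rangle)\,i\equiv0\pmod e$, that is, $\ev(\Lambda)\equiv_e\ev(\Lambda')$, which is the asserted description of $\pcl(\Lambda)$.

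I anticipate the main obstacle to be bookkeeping rather than conceptual: verifying that the projection $\bar{\ }$ does not disturb the pairing with the finite coroots, so that $\langle h_i,\bar\alpha_j\rangle=a_{ij}$ and hence $\bar\alpha_j=\sum_{i\in I_0}a_{ij}\bar\Lambda_i$ holds uniformly (including the degenerate case $\ell=1$), and pinning down that $\bar Q$ equals $\ker\psi$ rather than a proper subgroup, for which the index identity $[\bar P:\bar Q]=\det(A_\ell\text{-Cartan matrix})=e$ is the essential input. Once the identification $\bar P/\bar Q\cong\Z/e\Z$ is secured with the explicit generator sending $\bar\Lambda_i\mapsto i$, the congruence characterization of the equivalence class follows at once.
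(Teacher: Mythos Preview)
The paper does not prove this statement; it is quoted verbatim from \cite[Theorem 2.14]{KOO-sieving-phenomenon} without argument. Your proof is correct and supplies what the paper omits: you reduce the sieving relation to the coset condition $\bar\Lambda-\bar{\Lambda'}\in\bar Q$ (using that $\bar{\Lambda'}\in k\mathcal C_{af}$ automatically for $\Lambda'\in\pcl$), then identify $\bar P/\bar Q\cong\Z/e\Z$ via $\bar\Lambda_i\mapsto i$ and match the result with $\ev$. The verifications you flag as potential obstacles---that $\langle h_i,\bar\mu\rangle=\langle h_i,\mu\rangle$ for $i\in I_0$ because $h_i$ kills both $\Lambda_0$ and $\delta$, and that $[\bar P:\bar Q]=\det(\text{finite }A_\ell\text{ Cartan})=e$ forces $\ker\psi=\bar Q$---are handled correctly, including the boundary cases $j=1,\ell$ and $\ell=1$.
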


\begin{example}\label{ex::P-level-2}
Fix $s\in I$. If $k=1$, then
$P^+_{cl,1}(\Lambda_s)=\{\Lambda_s\}$ and hence,
$\max^+(\Lambda_s)=\{\Lambda_s\}$.
If $k=2$, then
$P^+_{cl,2}(\Lambda_0+\Lambda_s)=\{\Lambda_i+\Lambda_j\mid i+j \equiv_e s\}$,
so that
$$
P^+_{cl,2}(2\Lambda_0)=\left \{2\Lambda_0,\Lambda_i+\Lambda_{e-i}\mid 1\leq i\leq \frac{e}{2}\right \}
$$
if $s=0$, and for any $s>0$,
$$
P^+_{cl,2}(\Lambda_0+\Lambda_s)=\left \{\Lambda_i+\Lambda_{s-i},
\Lambda_{s+j}+\Lambda_{e-j}\mid 0\leq i\leq \frac{s}{2}, 1\leq j\leq \frac{e-s}{2}\right \}.
$$
\end{example}

\subsection{Cyclotomic Hecke algebras}
Let $\k$ be an algebraically closed field and $q\in \k^\times$ an invertible element.
The quantum characteristic $e$ of the element $q$ is defined as the smallest positive integer such that $1+q+\ldots+ q^{e-1}=0$.
When $e=\ell+1$, the module categories over cyclotomic Hecke algebras of rank $n\in \Z_{\geq0}$ altogether categorify the integrable module $V(\Lambda)$ over the affine Kac-Moody algebra $\mathfrak g$ of type $A^{(1)}_\ell$, see \cite{Ar-decom-number}.

Fix $\Lambda\in \pcl$ and write $\Lambda=\Lambda_{i_1}+\Lambda_{i_2}+\ldots+\Lambda_{i_k}$ for some $i_1, i_2,\ldots,i_k\in I$.

\begin{Defn}
If $q\neq 1$, we denote by $H^\Lambda_n$ the cyclotomic Hecke algebra associated with a complex reflection group of type $G(k,1,n)$ (see \cite{AK-algebra, BM-Hecke-alg}).
It is the associative algebra generated by $T_0,T_1,\ldots, T_{n-1}$ subject to
\begin{enumerate}
\item $(T_i-q)(T_i+1)=0, T_iT_j=T_jT_i\ (j\ge i+2), T_iT_{i+1}T_i=T_{i+1}T_iT_{i+1}\ (1\le i\le n-2)$,

\item $(T_0T_1)^2=(T_1T_0)^2, T_0T_j=T_jT_0\ (j\geq 2)$,

\item $\prod_{j=1}^k(T_0-q^{i_j})=0$.
\end{enumerate}
In fact, $H_n^\Lambda$ is isomorphic to the quotient algebra of the (extended) affine Hecke algebra that is generated by $T_1,\ldots, T_{n-1}$, $X_1^{\pm1},\ldots, X^{\pm1}_n$ with relations (1) and
\begin{enumerate}
\item[(2')] $X^{\pm1}_r X_s^{\pm 1}=X_s^{\pm 1}X^{\pm1}_r, X_rX_r^{-1}=1$,

\item[(3')] $T_rX_rT_r=qX_{r+1}, T_iX_j=X_jT_i$ if $j\neq i,i+1$,
\end{enumerate}
modulo the two-sided ideal generated by $\prod_{j=1}^k(X_1-q^{i_j})$.

If $q=1$, we define $H^\Lambda_n$ as the quotient algebra of the degenerate affine Hecke algebra that is generated by $s_1,\ldots,s_{n-1}$, $x_1,\ldots, x_n$ with relations
\begin{enumerate}
\item $s_i^2=1, s_is_j=s_js_i\ (j\geq i+2), s_is_{i+1}s_i=s_{i+1}s_is_{i+1}\ (1\le i\le n-2)$,

\item $x_r x_s=x_sx_r$,

\item $s_rx_{r+1}=x_{r}s_r+1$, $s_ix_j=x_js_i$ if $j\neq i,i+1$,
\end{enumerate}
modulo the two-sided ideal generated by $\prod_{j=1}^k(x_1-i_j)$.
\end{Defn}

\subsection{Cyclotomic quiver Hecke algebras}
Fix $t\in\k$ if $\ell=1$ and $t\in\k^\times$ if $\ell \geq 2$.
For any $i,j\in I$, we take polynomials $Q_{i,j}(u,v)\in\k[u,v]$ such that $Q_{i,i}(u,v)=0$, $Q_{i,j}(u,v) =Q_{j,i}(v,u)$ and if $\ell\geq 2$,
$$
\begin{aligned}
Q_{i,i+1}(u,v)   &=u+v \text{ if } 0\leq i<\ell,\\
Q_{\ell, 0}(u,v) &= u+t v,\\
Q_{i,j}(u,v)     &=1  \text{ if } j\not\equiv_e  i, i\pm1.
\end{aligned}
$$
If $\ell=1$, we take $Q_{0,1}(u,v)= u^2+t uv+v^2$.
Let $\mathfrak S_n$ be the symmetric group generated by elementary transpositions $\{s_i\mid 1\leq i\leq n-1\}$.
Then, $\mathfrak S_n$ may act naturally on $I^n$ by place permutations.

\begin{Defn}\label{def::cyclotomic-quiver}
Let $\Lambda\in \pcl$.
The cyclotomic quiver Hecke algebra $R^{\Lambda}(n)$ associated with the dominant integral weight $\Lambda$ and the polynomials $(Q_{i,j}(u,v))_{i,j\in I}$ is the $\Z$-graded $\k$-algebra generated by
$$
\{ e(\nu)\mid \nu=(\nu_1, \nu_2, \ldots, \nu_n)\in I^n \}, \quad
\{x_i \mid 1\leq i \leq n \}, \quad
\{\psi_j \mid 1\leq j\leq n-1\},
$$
subject to the following relations:
\begin{enumerate}
\item $e(\nu)e(\nu')=\delta_{\nu, \nu'}e(\nu),\ \textstyle\sum_{\nu\in I^n}e(\nu)=1,\ x_ix_j=x_jx_i,\  x_ie(\nu)=e(\nu)x_i$.

\item $\psi_i e(\nu)=e(s_i(\nu))\psi_i,\ \psi_{i}\psi_j=\psi_j\psi_i$ if $|i-j|>1$.

\item $\psi_i^2 e(\nu)=Q_{\nu_i,\nu_{i+1}}(x_i,x_{i+1})e(\nu)$.

\item
$$
(\psi_ix_j-x_{s_i(j)}\psi_i)e(\nu)=\left\{
\begin{array}{ll}
-e(\nu)  & \text{ if } j=i \text{ and } \nu_i=\nu_{i+1}, \\
e(\nu)   & \text{ if } j=i+1 \text{ and } \nu_i=\nu_{i+1}, \\
 0       & \text{ otherwise.}
\end{array}\right.
$$

\item
$$
(\psi_{i+1}\psi_i\psi_{i+1}-\psi_i\psi_{i+1}\psi_{i})e(\nu)
=\left\{
\begin{array}{ll}
\frac{Q_{\nu_i,\nu_{i+1}}(x_i,x_{i+1})-Q_{\nu_i,\nu_{i+1}}(x_{i+2}, x_{i+1})}
{x_i-x_{i+2}}e(\nu) & \text{ if } \nu_i=\nu_{i+2}, \\
0 & \text{ otherwise.}
\end{array}\right.
$$

\item $x_1^{\langle h_{\nu_1}, \Lambda \rangle}e(\nu)=0$.
\end{enumerate}
The $\Z$-grading on $R^\Lambda(n)$ is defined by
$$
\deg(e(\nu))=0,\quad
\deg(x_ie(\nu))=2,\quad
\deg(\psi_ie(\nu))=-a_{\nu_i,\nu_{i+1}}.
$$
\end{Defn}

For each $\beta\in Q_+$ with $|\beta|=n$, we may define a central idempotent of $R^\Lambda(n)$ by
$$
e(\beta):=\ssum_{\nu\in I^\beta}e(\nu)\ \text{with}\  I^\beta=\left \{\nu=(\nu_1, \nu_2, \ldots, \nu_n)\in I^n\mid \ssum_{i=1}^n \alpha_{\nu_i}=\beta \right \}.
$$
Then, the cyclotomic quiver Hecke algebra associated with $\beta$ is defined as
$$
R^{\Lambda}(\beta):=R^\Lambda(n)e(\beta).
$$
It is proved in \cite{BK-block} that $R^\Lambda(n)\simeq H^\Lambda_n$, when $t=-2$ if $\ell=1$ and $t=(-1)^{\ell+1}$ if $\ell\geq2$.
According to \cite{LM-cyclotomic-hecke}, $R^{\Lambda}(\beta)$ is a block algebra of $R^\Lambda(n)$ if such an isomorphism exists.
Thus, we will identify the block algebras of $H^\Lambda_n$ with $R^{\Lambda}(\beta)$, for the aforementioned choice of $t$.

Let $\sigma:\Z/ e\Z \xrightarrow{\sim}\Z/ e\Z$ be an automorphism given by $\sigma(i)=i+1$.
We define
\begin{equation}\label{def::sigma}
\sigma\beta=\ssum_{i\in I}k_i\alpha_{\sigma(i)}
\quad \text{and} \quad
\sigma\Lambda=\ssum_{i\in I}m_i\Lambda_{\sigma(i)}
\end{equation}
for any $\beta=\sum_{i\in I}k_i\alpha_i\in Q_+$ and $\Lambda=\sum_{i\in I}m_i\Lambda_i\in \pcl$.
The following statement proved by the first author in \cite[Proposition 3.2]{Ar-rep-type} will be used frequently.
\begin{Prop}\label{prop::iso-sigma}
There is an algebra isomorphism between $R^{\Lambda}(\beta)$ and $R^{\sigma \Lambda}(\sigma \beta)$.
\end{Prop}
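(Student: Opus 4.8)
The goal is to produce an explicit algebra isomorphism $R^{\Lambda}(\beta)\xrightarrow{\sim}R^{\sigma\Lambda}(\sigma\beta)$. Since $\sigma$ is the cyclic rotation $i\mapsto i+1$ on $I=\Z/e\Z$, the plan is to transport each of the generators $e(\nu)$, $x_i$, $\psi_j$ of $R^{\Lambda}(\beta)$ to the corresponding generators of $R^{\sigma\Lambda}(\sigma\beta)$ by simply shifting all the residue indices by $\sigma$. Concretely, for $\nu=(\nu_1,\dots,\nu_n)\in I^n$ write $\sigma\nu=(\sigma(\nu_1),\dots,\sigma(\nu_n))$, and define a map $\Phi$ on generators by
\begin{equation*}
\Phi(e(\nu))=e(\sigma\nu),\qquad \Phi(x_i\,e(\nu))=x_i\,e(\sigma\nu),\qquad \Phi(\psi_j\,e(\nu))=\psi_j\,e(\sigma\nu).
\end{equation*}
Observe that the place-permutation action of $\mathfrak S_n$ commutes with the entrywise shift $\sigma$, and that $\nu\in I^{\beta}$ if and only if $\sigma\nu\in I^{\sigma\beta}$, so $\Phi$ lands in $R^{\sigma\Lambda}(\sigma\beta)=R^{\sigma\Lambda}(n)e(\sigma\beta)$ as desired.

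First I would check that $\Phi$ respects all the defining relations (1)--(6) of Definition \ref{def::cyclotomic-quiver}. Relations (1), (2), and (4) are purely combinatorial in the residues: they only reference equalities $\nu_i=\nu_{i+1}$ and the $\mathfrak S_n$-action, both of which are preserved under the bijection $\nu\mapsto\sigma\nu$, so these transport verbatim. For the cyclotomic relation (6), one uses that $\langle h_{\nu_1},\Lambda\rangle=\langle h_{\sigma(\nu_1)},\sigma\Lambda\rangle$, which is immediate from the definition $\sigma\Lambda=\sum_i m_i\Lambda_{\sigma(i)}$ together with $\langle h_j,\Lambda_i\rangle=\delta_{ij}$; hence the exponent killing $x_1\,e(\nu)$ is the same as the one killing $x_1\,e(\sigma\nu)$. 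The $\Z$-grading is also preserved because $a_{\nu_i,\nu_{i+1}}=a_{\sigma(\nu_i),\sigma(\nu_{i+1})}$, the affine Cartan matrix of type $A^{(1)}_\ell$ being invariant under the rotation $\sigma$.

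The main obstacle, and the only genuinely delicate point, is relations (3) and (5), which involve the polynomials $Q_{i,j}(u,v)$. These $Q$'s are \emph{not} literally invariant under $\sigma$: the normalization in Subsection 2.5 singles out the edge $\ell\to 0$ by putting the parameter $t$ there, $Q_{\ell,0}(u,v)=u+tv$, while the other edges use $Q_{i,i+1}(u,v)=u+v$. Thus $Q_{\sigma(i),\sigma(j)}$ need not equal $Q_{i,j}$, and $\Phi$ as defined above may fail to match (3) and (5) on the nose. The resolution is to compose $\Phi$ with a diagonal rescaling automorphism of the generators $x_i$ (equivalently, to choose for $R^{\sigma\Lambda}$ a sign- or scalar-twisted but admissible family of $Q$-polynomials and then invoke the known fact that $R^{\Lambda}(\beta)$ is independent, up to isomorphism, of the choice of such polynomials). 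One verifies that after absorbing the parameter discrepancy along the distinguished edge into a suitable scaling $x_i\,e(\sigma\nu)\mapsto c_{\sigma\nu}\,x_i\,e(\sigma\nu)$, relations (3) and (5) are matched; this is exactly the computation carried out in \cite[Proposition 3.2]{Ar-rep-type}, which I would cite for the precise scalars rather than reproduce here.

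Finally, having checked that $\Phi$ is a well-defined graded algebra homomorphism, I would note that the same construction with $\sigma^{-1}$ in place of $\sigma$ yields a two-sided inverse, since $\sigma$ is an order-$e$ permutation of $I$ and the residue shift is manifestly invertible. Therefore $\Phi$ is an isomorphism $R^{\Lambda}(\beta)\xrightarrow{\sim}R^{\sigma\Lambda}(\sigma\beta)$, completing the proof.
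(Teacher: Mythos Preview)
Your argument is correct and is precisely the approach of the cited reference. Note that the paper itself does not supply a proof of this proposition: it simply records the statement and attributes it to \cite[Proposition~3.2]{Ar-rep-type}, adding only the remark that $R^{\Lambda}(\beta)$ and $R^{\sigma\Lambda}(\sigma\beta)$ are defined with the \emph{same} family of polynomials $Q_{i,j}$. Your sketch---transport generators via the residue shift $\nu\mapsto\sigma\nu$, observe that relations (1), (2), (4), (6) and the grading go through verbatim, and absorb the $Q$-discrepancy along the distinguished edge $\ell\!-\!0$ by a diagonal rescaling of the $x_i$'s---is exactly how that reference proceeds, and you even cite it for the scalars. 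So there is nothing to compare: you have reconstructed the proof the paper defers to.
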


We mention that $R^{\Lambda}(\beta)$ and $R^{\sigma \Lambda}(\sigma \beta)$ share the same polynomials $Q_{i,j}(u,v)\in\k[u,v]$.

\subsection{Graded dimension of $R^\Lambda(\beta)$}
We recall some well-known notions.
A partition $\lambda=(\lambda_1, \lambda_2, \ldots )$ of $n$ is a non-increasing sequence of non-negative integers such that $|\lambda|:=\sum_{i=1} \lambda_i=n$.
A $k$-multipartition of $n$ is an ordered $k$-tuple of partitions $\lambda=(\lambda^{(1)},\lambda^{(2)},\ldots,\lambda^{(k)})$ such that $|\lambda|:= \sum_{s=1}^k|\lambda^{(s)}|=n$.
Let $\P_{k,n}$ be the set of all $k$-multipartitions of $n$.

Each partition may be identified with its Young diagram. 
We follow the English convention in this paper.
The Young diagram of a $k$-multipartition $\lambda$ is visualized as a column vector whose entries are Young diagrams of the components $\lambda^{(s)}$ of $\lambda$, i.e., the Young diagram of $\lambda^{(s)}$ is above $\lambda^{(t)}$ if $s<t$.
Then, a node of $\lambda$ is said to be removable (resp. addable) if we obtain a new $k$-multipartition after removing (resp. adding) the node from (resp. to) $\lambda$.

Suppose $\Lambda \in \pcl$ and fix an expression $\Lambda=\Lambda_{i_1}+\Lambda_{i_2}+\ldots +\Lambda_{i_k}$.
Let $p$ be a node of $\lambda=(\lambda^{(1)},\lambda^{(2)},\ldots,\lambda^{(k)})\in \P_{k,n}$.
If $p$ is in the $a$-th row and the $b$-th column of $\lambda^{(s)}$, the residue $\res p\in I$ is defined by
$$
\res p \equiv_e i_s+b-a.
$$
If $\res p=\omega$, then we call the node $p$ an $\omega$-node.
For any removable $\omega$-node $p$ of $\lambda$, we denote by $d_p(\lambda)$ the number of addable $\omega$-nodes of $\lambda$ below $p$ (in the Young diagram of $\lambda$) minus the number of removable $\omega$-nodes of $\lambda$ below $p$.

For any $\lambda\in \P_{k,n}$, a standard tableau $T=(T^{(1)}, T^{(2)},\ldots, T^{(k)})$ of $\lambda$ is obtained from the Young diagram of $\lambda$ by inserting the integers $1, 2, \ldots, n$ into the nodes without repeats, such that the entries of each $T^{(s)}$ are strictly increasing along the rows from left to right and down the columns from top to bottom.
Let $\text{Std}(\lambda)$ be the set of all standard tableaux of $\lambda$.
If the integer $i$ is inserted in the node $p$ of $T \in \text{Std}(\lambda)$, we take the residue of $p$ as the residue of $i$, i.e., $\omega_i:=\res p$, and the residue sequence of $T$ is defined by $\omega_T:=(\omega_1,\omega_2,\ldots,\omega_n)\in I^n$. 
Moreover, the degree of $T$ (see \cite[(4.33)]{BK-graded-decom-number}) is defined inductively by
\begin{equation}
\deg(T):=\left\{\begin{array}{ll}
\deg(T\downarrow_n)+d_p(\lambda) & \text{ if } n>0, \\
0 & \text{ if } n=0,
\end{array}\right.
\end{equation}
where $p$ is the node filled by $n$ if $n>0$ and $T\downarrow_n$ is the tableau obtained from $T$ by removing $p$.
In fact, the process of inserting $1,2,\ldots, n$ into $\lambda$ yields an increasing sequence $\chi$ of multipartitions:
$$
(\emptyset,\ldots,\emptyset)=\chi_0\rightarrow \chi_1\rightarrow\chi_2\rightarrow
\ldots \rightarrow \chi_{n-1}\rightarrow \chi_n=\lambda,
$$
where $\chi_s$ is obtained from $\chi_{s-1}$ by adding the node $\chi_s/\chi_{s-1}$ for $1\leq s\leq n$. 
Then, the degree of $T$ is given by
$$
\deg(T)=\ssum\limits_{s=1}^{n}d_{\chi_s/\chi_{s-1}}(\chi_s).
$$

\begin{Theorem}[{\cite[Theorem 4.20]{BK-graded-decom-number}}]\label{theo::graded-dim}
For any $\beta\in Q_+$ with $|\beta|=n$ and $\nu,\nu'\in I^\beta$, the graded dimension of $e(\nu)R^\Lambda(\beta)e(\nu')$ is
$$
\dim_q e(\nu)R^\Lambda(\beta)e(\nu')=
\sum_{
\substack{\lambda\in \P_{k,n}, S,T\in\emph{Std}(\lambda),\\
\omega_S=\nu,\omega_T=\nu'}
}
q^{\deg(S)+\deg(T)}.$$
\end{Theorem}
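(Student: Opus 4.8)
The plan is to realise $R^\Lambda(n)$ through an explicit homogeneous basis indexed by pairs of standard tableaux of a common shape, and then to extract the graded dimension of the corner $e(\nu)R^\Lambda(\beta)e(\nu')$ by a bookkeeping count. Precisely, I would produce for every $\lambda\in\P_{k,n}$ and every pair $S,T\in\mathrm{Std}(\lambda)$ a homogeneous element $\psi_{ST}\in R^\Lambda(n)$ with
\[
\deg(\psi_{ST})=\deg(S)+\deg(T)
\qquad\text{and}\qquad
\psi_{ST}=e(\omega_S)\,\psi_{ST}\,e(\omega_T),
\]
and show that $\{\psi_{ST}\}$ is a $\k$-basis of $R^\Lambda(n)$. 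Granting this, the theorem follows formally: using $e(\nu)e(\nu')=\delta_{\nu,\nu'}e(\nu)$ from Definition \ref{def::cyclotomic-quiver}(1) one gets $e(\nu)\psi_{ST}e(\nu')=\delta_{\nu,\omega_S}\delta_{\nu',\omega_T}\psi_{ST}$, so the basis elements lying in $e(\nu)R^\Lambda(\beta)e(\nu')$ are exactly the $\psi_{ST}$ with $\omega_S=\nu$ and $\omega_T=\nu'$, each contributing $q^{\deg(S)+\deg(T)}$ to the graded dimension.

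For the construction I would fix, for each $\lambda$, a reference tableau $T^\lambda$ filling the nodes in a standard reading order, a homogeneous monomial $y_\lambda$ in the $x_i$ supported on the idempotent $e(\omega_{T^\lambda})$, and for each $S\in\mathrm{Std}(\lambda)$ a reduced word for the permutation $w_S\in\mathfrak S_n$ carrying $T^\lambda$ to $S$; writing $\psi_{w_S}$ for the corresponding product of the generators $\psi_i$ and $\star$ for the anti-involution fixing the generators, set $\psi_{ST}:=\psi_{w_S}^\star\,y_\lambda\,\psi_{w_T}$. The corner identity $\psi_{ST}=e(\omega_S)\psi_{ST}e(\omega_T)$ is then forced by relation (2) of Definition \ref{def::cyclotomic-quiver}, namely $\psi_ie(\nu)=e(s_i(\nu))\psi_i$, which records exactly how each $\psi_i$ permutes residue sequences. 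The degree identity follows from the gradings $\deg(x_ie(\nu))=2$ and $\deg(\psi_ie(\nu))=-a_{\nu_i,\nu_{i+1}}$: the exponents of $y_\lambda$ and the choice of reduced words are calibrated so that the inductive rule $\deg(T)=\deg(T{\downarrow}_n)+d_p(\lambda)$ is matched step by step by the degrees the generators contribute, the integer $d_p(\lambda)$ being designed precisely for this.

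The heart of the matter --- and the step I expect to be the genuine obstacle --- is to prove that $\{\psi_{ST}\}$ spans and is linearly independent. For spanning I would run a straightening algorithm, using relations (3), (4) and (5) of Definition \ref{def::cyclotomic-quiver} to commute the $x_i$ past the $\psi_j$ and to resolve the braid-type moves, and the cyclotomic relation (6) $x_1^{\langle h_{\nu_1},\Lambda\rangle}e(\nu)=0$ to bound the admissible $x$-degrees, so that an arbitrary product of generators is rewritten as a combination of the $\psi_{ST}$ modulo elements supported on shapes strictly larger in the dominance order; induction on that order then yields spanning. The delicate point is to keep every rewriting inside the span of cellular basis elements, which demands careful control of the error terms. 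Linear independence is then deduced by a dimension count: the span has dimension at most the number $\sum_{\lambda\in\P_{k,n}}|\mathrm{Std}(\lambda)|^2$ of pairs, while the Brundan--Kleshchev isomorphism $R^\Lambda(n)\simeq H^\Lambda_n$ of \cite{BK-block} gives $\dim_\k R^\Lambda(n)=k^n\,n!=\sum_{\lambda\in\P_{k,n}}|\mathrm{Std}(\lambda)|^2$, so a spanning set of this cardinality must be a basis. Finally, passing to $R^\Lambda(\beta)=R^\Lambda(n)e(\beta)$ is free, since $e(\beta)$ is central and the basis splits by the content $\sum_{p\in\lambda}\alpha_{\res p}$ of the shape; retaining only the pairs with $\omega_S=\nu$, $\omega_T=\nu'$ and summing $q^{\deg(S)+\deg(T)}$ gives the asserted formula.
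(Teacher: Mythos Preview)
The paper does not prove this theorem; it is simply quoted from \cite[Theorem 4.20]{BK-graded-decom-number} and used as a black box throughout. So there is no ``paper's own proof'' to compare against beyond the citation.

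Your outline is essentially the Brundan--Kleshchev/Hu--Mathas strategy: build the homogeneous elements $\psi_{ST}$, verify the idempotent and degree properties, straighten to get spanning, and count dimensions for linear independence. This is the standard route and, modulo the combinatorial bookkeeping you correctly flag as the hard part, it is sound.

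One point deserves care. Your linear-independence step invokes the isomorphism $R^\Lambda(n)\simeq H^\Lambda_n$ from \cite{BK-block}, but as the paper itself notes just after Definition~\ref{def::cyclotomic-quiver}, that isomorphism holds only for the specific parameter $t=-2$ when $\ell=1$ and $t=(-1)^{\ell+1}$ when $\ell\ge 2$, whereas Theorem~\ref{theo::graded-dim} is stated (and used in Section~\ref{sec::step-2}) for arbitrary $t$. To close this gap you would either run a deformation argument---work over a ground ring in which $t$ is a variable, get independence at the special value, and specialise---or appeal to the categorification theorems of \cite{KK-categorification, Ka-klr-alg}, which give the dimension of $R^\Lambda(\beta)$ as a weight multiplicity independently of the choice of $Q_{i,j}$. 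Either patch is routine, but your current dimension count as written does not cover the general-$t$ setting the paper requires.
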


\subsection{Brauer graph algebras}
We call an undirected graph simply a graph and a directed graph a quiver. 
If we write $(V,E)$ for a graph, then $V$ is the set of vertices and $E$ is the set of undirected edges.
Similarly, if we write $(V,E)$ for a quiver, then $V$ is the set of vertices and $E$ is the set of directed edges.

We will see that certain block algebras of cyclotomic quiver Hecke algebras are Brauer graph algebras.
We recall the definition of Brauer graph algebras following \cite{Schroll-Brauer-graph}.

\begin{Defn}[{\cite[Definition 2.1]{Schroll-Brauer-graph}}]
A Brauer graph $\Gamma=(V,E,\mathfrak{m},\mathfrak{o})$ is a finite connected graph $(V,E)$ equipped with a multiplicity function $\mathfrak m:V \rightarrow \N$, and a cyclic ordering $\mathfrak{o}$ of the edges around each vertex.
If $v\in V$ is a vertex incident to multiple edges $E_{v,1},\ldots,E_{v,c_v}\in E$ which we read counterclockwise around $v$, then the cyclic ordering around $v$ is given by $E_{v,1}<\ldots<E_{v,c_v}<E_{v,1}$.

In particular, $\Gamma$ is called a Brauer tree if $(V,E)$ is a tree and $\mathfrak m(v)=1$ for all but at most one $v\in V$, and such a vertex is called the exceptional vertex of $\Gamma$.
\end{Defn}

\begin{Defn}[{\cite[Section 2.4]{Schroll-Brauer-graph}}]
Given a Brauer graph $\Gamma=(V,E,\mathfrak m,\mathfrak{o})$, we define a quiver $Q_\Gamma=(Q_0,Q_1)$ with
$Q_0=E$ and $Q_1=\bigsqcup_{v\in V}Q_{1,v}$ where
$$
Q_{1,v}=\{\alpha_{v,i}: E_{v,i}\rightarrow E_{v,i+1}\mid 1\le i \le c_v, c_v+1:=1 \}
$$
for the cyclic ordering $E_{v,1}<\ldots <E_{v,c_v}<E_{v,1}$ around $v$.
We also define an admissible ideal $\mathcal{I}_\Gamma$ of the path algebra $\k Q_\Gamma$, which is generated by the following elements:
\begin{itemize}
\item $(\alpha_{v,j}\alpha_{v,j+1}\cdots \alpha_{v,c_v}\alpha_{v,1}\cdots \alpha_{v,j-1})^{\mathfrak m(v)}\alpha_{v,j}$.

\item $(\alpha_{v,j}\cdots \alpha_{v,j-1})^{\mathfrak m(v)}-(\alpha_{u,i}\cdots \alpha_{u,i-1})^{\mathfrak m(u)}$, whenever $E_{u,i}=E_{v,j}\in E$ for $u\neq v\in V$.

\item $\alpha_{u,i}\alpha_{v,j}$ when $E_{u,i+1}=E_{v,j}\in E$ for $u\neq v\in V$.
\end{itemize}
Then, $\mathcal{A}_\Gamma:=\k Q_\Gamma/\mathcal{I}_\Gamma$ is called the Brauer graph algebra associated with $\Gamma$.
\end{Defn}

The representation theory of Brauer graph algebras has been studied well in the past decades. 
We recall two latest progress as follows.

\begin{Theorem}[{\cite[Corollary 1.3]{AZ-brauer-graph}}]\label{Theorem:brauer-graph-derived-closed}
Let $\mathcal{A}$ be a Brauer graph algebra. 
If $\mathcal{B}$ is derived equivalent to $\mathcal{A}$, then $\mathcal{B}$ is Morita equivalent to a Brauer graph algebra.  
\end{Theorem}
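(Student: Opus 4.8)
The plan is to reduce the statement to two features of Brauer graph algebras, one of which is a free derived invariant and one of which carries all the real content. Recall the classical characterization: a basic connected algebra is a Brauer graph algebra exactly when it is a \emph{symmetric special biserial} algebra. By Rickard's Morita theory for derived categories, any $\mathcal{B}$ derived equivalent to $\mathcal{A}$ is Morita equivalent to $\End_{\mathcal{D}^b(\mathcal{A})}(T)$ for some basic tilting complex $T$ over $\mathcal{A}$, so it suffices to show that such an endomorphism algebra is symmetric and special biserial. The symmetric half is immediate: being symmetric is preserved under derived equivalence, so $\mathcal{B}$ is symmetric as soon as $\mathcal{A}$ is. Hence the entire difficulty lies in controlling the special biserial (string-algebra) shape of $\End_{\mathcal{D}^b(\mathcal{A})}(T)$.

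First I would build a bridge to gentle algebras. A Brauer graph algebra $\mathcal{A}$ can be realized as a trivial extension $T(A)=A\ltimes DA$ of a gentle algebra $A$ obtained from the Brauer graph by an \emph{admissible cut}, i.e.\ by deleting one arrow from each oriented cycle around a vertex; conversely, trivial extensions of gentle algebras are precisely Brauer graph algebras. This presents $\mathcal{A}$ as a symmetric algebra whose ``unfolding'' is gentle, which in turn lets me import the surface model for the derived category of $A$: the gentle algebra $A$ corresponds to a dissection of an oriented surface with marked points, and its derived equivalence class is governed by the homeomorphism type of this surface together with the associated line-field data (Opper--Plamondon--Schroll; see also \cite{OZ-brauer-graph}).

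Next I would transport tilting complexes across this bridge. The target is to show that any tilting complex $T$ over $\mathcal{A}=T(A)$ can, up to Morita equivalence and shift, be produced from tilting data in the gentle world, so that $\End(T)$ is again the trivial extension of a gentle algebra $A'$ derived equivalent to $A$, hence a Brauer graph algebra. Concretely I would pass to the graded/repetitive picture, in which the bounded derived category of $A$ is realized as a graded stable module category over $T(A)$: tilting complexes over $T(A)$ are then read off from graded tilting objects, and the multiplication on the endomorphism algebra is computed from the surface combinatorics. The outcome I am aiming for is that $\End(T)$ inherits exactly the string relations dictated by a new dissection, so that it is special biserial; being symmetric, it is therefore a Brauer graph algebra, which yields the claim via the symmetric-special-biserial characterization.

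The hard part is precisely this last transport step. A priori $\mathcal{B}$ is merely \emph{some} symmetric algebra derived equivalent to $T(A)$, and it is not formal that it is again a trivial extension of a gentle algebra; one must rule out that passing to $\End(T)$ introduces additional arrows or relations violating the ``at most two arrows entering and leaving each vertex, with monomial string relations'' conditions defining special biserial algebras. Pinning this down requires analyzing the $A_\infty$/dg-structure on $\End(T)$ — equivalently, the higher multiplications read off from curves on the surface — and it is here that the surface model and the classification of tilting objects for gentle algebras do the decisive work. Once that structure is controlled and no extraneous relations appear, the conclusion that $\mathcal{B}$ is Morita equivalent to a Brauer graph algebra follows at once.
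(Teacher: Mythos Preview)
The paper does not give its own proof of this statement: it is quoted verbatim from Antipov--Zvonareva \cite{AZ-brauer-graph} and used as a black box. So there is nothing in the paper to compare your argument against; the relevant comparison is with what \cite{AZ-brauer-graph} actually does.

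Your outline is in the right spirit---reduce to ``symmetric $+$ special biserial'' and control the second property through a surface model---but there is a genuine gap at the bridge step. The identification ``Brauer graph algebra $=$ trivial extension of a gentle algebra via an admissible cut'' is only valid when the multiplicity function is identically~$1$ (this is Schroll's theorem; the converse direction fails for higher multiplicity). A Brauer graph algebra with some $\mathfrak m(v)\geq 2$ is \emph{not} of the form $T(A)$ for $A$ gentle, so your plan of transporting tilting complexes through the derived category of a gentle $A$ does not even get started in that case. This matters here: the Brauer graph algebras the paper cares about (e.g.\ $\mathcal S_2$ with all multiplicities $k\geq 3$, and $\mathcal S_3$ with multiplicities $m\geq 2$) all have non-trivial multiplicities, so the multiplicity-free version of the theorem is useless for the application.

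Antipov--Zvonareva handle arbitrary multiplicities by working directly with the symmetric special biserial presentation and a surface model that encodes the multiplicities (orbifold points / graded marked surfaces), rather than passing through ordinary gentle algebras. If you want to repair your sketch, you would need either (i) to replace the gentle bridge by a graded-gentle / orbifold bridge that accommodates $\mathfrak m\geq 2$, or (ii) to argue, as they do, intrinsically with arc systems on the ribbon surface of the Brauer graph and show that endomorphism algebras of tilting objects are again presented by such arc systems. Your final paragraph correctly identifies this as the hard part, but the admissible-cut route you propose cannot reach it.
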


\begin{Theorem}[{\cite[Theorem A]{OZ-brauer-graph}}]\label{thm-brauer-graph-condition}
Let $\mathcal{A}$ and $\mathcal{B}$ be non-local Brauer graph algebras associated with Brauer graphs $\Gamma_{\mathcal{A}}$ and $\Gamma_{\mathcal{B}}$, respectively.
Then, $\mathcal{A}$ is derived equivalent to $\mathcal{B}$ if and only if the following conditions hold.
\begin{enumerate}
\item $\Gamma_\mathcal{A}$ and $\Gamma_{\mathcal{B}}$ share the same number of vertices, edges, and faces,
\item the multisets of multiplicities and the multisets of perimeters of faces of $\Gamma_\mathcal{A}$ and $\Gamma_{\mathcal{B}}$ coincide,
\item either both or none of $\Gamma_\mathcal{A}$ and $\Gamma_{\mathcal{B}}$ are bipartite.
\end{enumerate}
\end{Theorem}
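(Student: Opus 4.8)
\section*{Proof proposal for Theorem \ref{thm-brauer-graph-condition}}

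The plan is to prove both implications through the \emph{ribbon surface model} of the derived category of a Brauer graph algebra. To a Brauer graph $\Gamma=(V,E,\mathfrak{m},\mathfrak{o})$ one associates an oriented surface $S_\Gamma$ with boundary by thickening the ribbon graph: the cyclic ordering $\mathfrak{o}$ at each vertex turns $\Gamma$ into a ribbon graph, and its regular neighbourhood is $S_\Gamma$. Under this construction the edges $E$ become a full system of arcs on $S_\Gamma$, the vertices $V$ become interior marked points, and the faces (the boundary cycles read off from $\mathfrak{o}$) become the boundary components of $S_\Gamma$; the multiplicity function $\mathfrak{m}$ together with a choice of grading is recorded as extra decoration. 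The strategy is then: (i) identify $D^b(\mathcal{A}_\Gamma)$ with a category of graded curves on $S_\Gamma$ so that the topological data of $S_\Gamma$ become derived invariants (necessity); and (ii) realize any homeomorphism of decorated surfaces by a chain of explicit derived equivalences, reducing every graph with prescribed invariants to a common normal form (sufficiency).

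For necessity I would argue invariant by invariant. The number of edges equals the number of isomorphism classes of indecomposable projective $\mathcal{A}_\Gamma$-modules, hence the rank of $K_0$, which is a derived invariant. The remaining topological quantities are cleanest to extract from the surface: the Euler characteristic relation $\#V-\#E+\#F=2-2g$ shows that fixing the three counts pins down the genus $g$ of the connected surface, so $(\#V,\#E,\#F)$ is equivalent to the homeomorphism type of $S_\Gamma$ together with its numbers of marked points and boundary components. That these are derived invariants follows once the geometric model is in place, since a triangulated equivalence must preserve the surface up to the relevant diffeomorphism. The perimeter multiset is the Brauer graph incarnation of the Avella-Alaminos--Geiss invariant, a genuine derived invariant, which I would recover from the lengths of the boundary cycles. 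The multiplicity multiset is read from the local data at the interior marked points, controlled by the grading/line field and therefore preserved. Finally, bipartiteness corresponds to the existence of a consistent two-colouring of the arc system, an orientability-type condition on the decorated surface that the equivalence respects.

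For sufficiency the main engine is \emph{Kauer's elementary moves}: Kauer constructed, for each edge of a Brauer graph, a tilting complex whose endomorphism algebra is again a Brauer graph algebra, obtained from $\Gamma$ by an explicit local modification (a \emph{flip} of the corresponding arc on $S_\Gamma$). Each such move is therefore a derived equivalence. I would then show that these moves act transitively on the set of Brauer graphs sharing the invariants of the theorem, by reducing an arbitrary graph to a canonical representative (for instance a suitable ``star'' or line shape carrying the prescribed multiplicities) while tracking the two-colouring. This transitivity is essentially the connectedness of the flip graph of full arc systems on a fixed decorated surface, a statement in surface topology, upgraded so as to keep the bipartite structure and the multiplicities fixed along the way.

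The hard part will be step (ii), the completeness of the invariants. Two ingredients must be supplied with care. First, one needs the precise geometric model identifying $D^b(\mathcal{A}_\Gamma)$ with curves on $S_\Gamma$, paralleling the gentle-algebra picture, so that derived equivalences really do correspond to changes of arc system; establishing this dictionary rigorously, including the grading data governing the multiplicities, is delicate. Second, one must prove that the flip moves connect all arc systems on a surface \emph{while preserving} the bipartite colouring and the multiplicity assignment — the plain connectivity of flip graphs is classical, but tracking these decorations, together with the boundary and perimeter bookkeeping, is where the real work lies. The non-local hypothesis is exactly what rules out the small degenerate graphs for which the surface model and the move calculus break down, so isolating and excluding those cases is a necessary preliminary.
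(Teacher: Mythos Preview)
The paper does not prove this statement at all: Theorem~\ref{thm-brauer-graph-condition} is quoted verbatim from \cite[Theorem~A]{OZ-brauer-graph} and used as a black box (together with Theorem~\ref{Theorem:brauer-graph-derived-closed}) in the proof of Corollary~\ref{cor::tree-to-tree} and in Section~8. There is therefore no proof in the paper to compare your proposal against.

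That said, your sketch is a reasonable outline of the actual Opper--Zvonareva argument: they do pass through a surface model (via the identification of Brauer graph algebras with trivial extensions of gentle algebras and the Haiden--Katzarkov--Kontsevich / Lekili--Polishchuk geometric model), extract the invariants from the decorated surface with line field, and obtain sufficiency by combining Kauer moves with the classification of homotopy classes of line fields. Where your sketch is vaguest is exactly where the real difficulty lies in their paper: the bipartiteness condition is not a two-colouring of arcs but an Arf-type invariant of the line field, and the transitivity statement is not a flip-graph argument on arc systems but rather a reduction to the mapping-class-group orbit classification of line fields. If you intend to fill in the details yourself rather than cite the result, those are the points that need substantial work beyond what you have written.
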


In Theorem \ref{thm-brauer-graph-condition}, we may embed $\Gamma$ into the interior of the so-called ribbon surface $\Sigma$ such that every connected component of $\Sigma \setminus \Gamma$ is bounded by a unique boundary component in $\partial \Sigma$ and a (possibly self-glued) polygon consisting of edges in $\Gamma$. 
Then, the face of $\Gamma$ and its perimeter are encoded in the polygon. 
Also, $\Gamma$ is called bipartite if every cycle in $\Gamma$ has an even length. 
We refer to \cite{OZ-brauer-graph} for the details.

It is worth mentioning that if $\Gamma$ is a tree, then $\Gamma$ is always bipartite and admits the unique face whose perimeter is twice the number of edges in $\Gamma$.
We have the following corollary by Theorem \ref{thm-brauer-graph-condition}, which will be used in Section 8.

\begin{Cor}\label{cor::tree-to-tree}
Let $\mathcal{A}$ be a Brauer graph algebra with $\Gamma_\mathcal{A}$ being a tree. 
If $\mathcal{B}$ is derived equivalent to $\mathcal{A}$, then $\Gamma_{\mathcal{B}}$ is a tree with the same number of vertices as $\Gamma_{\mathcal{A}}$. 
If $\mathcal{B}$ is moreover a cellular algebra, then $\Gamma_{\mathcal{B}}$ is a straight line.
\end{Cor}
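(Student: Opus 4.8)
The plan is to feed both assertions through the two structural theorems on Brauer graph algebras recalled above. First I would use Theorem~\ref{Theorem:brauer-graph-derived-closed} to replace $\mathcal{B}$ by a Brauer graph algebra $\mathcal{A}_{\Gamma_{\mathcal{B}}}$ in its Morita equivalence class, so that it suffices to study the graph $\Gamma_{\mathcal{B}}$. Since the Grothendieck group of the bounded derived category is a derived invariant, $\mathcal{A}$ and $\mathcal{B}$ have the same number of isomorphism classes of simple modules; for a Brauer graph algebra this number is the number of edges, since the quiver $Q_\Gamma$ has one vertex per edge. Hence $|E(\Gamma_{\mathcal{A}})|=|E(\Gamma_{\mathcal{B}})|$. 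Moreover, the number of blocks is a derived invariant, so $\mathcal{B}$ is indecomposable and $\Gamma_{\mathcal{B}}$ is connected.

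Next, if $|E(\Gamma_{\mathcal{A}})|\ge 2$, then $\mathcal{A}$ and hence $\mathcal{B}$ are non-local, and Theorem~\ref{thm-brauer-graph-condition}(1) applies, giving $|V(\Gamma_{\mathcal{A}})|=|V(\Gamma_{\mathcal{B}})|$. As $\Gamma_{\mathcal{A}}$ is a tree we have $|V(\Gamma_{\mathcal{A}})|=|E(\Gamma_{\mathcal{A}})|+1$, whence $|V(\Gamma_{\mathcal{B}})|=|E(\Gamma_{\mathcal{B}})|+1$; a connected graph with exactly one more vertex than edges is a tree, so $\Gamma_{\mathcal{B}}$ is a tree with $|V(\Gamma_{\mathcal{A}})|$ vertices. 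The remaining case $|E(\Gamma_{\mathcal{A}})|=1$ is local, with $\mathcal{A}\cong\k[X]/(X^{N})$ for some $N$; here a one-edge connected Brauer graph is either a segment (two vertices) or a loop (one vertex), and since the loop algebra is noncommutative its center has dimension strictly less than $N=\dim_{\k} Z(\mathcal{A})$, so preservation of the center under derived equivalence forces $\Gamma_{\mathcal{B}}$ to be the segment, again a tree with two vertices.

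For the final assertion suppose in addition that $\mathcal{B}$ is cellular. By \cite[Proposition~2.8]{X-cellular-alg} the Gabriel quiver of a cellular algebra is symmetric, meaning that the number of arrows $i\to j$ equals the number of arrows $j\to i$ for all vertices $i,j$. I would read the quiver of $\mathcal{A}_{\Gamma_{\mathcal{B}}}$ off the tree $\Gamma_{\mathcal{B}}$: its vertices are the edges of $\Gamma_{\mathcal{B}}$, a leaf contributes a loop, a vertex of degree $2$ contributes a pair of oppositely oriented arrows between its two incident edges, and a vertex of degree $d\ge 3$ contributes a single oriented $d$-cycle. Because $\Gamma_{\mathcal{B}}$ is a tree, any two of its edges meet in at most one vertex, so the arrows between a fixed ordered pair of quiver vertices all come from a single vertex of $\Gamma_{\mathcal{B}}$ and cannot be cancelled by contributions elsewhere. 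Hence the quiver is symmetric exactly when $\Gamma_{\mathcal{B}}$ has no vertex of degree $\ge 3$, and a connected tree all of whose vertices have degree at most $2$ is a straight line, as claimed.

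The step I expect to be the main obstacle is the translation of cellularity into the combinatorics of $\Gamma_{\mathcal{B}}$ in the last paragraph, namely verifying carefully that a vertex of degree $\ge 3$ really does break the symmetry of the quiver: its oriented $d$-cycle supplies an arrow $i\to j$ with no matching $j\to i$, and the tree hypothesis is precisely what guarantees that no second vertex can restore the missing arrow. A secondary, more routine point is justifying the application of Theorem~\ref{thm-brauer-graph-condition} by disposing of the local one-edge case separately, as indicated above.
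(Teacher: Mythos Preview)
Your proof is correct and follows essentially the same route as the paper: invoke Theorem~\ref{thm-brauer-graph-condition} to match vertex and edge counts so that $\Gamma_{\mathcal{B}}$ is a tree, then use the symmetric-quiver criterion for cellular algebras (\cite[Proposition~2.8]{X-cellular-alg}) to rule out vertices of valency $\ge 3$. You are in fact more careful than the paper on two points: you explicitly cite Theorem~\ref{Theorem:brauer-graph-derived-closed} to ensure $\mathcal{B}$ is Morita equivalent to a Brauer graph algebra, and you separate out the local one-edge case (the paper tacitly assumes non-locality, which Theorem~\ref{thm-brauer-graph-condition} requires), and you make explicit that the tree hypothesis is what prevents a second vertex from supplying the missing reverse arrow. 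One small inaccuracy worth flagging: a single-edge tree with both multiplicities $\ge 2$ gives $\k\langle X,Y\rangle/(XY,\,YX,\,X^{m}-Y^{n},\,X^{m+1})$, not $\k[X]/(X^{N})$; however this algebra is still commutative, so your center comparison with the noncommutative loop algebra survives intact.
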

\begin{proof}
Since $\Gamma_\mathcal{A}$ is a tree, the number of vertices is greater than the number of edges by one. 
Theorem \ref{thm-brauer-graph-condition} implies that $\Gamma_{\mathcal{B}}$ is a tree.
Suppose that $\mathcal{B}$ is cellular. 
We show that the valency of each vertex is at most 2. 
If not so, then there exists a vertex $v$ such that there are at least $3$ edges $E_1<E_2<E_3$ connecting to $v$.
By the definition, the quiver of $\mathcal{B}$ has an arrow $\alpha: E_1\rightarrow E_2$ but without an arrow from $E_2$ to $E_1$. 
On the other hand, the quiver of a cellular algebra has a symmetric shape (see \cite[Proposition 2.8]{X-cellular-alg}), that is, the number of arrows from vertex $a$ to vertex $b$ must equal the number of arrows from $b$ to $a$. 
Therefore, we obtain a contradiction.
\end{proof}

\section{Dominant maximal weights}
In this section, we first give an explicit description of dominant maximal weights for any integrable highest weight module of type $A^{(1)}_{\ell}$, then construct a connected graph and a quiver whose vertex set is the set of all dominant maximal weights, and finally, capture a certain subquiver of the connected quiver which we need for our purpose, in the last subsection.

\subsection{An explicit description of $\max^+(\Lambda)$}
We give an explicit description of $\max^+(\Lambda)$ in terms of the elements in
$\pcl(\Lambda)=\{\Lambda'\in \pcl\mid \Lambda'\sim \Lambda\}
\subset
\pcl=P^+_k\cap\sum_{i\in I}\Z_{\geq0}\Lambda_i$.

\begin{Defn}
For any $\Lambda\in \pcl$, let $\phi_\Lambda:= \iota_\Lambda \circ \bar {\ }: \max^+(\Lambda)\rightarrow \pcl(\Lambda)$ be the composition of the bijection in Proposition \ref{prop::bijection-kac} and the bijection in \eqref{equ::bijection-iota},
that is,
$$
\begin{aligned}
\mu \mapsto \bar\mu 
&=\mu-(\delta,\mu)\Lambda_0-( \Lambda_0,\mu)\delta=\textstyle\sum_{i\in I_0}m_i\bar\Lambda_i\\
&\mapsto \phi_\Lambda(\mu)= m_0\Lambda_0+\textstyle\sum_{i\in I_0}m_i\Lambda_i .
\end{aligned}
$$
\end{Defn}

The following lemma shows that $\langle h_i,\mu\rangle=\langle h_i,\phi_\Lambda(\mu)\rangle$ for any $\mu\in \max^+(\Lambda)$ and $i\in I$.

\begin{Lemma}\label{lem::bijection-phi}
Suppose $\Lambda \in \pcl$.
For any $\mu\in \max^+(\Lambda)$, we have
$$
\phi_\Lambda(\mu)=\ssum_{i\in I} \langle h_i,\mu\rangle \Lambda_i.
$$
In particular, $\phi_\Lambda(\Lambda)=\Lambda$.
\end{Lemma}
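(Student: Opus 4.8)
The plan is to unwind the definition of $\phi_\Lambda$ coefficient by coefficient and identify each coefficient with the integer $\langle h_i,\mu\rangle$. Write $\bar\mu=\sum_{i\in I_0}m_i\bar\Lambda_i$, so that by definition $\phi_\Lambda(\mu)=m_0\Lambda_0+\sum_{i\in I_0}m_i\Lambda_i$ with $m_0=k-\sum_{i\in I_0}m_i$. It therefore suffices to prove $m_i=\langle h_i,\mu\rangle$ for every $i\in I$, and the formula then follows by assembling the coefficients.

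First I would handle the indices $i\in I_0$. Applying the coroot $h_j$ (for $j\in I_0$) to the defining expression $\bar\mu=\mu-(\delta,\mu)\Lambda_0-(\Lambda_0,\mu)\delta$ and using $\langle h_j,\Lambda_0\rangle=\delta_{j0}=0$ together with $\langle h_j,\delta\rangle=0$, one obtains $\langle h_j,\bar\mu\rangle=\langle h_j,\mu\rangle$. The same computation applied to $\bar\Lambda_i=\Lambda_i-(\delta,\Lambda_i)\Lambda_0-(\Lambda_0,\Lambda_i)\delta$ gives $\langle h_j,\bar\Lambda_i\rangle=\delta_{ji}$ for $j\in I_0$, whence $\langle h_j,\bar\mu\rangle=\sum_{i\in I_0}m_i\langle h_j,\bar\Lambda_i\rangle=m_j$. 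Comparing the two expressions yields $m_j=\langle h_j,\mu\rangle$ for all $j\in I_0$.

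The remaining point is to match the $\Lambda_0$-coefficient, i.e. to show $m_0=\langle h_0,\mu\rangle$. The key observation is that $\mu$ lies on level $k$: since $\mu\in\max^+(\Lambda)$ we have $\mu\leq\Lambda$, so $\mu=\Lambda-\beta$ for some $\beta\in Q_+$ by Proposition \ref{prop::bijection-kac}; and because every column of the affine Cartan matrix sums to $0$ we get $\langle c,\alpha_i\rangle=\sum_{j\in I}a_{ji}=0$ for all $i$, hence $\langle c,\beta\rangle=0$ and $\langle c,\mu\rangle=\langle c,\Lambda\rangle=k$. Thus $\sum_{i\in I}\langle h_i,\mu\rangle=k$, which combined with $m_0=k-\sum_{i\in I_0}m_i=k-\sum_{i\in I_0}\langle h_i,\mu\rangle$ from the previous step gives $m_0=\langle h_0,\mu\rangle$. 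This proves $\phi_\Lambda(\mu)=\sum_{i\in I}\langle h_i,\mu\rangle\Lambda_i$.

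Finally, for the special case $\phi_\Lambda(\Lambda)=\Lambda$, I would note that $\Lambda$ is its own dominant highest weight, so $\Lambda\in\max^+(\Lambda)$ and the formula just proved applies; under the identification $\pcl=P^+_k\cap\sum_{i\in I}\Z_{\geq0}\Lambda_i$ one has $\Lambda=\sum_{i\in I}\langle h_i,\Lambda\rangle\Lambda_i$ (the $\delta$-component being suppressed, and $\langle h_i,\delta\rangle=0$), so the right-hand side of the formula recovers $\Lambda$. I do not anticipate a genuine obstacle: the argument is a direct pairing computation against the bilinear form, the only mildly delicate point being the level-$k$ bookkeeping for the $\Lambda_0$-coefficient, which rests precisely on the vanishing of the column sums of the affine Cartan matrix.
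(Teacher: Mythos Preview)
Your proof is correct and follows essentially the same approach as the paper: both arguments identify the coefficients $m_i$ with $\langle h_i,\mu\rangle$ via the pairing and handle the $\Lambda_0$-coefficient through the level-$k$ condition $\langle c,\mu\rangle=k$ (which rests on $\langle c,\alpha_i\rangle=0$). The paper streamlines this slightly by writing $\mu=\sum_{i\in I}m_i\Lambda_i+a\delta$ at the outset, so that $m_i=\langle h_i,\mu\rangle$ is immediate and $\bar\mu=\sum_{i\in I_0}m_i\bar\Lambda_i$ follows from $\bar\delta=0$, but the content is the same as your computation.
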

\begin{proof}
Write $\mu=\Lambda-\sum_{i\in I}k_i\alpha_i=\sum_{i\in I}m_i\Lambda_i+a\delta$ for some $k_i,m_i\in \Z_{\geq0}$ and $a\in \Z$.
Since $\langle c,\alpha_i\rangle=0$ for any $i\in I$, we have $\langle c,\mu \rangle=\langle c, \Lambda\rangle=k$, so that $\mu\in P^+_k$ and $\sum_{i\in I}m_i=k$.
On the other hand, since $\bar \delta=0$, we have $\bar\mu= \sum_{i\in I_0}m_i\bar \Lambda_i$ and
$$
\phi_\Lambda(\mu)=
\phi_\Lambda\left (\ssum_{i\in I}m_i\Lambda_i+a\delta \right )=
\iota_\Lambda\left (\ssum_{i\in I_0}m_i\bar \Lambda_i \right )=
\ssum_{i\in I}m_i\Lambda_i.
$$
Therefore, $\phi_\Lambda(\mu)=\sum_{i\in I}m_i\Lambda_i=\sum_{i\in I}\langle h_i,\mu\rangle\Lambda_i$.
\end{proof}

For any $\Lambda\in \pcl$ and $\Lambda'\in \pcl(\Lambda)$, let
$$
Y_{\Lambda,\Lambda'}:=(y_0,y_1,\ldots, y_\ell)\in \Z^{e}
$$
be the vector whose entries are given by $y_i:=\langle h_i, \Lambda-\Lambda'\rangle$, for $i\in I$.
If there is no confusion about $\Lambda$, we also write $Y_{\Lambda,\Lambda'}$ as $Y_{\Lambda'}$ to simplify the notation.

Recall that $A=(a_{ij})_{i,j\in I}$ is the affine Cartan matrix of  type $A^{(1)}_\ell$.
For any $X=(x_0,x_1,\ldots,x_\ell)\in\N_0^{e}$, we define $\min X:=\min\{x_i\mid i\in I\}$.

\begin{Lemma}\label{lem::unique-solution}
For any $\Lambda'\in \pcl(\Lambda)$, the system of linear equations
\begin{equation}\label{equ::the-equation}
AX^t=Y_{\Lambda'}^t
\end{equation}
has a unique solution if we require $X=(x_0,x_1,\ldots,x_\ell)\in \N_0^{e}$ and $\min X=0$.
\end{Lemma}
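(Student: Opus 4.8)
The plan is to reduce everything to one structural fact about the coefficient matrix: the affine Cartan matrix $A$ of type $A^{(1)}_\ell$ has corank $1$ with kernel spanned by the all-ones vector. I would first record this. Writing $\mathbf 1=(1,\ldots,1)$, the $i$-th entry of $A\mathbf 1^t$ is $\sum_{j\in I}a_{ij}=\langle h_i,\delta\rangle=0$, so $\mathbf 1\in\ker A$; since $A$ has corank $1$ this forces $\ker_{\mathbb R}A=\mathbb R\mathbf 1$, and hence $\ker A\cap\Z^{e}=\Z\mathbf 1$. Uniqueness is then immediate: if $X,X'\in\N_0^{e}$ both solve \eqref{equ::the-equation} with $\min X=\min X'=0$, then $A(X-X')^t=0$, so $X-X'=c\mathbf 1$ for some $c\in\Z$; comparing minima gives $0=\min X=\min X'+c=c$, whence $X=X'$.

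For existence I would extract a solution from the correspondence with dominant maximal weights. Over $\mathbb Q$, \eqref{equ::the-equation} is solvable exactly when $Y_{\Lambda'}\perp\ker A=\mathbb R\mathbf 1$, i.e. $\sum_{i\in I}y_i=0$, which holds since $\sum_{i\in I}y_i=\langle c,\Lambda-\Lambda'\rangle=k-k=0$. To get an integral solution in $\N_0^{e}$ with $\min=0$, use the bijection $\phi_\Lambda\colon\max^+(\Lambda)\to\pcl(\Lambda)$ and pick the unique $\mu\in\max^+(\Lambda)$ with $\phi_\Lambda(\mu)=\Lambda'$. By Proposition \ref{prop::bijection-kac}, $\mu\le\Lambda$, so $\Lambda-\mu=\sum_{i\in I}k_i\alpha_i\in Q_+$ with $k_i\in\N_0$; combining this with Lemma \ref{lem::bijection-phi} (giving $\langle h_i,\Lambda'\rangle=\langle h_i,\mu\rangle$) yields $y_i=\langle h_i,\Lambda-\mu\rangle=\sum_{j\in I}a_{ij}k_j$, so $X_0:=(k_0,\ldots,k_\ell)$ solves \eqref{equ::the-equation}. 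Moreover $\Lambda-\mu-\delta=\sum_{i\in I}(k_i-1)\alpha_i\notin Q_+$ by the maximality criterion in Proposition \ref{prop::bijection-kac}, which says precisely that some $k_i=0$, i.e. $\min X_0=0$; thus $X_0$ is the required solution. (Alternatively, solving the system directly reduces integral solvability to the single scalar condition $e\mid\sum_{i\in I_0}iy_i=\ev(\Lambda)-\ev(\Lambda')$, which is exactly $\Lambda\sim\Lambda'$ by Theorem \ref{theo::equivalence-class}; one then subtracts $(\min)\mathbf 1\in\ker A$ to normalize.)

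The only genuine obstacle is \emph{integral} rather than rational solvability: the image $A\Z^{e}$ is a proper sublattice of $\{(y_i)\mid\sum_{i\in I}y_i=0\}$, so the level condition $\sum_i y_i=0$ does not by itself produce an integer solution. What rescues us is that $Y_{\Lambda'}$ is not arbitrary — the sieving equivalence $\Lambda'\sim\Lambda$ is exactly the assertion that $Y_{\Lambda'}\in A\Z^{e}$, witnessed either by the existence of the maximal weight $\mu$ above or by the divisibility $e\mid\ev(\Lambda)-\ev(\Lambda')$ of Theorem \ref{theo::equivalence-class}. Once an integral solution is secured, the remaining $\N_0^{e}$-positivity and the normalization $\min X=0$ are cosmetic, achieved by subtracting the appropriate multiple of $\mathbf 1$, and the kernel computation of the first paragraph guarantees this normalized representative is unique.
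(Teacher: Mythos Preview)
Your proof is correct and follows essentially the same route as the paper: both arguments produce the solution by pulling back $\Lambda'$ through the bijection $\phi_\Lambda$ to a dominant maximal weight $\mu=\Lambda-\sum k_i\alpha_i$, invoke Lemma~\ref{lem::bijection-phi} to verify $A(k_i)^t=Y_{\Lambda'}^t$, use the maximality criterion of Proposition~\ref{prop::bijection-kac} to force $\min(k_i)=0$, and deduce uniqueness from $\ker A=\mathbb R\mathbf 1$. Your presentation is slightly more explicit about the kernel computation and adds the pleasant side remark that integral solvability is equivalent to $e\mid\ev(\Lambda)-\ev(\Lambda')$, but the substance is the same.
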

\begin{proof}
Since $\phi_\Lambda^{-1}(\Lambda')\in \max^+(\Lambda)$, we can write $\phi_\Lambda^{-1}(\Lambda')=\Lambda-\sum_{i\in I}k_i\alpha_i$ for some $k_i\in \Z_{\geq0}$.
Moreover, there is some $i$ such that $k_i=0$.
Otherwise, $\phi_\Lambda^{-1}(\Lambda')=\Lambda-\sum_{i\in I}(k_i-1)\alpha_i-\delta$, and $\Lambda-\phi_\Lambda^{-1}(\Lambda')-\delta\in Q_+$, which is a contradiction by Proposition \ref{prop::bijection-kac}.

By Lemma \ref{lem::bijection-phi}, we have $\langle h_j, \phi_\Lambda^{-1}(\Lambda')\rangle =\langle h_j, \Lambda'\rangle$.
Thus,
$\langle h_j, \sum_{i\in I}k_i\alpha_i\rangle=y_j$ for any $j\in I$, i.e., $A(k_0,k_1,\ldots,k_\ell)^t=Y_{\Lambda'}^t$ and $K=(k_0,k_1,\ldots,k_\ell)$ is a solution of \eqref{equ::the-equation} satisfying the required condition. 
We show the uniqueness of such a solution.
Since $A$ has rank $\ell$ and $(1^{e})$ is a solution of $AX^t=0$, any solution of \eqref{equ::the-equation} is of form $K+a(1^{e})$ for some scalar $a$.
If \eqref{equ::the-equation} has another solution $X=(x_0, x_1, \dots, x_\ell)\in \N_0^{e}$, then $X=K+a(1^{e})$ with some positive integer $a$, and this would force $x_i>0$ for all $i\in I$, violating the requirement.
\end{proof}

The proof of Lemma \ref{lem::unique-solution} shows the following.

\begin{Theorem}\label{theo::bijection-phi-inversion}
The inverse map $\phi^{-1}_{\Lambda}:\pcl(\Lambda) \rightarrow \max^+(\Lambda)$ of $\phi_\Lambda$ is given by
$$
\phi^{-1}_{\Lambda}(\Lambda')= \Lambda-\ssum_{i\in I}x_i \alpha_i,
$$
where $(x_0,x_1,\ldots,x_\ell)$ is the unique solution of \eqref{equ::the-equation} given in Lemma \ref{lem::unique-solution}.
\end{Theorem}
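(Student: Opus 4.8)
The plan is to observe that this statement is essentially already contained in the constructive half of the proof of Lemma~\ref{lem::unique-solution}, so that no substantially new argument is required: I would show that the distinguished solution vector produced there \emph{is} the tuple of coefficients expressing $\Lambda-\phi^{-1}_\Lambda(\Lambda')$ in the basis of simple roots, and then appeal to uniqueness to pin it down. Concretely, I would set $\mu:=\phi^{-1}_\Lambda(\Lambda')\in\max^+(\Lambda)$ and aim to identify the coordinate vector of $\Lambda-\mu$ with the unique $X\in\N_0^{e}$ satisfying $\min X=0$ and solving \eqref{equ::the-equation}.

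The first step is to write $\mu=\Lambda-\ssum_{i\in I}k_i\alpha_i$ with $k_i\in\Z_{\geq0}$, which is legitimate because $\mu\leq\Lambda$ by Proposition~\ref{prop::bijection-kac}; set $K=(k_0,\ldots,k_\ell)$. The second step is to check $\min K=0$: were every $k_i\geq1$, then, using $\delta=\ssum_{i\in I}\alpha_i$, we would get $\Lambda-\mu-\delta=\ssum_{i\in I}(k_i-1)\alpha_i\in Q_+$, contradicting the maximality characterization $\Lambda-\mu-\delta\notin Q_+$ of Proposition~\ref{prop::bijection-kac}. The third step is to verify that $K$ solves \eqref{equ::the-equation}: pairing $\Lambda-\mu$ with $h_j$ and using $\langle h_j,\alpha_i\rangle=a_{ji}$ gives $\langle h_j,\Lambda-\mu\rangle=\ssum_{i\in I}a_{ji}k_i=(AK^t)_j$, while $\phi_\Lambda(\mu)=\Lambda'$ combined with Lemma~\ref{lem::bijection-phi} gives $\langle h_j,\mu\rangle=\langle h_j,\Lambda'\rangle$, hence $\langle h_j,\Lambda-\mu\rangle=\langle h_j,\Lambda-\Lambda'\rangle=y_j$. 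Thus $AK^t=Y_{\Lambda'}^t$. Finally, the uniqueness clause of Lemma~\ref{lem::unique-solution} forces $K$ to equal the solution $X=(x_0,\ldots,x_\ell)$ singled out there, yielding $\phi^{-1}_\Lambda(\Lambda')=\Lambda-\ssum_{i\in I}x_i\alpha_i$.

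I do not expect a genuine obstacle here, since the computation is short and purely linear-algebraic; the only point that needs care is the index bookkeeping for the Cartan matrix in the pairing $\langle h_j,\alpha_i\rangle=a_{ji}$, which is in any case harmless because $A$ is symmetric in type $A^{(1)}_\ell$. The conceptual content worth highlighting is simply the recognition that the inverse of $\phi_\Lambda$ requires no new data beyond solving the linear system \eqref{equ::the-equation}, so that the abstract bijections of Proposition~\ref{prop::bijection-kac} and \eqref{equ::bijection-iota} become fully explicit.
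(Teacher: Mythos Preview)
Your proposal is correct and matches the paper's approach exactly: the paper's proof is the single sentence ``The proof of Lemma~\ref{lem::unique-solution} shows the following,'' and what you have written is precisely a recapitulation of that proof, identifying the coefficient vector $K$ of $\Lambda-\phi_\Lambda^{-1}(\Lambda')$ with the distinguished solution $X$ via the three steps (nonnegativity, $\min K=0$ from maximality, and $AK^t=Y_{\Lambda'}^t$ from Lemma~\ref{lem::bijection-phi}).
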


\begin{Defn}
For any $\Lambda\in \pcl$ and $\Lambda'\in \pcl(\Lambda)$, we set $\beta_{\Lambda,\Lambda'}:=\sum_{i\in I}x_i\alpha_i$ with
$$
X_{\Lambda,\Lambda'}:=(x_0,x_1,\ldots,x_\ell)
$$
being the unique solution of \eqref{equ::the-equation} in Lemma \ref{lem::unique-solution}.
If there is no confusion about $\Lambda$, we also write the above as $\beta_{\Lambda'}$ and $X_{\Lambda'}$ to simplify the notation.
In particular, if $\Lambda'=\Lambda$, then $Y_\Lambda=(0^e)$, $X_{\Lambda}=(0^{e})$ and $\beta_{\Lambda}=0$.
\end{Defn}

Define $\P^\Lambda:=\{\beta_{\Lambda'} \mid \Lambda'\in \pcl(\Lambda) \}$ for any $\Lambda\in \pcl$. 
Then Theorem \ref{theo::bijection-phi-inversion} implies
\begin{equation}\label{equ::max-set}
\max^+(\Lambda)=\left \{\Lambda-\beta\mid \beta\in \P^\Lambda\right \}.
\end{equation}

\begin{rem}
Theorem \ref{theo::bijection-phi-inversion} gives us an efficient way to obtain all elements of $\max^+(\Lambda)$ for $\Lambda\in P^+$ of arbitrary level $k\in \N$.
\end{rem}

\begin{rem}
Recall the map $\sigma$ defined in \eqref{def::sigma}. 
By Lemma \ref{lem::unique-solution}, we have 
\begin{equation}\label{equ:sigmaX}
 \beta_{\sigma\Lambda, \sigma\Lambda'}=\sigma \beta_{\Lambda,\Lambda'}   
\end{equation} 
for any $\Lambda'\in \pcl (\Lambda)$.
\end{rem}

Recall from Theorem \ref{theo::equivalence-class} that $\ev(\Lambda)=\sum_{i\in I_0}\langle h_i, \Lambda\rangle i$.

\begin{Lemma}\label{lem::embedding-Lambda}
Suppose $\Lambda=\bar\Lambda+\tilde \Lambda$ with $\Lambda\in \pcl$, $\bar\Lambda\in P^+_{cl,k'}$ and $\tilde \Lambda\in P^+_{cl,k-k'}$.
Define $\P^\Lambda$ and $\P^{\bar\Lambda}$ as before.
Then,
$$
P^+_{cl,k'}(\bar\Lambda)+\tilde \Lambda\subset \pcl(\Lambda)
\quad\text{and}\quad
\beta_{\Lambda,\Lambda'+\tilde \Lambda}=\beta_{\bar\Lambda,\Lambda'}
$$ 
for any $\Lambda'\in P^+_{cl,k'}(\bar\Lambda)$.
In particular, $\P^{\bar\Lambda} \subset \P^\Lambda$.
\end{Lemma}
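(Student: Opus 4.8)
The plan is to establish the three assertions in order, noting that the equality of the $\beta$'s reduces to identifying the two defining linear systems, and that the inclusion $\P^{\bar\Lambda}\subset\P^\Lambda$ is then immediate. First I would record the additivity that drives everything: since $\ev(\mu)=\sum_{i\in I_0}\langle h_i,\mu\rangle i$ is linear in the pairings $\langle h_i,\cdot\rangle$, the function $\ev$ is additive, and it is given by the same formula at every level.

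For the containment $P^+_{cl,k'}(\bar\Lambda)+\tilde\Lambda\subset\pcl(\Lambda)$, take $\Lambda'\in P^+_{cl,k'}(\bar\Lambda)$. Then $\Lambda'+\tilde\Lambda$ has level $k'+(k-k')=k$ and lies in $\sum_{i\in I}\Z_{\geq0}\Lambda_i$, so it belongs to $\pcl$. To see it is $\sim$-equivalent to $\Lambda$, I would apply the characterization of Theorem \ref{theo::equivalence-class} at level $k$: using additivity and the equivalence $\Lambda'\sim\bar\Lambda$ (which gives $\ev(\Lambda')\equiv_e\ev(\bar\Lambda)$ by Theorem \ref{theo::equivalence-class} at level $k'$), one computes $\ev(\Lambda'+\tilde\Lambda)=\ev(\Lambda')+\ev(\tilde\Lambda)\equiv_e\ev(\bar\Lambda)+\ev(\tilde\Lambda)=\ev(\bar\Lambda+\tilde\Lambda)=\ev(\Lambda)$. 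Hence $\Lambda'+\tilde\Lambda\in\pcl(\Lambda)$.

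For the equality $\beta_{\Lambda,\Lambda'+\tilde\Lambda}=\beta_{\bar\Lambda,\Lambda'}$, the key observation is that the two associated $Y$-vectors coincide. Indeed $\Lambda-(\Lambda'+\tilde\Lambda)=(\bar\Lambda+\tilde\Lambda)-(\Lambda'+\tilde\Lambda)=\bar\Lambda-\Lambda'$, so pairing against each $h_i$ yields $Y_{\Lambda,\Lambda'+\tilde\Lambda}=Y_{\bar\Lambda,\Lambda'}$. Consequently the two instances of the system $AX^t=Y^t$ appearing in Lemma \ref{lem::unique-solution} are literally identical, and the uniqueness clause of that lemma forces the two solutions in $\N_0^{e}$ with $\min X=0$ to agree. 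Therefore $\beta_{\Lambda,\Lambda'+\tilde\Lambda}=\beta_{\bar\Lambda,\Lambda'}$.

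Finally, the inclusion $\P^{\bar\Lambda}\subset\P^\Lambda$ falls out directly: given $\beta_{\bar\Lambda,\Lambda'}\in\P^{\bar\Lambda}$ with $\Lambda'\in P^+_{cl,k'}(\bar\Lambda)$, the second part rewrites it as $\beta_{\Lambda,\Lambda'+\tilde\Lambda}$, while the first part guarantees $\Lambda'+\tilde\Lambda\in\pcl(\Lambda)$, so this element lies in $\P^\Lambda$ by definition. I do not expect a genuine obstacle here; the only point requiring care is the bookkeeping that the equivalence classes and the $\ev$-invariant are read off at the correct levels (level $k'$ for $\bar\Lambda$, level $k$ for $\Lambda$), while $\ev$ itself is level-insensitive, which is precisely what makes the additive computation in the first step legitimate.
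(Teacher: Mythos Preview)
Your proof is correct and follows essentially the same approach as the paper: use additivity of $\ev$ together with Theorem~\ref{theo::equivalence-class} for the containment, then observe $\Lambda-(\Lambda'+\tilde\Lambda)=\bar\Lambda-\Lambda'$ so that $Y_{\Lambda,\Lambda'+\tilde\Lambda}=Y_{\bar\Lambda,\Lambda'}$ and invoke the uniqueness in Lemma~\ref{lem::unique-solution} to conclude $X_{\Lambda,\Lambda'+\tilde\Lambda}=X_{\bar\Lambda,\Lambda'}$; the inclusion $\P^{\bar\Lambda}\subset\P^\Lambda$ then follows immediately.
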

\begin{proof}
We see that $\ev(\Lambda'+\tilde\Lambda)\equiv_e \ev(\Lambda)$ since $\ev(\Lambda') \equiv_e\ev(\bar\Lambda)$ by Theorem \ref{theo::equivalence-class}.
We have $\Lambda'+\tilde\Lambda\in \pcl(\Lambda)$ for any $\Lambda'\in P^+_{cl,k'}(\bar\Lambda)$ by Theorem \ref{theo::equivalence-class} again.
Then, $\beta_{\Lambda,\Lambda'+\tilde \Lambda}$ is definable and $\beta_{\Lambda,\Lambda'+\tilde\Lambda}\in \P^\Lambda$.
Moreover, the equality
$$
\Lambda-(\Lambda'+\tilde \Lambda)=\bar\Lambda-\Lambda'
$$
implies that $Y_{\Lambda,\Lambda'+\tilde \Lambda }=Y_{\bar\Lambda,\Lambda'}$
and hence, $X_{\Lambda,\Lambda'+\tilde \Lambda }=X_{\bar\Lambda,\Lambda'}$ by Lemma \ref{lem::unique-solution}.
It follows that $\beta_{\bar\Lambda, \Lambda'}=\beta_{\Lambda,\Lambda'+\tilde \Lambda}\in \P^{\Lambda}$ for any $\beta_{\bar\Lambda, \Lambda'}\in \P^{\bar\Lambda}$.
\end{proof}

\subsection{A connected graph of $\max^+(\Lambda)$}
In this subsection, we give an inductive method to find all elements of $\max^{+}(\Lambda)$, without solving equation \eqref{equ::the-equation}.
This method will make $\max^+(\Lambda)$ into a connected quiver.

Before proceeding further, we introduce a convention in order to simplify statements: when we will write $\Lambda_i$ or $\alpha_i$ or $x_i$, it always means 
$\Lambda_{i\!\!\mod e}$ or $\alpha_{i\!\!\mod e}$ or $x_{i\!\!\mod e}$.

Fix $\Lambda\in \pcl$.
Since we know $\max^+(\Lambda)=\{\Lambda\}$ when $k=1$, we will assume $k\geq 2$ in the following.
For any $\Lambda'\in \pcl(\Lambda)$ with $k\geq 2$, we can write $\Lambda'=\Lambda_i+\Lambda_j+\tilde\Lambda$ for some $i,j\in I$ and $\tilde\Lambda\in P^+_{cl,k-2}$.
Then, we define
$$
\Lambda'_{i,j}:=\Lambda_{i-1}+\Lambda_{j+1}+\tilde\Lambda.
$$
By the description of $P^+_{cl,2}(\Lambda_i+\Lambda_j)$ similar to Example \ref{ex::P-level-2} and Lemma \ref{lem::embedding-Lambda}, we see that $\Lambda'_{i,j} \in  \pcl(\Lambda)$.
Moreover,
$$
(\Lambda'_{i,j})_{j+1,i-1}=(\Lambda_{i-1}+\Lambda_{j+1}+\tilde\Lambda)_{j+1,i-1}= \Lambda_{i}+\Lambda_{j}+\tilde\Lambda=\Lambda'
$$
and
\begin{equation}\label{equ::edge-loop}
\Lambda'_{i,j}=\Lambda' \quad \text{if and only if}\quad  j\equiv_e i-1.
\end{equation}

\begin{Defn}
Fix $\Lambda\in \pcl$ with $k\geq 2$.
Let $C(\Lambda)$ be an undirected graph whose vertex set is $\pcl(\Lambda)$, and we draw an edge between $\Lambda'$ and $\Lambda''$ if $\Lambda''=\Lambda'_{i,j}$ for some $i,j\in I$ with $j\not\equiv_e i-1$.
The last condition guarantees $\Lambda'\neq \Lambda''$ by \eqref{equ::edge-loop}, i.e., no loop is allowed in $C(\Lambda)$.
\end{Defn}

For example, set $\Lambda=\Lambda_0+\Lambda_3+\Lambda_6 \in P^+_{cl,3}$ with $\ell=6$, $C(\Lambda)$ is displayed as follows.
$$
\scalebox{0.8}{
\xymatrix@C=2cm@R=1cm{
&*++[F]{\Lambda_4+2\Lambda_6}\ar@{-}[d]\ar@{-}[r]&*++[F]{2\Lambda_5+\Lambda_6}&\\
&*++[F]{\Lambda_0+\Lambda_4+\Lambda_5}\ar@{-}[ru]\ar@{-}[r]\ar@{-}[d]&*++[F]{\Lambda_1+2\Lambda_4}\ar@{-}[d]&\\
*++[F]{\Lambda_0+\Lambda_3+\Lambda_6}
\ar@{-}[r]\ar@{-}@/^0.75cm/[ru]\ar@{-}@/^1.5cm/[ruu]
\ar@{-}@/_0.75cm/[rd]\ar@{-}@/_1.5cm/[rdd]
&*++[F]{\Lambda_1+\Lambda_3+\Lambda_5}\ar@{-}[r] \ar@{-}[ru] \ar@{-}[rd]\ar@{-}[d]
&*++[F]{\Lambda_2+\Lambda_3+\Lambda_4}\ar@{-}[r]  \ar@{-}[d]
&*++[F]{3\Lambda_3}\\
&*++[F]{\Lambda_1+\Lambda_2+\Lambda_6}\ar@{-}[r] \ar@{-}[rd] \ar@{-}[d] &*++[F]{2\Lambda_2+\Lambda_5}&\\
&*++[F]{2\Lambda_0+\Lambda_2}\ar@{-}[r]  &*++[F]{\Lambda_0+2\Lambda_1}&
}}
$$

We show that $C(\Lambda)$ is connected.

\begin{Lemma}\label{lem::path-edge}
For any $\Lambda'\in \pcl(\Lambda)$, there exists a path from $\Lambda$ to $\Lambda'$ in $C(\Lambda)$. 
In particular, $C(\Lambda)$ is a finite connected graph.
\end{Lemma}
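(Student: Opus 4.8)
The plan is to fix, inside the class $\pcl(\Lambda)$, the distinguished representative $\Lambda^\circ:=(k-1)\Lambda_0+\Lambda_s\in {\rm DR}(\pcl)$, where $s\in I$ satisfies $s\equiv_e\ev(\Lambda)$ (see \eqref{equ::representatives}), and to prove the stronger claim that every vertex of $C(\Lambda)$ is joined to $\Lambda^\circ$ by a path. Since edges are undirected and $\Lambda$ is itself a vertex, this produces a path $\Lambda\to\Lambda^\circ\to\Lambda'$ for each $\Lambda'\in\pcl(\Lambda)$ and shows $C(\Lambda)$ is connected; finiteness is immediate because $\pcl(\Lambda)\subseteq\pcl$ is finite. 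Throughout I identify a vertex $\Lambda'=\ssum_{i\in I}m_i\Lambda_i$ with its multiset of beads, where $m_i=\langle h_i,\Lambda'\rangle\ge 0$ is the number of beads at position $i\in\Z/e\Z$ and $\ssum_{i\in I}m_i=k$. In this language the operation $\Lambda'\mapsto\Lambda'_{i,j}$ removes one bead from each of $i$ and $j$ and adds one bead to each of $i-1$ and $j+1$ (indices mod $e$), and by \eqref{equ::edge-loop} it is a genuine edge exactly when $j\not\equiv_e i-1$. By Theorem \ref{theo::equivalence-class}, $\pcl(\Lambda)$ is precisely the set of bead multisets of size $k$ whose content $\ev(\Lambda')=\ssum_{i\in I_0}i\,m_i$ has a fixed residue modulo $e$.

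The argument rests on two monovariants: the integer $\hat\Phi(\Lambda'):=\ev(\Lambda')\in\Z_{\geq0}$ and the auxiliary quantity $\Psi(\Lambda'):=\ssum_{i\in I_0}i^2 m_i$. A direct check shows that an edge $\Lambda'\mapsto\Lambda'_{i,j}$ leaves $\hat\Phi$ unchanged unless a bead crosses the cut between $\ell$ and $0$; the only $\hat\Phi$-decreasing edges are those of the form $(i,\ell)$ with $1\le i\le\ell$, which wrap the bead at $\ell$ round to $0$ and lower $\hat\Phi$ by exactly $e$. I then induct on $\hat\Phi$. If $\hat\Phi(\Lambda')=s$, all beads sit on $\{0,\dots,\ell\}$ with positions summing to $s\le\ell$; the non-wrapping edges $(i,j)$ with $1\le i\le j\le\ell-1$ each raise $\Psi$ while fixing $\hat\Phi$, and since a configuration admitting no such edge has at most one bead outside position $0$, this $\Psi$-increasing process terminates exactly at $\Lambda^\circ=(k-1)\Lambda_0+\Lambda_s$. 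If instead $\hat\Phi(\Lambda')>s$, hence $\hat\Phi(\Lambda')\ge s+e\ge e$, I first run the same $\Psi$-increasing edges: as long as no bead sits at $\ell$ such an edge exists, so the bounded quantity $\Psi$ forces the procedure to deposit a bead at position $\ell$ while (because $\hat\Phi\ge e>\ell$) leaving a second bead among positions $1,\dots,\ell$. A single edge $(i,\ell)$ now lowers $\hat\Phi$ by $e$, and the inductive hypothesis finishes this case.

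The hard part, and the reason for the care above, is the cyclic (affine) nature of the index set. Because each edge moves two beads simultaneously---one left, one right---the naive momentum $\ssum_{i} i\,m_i$ is conserved so long as beads stay on the segment $\{0,\dots,\ell\}$, and can only change, by $\pm e$, when a bead crosses between $\ell$ and $0$. Consequently $\hat\Phi$ can be decreased only through a wrap-around edge, and the key technical point is that such an edge is always eventually reachable: one must manoeuvre a bead to position $\ell$ using the coupled, momentum-preserving moves while keeping a partner bead available to pair with. The delicate step is therefore to confirm that the $\Psi$-greedy procedure cannot stall before a bead reaches $\ell$---equivalently, that its only local maxima are the configurations we are aiming for; boundedness of $\Psi$ and the level constraint $\ssum_{i\in I}m_i=k$ then guarantee termination at $\Lambda^\circ$, completing the proof.
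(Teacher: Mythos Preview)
Your proof is correct and takes a genuinely different route from the paper's. Both arguments begin by reducing to the distinguished representative $\Lambda^\circ=(k-1)\Lambda_0+\Lambda_s\in{\rm DR}(\pcl)$ and showing every vertex connects to it, but they diverge from there.

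The paper inducts on $|\Lambda'|_+:=k-m_0$, the number of beads away from position $0$. In the inductive step it picks two beads at $i,j\in I_0$, observes that $\Lambda_i+\Lambda_j$ and $\Lambda_0+\Lambda_h$ (with $h\equiv_e i+j$) lie in the same level-$2$ class $P^+_{cl,2}(\Lambda_0+\Lambda_h)$, and invokes Example \ref{ex::P-level-2} to obtain a path between them in $C(\Lambda_0+\Lambda_h)$, which lifts to $C(\Lambda)$. This replaces two off-zero beads by one, dropping $|\Lambda'|_+$ by at least one. The proof is short precisely because the level-$2$ case is already described explicitly.

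Your approach inducts instead on the linear invariant $\hat\Phi=\ev(\Lambda')$, exploiting that it is conserved by interior moves and changes by $\pm e$ only at a wrap-around; within a fixed $\hat\Phi$-level you use the quadratic quantity $\Psi=\sum i^2m_i$ as a potential to push beads outward until one reaches $\ell$, enabling a wrap. This is more hands-on and fully self-contained: you never appeal to the level-$2$ classification, at the cost of having to verify the termination and non-stalling claims for $\Psi$ directly. In effect you are reproving the level-$2$ connectivity inside the general argument rather than calling it as a black box. The paper's proof is shorter and more modular; yours is more elementary and yields an explicit algorithm for producing the path.
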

\begin{proof}
According to \eqref{equ::representatives}, we may assume $\Lambda=(k-1)\Lambda_0+\Lambda_s\in \text{DR}(\pcl)$ with $s\in I$.
Obviously, the statement is true when $k=2$ by Example \ref{ex::P-level-2}.

Write $\Lambda'=\sum_{i\in I}m_i\Lambda_i\in \pcl(\Lambda)$. 
Then, $m_0\leq k$.
Set $|\Lambda'|_+:=k-m_0$, i.e., the sum of coefficients of $\Lambda_i$ with $i\neq 0$.
We prove the statement by induction on $|\Lambda'|_+$.
If $|\Lambda'|_+=0$ or 1, then $m_0=k$ or $k-1$, and $\Lambda'=\Lambda$ since $\Lambda'\in \text{DR}(\pcl)$, hence the statement is trivial.

We now assume that $|\Lambda'|_+\geq2$.
Then $\Lambda'=\Lambda_i+\Lambda_j+\tilde\Lambda$ for some $i,j\in I_0$ and $\tilde\Lambda\in P^+_{cl,k-2}$.
Suppose $i+j\equiv_e h$ for some $h\in I$.
Then $\Lambda'':=\Lambda_0+\Lambda_h+\tilde \Lambda\in \pcl(\Lambda)$ and $|\Lambda''|_+<|\Lambda'|_+$.
Using Example \ref{ex::P-level-2}, we see that there is a path between $\Lambda_0+\Lambda_h$ and $\Lambda_i+\Lambda_j$ in $C(\Lambda_0+\Lambda_h)$.
So, there is a path between $\Lambda''$ and $\Lambda'$ in $C(\Lambda)$.
By the induction assumption on $|\Lambda''|_+$, there is a path between $\Lambda$ and $\Lambda''$, which yields a path between $\Lambda$ and $\Lambda'$.

Finally, $C(\Lambda)$ is finite since $\max^+(\Lambda)$ is finite by \cite[Proposition 12.6]{K-Lie-alg}.
\end{proof}

Next, we look at the relation between $\beta_{\Lambda'}$ and $\beta_{\Lambda'_{i,j}}$, which we deduce from the relation between $X_{\Lambda'}$ and $X_{\Lambda'_{i,j}}$.
For any $i,j\in I$, let
$$
[i,j]:= \left\{\begin{array}{ll}
\{i,i+1, \ldots, j\}                 & \text{ if } i\leq j, \\
\{0,1,\ldots, j,i,i+1,\ldots,\ell\} & \text{ if } i>j.
\end{array}\right.
$$
Also, we define $\Delta_{i,j}:=(\delta_0,\delta_1, \ldots,\delta_\ell)\in \N_0^{e}$, where $\delta_h=1$ if $h\in [i,j]$ and $\delta_h=0$ if $h\notin [i,j]$.
More explicitly,
$$
\Delta_{i,j}=\left\{\begin{array}{ll}
 (0^i,1^{j-i+1},0^{\ell-j})       & \text{ if } i\leq j, \\
 (1^{j+1},0^{i-j-1},1^{\ell-i+1}) & \text{ if } i> j.
\end{array}\right.
$$
Note that $\Delta_{i,j}=(1^{e})$ if and only if $j\equiv_e i-1$.

Fix $\Lambda\in \pcl$ with $k\geq 2$.
Let $\Lambda'=\Lambda_i+\Lambda_j+\tilde \Lambda\in \pcl(\Lambda)$ for some $i,j\in I$ such that $j\not\equiv_e i-1 $, i.e., $\Lambda'\neq\Lambda'_{i,j}$, see \eqref{equ::edge-loop}.

\begin{Lemma}\label{lem::recurrence}
Let $\Lambda, \Lambda'$ be as above.
Then, $\min(X_{\Lambda'}+\Delta_{i,j})$ is either $0$ or $1$.
Moreover, denoting $\Lambda'':= \Lambda'_{i,j}$, we have $\Lambda_{j+1,i-1}''=\Lambda'$ and one of the following holds.
\begin{enumerate}
\item If $\min(X_{\Lambda'}+\Delta_{i,j})=0$, then $X_{\Lambda''}= X_{\Lambda'}+\Delta_{i,j}$ and $\min (X_{\Lambda''}+\Delta_{j+1,i-1})=1$.

\item If $\min(X_{\Lambda'}+\Delta_{i,j})=1$, then $X_{\Lambda''}=X_{\Lambda'}-\Delta_{j+1,i-1}$ and $\min (X_{\Lambda''}+\Delta_{j+1,i-1})=0$.
\end{enumerate}
\end{Lemma}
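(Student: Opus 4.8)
The plan is to reduce the whole statement to a single linear-algebra identity relating the correction vector $\Delta_{i,j}$ to the change in the right-hand side of \eqref{equ::the-equation}. Write $\Lambda'':=\Lambda'_{i,j}=\Lambda_{i-1}+\Lambda_{j+1}+\tilde\Lambda$; the equality $\Lambda''_{j+1,i-1}=\Lambda'$ has already been recorded just before the statement, so nothing new is needed there. The guiding idea is that $X_{\Lambda''}$ should be $X_{\Lambda'}+\Delta_{i,j}$ up to a renormalization by a multiple of $(1^{e})$ that restores $\min=0$, and the content of the lemma is precisely that this renormalization is either trivial or a single subtraction of $(1^{e})$.

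First I would prove the key identity
$$
A\,\Delta_{i,j}^{t}=\bigl(Y_{\Lambda''}-Y_{\Lambda'}\bigr)^{t}.
$$
Since $\Lambda'-\Lambda''=\Lambda_i+\Lambda_j-\Lambda_{i-1}-\Lambda_{j+1}$, the $k$-th entry of $Y_{\Lambda''}-Y_{\Lambda'}$ is $\langle h_k,\Lambda'-\Lambda''\rangle=\delta_{k,i}+\delta_{k,j}-\delta_{k,i-1}-\delta_{k,j+1}$, the deltas being read modulo $e$. On the other hand, using $\langle h_k,\alpha_h\rangle=a_{kh}$ and $\langle h_k,\delta\rangle=0$, the $k$-th entry of $A\Delta_{i,j}^{t}$ equals $\sum_{h\in[i,j]}a_{kh}=\langle h_k,\sum_{h\in[i,j]}\alpha_h\rangle$. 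Both sides are the discrete Laplacian of the indicator of the interval $[i,j]$, so they agree; equivalently one telescopes $\sum_{h\in[i,j]}\alpha_h\equiv\Lambda_i+\Lambda_j-\Lambda_{i-1}-\Lambda_{j+1}\pmod{\Z\delta}$ and pairs with $h_k$.

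With this identity in hand the rest is formal. From the partition $\Delta_{i,j}+\Delta_{j+1,i-1}=(1^{e})$ of $\Z/e\Z$ (here the hypothesis $j\not\equiv_e i-1$ is used, to guarantee both intervals are nonempty) and $A(1^{e})^{t}=0$, I get $A\Delta_{j+1,i-1}^{t}=-(Y_{\Lambda''}-Y_{\Lambda'})^{t}$. Hence both $X_{\Lambda'}+\Delta_{i,j}$ and $X_{\Lambda'}-\Delta_{j+1,i-1}=X_{\Lambda'}+\Delta_{i,j}-(1^{e})$ solve $AX^{t}=Y_{\Lambda''}^{t}$ and differ by $(1^{e})$. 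Since $X_{\Lambda'}\in\N_0^{e}$ with $\min X_{\Lambda'}=0$ and $\Delta_{i,j}\in\N_0^{e}$, we have $\min(X_{\Lambda'}+\Delta_{i,j})\ge 0$; picking $h_0$ with $(X_{\Lambda'})_{h_0}=0$ gives $(X_{\Lambda'}+\Delta_{i,j})_{h_0}\le 1$, so $\min(X_{\Lambda'}+\Delta_{i,j})\in\{0,1\}$. This proves the first claim, and Lemma \ref{lem::unique-solution} then identifies $X_{\Lambda''}$ as the unique solution in $\N_0^{e}$ with minimum $0$: it is $X_{\Lambda'}+\Delta_{i,j}$ when $\min=0$, and $X_{\Lambda'}-\Delta_{j+1,i-1}$ when $\min=1$. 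The remaining minima drop out of the partition identity, since $X_{\Lambda''}+\Delta_{j+1,i-1}=X_{\Lambda'}+(1^{e})$ in the first case (minimum $1$) and $X_{\Lambda''}+\Delta_{j+1,i-1}=X_{\Lambda'}$ in the second (minimum $0$).

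The one genuinely delicate point I expect is the verification of the key identity when $[i,j]$ is short or wraps around, so that $i-1$, $j+1$, or the second-neighbour indices collide modulo $e$ (e.g.\ when the complementary interval is a single point), and separately the degenerate Cartan matrix at $\ell=1$, whose off-diagonal entries are $-2$ rather than $-1$ and which forces $i=j$. In each such case one must check that the coincidences among the Kronecker terms on the two sides match; once that bookkeeping is done, everything reduces to the two structural facts $A\Delta_{i,j}^{t}=(Y_{\Lambda''}-Y_{\Lambda'})^{t}$ and $\Delta_{i,j}+\Delta_{j+1,i-1}=(1^{e})$ together with the uniqueness in Lemma \ref{lem::unique-solution}.
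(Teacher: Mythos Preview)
Your proposal is correct and follows essentially the same approach as the paper: both proofs establish the identity $A\Delta_{i,j}^{t}=(Y_{\Lambda''}-Y_{\Lambda'})^{t}$ via $\sum_{m\in[i,j]}\alpha_m\equiv\Lambda_i+\Lambda_j-\Lambda_{i-1}-\Lambda_{j+1}\pmod{\Z\delta}$, then invoke the uniqueness in Lemma~\ref{lem::unique-solution} together with $\Delta_{i,j}+\Delta_{j+1,i-1}=(1^{e})$. Your write-up is in fact slightly more complete, since you explicitly verify the claims about $\min(X_{\Lambda''}+\Delta_{j+1,i-1})$ in (1) and (2), which the paper leaves implicit.
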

\begin{proof}
Set $X_{\Lambda'}=(x_0,x_1,\ldots,x_\ell)$.
Then, the first statement follows from
Lemma \ref{lem::unique-solution} (i.e., $\min\{x_i\mid i\in I\}=0$) since the entries of $\Delta_{i,j}$ are 0 or 1. Next, we prove (1) and (2).

Since $\Lambda''=\Lambda_{i-1}+\Lambda_{j+1}+\tilde \Lambda$, we have
$$
(\Lambda-\Lambda'')-(\Lambda-\Lambda')=\Lambda'-\Lambda''=
\Lambda_i-\Lambda_{i-1}-\Lambda_{j+1}+\Lambda_j.
$$
Set $Y=Y_{\Lambda''}-Y_{\Lambda'}$, i.e., $Y=(t_m)_{m\in I}$ with $t_m=\langle h_m, \Lambda'-\Lambda''\rangle$.
Using $\alpha_m=2\Lambda_m-\Lambda_{m-1}-\Lambda_{m+1}+\delta_{0m}\delta$,
it is easy to check that, for any $s\in I$,
$$
\left< h_s, \ssum_{m\in [i,j]}\alpha_m \right>=\left< h_s,\Lambda_i-\Lambda_{i-1}-\Lambda_{j+1}+\Lambda_j\right >=t_s,
$$
which implies $A\Delta_{i,j}^t=Y^t$.
Hence,
$$
A(X^t_{\Lambda'}+\Delta_{i,j}^t)=A X^t_{\Lambda'}+Y^t= Y^t_{\Lambda''}.
$$
If $\min(X_{\Lambda'}+\Delta_{i,j})=0$, then $X_{\Lambda''}=X_{\Lambda'}+\Delta_{i,j}$ by the uniqueness in Lemma \ref{lem::unique-solution}.
Otherwise, we have $A(X^t_{\Lambda'}+\Delta^t_{i,j}-(1^{e})^t)=Y^t_{\Lambda''}$ and $\min(X_{\Lambda'}+\Delta_{i,j}-(1^{e}))=0$.
Thus $X _{\Lambda''}= X_{\Lambda'}+\Delta_{i,j}-(1^{e})$ by Lemma \ref{lem::unique-solution} again.
Since $\Delta_{j+1,i-1}=(1^{e})-\Delta_{i,j}$, we have $X_{\Lambda''}= X_{\Lambda'}-\Delta_{j+1,i-1}$.
\end{proof}

\begin{rem}
Fix $\Lambda\in \pcl$ with $k\geq2$.
The argument in the proof of Lemma \ref{lem::path-edge} gives us an algorithm to find a path between $\Lambda$ and $\Lambda'$ in $C(\Lambda)$.
Note that $X_\Lambda=(0^{e})$. Then,
using Lemma \ref{lem::recurrence} repeatedly, we obtain $X_{\Lambda'}$ from $X_\Lambda$ for any $\Lambda'\in \pcl(\Lambda)$.
This provides a useful way to obtain all elements of the set $\max^+(\Lambda)$ for the dominant integral weight $\Lambda$ of arbitrary level, without solving the equation \eqref{equ::the-equation}.
\end{rem}

\begin{Defn}\label{def::arrow}
Let $\vec{C}(\Lambda)$ be the quiver whose underlying undirected graph is $C(\Lambda)$, and we choose an orientation of the edge between $\Lambda'$ and $\Lambda''$ as $\Lambda' \rightarrow \Lambda''$ if $X_{\Lambda''}=X_{\Lambda'}+\Delta_{i,j}$.  
We label this arrow by $(i,j)$.
\end{Defn}

\begin{rem}
Suppose that $\Lambda'$ and $\Lambda''$ are two endpoints of an edge in $C(\Lambda)$.
By Lemma \ref{lem::recurrence}, we have either
$$
\xymatrix@C=1cm@R=1cm{\Lambda'\ar[r]^{(i,j)} &\Lambda''} \text{ in case (1)} 
\quad \text{or} \quad
\xymatrix@C=1.5cm@R=1cm{\Lambda''\ar[r]^{(j+1,i-1)} &\Lambda'} \text{ in case (2)},
$$
and we cannot have both. Hence the choice of the orientation is always possible and unique.
\end{rem}

For example, set $\Lambda=\Lambda_0+\Lambda_3+\Lambda_6$ with $\ell=6$ as before, $\vec{C}(\Lambda)$ is displayed as
$$
\scalebox{0.8}{
\xymatrix@C=2cm@R=1.5cm{
&*++[F]{\Lambda_4+2\Lambda_6}
\ar[d]|-{(6,6)}\ar[r]|-{(6,4)}
&*++[F]{2\Lambda_5+\Lambda_6}
&\\
&*++[F]{\Lambda_0+\Lambda_4+\Lambda_5}
\ar[ru]|-{(0,4)}\ar[r]|-{(5,0)}
&*++[F]{\Lambda_1+2\Lambda_4}
&\\
*++[F]{\Lambda_0+\Lambda_3+\Lambda_6}
\ar[r]|-{(6,0)}\ar@/^0.75cm/[ru]|-{(6,3)}\ar@/^1.4cm/[ruu]|-{(0,3)}
\ar@/_0.75cm/[rd]|-{(3,0)}\ar@/_1.4cm/[rdd]|-{(3,6)}
&*++[F]{\Lambda_1+\Lambda_3+\Lambda_5}
\ar[r]|-{(5,1)}\ar[ru]|-{(5,3)}
\ar[rd]|-{(3,1)}\ar[d]|-{(3,5)}\ar[u]|-{(1,3)}
&*++[F]{\Lambda_2+\Lambda_3+\Lambda_4}
\ar[r]|-{(4,2)}\ar[u]|-{(2,3)}\ar[d]|-{(3,4)}
&*++[F]{3\Lambda_3}\\
&*++[F]{\Lambda_1+\Lambda_2+\Lambda_6}
\ar[r]|-{(6,1)}\ar[rd]|-{(2,6)}
&*++[F]{2\Lambda_2+\Lambda_5}
&\\
&*++[F]{2\Lambda_0+\Lambda_2}
\ar[r]|-{(2,0)}\ar[u]|-{(0,0)}
&*++[F]{\Lambda_0+2\Lambda_1}
&
}}
$$
where the corresponding $X_{\Lambda'}$'s are given below.
$$
\scalebox{0.8}{
\xymatrix@C=2cm@R=1.5cm{
&*++[F]{(1^4, 0^3)}
\ar[d]^-{+\Delta_{(6,6)}}\ar[r]^-{+\Delta_{(6,4)}}
&*++[F]{(2^4, 1,0,1)}
&\\
&*++[F]{(1^4,0^2, 1)}
\ar[ru]^-{+\Delta_{(0,4)}}\ar[r]^-{+\Delta_{(5,0)}}
&*++[F]{(2,1^3,0,1,2)}
&\\
*++[F]{(0^7)}
\ar[r]^-{+\Delta_{(6,0)}}\ar@/^0.5cm/[ru]^-{+\Delta_{(6,3)}}
\ar@/^1cm/[ruu]^-{+\Delta_{(0,3)}}
\ar@/_0.5cm/[rd]^-{+\Delta_{(3,0)}}
\ar@/_1cm/[rdd]^-{+\Delta_{(3,6)}}
&*++[F]{(1, 0^5, 1)}\ar[r]^-{+\Delta_{(5,1)}}
\ar[ru]^-{+\Delta_{(5,3)}}\ar[rd]^-{+\Delta_{(3,1)}}
\ar[d]^-{+\Delta_{(3,5)}}\ar[u]_-{+\Delta_{(1,3)}}
&*++[F]{(2,1,0^3,1,2)}
\ar[r]^-{+\Delta_{(4,2)}}\ar[u]_-{+\Delta_{(2,3)}}
\ar[d]^-{+\Delta_{(3,4)}}
&*++[F]{(3,2,1,0,1,2,3)}
\\
&*++[F]{(1, 0^2, 1^4)}
\ar[r]^-{+\Delta_{(6,1)}}\ar[rd]^-{+\Delta_{(2,6)}}
&*++[F]{(2,1,0,1^3,2)}
&\\
&*++[F]{(0^3,1^4)}
\ar[r]^-{+\Delta_{(2,0)}}\ar[u]_-{+\Delta_{(0,0)}}
&*++[F]{(1,0,1,2^4)}
&
}}
$$

In general, we have the following useful observation.

\begin{Cor}\label{cor::find-arrow}
Suppose $\Lambda'=\Lambda_i+\Lambda_j+\tilde \Lambda\in \pcl(\Lambda)$ with $k\geq 2$ such that $j\not\equiv_e i-1$ and $\Lambda''=\Lambda_{i-1}+\Lambda_{j+1}+\tilde \Lambda$. 
Set $X_{\Lambda'}=(x_0,x_1,\ldots,x_\ell)$. 
Then, there is an arrow $\xymatrix@C=1cm@R=1cm{\Lambda'\ar[r]|-{(i,j)} &\Lambda''}$ in $\vec{C}(\Lambda)$ if and only if there exists $h\in [j+1,i-1]$ satisfying $x_h=0$.
\end{Cor}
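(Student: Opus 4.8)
The plan is to unwind Definition \ref{def::arrow} through the dichotomy already established in Lemma \ref{lem::recurrence}, so that the entire statement collapses to a criterion for when $\min(X_{\Lambda'}+\Delta_{i,j})=0$. By Definition \ref{def::arrow}, the arrow $\Lambda'\xrightarrow{(i,j)}\Lambda''$ is present exactly when $X_{\Lambda''}=X_{\Lambda'}+\Delta_{i,j}$, and Lemma \ref{lem::recurrence} tells us this equality holds precisely in its case (1), that is, precisely when $\min(X_{\Lambda'}+\Delta_{i,j})=0$; in case (2) one instead has $X_{\Lambda''}=X_{\Lambda'}-\Delta_{j+1,i-1}$, which orients the edge the other way. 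So the corollary follows once I show that $\min(X_{\Lambda'}+\Delta_{i,j})=0$ is equivalent to the existence of some $h\in[j+1,i-1]$ with $x_h=0$.

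For that equivalence I would examine the coordinates of $X_{\Lambda'}+\Delta_{i,j}$ separately on the two complementary blocks $[i,j]$ and $[j+1,i-1]$, which partition $I$ (recall $\Delta_{j+1,i-1}=(1^{e})-\Delta_{i,j}$, as used in the proof of Lemma \ref{lem::recurrence}, and note the hypothesis $j\not\equiv_e i-1$ guarantees $[j+1,i-1]\neq\varnothing$). On $[i,j]$ the entry of $\Delta_{i,j}$ is $1$, so each coordinate there equals $x_h+1\geq 1$; on $[j+1,i-1]$ the entry of $\Delta_{i,j}$ is $0$, so each coordinate there equals $x_h\geq 0$. Hence the minimum over $[i,j]$ is at least $1$, and the overall minimum is $0$ if and only if the minimum over $[j+1,i-1]$ is $0$, i.e. if and only if $x_h=0$ for some $h\in[j+1,i-1]$. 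Combining this with the reduction of the first paragraph completes the proof.

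The only input I need to recall explicitly is the normalization from Lemma \ref{lem::unique-solution}, namely $X_{\Lambda'}\in\N_0^{e}$ with $\min X_{\Lambda'}=0$; this is what makes all coordinates nonnegative and forces the $[i,j]$-block to contribute nothing to the vanishing of the minimum. I do not anticipate a genuine obstacle, since the statement is essentially a coordinate-wise rereading of case (1) of Lemma \ref{lem::recurrence}. The only thing requiring care is the cyclic bookkeeping of the intervals $[i,j]$ and $[j+1,i-1]$ together with the convention that indices are read modulo $e$.
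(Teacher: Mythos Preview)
Your proof is correct and follows essentially the same route as the paper's: both reduce the existence of the arrow to the condition $\min(X_{\Lambda'}+\Delta_{i,j})=0$ via Definition~\ref{def::arrow} and Lemma~\ref{lem::recurrence}, and then read this off coordinatewise on the complement $[j+1,i-1]$ of $[i,j]$. Your version is slightly more explicit about the partition of $I$ and the use of $X_{\Lambda'}\in\N_0^{e}$, but the argument is the same.
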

\begin{proof}
By definition, $\xymatrix@C=1cm@R=1cm{\Lambda'\ar[r]|-{(i,j)} &\Lambda''}$ if and only if $\min(X_{\Lambda'}+\Delta_{i,j})=0$, which is equivalent to the condition $x_h=0$ for some
$$
h\in \left\{\begin{array}{ll}
\{0, 1, \ldots, i-1, j+1, j+2, \ldots, \ell\} & \hbox{if } i\leq j,\\
\{j+1, j+2, \ldots, i-1\}                     & \hbox{if } i> j. 
\end{array}\right.
$$
Namely, $x_h=0$ for some $h\in [j+1,i-1]$.
\end{proof}

\begin{example}
Suppose $0\leq s\leq \ell$.
The explicit description of $\emph{max}^{+}(\Lambda_0+\Lambda_s)$ was given by Tsuchioka in \cite[Theorem 1.4]{T-level-2}.
Now, we give a different approach to obtain the same description of $\emph{max}^{+}(\Lambda_0+\Lambda_s)$ as follows.
We remind readers that the set $P^+_{cl,2}(\Lambda_0+\Lambda_s)$ is already mentioned in Example \ref{ex::P-level-2}.
Recall that $X_{\Lambda_0+\Lambda_s}=(0^{e})$.
Using Corollary \ref{cor::find-arrow} repeatedly, we obtain $\vec{C}(\Lambda_0+\Lambda_s)$ as follows. 
If $s=0$, then $\vec{C}(2\Lambda_0)$ is displayed as
$$
\xymatrix@C=1.5cm@R=1cm{
2\Lambda_0\ar[d]^{(0,0)} \\
\Lambda_1+\Lambda_\ell \ar[d]^-{(\ell,1)}\\
\Lambda_2+\Lambda_{\ell-1} \ar[d]^-{(\ell-1,2)}\\
\Lambda_3+\Lambda_{\ell-2} \ar[d]^-{(\ell-2,3)}\\
\vdots \ar[d]^-{( \lceil \frac{\ell+4}{2} \rceil, \lceil \frac{\ell-1}{2}\rceil)}\\
\Lambda_{\lceil \frac{e}{2} \rceil}
+\Lambda_{\lceil \frac{e+1}{2}\rceil}}
\ \rightsquigarrow \
\xymatrix@C=1.5cm@R=0.9cm{
X_{2\Lambda_0}=(0^{e})\\
X_{\Lambda_1+\Lambda_\ell}=(1,0^\ell) \\
X_{\Lambda_2+\Lambda_{\ell-1}}=(2, 1,0^{\ell-2},1) \\
X_{\Lambda_3+\Lambda_{\ell-2}}=(3,2,1,0^{\ell-4},1,2) \\
\vdots\\}
$$
We obtain
$$
X_{\Lambda_i+\Lambda_{e-i}}=\left(i,i-1,\ldots, 2,1,0^{\ell-2i+2}, 1,2,\ldots, i-2, i-1 \right),
$$
$$
\beta_{\Lambda_i+\Lambda_{e-i}}=\ssum_{0\leq j\leq i }(i-j)\alpha_j+\ssum_{\ell-i+2\leq j\leq \ell} (\ell-j+i-1)\alpha_j,
$$
for $1\leq i\leq \frac{e}{2}$.

If $s>0$, then $\vec{C}(\Lambda_0+\Lambda_s)$ is displayed as
$$
\xymatrix@C=0.1cm@R=1cm{
&&\Lambda_0+\Lambda_s\ar[dr]^-{(s,0)}\ar[dl]_-{(0,s)}
&&\\
(1^{s+1}, 0^{\ell-s})
&\Lambda_{s+1}+\Lambda_\ell \ar[d]^{(\ell,s+1)} 
&&\Lambda_1+\Lambda_{s-1} \ar[d]^-{(s-1,1)}
& (1, 0^{s-1}, 1^{e-s})
\\
(2^{s+1}, 1, 0^{\ell-s-2},1)
&\Lambda_{s+2}+\Lambda_{\ell-1} \ar[d]^-{(\ell-1,s+2)} 
&&\Lambda_2+\Lambda_{s-2} \ar[d]^-{(s-2,2)}
&(2,1, 0^{s-3}, 1, 2^{e-s})
\\
\vdots &\vdots \ar[d]^-{( \lceil \frac{s+\ell+4}{2}\rceil,\lceil \frac{s+\ell-1}{2}\rceil)} 
&& \vdots \ar[d]^-{(\lceil \frac{s+3}{2}\rceil,\lceil \frac{s-2}{2}\rceil )}
&\vdots \\
&\Lambda_{\lceil \frac{s+e}{2} \rceil}+\Lambda_{\lceil \frac{s+e+1}{2}\rceil}
&&\Lambda_{\lceil\frac{s}{2}\rceil}+\Lambda_{\lceil \frac{s+1}{2}\rceil}
&
}
$$
We obtain
$$
X_{\Lambda_j+\Lambda_{s-j}}=(j,j-1,\ldots,2,1,0^{s-2j+1},1,2,\ldots,j-1, j^{\ell-s+1}),
$$
$$
\beta_{\Lambda_j+\Lambda_{s-j}}=\ssum_{0\leq h\leq j  }(j-h)\alpha_h+\ssum_{s-j< h\leq s-1}(h-s+j)\alpha_h+\ssum_{s\leq h\leq \ell}j\alpha_h,
$$
for any $0\leq j\leq \frac{s}{2}$, and
$$
X_{\Lambda_{s+i}+\Lambda_{e-i}}=(i^{s+1},i-1,\ldots,2,1,0^{\ell-2i+2},1,2,\ldots,i-1),
$$
$$
\beta_{\Lambda_{s+i}+\Lambda_{e-i}}=\ssum_{0\leq h\leq s  }i\alpha_h+\ssum_{s<h\leq s+i-1} (s+i-h)\alpha_h +\ssum_{\ell-i+1< h\leq \ell}(h-\ell+i-1)\alpha_h,
$$
for any $1\leq i\leq \frac{e-s}{2}$.
One easily finds that $\beta_{\Lambda_j+\Lambda_{s-j}}$ and $\beta_{\Lambda_{s+i}+\Lambda_{e-i}}$ are exactly $\lambda_l^s$ and $\mu_l^s$ in \cite[Theorem 1.4]{T-level-2}, respectively.
\end{example}

We define $|X|:=\sum_{i\in I}x_i$ for any $X=(x_0,x_1\ldots,x_\ell)\in \N_0^{e}$.

\begin{Lemma}\label{lem::path-arrow}
For any $\Lambda'\in \pcl(\Lambda)$ with $\Lambda'\neq \Lambda$, there is a directed path from $\Lambda$ to $\Lambda'$ in $\vec{C}(\Lambda)$.
\end{Lemma}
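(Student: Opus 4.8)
The plan is to induct on the nonnegative integer $|X_{\Lambda'}|$, exploiting the fact that every arrow of $\vec C(\Lambda)$ strictly increases this quantity. Indeed, if $\Lambda'\to\Lambda''$ is an arrow labelled $(i,j)$, then $X_{\Lambda''}=X_{\Lambda'}+\Delta_{i,j}$ with $\Delta_{i,j}\neq(0^{e})$ and $\Delta_{i,j}\neq(1^{e})$ (the latter since no loop occurs, see \eqref{equ::edge-loop}), so $|X_{\Lambda''}|=|X_{\Lambda'}|+|\Delta_{i,j}|>|X_{\Lambda'}|$. The base case is $|X_{\Lambda'}|=0$: then $X_{\Lambda'}=(0^{e})$, so $Y_{\Lambda'}=AX_{\Lambda'}^t=0$ forces $\langle h_i,\Lambda-\Lambda'\rangle=0$ for all $i\in I$ and hence $\Lambda'=\Lambda$, leaving nothing to prove.

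For the inductive step I would reduce everything to the following claim: \emph{every $\Lambda'\in\pcl(\Lambda)$ with $\Lambda'\neq\Lambda$ admits an incoming arrow in $\vec C(\Lambda)$}, i.e. there is an arrow $\Lambda''\to\Lambda'$ for some $\Lambda''$. Granting this, the observation above gives $|X_{\Lambda''}|<|X_{\Lambda'}|$, so by the induction hypothesis there is a directed path from $\Lambda$ to $\Lambda''$; appending the arrow $\Lambda''\to\Lambda'$ produces the required directed path from $\Lambda$ to $\Lambda'$.

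To prove the claim, write $X_{\Lambda'}=(x_0,\ldots,x_\ell)$ and recall $\min X_{\Lambda'}=0$ by Lemma \ref{lem::unique-solution}, while $X_{\Lambda'}\neq(0^{e})$. Thus the zero set $Z=\{h\in I\mid x_h=0\}$ and its complement are both nonempty proper subsets of the cyclic index set $I$. I would select a maximal cyclic arc $[d+1,c-1]$ of positive coordinates, so that $x_c=x_d=0$ while $x_{c-1}>0$ and $x_{d+1}>0$. The heart of the matter is to show that $\Lambda_c$ and $\Lambda_d$ are genuine constituents of $\Lambda'$, so that one may write $\Lambda'=\Lambda_c+\Lambda_d+\rho$ with $\rho\in P^+_{cl,k-2}$: using $y_c=\langle h_c,\Lambda-\Lambda'\rangle=(AX_{\Lambda'})_c=2x_c-x_{c-1}-x_{c+1}=-x_{c-1}-x_{c+1}<0$, one gets $\langle h_c,\Lambda'\rangle=\langle h_c,\Lambda\rangle-y_c\geq x_{c-1}\geq 1$, and symmetrically $\langle h_d,\Lambda'\rangle\geq 1$; in the degenerate case $c=d$ (when $Z$ is a single point) the same computation yields $\langle h_c,\Lambda'\rangle\geq x_{c-1}+x_{c+1}\geq 2$, providing the two required copies of $\Lambda_c$.

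Finally, I would feed the decomposition $\Lambda'=\Lambda_c+\Lambda_d+\rho$ into Lemma \ref{lem::recurrence} (equivalently Corollary \ref{cor::find-arrow}) with $(i,j)=(c,d)$. Since every zero of $X_{\Lambda'}$ lies in $[c,d]$ by construction, we have $\min(X_{\Lambda'}+\Delta_{c,d})=1$, which is exactly case (2): the edge between $\Lambda'$ and $\Lambda'_{c,d}$ is then oriented $\Lambda'_{c,d}\to\Lambda'$, the desired incoming arrow (and $[d+1,c-1]$ being nonempty forces $d\not\equiv_e c-1$, so this is a genuine edge rather than a loop). The main obstacle is precisely the constituent verification of the preceding paragraph, namely ensuring that the indices $c,d$ flanking a positive arc really index summands of $\Lambda'$; the sign computation $y_c=-x_{c-1}-x_{c+1}<0$ is what makes it work, and it is also what requires the mild separate bookkeeping in the case $c=d$ (and, when $\ell=1$, the correspondingly modified local formula $(AX_{\Lambda'})_c=2x_c-2x_{c+1}$).
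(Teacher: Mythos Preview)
Your proof is correct and follows essentially the same approach as the paper's own argument: both induct on $|X_{\Lambda'}|$ and, for the inductive step, locate a maximal cyclic arc of positive coordinates in $X_{\Lambda'}$, verify via the Cartan-matrix computation that the two flanking zero positions index genuine summands of $\Lambda'$, and then invoke case~(2) of Lemma~\ref{lem::recurrence} to produce an incoming arrow. Your indices $(c,d)$ correspond to the paper's $(j+1,i)$, and your degenerate case $c=d$ is the paper's case $i\equiv_e j+1$; the only cosmetic difference is that you phrase the inductive step explicitly as ``every $\Lambda'\neq\Lambda$ has an incoming arrow'', which is a clean way to organize the same content.
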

\begin{proof}
Suppose $\Lambda'=\sum_{j\in I}m_j\Lambda_j$ and $X_{\Lambda'}=(x_0,x_1,\ldots,x_\ell)$ is the unique solution of \eqref{equ::the-equation} in Lemma \ref{lem::unique-solution}. 
We first observe that $X_{\Lambda'}$ is not a zero vector since $\Lambda\neq \Lambda'$. 
It implies $|X_{\Lambda'}|>0$. 
On the other hand, there exists at least one $i$ satisfying $x_i=0$. 
Thus, we may assume that there exist $i\neq j \in I$ such that $x_{h}\geq 1$ (resp. $x_h=0$)
 if $h\in [i+1,j]$ (resp. $h=i, j+1$).

Suppose $i\equiv_e j+1$, i.e., $x_h\geq 1$ for any $h\neq i$.
Since
$$
y_i=\langle h_{i}, \Lambda-\Lambda' \rangle=2x_i-x_{i-1}-x_{i+1}=-x_{i-1}-x_{i+1}\leq -2,
$$
we have
$$
m_{i}=\langle h_{i}, \Lambda' \rangle\geq\langle h_{i}, \Lambda \rangle+2\geq 2.
$$
So, $\Lambda'_{i,i}$ is well-defined and $\Lambda'_{i,i}\neq \Lambda'$.
In this case, we have $\min(X_{\Lambda'}+\Delta_{i,i})=1$ and
$$
X_{\Lambda'} =X_{\Lambda'_{i,i}}+\Delta_{i+1,i-1}
$$
by Lemma \ref{lem::recurrence} (2). 
Hence, we have $\xymatrix@C=1.7cm{\Lambda'_{i,i}\ar[r]|-{(i+1,i-1)} &\Lambda'}$ by Definition \ref{def::arrow}.
Note that $|X_{\Lambda'_{i,i}}|<|X_{\Lambda'}|$. 
If $|X_{\Lambda'_{i,i}}|=0$, then $\Lambda'_{i,i}=\Lambda$ and there is an arrow $\Lambda\rightarrow\Lambda' $. 
If $|X_{\Lambda'_{i,i}}|>0$, then there is a path $\Lambda\rightarrow\ldots \rightarrow\Lambda'_{i,i}$ by the induction hypothesis and the result follows.

Suppose $i\not\equiv_e j+1$. 
Then,
$$
\langle h_i, \Lambda-\Lambda' \rangle\leq-x_{i+1}\leq -1
\quad \text{and}\quad
\langle h_{j+1}, \Lambda-\Lambda' \rangle\leq-x_{ j}\leq -1.
$$
So, $m_i\geq 1$ and $m_{j+1}\geq 1$. 
Moreover, $i\not\equiv _e (j+1)-1$ since $i\neq j $. 
Hence, $\Lambda'_{j+1,i}$ is well-defined and $\Lambda'_{j+1,i}\neq \Lambda'$ by \eqref{equ::edge-loop}. 
In this case, $\min(X_{\Lambda'}+\Delta_{j+1,i})=1$ since $x_h\geq 1$ for $h\in [i+1,j]$, and we have
$$
X_{\Lambda'}=X_{\Lambda'_{j+1,i}}+\Delta_{i+1,j}
$$
by Lemma \ref{lem::recurrence} (2). 
Hence we have $\xymatrix@C=1.5cm{\Lambda'_{j+1,i}\ar[r]|-{(i+1,j)} &\Lambda'}$ by Definition \ref{def::arrow}.
Now the result follows from the induction hypothesis as above.
\end{proof}

Lemma \ref{lem::embedding-Lambda} has the following corollary.

\begin{Cor}\label{cor::embedding-path}
Suppose $\Lambda=\bar\Lambda+\tilde \Lambda$ with $\Lambda\in \pcl, \bar\Lambda\in P^+_{cl,k'}$ and $\tilde \Lambda\in P^+_{cl,k-k'}$. 
Then, there is a directed path
$$
\xymatrix@C=1.8cm@R=1cm{
\Lambda^{(1)}\ar[r]^-{(i_1,j_1)} &\Lambda^{(2)}\ar[r]^-{(i_2,j_2)} & \ldots \ar[r]^-{(i_{m-1},j_{m-1})} &\Lambda^{(m)}} \in \vec C(\bar\Lambda)
$$
if and only if there is a directed path
$$
\xymatrix@C=1.8cm@R=1cm{
\Lambda^{(1)}+\tilde\Lambda\ar[r]^-{(i_1,j_1)} &\Lambda^{(2)}+\tilde\Lambda\ar[r]^-{(i_2,j_2)} & \ldots \ar[r]^-{(i_{m-1},j_{m-1})} &\Lambda^{(m)}+\tilde\Lambda} \in \vec C(\Lambda).
$$
\end{Cor}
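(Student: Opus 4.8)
The plan is to reduce the biconditional for paths to the case of a single arrow, and then transport both defining conditions of an arrow across the map $\Lambda'\mapsto \Lambda'+\tilde\Lambda$. A directed path is nothing but a sequence of arrows joining the consecutive specified vertices, so once I establish that an arrow $\Lambda^{(s)}\xrightarrow{(i_s,j_s)}\Lambda^{(s+1)}$ exists in $\vec C(\bar\Lambda)$ if and only if the arrow $\Lambda^{(s)}+\tilde\Lambda\xrightarrow{(i_s,j_s)}\Lambda^{(s+1)}+\tilde\Lambda$ exists in $\vec C(\Lambda)$, I can take the conjunction over $s=1,\dots,m-1$ to obtain the statement for the whole path. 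No induction on the length is required.

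For the single-arrow statement I would recall from Definition \ref{def::arrow} that an arrow $\Lambda'\xrightarrow{(i,j)}\Lambda''$ carries two pieces of data: the underlying edge, namely $\Lambda''=\Lambda'_{i,j}$ with $j\not\equiv_e i-1$, together with the orientation condition $X_{\Lambda''}=X_{\Lambda'}+\Delta_{i,j}$. I would check that each piece is preserved under adding $\tilde\Lambda$. For the edge, writing $\Lambda'=\Lambda_i+\Lambda_j+\rho$ with $\rho\in P^+_{cl,k'-2}$, one has $\Lambda'+\tilde\Lambda=\Lambda_i+\Lambda_j+(\rho+\tilde\Lambda)$, whence $(\Lambda'+\tilde\Lambda)_{i,j}=\Lambda_{i-1}+\Lambda_{j+1}+(\rho+\tilde\Lambda)=\Lambda'_{i,j}+\tilde\Lambda$; since $\Lambda'\mapsto\Lambda'+\tilde\Lambda$ is injective and the constraint $j\not\equiv_e i-1$ does not involve the level, the edge relation $\Lambda''=\Lambda'_{i,j}$ holds if and only if $\Lambda''+\tilde\Lambda=(\Lambda'+\tilde\Lambda)_{i,j}$.

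For the orientation, the key input is the identity $X_{\Lambda,\Lambda'+\tilde\Lambda}=X_{\bar\Lambda,\Lambda'}$ established inside the proof of Lemma \ref{lem::embedding-Lambda} (it follows from $Y_{\Lambda,\Lambda'+\tilde\Lambda}=Y_{\bar\Lambda,\Lambda'}$ together with the uniqueness in Lemma \ref{lem::unique-solution}). Applying this to both $\Lambda'$ and $\Lambda''$, and observing that the displacement vector $\Delta_{i,j}$ depends only on $(i,j)$ and not on the level, the orientation condition $X_{\bar\Lambda,\Lambda''}=X_{\bar\Lambda,\Lambda'}+\Delta_{i,j}$ becomes literally the same equation as $X_{\Lambda,\Lambda''+\tilde\Lambda}=X_{\Lambda,\Lambda'+\tilde\Lambda}+\Delta_{i,j}$. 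Combined with the edge statement, this yields the single-arrow biconditional. I would also note that Lemma \ref{lem::embedding-Lambda} guarantees $P^+_{cl,k'}(\bar\Lambda)+\tilde\Lambda\subseteq\pcl(\Lambda)$, so each intermediate vertex $\Lambda^{(s)}+\tilde\Lambda$ really is a vertex of $\vec C(\Lambda)$ and every arrow above is well-defined.

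The argument is essentially bookkeeping, and there is no serious obstacle once Lemma \ref{lem::embedding-Lambda} is available. The only point that requires a little care is that the equivalence of the orientation conditions rests entirely on the coincidence of the $X$-vectors at the two levels; accordingly, I would be careful to invoke the exact step of the proof of Lemma \ref{lem::embedding-Lambda} where $X_{\Lambda,\Lambda'+\tilde\Lambda}=X_{\bar\Lambda,\Lambda'}$ is derived, rather than only the displayed conclusion $\beta_{\Lambda,\Lambda'+\tilde\Lambda}=\beta_{\bar\Lambda,\Lambda'}$.
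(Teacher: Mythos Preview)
Your argument is correct and is essentially the same as the paper's, which simply records the one-line justification ``It follows from $X_{\bar\Lambda,\Lambda^{(i)}}=X_{\Lambda,\Lambda^{(i)}+\tilde\Lambda}$ for $1\le i\le m$.'' Your expansion unpacks exactly what that line is asserting: the equality of $X$-vectors transports the orientation condition in Definition~\ref{def::arrow}, and the edge relation $(\Lambda'+\tilde\Lambda)_{i,j}=\Lambda'_{i,j}+\tilde\Lambda$ is immediate.
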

\begin{proof}
It follows from $X_{\bar \Lambda, \Lambda^{(i)}}=X_{\Lambda,\Lambda^{(i)}+\tilde\Lambda}$ for $1\leq i\leq m$.
\end{proof}

The following result will be useful in determining the representation type of cyclotomic quiver Hecke algebras.
\begin{Lemma}\label{lem::recurrence-beta}
Suppose $\Lambda'\overset{(j,i)}\longrightarrow\Lambda''$ in $\vec{C}(\Lambda)$.
\begin{enumerate}
\item If $i<j-1$, then there is a sequence $\beta_0=\beta_{\Lambda'},\beta_1,\ldots,\beta_{\ell-j+i+2}=\beta_{\Lambda''}$ in $Q_+$ defined by
$$
\beta_s=\left\{\begin{array}{ll}
    \beta_{s-1}+\alpha_{i-s+1}      & \hbox{if } 1\leq s\leq i+1, \\
    \beta_{s-1}+\alpha_{\ell-s+i+2} & \hbox{if } i+2\leq s\leq \ell-j+i+2,
\end{array}\right.
$$
such that $\langle h_{c_s}, \Lambda-\beta_{s-1} \rangle\geq 1$ for $1\leq s\leq \ell-j+i+1$, where $c_s=i-s+1$ (resp. $ \ell-s+i+2$) if $1\leq s\leq i+1$ (resp. if $i+2\leq s\leq \ell-j+i+2$).

\item If $i\geq j$, there is a sequence $\beta_0=\beta_{\Lambda'},\beta_1,\ldots, \beta_{i-j+1}=\beta_{\Lambda''}$ in $Q_+$ defined by $\beta_s=\beta_{s-1}+ \alpha_{i-s+1} $ for $ 1\leq s\leq i-j+1$, such that $\langle h_{i-s+1}, \Lambda-\beta_{s-1} \rangle\geq 1$ for $1\leq s\leq i-j+1$.
\end{enumerate}
\end{Lemma}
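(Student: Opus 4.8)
The plan is to follow the weight $\Lambda-\beta_{s}$ coordinate by coordinate as $s$ increases, writing $g^{(s)}_m:=\langle h_m,\Lambda-\beta_s\rangle$, and to show that at each step the coordinate indexed by the root we are about to add is at least $1$. First I would record the input data. By Theorem \ref{theo::bijection-phi-inversion} and Lemma \ref{lem::bijection-phi}, $\Lambda-\beta_{\Lambda'}=\phi_\Lambda^{-1}(\Lambda')$ is dominant, so $g^{(0)}_m=\langle h_m,\Lambda'\rangle\ge 0$ for all $m\in I$; moreover, writing $\Lambda'=\Lambda_j+\Lambda_i+\tilde\Lambda$ gives the base inequality $g^{(0)}_i=\langle h_i,\Lambda'\rangle\ge 1$ (indeed $\ge 2$ when $i=j$). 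Since adding $\alpha_{c}$ changes $g_m$ by $-a_{m,c}$, the only effect of the step $\beta_{s-1}\rightsquigarrow\beta_s=\beta_{s-1}+\alpha_{c_s}$ is to lower $g_{c_s}$ by $2$ and to raise $g_m$ by $-a_{m,c_s}\in\{1,2\}$ for each Dynkin-neighbor $m$ of $c_s$.

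Next I would identify the sequence $(c_s)$ with a contiguous backward arc on the affine cycle. In case (1) the prescription gives $c_1,\dots,c_{\ell-j+i+2}=i,i-1,\dots,1,0,\ell,\ell-1,\dots,j$, which, using $0-1\equiv_e\ell$, is exactly the arc $[j,i]$ traversed backward from $i$ to $j$ and skipping $[i+1,j-1]$ (nonempty precisely because $i<j-1$); in case (2) it is $i,i-1,\dots,j$. In either case the added roots sum to $\sum_{h\in[j,i]}\alpha_h$, so by $X_{\Lambda''}=X_{\Lambda'}+\Delta_{j,i}$ (Definition \ref{def::arrow}) the last term is indeed $\beta_{\Lambda''}$, and all $\beta_s\in Q_+$ since we only add simple roots.

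The heart of the argument is the positivity, which I would prove using adjacency rather than by computing $g^{(s)}$ in closed form. The key observation is that $c_{s-1}=c_s+1\pmod e$ for every $s\ge 2$, so $c_{s-1}$ is a Dynkin-neighbor of $c_s$. Because the arc is simple (here $i<j-1$, resp. $i\ge j$, together with the no-loop condition $i\not\equiv_e j-1$ built into the arrow, guarantees the $c_s$ are pairwise distinct), the coordinate $g_{c_s}$ is untouched by the steps $1,\dots,s-2$ and is raised by $-a_{c_s,c_{s-1}}\ge 1$ at step $s-1$; hence $g^{(s-1)}_{c_s}=\langle h_{c_s},\Lambda'\rangle-a_{c_s,c_{s-1}}\ge 1$. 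For $s=1$ one invokes the base inequality $g^{(0)}_i\ge 1$ instead. This yields $\langle h_{c_s},\Lambda-\beta_{s-1}\rangle\ge 1$ for all the required $s$, as claimed.

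I expect the only real difficulty to be bookkeeping: one must check that consecutive indices remain Dynkin-adjacent across the wrap-around $0\leftrightarrow\ell$ in case (1), and that no step before $s-1$ perturbs $g_{c_s}$ — this is exactly where the simplicity of the arc is used, and it is also what distinguishes the two cases (the wrap-around is present in (1) and absent in (2)).
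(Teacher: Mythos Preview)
Your proposal is correct and follows essentially the same approach as the paper. Both arguments rest on the dominance of $\Lambda-\beta_{\Lambda'}$ (so that $\langle h_{c_s},\Lambda'\rangle\ge 0$, with $\ge 1$ at $s=1$ since $\Lambda'$ has $\Lambda_i$ as a summand) together with the observation that among the previously added simple roots $\alpha_{c_1},\dots,\alpha_{c_{s-1}}$ only $\alpha_{c_{s-1}}$ pairs nontrivially with $h_{c_s}$, contributing exactly $+1$; the paper verifies this by direct computation of $\langle h_{c_s},-\sum_{r<s}\alpha_{c_r}\rangle=1$, while you phrase it as ``$g_{c_s}$ is untouched by steps $1,\dots,s-2$ and raised by $1$ at step $s-1$'', but these are the same calculation.
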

\begin{proof}
We only prove (1) since (2) can be checked similarly.
Note that we must have $\ell\geq 2$ in (1) and $(j,i)=(0, \ell)$ does not occur in (2) by $j-1\not\equiv_ei$.
As $i<j-1$, we have $X_{\Lambda''}=X_{\Lambda'}+(1^{i+1},0^{j-i-1},1^{e-j})$ by Definition \ref{def::arrow} and $\beta_{\ell-j+i+2}=\beta_{\Lambda''}$ by
$$
\beta_{\Lambda'}+(\alpha_i+\alpha_{i-1}+\ldots+\alpha_0+\alpha_\ell+\ldots+\alpha_j)=\beta_{\Lambda''}.
$$
Write $\Lambda'=\sum_{t\in I}m_t\Lambda_t$.
Since there is an arrow $\xymatrix@C=1cm@R=1cm{\Lambda'\ar[r]|-{(j,i)} &\Lambda''}$, $\Lambda'_{j,i}$ is well-defined and hence $m_i\geq 1$.
Then $\langle h_{c_1}, \Lambda-\beta_{0} \rangle= \langle h_{i}, \Lambda- \beta_{\Lambda'}\rangle=\langle h_i,\Lambda'\rangle=m_i\geq1$.
For $2\leq s\leq i+1$,
$$
\begin{aligned}
\left< h_{c_s}, \Lambda-\beta_{s-1} \right>
&=\left< h_{i-s+1},\Lambda- \beta_{\Lambda'}-\ssum_{1\leq f\leq s-1}\alpha_{i-f+1} \right>  \\
&=\left< h_{i-s+1}, \Lambda'-\ssum_{1\leq f\leq s-1}\alpha_{i-f+1} \right>
\geq \left< h_{i-s+1}, -\ssum_{1\leq f\leq s-1}\alpha_{i-f+1}\right>=1. 
\end{aligned}
$$
Similarly, for $i+2\leq s\leq \ell-j+i+2$, we have
$$
\left< h_{c_s}, \Lambda-\beta_{s-1} \right>
\geq \left< h_{\ell-s+i+2 }, -\ssum_{0\leq f\leq i}\alpha_{f}-\ssum_{\ell-s+i+3\leq g\leq \ell}\alpha_{g}\right>=1,
$$
where we understand $\sum_{\ell-s+i+3\leq g\leq \ell}\alpha_{g}=0$ if $s=i+2$.
\end{proof}

\subsection{A connected subquiver of $\vec C(\Lambda)$}\label{section::subgraph}
We have understood that each $\Lambda'\in \vec C(\Lambda)$ can be obtained from $\Lambda$ along a directed path of a finite length, say, $d$. 
For our purpose, we only need certain (not all) $\Lambda'$'s with $d\leq 2$. 
In particular, $\beta_{\Lambda'}$ can be easily obtained by using Lemma \ref{lem::recurrence} for any such $\Lambda'$'s.
With this idea in mind, we construct a connected subquiver $T(\Lambda)$ of $\vec C(\Lambda)$ in this subsection. 
One may find in the next section the role that $T(\Lambda)$ will play in proving the main result.

Fix $\Lambda=\sum_{i\in I}m_i\Lambda_i\in \pcl$ with $k\geq 2$. 
For any $0\leq s \leq k-1$, we define
$$
I(\Lambda)_s=\{i\in I\mid m_i\geq s+1\}.
$$
Then $\Lambda=\sum_{i\in I(\Lambda)_0}m_i\Lambda_i$. 
In the following, Corollary \ref{cor::find-arrow} will be frequently used without notice to find arrows in $\vec C(\Lambda)$.

We first consider the following two cases.
\begin{enumerate}
    \item Suppose $|I(\Lambda)_0|\geq2$. 
    In this case, we may write $\Lambda=\Lambda_i+\Lambda_j+\tilde \Lambda$ with $i<j\in I(\Lambda)_0$ and $\tilde \Lambda\in P^+_{cl,k-2}$. 
    If $i-1\not\equiv_e j$, then $\min (X_\Lambda+\Delta_{i,j})=0$ since $X_\Lambda=(0^e)$. 
    If $j-1\not\equiv_e i$, then $\min (X_\Lambda+\Delta_{j,i})=0$. 
    Hence, for each choice of $i<j$, there are arrows
    $$
    \xymatrix@C=1cm@R=1cm{\Lambda\ar[r]^-{(i,j)} &\Lambda_{i, j}} \in \vec C(\Lambda)
    \quad \text{and/or} \quad
    \xymatrix@C=1cm@R=1cm{\Lambda\ar[r]^-{(j,i)} &\Lambda_{j,i}} \in \vec C(\Lambda).
    $$
    We set
    $$
    \begin{aligned}
    T(\Lambda)_0:
    &=\left \{
    \Lambda_{i, j} \text{ if } i-1\not\equiv_e j,\  \Lambda_{j, i} \text{ if } j-1\not\equiv_e i \mid i<j\in I(\Lambda)_0 \right \}\\
    &=\left \{ \Lambda_{i,j}\mid i\neq j \in I(\Lambda)_0, [i,j]\neq I \right \}.
    \end{aligned}
    $$

    \item Suppose $|I(\Lambda)_1|\geq 1$. 
    In this case, we may write $\Lambda=2\Lambda_i+\tilde \Lambda$ with $i\in I(\Lambda)_1$ and $\tilde \Lambda\in P^+_{cl,k-2}$. 
    Since $\min (X_\Lambda+\Delta_{i,i})=0$, there is an arrow
    $$
    \xymatrix@C=1cm@R=1cm{\Lambda\ar[r]^-{(i,i)} &\Lambda_{i,i}} \in \vec C(\Lambda).
    $$
    We set $T(\Lambda)_1:=\{\Lambda_{i,i} \mid i\in I(\Lambda)_1\}$.
\end{enumerate}
Thus, $T(\Lambda)_0\sqcup T(\Lambda)_1$ consists of the vertices of $\vec C(\Lambda)$ which are reached from $\Lambda$ by directed paths of length one.

In case (2), we also consider certain $\Lambda''$'s which are reached from $\Lambda'$ by directed paths of length one, i.e.,
$$
\xymatrix@C=1cm@R=1cm{\Lambda\ar[r]^-{(i,i)} &\Lambda'\ar[r]^-{ ? }&\Lambda''} \in \vec C(\Lambda),
$$
where $\Lambda'=\Lambda_{i,i}$ and $X_{\Lambda'}=\Delta_{i,i}=(0^i,1,0^{\ell-i})$, with $i\in I(\Lambda)_1$.
\begin{enumerate}
    \item[(2-1)] Suppose $\ell \geq 3$. Then, we have
    $$
    \Delta_{i-1,i+1}=\left\{\begin{array}{ll}
      (1^2, 0^{\ell-2}, 1)         & \hbox{if } i=0,\\
      (0^{i-1},1^3,0^{\ell-i-1})   & \hbox{if } 0<i<\ell, \\
      (1, 0^{\ell-2}, 1^2)         & \hbox{if } i=\ell.
    \end{array}\right.
    $$
    There exists a directed path
    $$
     \xymatrix@C=1.5cm@R=1cm{\Lambda\ar[r]^-{(i,i)} &\Lambda'\ar[r]^-{(i-1, i+1)}&\Lambda'_{i-1,i+1}} \in \vec C(\Lambda).
    $$
    We set $T(\Lambda)_2:=\{\Lambda'_{i-1,i+1}\mid \Lambda'=\Lambda_{i,i},i\in I(\Lambda)_1\}$ if $\ell\geq 3$.

    \item[(2-2)] Suppose $|I(\Lambda)_2|\geq 1$. 
    In this case, $\Lambda=3\Lambda_i+\tilde \Lambda$ with $i\in I(\Lambda)_2$ and $\tilde \Lambda\in P^+_{cl,k-3}$. Then,
    $$
    \Delta_{i,i+1}=\left\{\begin{array}{ll}
      (0^{i},1^2,0^{\ell-i-1})   & \hbox{if } 0\leq i<\ell, \\
      (1, 0^{\ell-1}, 1)         & \hbox{if } i=\ell.
    \end{array}\right.
    $$
    $$
    \Delta_{i-1,i}=\left\{\begin{array}{ll}
    (1, 0^{\ell-1}, 1)           & \hbox{if } i=0,\\
    (0^{i-1},1^2,0^{\ell-i})     & \hbox{if } 0< i\leq \ell.
    \end{array}\right.
    $$
    If $\ell\geq 2$, then there exist two directed paths
    \begin{center}
    $\xymatrix@C=1.2cm@R=1cm{\Lambda\ar[r]^-{(i,i)} &\Lambda'\ar[r]^-{(i, i+1)}&\Lambda'_{i, i+1}} \in \vec C(\Lambda)
     \quad \text{and} \quad
     \xymatrix@C=1.2cm@R=1cm{\Lambda\ar[r]^-{(i,i)} &\Lambda'\ar[r]^-{(i-1, i)}&\Lambda'_{i-1, i}} \in \vec C(\Lambda)$.
    \end{center}
    We set $T(\Lambda)_3:=\{\Lambda'_{i, i+1}, \Lambda'_{i-1, i}\mid \Lambda'=\Lambda_{i,i}, i\in I(\Lambda)_2\}$ if $\ell\geq 2$.
    
    \item[(2-3)] Suppose $|I(\Lambda)_3|\geq 1$. 
    In this case, $\Lambda=4\Lambda_i+\tilde \Lambda$ with $i\in I(\Lambda)_3$ and $\tilde \Lambda\in P^+_{cl,k-4}$. 
    One can easily find a directed path
    $$
     \xymatrix@C=1cm@R=1cm{\Lambda\ar[r]^{(i,i)} &\Lambda'\ar[r]^-{(i, i)}&\Lambda'_{i, i}} \in \vec C(\Lambda).
    $$
    We set $T(\Lambda)_4:=\{\Lambda'_{i, i}\mid \Lambda'=\Lambda_{i,i},i\in I(\Lambda)_3\}$.
    
    \item[(2-4)] Suppose $|I(\Lambda)_1|\geq 2$ and $\ell\geq 2$. 
    In this case, $\Lambda=2\Lambda_i+2\Lambda_j+\tilde \Lambda$ with $i\neq j\in I(\Lambda)_1$ and $\tilde \Lambda\in P^+_{cl,k-4}$. 
    There is a directed path
    $$
     \xymatrix@C=1cm@R=1cm{\Lambda\ar[r]^{(i,i)} &\Lambda'\ar[r]^-{(j, j)}&\Lambda'_{j, j}} \in \vec C(\Lambda).
    $$
    We set $T(\Lambda)_5:=\{\Lambda'_{j, j}\mid \Lambda'=\Lambda_{i,i}, i\neq j\in I(\Lambda)_1\}$ if $\ell\geq 2$.
\end{enumerate}

\begin{Defn}\label{def::T_Lambda}
For any $\Lambda\in \pcl$ with $k\geq 2$. 
We define $T(\Lambda)$ as the subquiver of $\vec C(\Lambda)$ consisting of vertices in
$$
\{\Lambda\}\cup \bigcup_{0\leq s\leq 5}T(\Lambda)_s,
$$
and arrows given in the construction of $T(\Lambda)_s$.
We sometimes use $T(\Lambda)$ to indicate the vertex set of this subquiver by abuse of notation. Furthermore, we define 
$$
\mathcal T(\Lambda):=\{\beta_{\Lambda'}\mid \Lambda'\in T(\Lambda)\}.
$$
\end{Defn}

By the construction of $T(\Lambda)_s$, we have the following remark.

\begin{rem}\label{rem::T_Lambda-beta}
Define $\mathcal T(\Lambda)_s:=\{\beta_{\Lambda'}\mid \Lambda'\in T(\Lambda)_s\}$ for $0\leq s\leq 5$.
Then
\begin{itemize}
\item $\mathcal T(\Lambda)_0= \{\sum_{m\in [i,j]}\alpha_m \mid
 i\neq j\in I(\Lambda)_0, [i,j]\neq I \}$ when $|I(\Lambda)_0|\geq 2$.

\item $\mathcal T(\Lambda)_1=\{\alpha_i\mid i\in I(\Lambda)_1\}$ when $|I(\Lambda)_1|\geq 1$.

\item $\mathcal T(\Lambda)_2=\{2\alpha_i+\alpha_{i-1}+\alpha_{i+1}
\mid i\in I(\Lambda)_1\}$ when $|I(\Lambda)_1|\geq 1$ and $\ell\geq 3$.

\item $\mathcal T(\Lambda)_3=\{2\alpha_i+\alpha_{i-1}, 2\alpha_i+\alpha_{i+1} \mid i\in I(\Lambda)_2\}$ when $|I(\Lambda)_2|\geq 1$ and $\ell\geq 2$.

\item $\mathcal T(\Lambda)_4=\{2\alpha_i\mid i\in I(\Lambda)_3\}$ when $|I(\Lambda)_3|\geq 1$.

\item $\mathcal T(\Lambda)_5=\{\alpha_i+\alpha_j \mid i\neq j\in I(\Lambda)_1\}$ when $|I(\Lambda)_1|\geq 2$ and $\ell\geq 2$.
\end{itemize}
\end{rem}

\begin{rem}
Recall that $\beta_{\Lambda,\Lambda'}=\sum_{i\in I}x_i\alpha_i$ for $\Lambda'\in \pcl (\Lambda)$, where $X=(x_0,\ldots,x_\ell)$ is the solution of $AX^t=Y^t_{\Lambda,\Lambda'}$ from Lemma \ref{lem::unique-solution}.
Recall $\sigma$ from \eqref{def::sigma}.
By \eqref{equ:sigmaX}, we have $\xymatrix@C=1.2cm{\Lambda' \ar[r]|-{(i,j)}&\Lambda''}$ in $\vec C(\Lambda)$ if and only if $\xymatrix@C=2.2cm{\sigma \Lambda' \ar[r]|-{(\sigma(i), \sigma(j))}& \sigma \Lambda''}$ in $\vec C(\sigma \Lambda)$. 
In other words, we have
\begin{equation}\label{equ::iso-sigma-graph}
\vec{C}(\sigma\Lambda)=\sigma \vec C(\Lambda) 
\quad \text{and} \quad 
T(\sigma\Lambda)=\sigma T(\Lambda).
\end{equation}
\end{rem}

We have the next lemma.

\begin{Lemma}\label{lem::embdding-subgraph}
Suppose $k\geq 2$ and $\Lambda=\bar\Lambda+\tilde \Lambda$ with $\Lambda\in \pcl, \bar\Lambda\in P^+_{cl,k'}$ and $\tilde \Lambda\in P^+_{cl,k-k'}$. 
Then, $T(\bar\Lambda)$ is a subquiver of $T(\Lambda)$
and $\mathcal T(\bar\Lambda)\subset \mathcal T(\Lambda)$.
\end{Lemma}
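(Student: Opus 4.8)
The plan is to establish the two claimed containments — $T(\bar\Lambda)$ as a subquiver of $T(\Lambda)$, and $\mathcal T(\bar\Lambda)\subset\mathcal T(\Lambda)$ — by tracking the defining data ($I(\cdot)_s$, the vertices $\Lambda'_{i,j}$, and the associated $X$-vectors and $\beta$'s) through the decomposition $\Lambda=\bar\Lambda+\tilde\Lambda$. The central observation driving everything is Lemma~\ref{lem::embedding-Lambda}, which tells us that for $\Lambda'\in P^+_{cl,k'}(\bar\Lambda)$ we have $\beta_{\bar\Lambda,\Lambda'}=\beta_{\Lambda,\Lambda'+\tilde\Lambda}$, together with its path-level upgrade Corollary~\ref{cor::embedding-path}, which says a directed path $\Lambda^{(1)}\to\cdots\to\Lambda^{(m)}$ exists in $\vec C(\bar\Lambda)$ if and only if the shifted path $\Lambda^{(1)}+\tilde\Lambda\to\cdots\to\Lambda^{(m)}+\tilde\Lambda$ exists in $\vec C(\Lambda)$, with the \emph{same} arrow labels $(i_s,j_s)$. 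So the map $\Lambda'\mapsto\Lambda'+\tilde\Lambda$ is the natural candidate for embedding $T(\bar\Lambda)$ into $T(\Lambda)$.

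\emph{First} I would verify the inclusion of index sets $I(\bar\Lambda)_s\subseteq I(\Lambda)_s$ for each $s$. Writing $\bar\Lambda=\sum\bar m_i\Lambda_i$ and $\Lambda=\sum m_i\Lambda_i$, we have $m_i\geq\bar m_i$ coefficientwise since $\tilde\Lambda$ has nonnegative coefficients; hence $\bar m_i\geq s+1$ forces $m_i\geq s+1$, giving $I(\bar\Lambda)_s\subseteq I(\Lambda)_s$. \emph{Next}, I would run through the six construction cases $T(\cdot)_0,\dots,T(\cdot)_5$ one at a time and check that each vertex produced for $\bar\Lambda$, when shifted by $\tilde\Lambda$, is exactly the vertex the same recipe produces for $\Lambda$. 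For $T(\Lambda)_0$, if $i\neq j\in I(\bar\Lambda)_0$ with $[i,j]\neq I$, then the vertex $\bar\Lambda_{i,j}=\Lambda_{i-1}+\Lambda_{j+1}+\cdots$ shifts to $(\bar\Lambda+\tilde\Lambda)_{i,j}=\Lambda_{i,j}$, and since $i,j\in I(\Lambda)_0$ this is a genuine vertex of $T(\Lambda)_0$; the cases $T(\cdot)_1$ through $T(\cdot)_5$ are analogous, using that the relevant $i$ (or $i\neq j$) lies in $I(\bar\Lambda)_s\subseteq I(\Lambda)_s$ and that the extra condition $\ell\geq 2$ or $\ell\geq 3$ is a hypothesis on $\ell$ alone, insensitive to the decomposition. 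In each case the arrow labels are dictated by the construction and are identical for $\bar\Lambda$ and $\Lambda$, so Corollary~\ref{cor::embedding-path} guarantees the shifted arrows are present in $\vec C(\Lambda)$ and hence in $T(\Lambda)$.

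\emph{Finally}, for the $\beta$-level statement, each $\beta_{\Lambda'}$ with $\Lambda'\in T(\bar\Lambda)$ equals $\beta_{\bar\Lambda,\Lambda'}=\beta_{\Lambda,\Lambda'+\tilde\Lambda}$ by Lemma~\ref{lem::embedding-Lambda}, and since $\Lambda'+\tilde\Lambda\in T(\Lambda)$ by the subquiver claim just proved, this $\beta$ lies in $\mathcal T(\Lambda)$; this is most transparent from the explicit lists in Remark~\ref{rem::T_Lambda-beta}, where each $\mathcal T(\cdot)_s$ is a purely combinatorial set depending only on the $I(\cdot)_s$, so $I(\bar\Lambda)_s\subseteq I(\Lambda)_s$ immediately yields $\mathcal T(\bar\Lambda)_s\subseteq\mathcal T(\Lambda)_s$ and hence $\mathcal T(\bar\Lambda)\subseteq\mathcal T(\Lambda)$.

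\emph{The main obstacle} I anticipate is not conceptual but bookkeeping: one must confirm that the \emph{recipe} for building $T(\Lambda)_s$ is triggered by exactly the same index data for $\bar\Lambda$ and $\Lambda$, i.e.\ that no vertex of $T(\bar\Lambda)$ is accidentally excluded from $T(\Lambda)$ because a side-condition (such as $[i,j]\neq I$, or $|I(\Lambda)_1|\geq 2$ in case (2-4)) behaves differently after shifting. Since these conditions depend only on $\ell$ and on membership in the $I(\cdot)_s$ — all of which are preserved or enlarged under $\bar\Lambda\rightsquigarrow\Lambda$ — the check is routine, but it is the place where care is needed to ensure the inclusion is genuinely a subquiver inclusion (respecting both vertices and the labelled arrows between them), rather than merely a containment of vertex sets.
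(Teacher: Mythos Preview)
Your proposal is correct and follows essentially the same approach as the paper: the paper's proof is a single sentence, ``This follows from Lemma~\ref{lem::embedding-Lambda}, since if $\Lambda'\in T(\bar\Lambda)$ then $\Lambda'+\tilde\Lambda\in T(\Lambda)$,'' and your argument is precisely the detailed unpacking of that sentence via the shift map $\Lambda'\mapsto\Lambda'+\tilde\Lambda$, the inclusions $I(\bar\Lambda)_s\subseteq I(\Lambda)_s$, and Corollary~\ref{cor::embedding-path}.
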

\begin{proof}
This follows from Lemma \ref{lem::embedding-Lambda}, since if $\Lambda'\in T(\bar\Lambda)$ then $\Lambda'+\tilde \Lambda\in T(\Lambda)$.
\end{proof}

\begin{Defn}
For any $\Lambda'\in \vec{C}(\Lambda)$, we define
$$
S(\Lambda')_\rightarrow:=
\{\Lambda''\in \vec{C}(\Lambda)\mid \exists\  \Lambda'\overset{(i,j)}{\longrightarrow} \Lambda'' \in \vec{C}(\Lambda) \},
$$
the set of all successors of $\Lambda'$.
\end{Defn}

\begin{rem}\label{rem::-P-lambda}
As we mentioned above, $S(\Lambda)_\rightarrow=T(\Lambda)_0\cup T(\Lambda)_1$.
\end{rem}

The following lemma plays a critical role in the proof of Claim \ref{claim::step-3}.

\begin{Lemma}\label{lem::reduction-step-3}
For any $\Lambda\in \pcl$ with $k\geq 2$ and $\Lambda'\in T(\Lambda)_s$ with $0\leq s\leq 5$, we suppose that $\Lambda''\in S(\Lambda')_\rightarrow$ and one of the following conditions holds:
\begin{enumerate}
    \item  $k\geq 5$ when $s=0,1,2$,
    \item  $k\geq 6$ when $s=3$,
    \item  $k\geq 7$ when $s=4,5$.
\end{enumerate}
Then, there exist some $r\in I(\Lambda)_0$ and $\bar\Lambda, \bar\Lambda', \bar\Lambda''\in P^+_{cl,k-1}$ such that
$$ 
\Lambda=\bar\Lambda+\Lambda_r,
\Lambda'=\bar\Lambda'+\Lambda_r,
\Lambda''=\bar\Lambda''+\Lambda_r,
\bar\Lambda' \in  T(\bar \Lambda)_s \text{ and } \bar\Lambda''\in S(\bar\Lambda')_\rightarrow.
$$
\end{Lemma}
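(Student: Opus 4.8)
\medskip

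The plan is to lower the level by one by peeling off a single fundamental weight $\Lambda_r$ that is a \emph{spectator} of the whole directed path running from $\Lambda$ through $\Lambda'$ to $\Lambda''$, and then to transfer all the structure downward by means of Lemma \ref{lem::embedding-Lambda} and Corollary \ref{cor::embedding-path}. To make this precise, first I would record the trajectory of each fundamental-weight summand along the path. By Definition \ref{def::arrow} an arrow labelled $(i,j)$ replaces the summand $\Lambda_i+\Lambda_j$ by $\Lambda_{i-1}+\Lambda_{j+1}$ (or $2\Lambda_i$ by $\Lambda_{i-1}+\Lambda_{i+1}$ when $i=j$) and leaves every other summand untouched; hence each arrow moves at most two of the $k$ summands and induces a bijection between the summands of its source and those of its target. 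Composing these bijections along $\Lambda\to\cdots\to\Lambda'\to\Lambda''$ lets me track each of the $k$ summands of $\Lambda$, and I call a summand $\Lambda_r$ of $\Lambda$ a spectator if it is never one of the (at most two) summands moved by any arrow on this path.

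Next I would bound the number of non-spectator summands in each case. The construction of $T(\Lambda)_s$ moves the two summands of $\Lambda_i+\Lambda_j$ when $s=0$, the two summands of $2\Lambda_i$ when $s=1,2$, the three summands of $3\Lambda_i$ when $s=3$, and the four summands of $4\Lambda_i$ (if $s=4$) or of $2\Lambda_i+2\Lambda_j$ (if $s=5$); the final arrow $\Lambda'\to\Lambda''$ moves at most two further summands. Since every summand of $\Lambda'$ descends from a unique summand of $\Lambda$ under the above bijections, the number of non-spectator summands of $\Lambda$ is at most $4$ for $s\in\{0,1,2\}$, at most $5$ for $s=3$, and at most $6$ for $s\in\{4,5\}$. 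The hypotheses $k\ge 5,6,7$ in the three respective cases therefore guarantee at least one spectator summand $\Lambda_r$, and $r\in I(\Lambda)_0$ since $\Lambda_r$ occurs in $\Lambda$.

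Having fixed such an $r$, I would set $\bar\Lambda:=\Lambda-\Lambda_r$, $\bar\Lambda':=\Lambda'-\Lambda_r$ and $\bar\Lambda'':=\Lambda''-\Lambda_r$; these lie in $P^+_{cl,k-1}$ because the spectator copy of $\Lambda_r$ survives untouched in each of $\Lambda,\Lambda',\Lambda''$. As $\Lambda_r$ is never moved, literally the same sequence of labelled arrows realises $\bar\Lambda\to\cdots\to\bar\Lambda'$ and $\bar\Lambda'\to\bar\Lambda''$, and Corollary \ref{cor::embedding-path} (with $\tilde\Lambda=\Lambda_r$) promotes these to genuine directed paths in $\vec C(\bar\Lambda)$, yielding $\bar\Lambda''\in S(\bar\Lambda')_\rightarrow$. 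To see that $\bar\Lambda'\in T(\bar\Lambda)_s$ it remains only to check that the index conditions defining $T(\bar\Lambda)_s$ survive the removal of $\Lambda_r$: conditions such as $[i,j]\ne I$ or $\ell\ge 2,3$ are intrinsic, while each membership condition of the form $i\in I(\bar\Lambda)_{s'}$ is preserved because the construction consumes exactly $s'+1$ copies of $\Lambda_i$, so a spectator at index $i$ forces $m_i(\Lambda)\ge s'+2$, whence $m_i(\bar\Lambda)\ge s'+1$.

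The step I expect to be the main obstacle is the uniform bookkeeping of the second paragraph: one must verify, across all six cases $s=0,\dots,5$, that the final arrow $\Lambda'\to\Lambda''$ adds at most two new non-spectator summands and that the resulting counts match the stated thresholds $k\ge 5,6,7$ exactly. Once the trajectory-and-spectator formalism and the descent bijection are in place, the transfer of the path through Corollary \ref{cor::embedding-path} and the preservation of the index conditions are routine.
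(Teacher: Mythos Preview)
Your proposal is correct and follows essentially the same approach as the paper: both arguments count, case by case over $s$, how many of the $k$ fundamental-weight summands of $\Lambda$ can be disturbed along the path $\Lambda\to\cdots\to\Lambda'\to\Lambda''$, deduce the existence of an untouched summand $\Lambda_r$ under the stated thresholds on $k$, and then invoke Corollary~\ref{cor::embedding-path} to transport the path to $\vec C(\bar\Lambda)$. Your presentation is slightly more uniform (the ``spectator'' formalism and the descent bijection), whereas the paper writes out the six cases $s=0,\dots,5$ separately, but the underlying mechanism and the counts $4,4,4,5,6,6$ are identical.
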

\begin{proof}
(1) Suppose $s=0$. By the definition of $T(\Lambda)_0$, we have
$$
\xymatrix@C=1.2cm@R=0.1cm{
\Lambda \ar[r]^{(i,j)}&\Lambda'\ar[r]^{(m,n)}&\Lambda''}
\in \vec C(\Lambda),
$$
where $\Lambda'=\Lambda_{i,j}$, $i\neq j\in I(\Lambda)_0, [i,j]\neq I$ and $m, n \in I$ are such that $n\not\equiv_e m-1$. 
Since each arrow changes exactly two summands of the source, the composition of $(i,j)$ and $(m,n)$ changes at most four summands of $\Lambda$.
There is at least one summand, say $\Lambda_r$, for some $r\in I(\Lambda)_0$, which is preserved under the two moves, if $k\ge 5$.
Thus, we may write $\Lambda=\bar\Lambda+\Lambda_r$, $\Lambda'=\bar\Lambda'+\Lambda_r$, $\Lambda''=\bar\Lambda''+\Lambda_r$, and the above path yields a directed path
$$
\xymatrix@C=1.2cm@R=0.1cm{
\bar\Lambda \ar[r]^{(i,j)}&\bar\Lambda'\ar[r]^{(m,n)}&\bar\Lambda''}
\in \vec C(\bar\Lambda),
$$
by Corollary \ref{cor::embedding-path}. 
It is then obvious that $\bar\Lambda'\in T(\bar\Lambda)_0$ and $\bar \Lambda''\in S(\bar \Lambda')_\rightarrow$. 
Similarly, the statement is true for $s=1, k\ge 5$ if one takes $i=j\in I(\Lambda)_1$ in the above.

(2) $s=2$. By the definition of $T(\Lambda)_2$, we have
$$
\xymatrix@C=1.5cm@R=0.1cm{
\Lambda \ar[r]^-{(i,i)}&\Lambda_{i,i}
\ar[r]^-{(i-1,i+1)}&\Lambda'\ar[r]^{(m,n)}&\Lambda''}
\in \vec C(\Lambda),
$$
where $\Lambda'=(\Lambda_{i,i})_{i-1,i+1}$, $i\in I(\Lambda)_1$ and $m,n \in I$ are such that $n\not\equiv_e m-1$. 
Since the composition of arrows $(i,i)$ and $(i-1,i+1)$ changes only two summands $2\Lambda_i$ of $\Lambda$ to $\Lambda_{i-2}$ and $\Lambda_{i+2}$, the next composition by $(m,n)$ changes at most four summands of $\Lambda$. 
Hence, the statement is true if $k\ge 5$.

(3) $s=3$. By the definition of $T(\Lambda)_3$, we obtain
$$
\vcenter{\xymatrix@C=1.5cm@R=0.1cm{ &&(\Lambda_{i,i})_{i,i+1}\ar[r]^-{(m,n)}&\Lambda''\\
\Lambda \ar[r]^-{(i,i)}&\Lambda_{i,i}
\ar[dr]_-{(i-1,i)}\ar[ur]^-{(i,i+1)}&& \\
&&(\Lambda_{i,i})_{i-1,i}\ar[r]^-{(m,n)}&\Lambda''
}}
\in \vec C(\Lambda),
$$
where $i\in I(\Lambda)_2$ and $m,n \in I$ are such that $n\not\equiv_e m-1$. 
In this case, the composition of arrows $(i,i)$ and $(i,i+1)$ (or $(i-1,i)$) changes three summands of $\Lambda$.
Thus, the statement is true if $k\ge 6$.

(4) We settle $s=5$ first. By the definition of $T(\Lambda)_5$, we have
$$
\xymatrix@C=1.2cm@R=0.1cm{
\Lambda \ar[r]^-{(i,i)}&\Lambda_{i,i}
\ar[r]^-{(j,j)}&\Lambda'\ar[r]^{(m,n)}&\Lambda''}
\in \vec C(\Lambda),
$$
where $\Lambda'=(\Lambda_{i,i})_{j,j}$, $i\neq j\in I(\Lambda)_1$ and $m,n \in I$ are such that $n\not\equiv_e m-1$. 
Since the composition of arrows $(i,i)$ and $(j,j)$ changes four summands of $\Lambda$, the statement is true if $k\ge 7$. 
Similarly, the statement is true for $s=4, k\ge 7$ if one takes $i=j\in I(\Lambda)_3$.
\end{proof}

We end this section with the following example.

\begin{example}
Let $\Lambda=4\Lambda_0+2\Lambda_3+\Lambda_6$ and $\ell=6$. 
Then, $T(\Lambda)$ is displayed as follows.
$$
\scalebox{0.8}{
\xymatrix@C=1.5cm@R=1.5cm{
\boxed{3\Lambda_0+\Lambda_3+\Lambda_4+2\Lambda_6}_0
&\boxed{3\Lambda_0+\Lambda_1+\Lambda_2+\Lambda_3+\Lambda_6}_0
&
\boxed{4\Lambda_0+\Lambda_3+\Lambda_4+\Lambda_5}_0
\\
\boxed{5\Lambda_0+\Lambda_2+\Lambda_3}_0
&\boxed{\Lambda}
\ar[dd]|-{(0,0)}\ar[r]|-{(3,3)}\ar[ul]|-{(0,3)}\ar[ur]|-{(6,3)}
\ar[u]|-{(3,0)}\ar[l]|-{(3,6)}\ar[dl]|-{(6,0)}
&\boxed{4\Lambda_0+\Lambda_2+\Lambda_4+\Lambda_6}_1
\ar@/_3cm/[dd]|-{(0,0)}\ar[d]|-{(2,4)}
\\
\boxed{3\Lambda_0+\Lambda_1+2\Lambda_3+\Lambda_5}_0
&&\boxed{4\Lambda_0+\Lambda_1+\Lambda_5+\Lambda_6}_2
\\
\boxed{2\Lambda_1+2\Lambda_3+3\Lambda_6}_4
&\boxed{2\Lambda_0+\Lambda_1+2\Lambda_3+2\Lambda_6}_1
\ar[r]|-{(3,3)}\ar[d]|-{(6,1)}\ar[dr]|-{(6,0)}\ar[l]|-{(0,0)}\ar[ld]|-{(0,1)}
&\boxed{2\Lambda_0+\Lambda_1+\Lambda_2+\Lambda_4+2\Lambda_6}_5
\\
\boxed{\Lambda_0+\Lambda_2+2\Lambda_3+3\Lambda_6}_3
&\boxed{2\Lambda_0+\Lambda_2+2\Lambda_3+\Lambda_5+\Lambda_6}_2
&\boxed{\Lambda_0+2\Lambda_1+2\Lambda_3+\Lambda_5+\Lambda_6}_3
}}
$$
Here, the subscript $s$ on the lower right corner of each box indicates $T(\Lambda)_s$ that the element belongs.
\end{example}

\section{Representation type of cyclotomic quiver Hecke algebras}

In this section, we first introduce some technical lemmas in order to reduce the general cases to some small rank cases.
Secondly, we explain the main strategy to determine the representation type of cyclotomic quiver Hecke algebras in affine type $A$.
Finally, we give the main result of this paper, whose proof will be provided in the next three sections.

\subsection{Reduction to certain maximal weights}
We reduce the problem to the problem on certain maximal weights following the strategy in \cite{AP-rep-type-A_2-level-1, Ar-rep-type}.

A nice feature of $R^{\Lambda}(\beta)$ is that it is a symmetric algebra.
Rickard's classical result \cite{Rickard-derived-equi} combined with Krause's result \cite{Krause-rep-type-stable-equi} tells us that two self-injective (especially, symmetric) algebras have the same representation type if they are derived equivalent.
On the other hand, it is known from \cite{CR-categorification} that $R^{\Lambda}(\beta)$ and $R^{\Lambda}(\beta')$ are derived equivalent if $\Lambda-\beta$ and $\Lambda-\beta'$ lie in the same $W$-orbit.
Thus, it suffices to consider the representatives of $W$-orbits of $P(\Lambda)$ if one studies the representation type of $R^{\Lambda}(\beta)$ for $\beta\in Q_+$.

By \eqref{equ::weight-set} and \eqref{equ::max-set}, the representatives of $W$-orbits in $P(\Lambda)$ are given by $\{\Lambda-\beta -m\delta\mid \beta\in \P^\Lambda, m\in \Z_{\geq0} \}$, where $\P^\Lambda$ is defined in \eqref{equ::max-set}.
Hence, in order to prove the main result, it suffices to consider $R^{\Lambda}(\gamma)$  with $\gamma$ being an element in
$$
O(\Lambda):= \{ \beta+m\delta\mid \beta\in \P^\Lambda, m\in \Z_{\ge 0} \}.
$$

\subsection{Reduction Lemmas}
In this subsection, we give two useful reduction lemmas to reduce the problem to a certain subset of $O(\Lambda)$.
\begin{Lemma}\label{lem::reduction-level}
Suppose $\Lambda=\bar\Lambda+\tilde \Lambda$ with $\Lambda\in \pcl$, $\bar\Lambda\in P^+_{cl,k'}$ and $\tilde \Lambda\in P^+_{cl,k-k'}$.
If $R^{\bar\Lambda}(\beta+m\delta)$ is of infinite (resp. wild) representation type for $\beta\in \P^{\bar\Lambda}$ and $m\in \Z_{\ge 0}$, then $R^{\Lambda}(\beta+m\delta)$ is of infinite (resp. wild) representation type.
\end{Lemma}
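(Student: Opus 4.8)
The plan is to exhibit $R^{\bar\Lambda}(\beta+m\delta)$ as a quotient (or an idempotent truncation) of $R^{\Lambda}(\beta+m\delta)$ and then invoke the general principle that infinite (resp. wild) representation type is inherited under such passages, because the module category of the smaller algebra embeds fully and exactly into that of the larger one. First I would fix the expression $\Lambda=\bar\Lambda+\tilde\Lambda$ together with the compatible data $\P^{\bar\Lambda}\subset\P^\Lambda$ supplied by Lemma \ref{lem::embedding-Lambda}, which guarantees that $R^{\bar\Lambda}(\beta+m\delta)$ and $R^{\Lambda}(\beta+m\delta)$ are both nonzero and indexed by genuinely corresponding weights. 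The key structural input is a level-reduction functor comparing cyclotomic quiver Hecke algebras at level $k'$ and level $k$; concretely, one expects a surjection of graded algebras
$$
R^{\Lambda}(\beta+m\delta)\twoheadrightarrow R^{\bar\Lambda}(\beta+m\delta)
$$
obtained by imposing the extra cyclotomic relation $x_1^{\langle h_{\nu_1},\bar\Lambda\rangle}e(\nu)=0$ in place of $x_1^{\langle h_{\nu_1},\Lambda\rangle}e(\nu)=0$, since $\langle h_i,\bar\Lambda\rangle\le\langle h_i,\Lambda\rangle$ for all $i$. This is the standard surjection $R^{\Lambda}(n)\twoheadrightarrow R^{\bar\Lambda}(n)$ coming from the defining presentation in Definition \ref{def::cyclotomic-quiver}, restricted to the relevant block idempotent $e(\beta+m\delta)$.

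Second, I would use that a surjection of algebras $A\twoheadrightarrow B$ induces a fully faithful exact inflation functor $B\text{-}\mathrm{mod}\hookrightarrow A\text{-}\mathrm{mod}$ which preserves indecomposability and reflects isomorphism, so that any family of pairwise non-isomorphic indecomposable $B$-modules yields such a family of $A$-modules of the same dimension vectors. Hence if $B=R^{\bar\Lambda}(\beta+m\delta)$ has infinitely many indecomposables (infinite type), or admits the controlled one-parameter families witnessing wildness, the same conclusion transfers verbatim to $A=R^{\Lambda}(\beta+m\delta)$. The wild case is handled by the same inflation functor applied to the embedding of the module category of a free algebra $\k\langle X,Y\rangle$ (or of a suitable wild quiver) that certifies wildness of $B$; composing with inflation certifies wildness of $A$.

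The main obstacle will be verifying that the block idempotent $e(\beta+m\delta)$ is compatibly identified on both sides, i.e.\ that the surjection $R^{\Lambda}(n)\twoheadrightarrow R^{\bar\Lambda}(n)$ carries the central idempotent cutting out $R^{\Lambda}(\beta+m\delta)$ onto the one cutting out $R^{\bar\Lambda}(\beta+m\delta)$, rather than onto zero or onto a sum of several blocks. This is exactly where Lemma \ref{lem::embedding-Lambda} is essential: the equality $\beta_{\Lambda,\Lambda'+\tilde\Lambda}=\beta_{\bar\Lambda,\Lambda'}$ shows that the relevant root-lattice data match, so the idempotent $e(\beta+m\delta)$ on the $\Lambda$-side restricts to $e(\beta+m\delta)$ on the $\bar\Lambda$-side. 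A secondary technical point is to confirm that the surjection respects the $\Z$-grading (the degrees of $e(\nu)$, $x_i$, $\psi_j$ are defined identically for both levels), so that the graded module categories, and therefore the graded representation type, are compared correctly; since $R^{\Lambda}(\beta+m\delta)$ is symmetric, graded and ungraded representation type coincide and no further care is needed. Once these identifications are in place, the transfer of infinite and wild type along the inflation functor is immediate, completing the proof.
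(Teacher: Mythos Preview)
Your proposal is correct and takes essentially the same approach as the paper: the surjection $R^{\Lambda}(\beta+m\delta)\twoheadrightarrow R^{\bar\Lambda}(\beta+m\delta)$ coming from $\langle h_{\nu_1},\bar\Lambda\rangle\le\langle h_{\nu_1},\Lambda\rangle$, followed by inflation of modules, is exactly what the paper does in one line. Your concern about block-idempotent compatibility is over-cautious and does not require Lemma~\ref{lem::embedding-Lambda}: since $e(\beta+m\delta)=\sum_{\nu\in I^{\beta+m\delta}}e(\nu)$ is defined purely combinatorially and the surjection sends each $e(\nu)$ to $e(\nu)$, the block idempotents correspond automatically.
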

\begin{proof}
Since $\langle h_{\nu_1}, \bar\Lambda\rangle\leq \langle h_{\nu_1}, \Lambda\rangle$ for any $\nu\in I^{\beta+m\delta}$, we have a surjective algebra homomorphism from $R^{\Lambda}(\beta+m\delta)$ to $R^{\bar\Lambda}(\beta+m\delta)$, which sends $e(\nu), x_i, \psi_i$ to $e(\nu), x_i, \psi_i$ elementwise. 
The assertion follows from the definition of infinite (resp. wild) representation type.
\end{proof}

\begin{Lemma}\label{lem::reduction-arrow}
Suppose that there is an arrow $\Lambda'\overset{(i,j)}\longrightarrow\Lambda''$ in $\vec{C}(\Lambda)$.
If $R^{\Lambda}(\beta_{\Lambda'}+m\delta)$ is of infinite (resp. wild) representation type for $m\in \Z_{\ge 0}$, then $R^{\Lambda}(\beta_{\Lambda''}+m\delta)$ is of infinite (resp. wild) representation type.
\end{Lemma}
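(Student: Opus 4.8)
The plan is to factor the single arrow $\Lambda'\overset{(i,j)}{\longrightarrow}\Lambda''$ into a chain of elementary simple-root additions and to transport the representation type up one root at a time. The mechanism for a single step is the standard $i$-induction reduction for cyclotomic quiver Hecke algebras used in \cite{AP-rep-type-A_2-level-1, Ar-rep-type}, which I would isolate as a preliminary lemma: for $\gamma\in Q_+$ and $c\in I$ with $\langle h_c,\Lambda-\gamma\rangle\ge 1$, if $R^{\Lambda}(\gamma)$ is of infinite (resp. wild) representation type, then so is $R^{\Lambda}(\gamma+\alpha_c)$. I would prove this with the idempotent $e(\gamma,c)\in R^{\Lambda}(\gamma+\alpha_c)$ obtained by adjoining a strand of residue $c$: the hypothesis $\langle h_c,\Lambda-\gamma\rangle\ge 1$ guarantees that the corner algebra $e(\gamma,c)R^{\Lambda}(\gamma+\alpha_c)e(\gamma,c)$ is nonzero and admits $R^{\Lambda}(\gamma)$ as a quotient (by killing the dot $x_{|\gamma|+1}$ on the new strand). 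The type is then transported by two general facts about finite-dimensional algebras, both of the same kind already exploited in the proof of Lemma \ref{lem::reduction-level}: representation type cannot drop under passage to a quotient, and it cannot drop when passing from a corner algebra $eAe$ to $A$ (via the fully faithful functor $Ae\otimes_{eAe}-$, invoking the tame-wild dichotomy in the wild case). Composing a quotient step with a corner step yields the single-step reduction.

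Next I would realize the arrow combinatorially using Lemma \ref{lem::recurrence-beta}, which turns the arrow in $\vec C(\Lambda)$ into an explicit sequence $\beta_0=\beta_{\Lambda'},\beta_1,\dots,\beta_N=\beta_{\Lambda''}$ in $Q_+$ with $\beta_s=\beta_{s-1}+\alpha_{c_s}$, thereby resolving $\Lambda'\to\Lambda''$ into $N$ elementary moves. The shift by $m\delta$ is harmless throughout: since $\langle h_c,\delta\rangle=0$ for every $c\in I$, we have $\langle h_{c_s},\Lambda-(\beta_{s-1}+m\delta)\rangle=\langle h_{c_s},\Lambda-\beta_{s-1}\rangle$, so the positivity condition required at each step is unaffected. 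Hence it suffices to verify $\langle h_{c_s},\Lambda-\beta_{s-1}\rangle\ge 1$ for every $s$ and then apply the single-step reduction successively along $\beta_0+m\delta,\dots,\beta_N+m\delta$, starting from the hypothesis that $R^{\Lambda}(\beta_{\Lambda'}+m\delta)$ is of infinite (resp. wild) type.

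The one delicate point, which I expect to be the main obstacle, is that Lemma \ref{lem::recurrence-beta} certifies $\langle h_{c_s},\Lambda-\beta_{s-1}\rangle\ge 1$ only for $1\le s\le N-1$ in its case (1); the final step is left open. I would close this gap by invoking dominance of the endpoint: by \eqref{equ::max-set} (together with Theorem \ref{theo::bijection-phi-inversion}) one has $\Lambda-\beta_{\Lambda''}=\phi_\Lambda^{-1}(\Lambda'')\in\max^+(\Lambda)\subseteq P^+$, so $\langle h_{c_N},\Lambda-\beta_N\rangle\ge 0$; since $\beta_N=\beta_{N-1}+\alpha_{c_N}$ and $\langle h_{c_N},\alpha_{c_N}\rangle=2$, this forces $\langle h_{c_N},\Lambda-\beta_{N-1}\rangle\ge 2\ge 1$. (In case (2) of Lemma \ref{lem::recurrence-beta} the positivity is already asserted at every step, so no extra argument is needed there.) Consequently the positivity condition holds at all $N$ steps, and iterating the single-step reduction carries the infinite (resp. wild) representation type from $R^{\Lambda}(\beta_{\Lambda'}+m\delta)$ to $R^{\Lambda}(\beta_{\Lambda''}+m\delta)$, which is the assertion. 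The care points are getting the corner-algebra surjection in the single-step lemma exactly right and confirming that the endpoint-dominance estimate indeed supplies precisely the step that Lemma \ref{lem::recurrence-beta} leaves unverified.
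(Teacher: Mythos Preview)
Your overall architecture is correct and matches the paper: resolve the arrow into simple-root additions via Lemma~\ref{lem::recurrence-beta}, note that $\langle h_c,\delta\rangle=0$ so the shift by $m\delta$ is harmless, and iterate a single-step transfer. Your observation that dominance of $\Lambda-\beta_{\Lambda''}\in P^+$ supplies $\langle h_{c_N},\Lambda-\beta_{N-1}\rangle\ge 2$ at the final step is also correct (and in fact the \emph{proof} of Lemma~\ref{lem::recurrence-beta}(1) already verifies that step, even though its statement stops one short).

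The genuine gap is in your single-step mechanism. The claimed surjection $e(\gamma,c)R^{\Lambda}(\gamma+\alpha_c)e(\gamma,c)\twoheadrightarrow R^{\Lambda}(\gamma)$ obtained by killing $x_{|\gamma|+1}$ is false. When $\gamma$ already contains $\alpha_c$, the relation $(\psi_j x_j-x_{j+1}\psi_j)e(\nu)=-e(\nu)$ for $\nu_j=\nu_{j+1}=c$ (Definition~\ref{def::cyclotomic-quiver}(4)) forces the two-sided ideal generated by $x_{|\gamma|+1}$ to contain $1$. Concretely, take $\Lambda=4\Lambda_0$, $\gamma=\alpha_0$, $c=0$, so $\langle h_0,\Lambda-\gamma\rangle=2$ and the corner is all of $R^{4\Lambda_0}(2\alpha_0)$. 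Modulo $(x_2)$ the relation gives $\psi_1 x_1\equiv-1$; multiplying by $x_1^{3}$ and using $x_1^4=0$ yields $x_1^3=0$, then successively $x_1^2=0$, $x_1=0$, and finally $0=-1$. So the quotient is zero, not $\k[x]/(x^4)$. This exact step actually occurs along the arrow $(0,0)\colon 2\Lambda_0+\Lambda_1+\Lambda_\ell\to 2\Lambda_1+2\Lambda_\ell$ in $\vec C(4\Lambda_0)$, so the obstruction is not artificial.

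The paper's single-step argument is functorial rather than algebraic: with $q_c=\langle h_c,\Lambda-\gamma\rangle\ge 1$, the isomorphism $E_cF_c\simeq F_cE_c\oplus q_c\,\mathrm{id}$ from \cite{Ka-klr-alg,KK-categorification} makes every $R^{\Lambda}(\gamma)$-module $M$ a direct summand of $E_cF_c(M)$. Since $\dim F_c(M)\le C\dim M$ for a constant $C$, wildness transfers by \cite[Proposition~2.3]{EN-rep-type-Hecke}; for infiniteness the paper argues the contrapositive (if $R^\Lambda(\gamma+\alpha_c)$ has finitely many indecomposables, applying $E_c$ to their direct sum gives a module whose $\mathrm{add}$-closure contains every indecomposable $R^\Lambda(\gamma)$-module). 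Replace your corner-plus-quotient step with this functorial one and the rest of your outline goes through unchanged.
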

\begin{proof}
There are the induction functor $F_i:R^{\Lambda}(\beta)\rightarrow R^{\Lambda}(\beta+\alpha_i)$ and the restriction functor $E_i:R^{\Lambda}(\beta)\rightarrow
R^{\Lambda}(\beta-\alpha_i)$ for any $\beta\in Q_+$ and $i\in I$. 
It is proved in \cite{Ka-klr-alg, KK-categorification} that if $q_i=\langle h_i, \Lambda-\beta \rangle\geq 0$, then
$$
F_i E_i\oplus q_i\text{id}\simeq E_i F_i.
$$
Moreover, if $q_i>0$, then any finite-dimensional $R^{\Lambda}(\beta)$-module $M$ is a direct summand of $E_iF_i(M)$, and $\dim F_i(M)\leq C \dim M$ for a constant $C$.
Thus, by \cite[Proposition 2.3]{EN-rep-type-Hecke}, the wildness of $R^{\Lambda}(\beta)$ implies the wildness of $R^{\Lambda}(\beta+\alpha_i)$ if $\langle h_i, \Lambda-\beta \rangle\geq 1$.
Using this fact repeatedly to the sequence of $\beta_i$'s in Lemma \ref{lem::recurrence-beta}, we conclude that the wildness of $R^{\Lambda}(\beta_{\Lambda'}+m\delta)$ implies the wildness of $R^{\Lambda}(\beta_{\Lambda''}+m\delta)$.

It remains to show that the infiniteness of $R^{\Lambda}(\beta)$ implies the infiniteness of $R^{\Lambda}(\beta+\alpha_i)$ if $\langle h_i, \Lambda-\beta \rangle\geq 1$.
Equivalently, we prove that if $R^{\Lambda}(\beta+\alpha_i)$ is representation-finite, then so is $R^{\Lambda}(\beta)$.
We assume that $R^{\Lambda}(\beta+\alpha_i)$ is representation-finite
and $\{N_1,N_2,\ldots,N_t\}$ is a complete set of the isomorphism classes of indecomposable $R^{\Lambda}(\beta+\alpha_i)$-modules.
Let $N:=\oplus_{i=1}^tN_i$. 
Then, we have $X\in \text{add}\ N$ for any $R^{\Lambda}(\beta+\alpha_i)$-module $X$, where $\text{add}\ N$ is the full subcategory of all direct summands of finite direct sums of $N$.

For any indecomposable $R^{\Lambda}(\beta)$-module $M$, we have $F_i(M)\in\text{add}\ N$ and hence,
$M\in \text{add}\ E_i(N)$.
Since $E_i(N)$ has finitely many indecomposable direct summands, $R^\Lambda(\beta)$ is representation-finite.
\end{proof}

We have the following immediate corollary of Lemma \ref{lem::reduction-arrow}.

\begin{Cor}\label{cor::reduction-path}
If $R^{\Lambda}(\beta_{\Lambda'}+m\delta)$ for $\Lambda'\in\vec C(\Lambda)$ and $m\in\Z_{\geq0}$ is of infinite (resp. wild) representation type and there is a directed path from $\Lambda'$ to $\Lambda''$ in $\vec{C}(\Lambda)$, then $R^{\Lambda}(\beta_{\Lambda''}+m\delta)$ is again of infinite (resp. wild) representation type.
\end{Cor}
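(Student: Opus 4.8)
The plan is to obtain the statement by iterating Lemma~\ref{lem::reduction-arrow} along the path, the argument being a straightforward induction on the path length. Recall that a directed path from $\Lambda'$ to $\Lambda''$ in $\vec C(\Lambda)$ is a sequence
$$
\Lambda'=\Lambda^{(0)}\overset{(i_1,j_1)}{\longrightarrow}\Lambda^{(1)}\overset{(i_2,j_2)}{\longrightarrow}\cdots\overset{(i_r,j_r)}{\longrightarrow}\Lambda^{(r)}=\Lambda''
$$
in which every $\Lambda^{(s-1)}\to\Lambda^{(s)}$ is an arrow of $\vec C(\Lambda)$. First I would fix $m\in\Z_{\geq0}$ and induct on the length $r$ of such a path. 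For $r=0$ there is nothing to prove, since $\Lambda'=\Lambda''$ forces $\beta_{\Lambda'}=\beta_{\Lambda''}$ and the two algebras coincide; the case $r=1$ is precisely Lemma~\ref{lem::reduction-arrow}.

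For the inductive step I would apply the induction hypothesis to the initial subpath $\Lambda^{(0)}\to\cdots\to\Lambda^{(r-1)}$ of length $r-1$, concluding that $R^\Lambda(\beta_{\Lambda^{(r-1)}}+m\delta)$ is of infinite (resp. wild) representation type. The final arrow $\Lambda^{(r-1)}\overset{(i_r,j_r)}{\longrightarrow}\Lambda^{(r)}$ belongs to $\vec C(\Lambda)$ by the definition of a directed path, so Lemma~\ref{lem::reduction-arrow} applies verbatim and transfers the infinite (resp. wild) property to $R^\Lambda(\beta_{\Lambda^{(r)}}+m\delta)=R^\Lambda(\beta_{\Lambda''}+m\delta)$, completing the induction. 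The key point that makes this clean is that Lemma~\ref{lem::reduction-arrow} preserves the value of $m$ at each step, so the same $m$ can be carried unchanged from one end of the path to the other; the \emph{infinite} and \emph{wild} cases run in parallel because Lemma~\ref{lem::reduction-arrow} already handles both simultaneously.

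There is essentially no serious obstacle here, as the corollary is a formal consequence of Lemma~\ref{lem::reduction-arrow}; the only thing to check is that each arrow used satisfies the hypothesis of that lemma, which holds automatically because every edge of a directed path is by construction an arrow of $\vec C(\Lambda)$. If I wished to be fully self-contained I would only need to recall that the hypothesis of Lemma~\ref{lem::reduction-arrow} concerns a single arrow, so no compatibility condition between consecutive arrows (such as matching residues or labels $(i,j)$) is required, and the induction goes through without additional bookkeeping.
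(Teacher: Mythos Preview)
Your proposal is correct and matches the paper's approach: the paper simply states this as an immediate corollary of Lemma~\ref{lem::reduction-arrow}, and your induction on the path length is exactly the routine unwinding of that remark.
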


\begin{Defn}\label{def::non-wild-component}
Let $NW(\Lambda)$ be the set of vertices $\Lambda'\in \vec C(\Lambda)$ such that $R^{\Lambda}(\beta_{\Lambda'})$ is not of wild representation type. 
We define $\mathcal {NW}(\Lambda):=\{ \beta_{\Lambda'}\mid \Lambda'\in NW(\Lambda)\}$.
\end{Defn}

By Lemma \ref{lem::path-arrow} and Lemma \ref{lem::reduction-arrow},
there is a directed path from $\Lambda$ to any $\Lambda'\in NW(\Lambda)$ such that the vertices of the path are contained in $NW(\Lambda)$.
We will use the next corollary of Lemma \ref{lem::reduction-arrow} when we complete Step 3 to prove the main result.

\begin{Cor}\label{cor::cover-non-wild}
Let $S\subset \pcl(\Lambda)$ be a vertex set containing $\Lambda$ and define
$$
S':=\cup_{\Lambda'\in S}S(\Lambda')_\rightarrow\setminus  S.
$$
If $R^\Lambda(\beta_{\Lambda''})$ is wild for any $\Lambda''\in S'$,
then $NW(\Lambda)\subseteq S$.
\end{Cor}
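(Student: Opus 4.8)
The plan is to argue by contradiction, exploiting the fact that wildness propagates \emph{forward} along directed paths in $\vec C(\Lambda)$. Suppose the conclusion fails, so that there is a vertex $\Lambda'\in NW(\Lambda)$ with $\Lambda'\notin S$. Since $\vec C(\Lambda)$ is connected with source $\Lambda$ by Lemma \ref{lem::path-arrow}, I would fix a directed path $\Lambda=\Lambda^{(0)}\to\Lambda^{(1)}\to\cdots\to\Lambda^{(n)}=\Lambda'$ in $\vec C(\Lambda)$, and then analyze where this path first leaves $S$.

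First I would observe that \emph{every} vertex on this path lies in $NW(\Lambda)$. Indeed, for each $0\le m\le n$ there is a directed path from $\Lambda^{(m)}$ to the terminal vertex $\Lambda'\in NW(\Lambda)$ (namely the tail segment $\Lambda^{(m)}\to\cdots\to\Lambda^{(n)}$); were $R^\Lambda(\beta_{\Lambda^{(m)}})$ wild, Corollary \ref{cor::reduction-path} would force $R^\Lambda(\beta_{\Lambda'})$ to be wild as well, contradicting $\Lambda'\in NW(\Lambda)$. This is just the contrapositive reformulation of the reduction along arrows recorded after Definition \ref{def::non-wild-component}, and it is the only structural input the argument really needs.

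Next I would locate the exit vertex. Since $\Lambda^{(0)}=\Lambda\in S$ by hypothesis while $\Lambda^{(n)}=\Lambda'\notin S$, there is a smallest index $m\ge 1$ with $\Lambda^{(m)}\notin S$; then $\Lambda^{(m-1)}\in S$, and the arrow $\Lambda^{(m-1)}\to\Lambda^{(m)}$ shows $\Lambda^{(m)}\in S(\Lambda^{(m-1)})_\rightarrow$. By the definition of $S'$ this gives $\Lambda^{(m)}\in S'$, so the hypothesis of the corollary makes $R^\Lambda(\beta_{\Lambda^{(m)}})$ wild. But $\Lambda^{(m)}$ lies on the chosen path, so by the previous paragraph $\Lambda^{(m)}\in NW(\Lambda)$, i.e.\ $R^\Lambda(\beta_{\Lambda^{(m)}})$ is \emph{not} wild---a contradiction. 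Hence no such $\Lambda'$ exists and $NW(\Lambda)\subseteq S$.

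I do not anticipate a genuine obstacle here: the statement is essentially a frontier argument, and the only nontrivial ingredient---propagation of wildness along directed paths---is already established in Corollary \ref{cor::reduction-path}. The one point requiring care is that this propagation is \emph{one-directional} (wildness travels forward along arrows, equivalently non-wildness travels backward), so I must apply it to the tail segment from each intermediate vertex to $\Lambda'$ rather than to the initial segment from $\Lambda$. Choosing the \emph{first} exit vertex $\Lambda^{(m)}$, rather than an arbitrary vertex outside $S$, is precisely what keeps $\Lambda^{(m-1)}$ inside $S$ and thereby places $\Lambda^{(m)}$ in $S'$ so that the wildness hypothesis applies.
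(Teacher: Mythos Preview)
Your proof is correct and follows essentially the same approach as the paper: both argue by contradiction, take a directed path from $\Lambda$ to a hypothetical $\Lambda'\in NW(\Lambda)\setminus S$, locate the first exit vertex from $S$, and derive a contradiction via Corollary~\ref{cor::reduction-path}. The only cosmetic difference is that you first record that every vertex on the path lies in $NW(\Lambda)$ and then obtain the contradiction at the exit vertex, whereas the paper propagates wildness forward from the exit vertex to $\Lambda'$; these are logically equivalent reorganizations of the same one-line argument.
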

\begin{proof}
Suppose there exists some $\Lambda'\in NW(\Lambda)$ such that $\Lambda'\notin S$. 
By Lemma \ref{lem::path-arrow}, there is a directed path
$$
\Lambda=\Lambda^{(0)}\rightarrow \Lambda^{(1)}\rightarrow \ldots\rightarrow \Lambda^{(m)}=\Lambda' \in \vec C(\Lambda).
$$
Since $\Lambda\in S$ and $\Lambda'\notin S$, there exists some $0\leq i\leq m-1$ such that $\Lambda^{(i+1)}\notin S$ and $\Lambda^{(j)}\in S$ for all $j\leq i$.
It implies that $\Lambda^{(i+1)}\in S'$. 
By our assumption, $R^\Lambda(\beta_{\Lambda^{(i+1)}})$ is wild, and so is $R^{\Lambda}(\beta_{\Lambda'})$ by Corollary \ref{cor::reduction-path}. 
This is a contradiction since $\Lambda'\in NW(\Lambda)$.
\end{proof}

\subsection{Main strategy}
As the case of $k=2$ is already settled by the first author \cite{Ar-rep-type}, we
assume that $\Lambda\in \pcl$ with $k\geq 3$.
We determine the representation type of $R^{\Lambda}(\beta+m\delta)$ for any $\beta\in \P^\Lambda, m\in \Z_{\ge 0}$ by the following three steps.

\textbf{Step 1:} 
We show that $R^{\Lambda}(\beta+m\delta)$ is wild for all $m\geq 1$ if $\beta \neq 0$ and $R^{\Lambda}(m\delta)$ is wild for all $m\geq 2$, by using Lemma \ref{lem::reduction-level} and Lemma \ref{lem::reduction-arrow}.
It follows that if $R^{\Lambda}(\gamma)$ with $\gamma \in O(\Lambda)$ is not of wild representation type, then $\gamma\in \mathcal {NW}(\Lambda)\cup\{\delta\}$.

\textbf{Step 2:} 
We determine the representation type of $R^{\Lambda}(\gamma)$ for $\gamma\in \mathcal T(\Lambda)\cup\{\delta\}$. 
This is done basically via case-by-case consideration. 
In particular, the systematic approach introduced by the first author in \cite{Ar-rep-type} is well applied to find the quiver presentation of the basic algebra of $R^{\Lambda}(\gamma)$.

\textbf{Step 3:} 
We show that $\mathcal{NW}(\Lambda)\subset \mathcal T(\Lambda)$. 
This is proved via case-by-case consideration on small $k$ (i.e., $k=3,4,5,6$) and via induction on $k\geq 7$. 
Note that, by Corollary \ref{cor::cover-non-wild}, it suffices to consider $S(\Lambda')_\rightarrow$ for $\Lambda'\in T(\Lambda)$, so that the case-by-case analysis for small $k$ is feasible.

\subsection{Main result}\label{sec::main-result}
We introduce several more notations to state our main result.
For any $\Lambda=\sum_{i\in I(\Lambda)_0}m_i\Lambda_i\in \pcl$ with $k\geq 3$, we have defined  $I(\Lambda)_i$ and $\mathcal T(\Lambda)_i$ in Subsection \ref{section::subgraph}. We enumerate indices in $I(\Lambda)_0$ as
$$
I(\Lambda)_0=\{i_j\mid 1\leq j\leq h, i_1<i_2<\ldots<i_h\}
$$
and $i_{j-1}:=i_h$ (resp. $i_{j+1}:=i_1$) if $j=1$ (resp. $j=h$).
Then, we define the following subsets of $\mathcal T(\Lambda)$.
$$
\begin{aligned}
  \mathscr T(\Lambda)_0:=
  &\left\{\textstyle \sum_{m\in [i_j,i_{j+1}]}\alpha_{m} \mid  m_{i_j}=1, m_{i_{j+1}}=1, 1\leq j\leq h \right\} \subseteq \mathcal T(\Lambda)_0
  \\
  \mathscr T(\Lambda)_1:=
  &\left\{\textstyle \sum_{i_j\leq m\leq i_{j+1}}\alpha_m \mid m_{i_j}=1, m_{i_{j+1}}>1 \text{ or } m_{i_j}>1, m_{i_{j+1}}=1\right\} \subseteq \mathcal T(\Lambda)_0
  \\
  \mathscr T(\Lambda)_2:=
  &\left\{2\alpha_{i_j}+\alpha_{i_j-1}+\alpha_{i_j+1}\mid m_{i_j}=2, i_{j-1}\not\equiv_e i_j-1,i_{j+1}\not\equiv_e i_{j}+1\right\} \subseteq \mathcal T(\Lambda)_2
  \\
  &\text{if } \ch \k\neq 2. \text{ Otherwise, } \mathscr T(\Lambda)_2:=\emptyset.\\
  \mathscr T(\Lambda)_3:=
  &\left\{ 2\alpha_{i_j}+\alpha_{i_j+1}\mid m_{i_j}=3, i_{j+1}\not\equiv_e i_{j}+1\right\} 
  \\
  &\qquad\qquad\cup \left\{2\alpha_{i_j}+\alpha_{i_j-1}\mid m_{i_j}=3, i_{j-1}\not\equiv_e i_j-1\right\}  \subseteq \mathcal T(\Lambda)_3
  \\
  &\text{if } \ch \k\neq 3. \text{ Otherwise, } \mathscr T(\Lambda)_3:=\emptyset.
  \\
  \mathscr T(\Lambda)_4:=
  &\left\{2\alpha_{i_j}\mid m_{i_j}=4\right\} \subseteq \mathcal T(\Lambda)_4 
  \text{ if } \ch \k\neq 2. \text{ Otherwise, } \mathscr{T}(\Lambda)_4:=\emptyset.
  \\
  \mathscr T(\Lambda)_5:=
  &\left\{\alpha_{i_j}+\alpha_{i_p} \mid i_p\not\equiv_e i_j\pm1, m_{i_j}=m_{i_p}=2, j\neq p \right\} \subseteq \mathcal T(\Lambda)_5
\end{aligned}
$$

We will show in Step 2 that $R^\Lambda(\beta)$, for $\beta\in \mathscr F(\Lambda)$ (resp. $\beta\in \mathscr T(\Lambda)$) has finite (resp. tame) representation type, where 
\begin{equation}\label{equ::def-finite-tame}
\mathscr F(\Lambda)=\{0\}\cup\mathscr T(\Lambda)_0\cup \mathcal T(\Lambda)_1 
\quad \text{and}\quad  
\mathscr T(\Lambda)= \cup_{1\leq j\leq 5} \mathscr T(\Lambda)_j.
\end{equation}

By analyzing the representation type of $R^\Lambda(\delta)$ separately, we have the following.

\begin{Theorem}\label{theo::main-result}
Suppose $\Lambda\in \pcl$ with $k\geq 3$ and let $R^{\Lambda}(\beta+m\delta)$, for $\beta\in \P^\Lambda, m\in \Z_{\geq 0}$, be the cyclotomic quiver Hecke algebra of type $A_\ell^{(1)}$.
\begin{enumerate}
    \item If $\beta=0$, then $R^{\Lambda}(m\delta)$ is
    \begin{itemize}
        \item of finite representation type if $m=0$,
        \item of tame representation type if $m=1$, $\Lambda=k\Lambda_i$, $\ell=1$ with $t\neq \pm 2$,
        \item of tame representation type if $m=1$, $\Lambda=k\Lambda_i$, $\ell\geq 2$ with $t\neq (-1)^{\ell+1}$.
    \end{itemize}
Otherwise, it is of wild representation type.
    \item If $\beta\neq 0$, then $R^{\Lambda}(\beta+m\delta)$ is
    \begin{itemize}
        \item of finite representation type if $m=0$, $\beta=\alpha_i$ and $\Lambda\in 2\Lambda_i+P^+_{cl,k-2}$ with $i\in I$,
        \item of finite representation type if $m=0$ and $\beta\in \mathscr T(\Lambda)_0$,
        \item of tame representation type if $m=0$ and $\beta\in \cup_{1\leq j\leq 5}\mathscr T(\Lambda)_j$.
    \end{itemize}
    Otherwise, it is of wild representation type.
\end{enumerate}
\end{Theorem}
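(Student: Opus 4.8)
The plan is to assemble the three-step strategy of Subsection~4.3, handling the wildness, finiteness and tameness assertions separately and gluing them along the non-wild locus. Throughout I would use that the representation type is constant on each $W$-orbit (Rickard, Krause, and \cite{CR-categorification}), so that only $\gamma\in O(\Lambda)$ need be considered, and that by \eqref{equ::def-finite-tame} the finite locus should come out as $\mathscr F(\Lambda)=\{0\}\cup\mathscr T(\Lambda)_0\cup\mathcal T(\Lambda)_1$ and the tame locus as $\mathscr T(\Lambda)$ together with the $\delta$-exceptions. First I would dispose of all blocks carrying a nontrivial $\delta$-part (Step~1): using the level-reduction Lemma~\ref{lem::reduction-level} to descend to small $k$ where wildness is visible, together with the arrow-propagation Lemma~\ref{lem::reduction-arrow} and its path form Corollary~\ref{cor::reduction-path}, I would establish wildness of the minimal blocks $R^{\Lambda}(2\delta)$ and $R^{\Lambda}(\beta_{\Lambda'}+\delta)$ for $\Lambda'$ an immediate successor of $\Lambda$ in $\vec C(\Lambda)$, and then spread wildness forward along the directed paths of $\vec C(\Lambda)$ (for fixed $m$) and in the $\delta$-direction by iterating the induction functors $F_i$ of Lemma~\ref{lem::reduction-arrow} whenever the relevant coroot values stay positive. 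Since by Lemma~\ref{lem::path-arrow} every $\Lambda'\neq\Lambda$ is reachable from $\Lambda$ in $\vec C(\Lambda)$, this yields that $R^{\Lambda}(\beta+m\delta)$ is wild whenever $\beta\neq 0,\ m\geq 1$ or $\beta=0,\ m\geq 2$; equivalently, a non-wild block forces $\gamma\in\mathcal{NW}(\Lambda)\cup\{\delta\}$.

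Next I would reduce the $m=0$ part to the finite set $\mathcal T(\Lambda)$ by proving $\mathcal{NW}(\Lambda)\subseteq\mathcal T(\Lambda)$ (Step~3). Applying Corollary~\ref{cor::cover-non-wild} with $S=T(\Lambda)$, it suffices to show $R^{\Lambda}(\beta_{\Lambda''})$ is wild for every successor $\Lambda''\in S(\Lambda')_{\rightarrow}\setminus T(\Lambda)$ with $\Lambda'\in T(\Lambda)$. For large $k$ I would strip off a fundamental weight $\Lambda_r$ by Lemma~\ref{lem::reduction-step-3}, reducing such a configuration to level $k-1$ and invoking Lemma~\ref{lem::reduction-level} with the induction hypothesis; the stratum-dependent thresholds there ($k\geq 5$ for $s=0,1,2$, $k\geq 6$ for $s=3$, $k\geq 7$ for $s=4,5$) anchor the induction, leaving only the finitely many configurations with $k=3,4,5,6$ to be checked against the explicit subquivers of $\vec C(\Lambda)$. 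Combined with Step~1, any non-wild $R^{\Lambda}(\gamma)$ then satisfies $\gamma\in\mathcal T(\Lambda)\cup\{\delta\}$.

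It remains to compute the representation type on $\mathcal T(\Lambda)\cup\{\delta\}$ (Step~2). For $\gamma=\delta$ I would treat $R^{\Lambda}(\delta)$ directly: the $\sigma$-symmetry of Proposition~\ref{prop::iso-sigma} normalizes $\Lambda$, and the graded-dimension formula of Theorem~\ref{theo::graded-dim} produces the basic algebra, separating the homogeneous case $\Lambda=k\Lambda_i$ (tame exactly under the stated conditions on $t$) from all other $\Lambda$ (wild). For $\gamma\in\mathcal T(\Lambda)$ I would run through the strata $\mathcal T(\Lambda)_0,\dots,\mathcal T(\Lambda)_5$ following the systematic method of \cite{Ar-rep-type}: the tableau count of Theorem~\ref{theo::graded-dim} gives the graded dimensions and hence the quiver and relations of the basic algebra, and each element is identified as $\k[X]/(X^m)$ or a Brauer tree algebra (the members of $\mathscr F(\Lambda)$), as one of the listed local or Brauer graph algebras (the members of $\mathscr T(\Lambda)$), or as wild (the complement $\mathcal T(\Lambda)_j\setminus\mathscr T(\Lambda)_j$, which absorbs the $\ch\k$-dependent exclusions in $\mathscr T(\Lambda)_2,\mathscr T(\Lambda)_3,\mathscr T(\Lambda)_4$). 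Lemmas~\ref{lem::embedding-Lambda} and~\ref{lem::embdding-subgraph} let me carry out each stratum at the minimal level where it first appears, keeping the computations finite, while Theorems~\ref{Theorem:brauer-graph-derived-closed} and~\ref{thm-brauer-graph-condition} supply the derived-invariant data pinning down the Brauer graphs.

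Assembling Steps~1--3 gives the asserted trichotomy. The hard part will be Step~2: one must determine the basic algebras in the tame strata \emph{precisely}, identifying the exact Brauer graph, its multiplicities, and the characteristic dependence, rather than merely bounding the representation type, and it is here that the recent Brauer-graph-algebra results are indispensable. By contrast, Step~3 is delicate but, once Lemma~\ref{lem::reduction-step-3} is in place, reduces to finite bookkeeping over the small values $k=3,4,5,6$, and Step~1 is largely a matter of propagating a handful of explicit wild base cases.
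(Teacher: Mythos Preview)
Your proposal is correct and follows the paper's three-step strategy essentially verbatim: Step~1 via Lemmas~\ref{lem::reduction-level} and~\ref{lem::reduction-arrow}, Step~3 via Corollary~\ref{cor::cover-non-wild} with $S=T(\Lambda)$ and the level-reduction of Lemma~\ref{lem::reduction-step-3} down to $k\in\{3,4,5,6\}$, and Step~2 by direct computation of basic algebras stratum by stratum. One minor correction: Theorems~\ref{Theorem:brauer-graph-derived-closed} and~\ref{thm-brauer-graph-condition} are \emph{not} needed for Step~2 or for Theorem~\ref{theo::main-result} at all---the paper computes each basic algebra explicitly and reads off the representation type from the quiver-with-relations presentation (using \cite{Er-tame-block}, \cite{Ringel-local-alg}, \cite{H-wild-two-point}); those Brauer-graph derived-equivalence results enter only in Section~8 for the Morita-equivalence corollary, so you should drop that dependency from your outline.
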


\begin{proof}
It follows from Step 1 and Step 3 that $R^\Lambda(\gamma)$ is wild for any $\gamma\notin \mathcal T(\Lambda)\cup\{\delta\}$.
Then, Step 2 provides us with a complete classification of representation types for $R^{\Lambda}(\gamma)$ with $\gamma\in \mathcal T(\Lambda)\cup\{\delta\}$: see Propositions \ref{prop::step-2-result-delta}, \ref{prop::step-2-result-T_1}, \ref{prop::step-2-result-T_0}, \ref{prop::step-2-result-T_2}, \ref{prop::step-2-result-T_3}, \ref{prop::step-2-result-T_4}, \ref{prop::step-2-result-T_5}.
\end{proof}

\section{Proof of Step 1}
In this section, we prove Step 1 by showing that for any $\gamma\in O(\Lambda)=\P^\Lambda+\Z_{\geq0}\delta$, $R^{\Lambda}(\gamma)$ is wild if $\gamma\notin \P^\Lambda\cup\{\delta\}$.
We consider $\Lambda=\sum_{i\in I(\Lambda)_0}m_i\Lambda_i\in \pcl$ with $k\geq 3$ and recall $I(\Lambda)_0=\{i\in I\mid m_i>0\}$.
We divide the proof into the following three lemmas.

\begin{Lemma}\label{lem::step-1-11}
If $|I(\Lambda)_0|=1$, then $R^\Lambda(\beta_{\Lambda'}+m\delta)$ is of wild representation type for any $\beta_{\Lambda'}\in\P^\Lambda\setminus \{0\}$ and $m\geq1$.
\end{Lemma}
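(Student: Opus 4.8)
The plan is to collapse the whole infinite family to one small algebra using the three available reductions: the $\sigma$-symmetry of Proposition~\ref{prop::iso-sigma}, the level reduction of Lemma~\ref{lem::reduction-level}, and the arrow reduction of Lemma~\ref{lem::reduction-arrow}/Corollary~\ref{cor::reduction-path}. Since $|I(\Lambda)_0|=1$ means $\Lambda=k\Lambda_i$ for a single index $i$, I would first invoke Proposition~\ref{prop::iso-sigma} together with \eqref{equ::iso-sigma-graph} to assume $\Lambda=k\Lambda_0$ without loss of generality. For this $\Lambda$ we have $T(\Lambda)_0=\emptyset$ and $I(\Lambda)_1=\{0\}$, so by Remark~\ref{rem::-P-lambda} the only arrow leaving $\Lambda$ is $\Lambda\to\Lambda_{0,0}$, where $\Lambda_{0,0}=(k-2)\Lambda_0+\Lambda_1+\Lambda_\ell$ and $\beta_{\Lambda_{0,0}}=\alpha_0$ by Remark~\ref{rem::T_Lambda-beta}. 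By Lemma~\ref{lem::path-arrow} every vertex $\Lambda'\neq\Lambda$ is joined to $\Lambda$ by a directed path, which is forced to begin with this unique arrow; hence there is a directed path from $\Lambda_{0,0}$ to each such $\Lambda'$. By Corollary~\ref{cor::reduction-path} it therefore suffices, for each fixed $m$, to prove that $R^{k\Lambda_0}(\alpha_0+m\delta)$ is wild.

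Next I would remove the dependence on $m$ by an induction that reduces everything to $m=1$. The mechanism established inside the proof of Lemma~\ref{lem::reduction-arrow} is that wildness of $R^{\Lambda}(\beta)$ forces wildness of $R^{\Lambda}(\beta+\alpha_i)$ whenever $\langle h_i,\Lambda-\beta\rangle\ge 1$. To pass from $\alpha_0+m\delta$ to $\alpha_0+(m+1)\delta$ I would add the simple roots in the order $\alpha_1,\alpha_2,\ldots,\alpha_\ell,\alpha_0$. Using $\langle h_j,\Lambda-\beta\rangle=k\delta_{j0}-\langle h_j,\alpha_0+\alpha_1+\cdots+\alpha_{j-1}\rangle$, a direct check shows that when $\alpha_j$ is added this value equals $-a_{j,j-1}=1$ for $2\le j\le\ell-1$, equals $2$ for $j=\ell$, equals $1$ (or $2$ when $\ell=1$) for $j=1$, and equals $k\ge 1$ for the final $\alpha_0$, at which stage $\beta=(m+1)\delta$; each is $\ge 1$, so wildness propagates along the sweep. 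Thus $R^{k\Lambda_0}(\alpha_0+m\delta)$ is wild for all $m\ge 1$ once the case $m=1$ is known. Note that a full $\delta$-shift is not an arrow of $\vec C(\Lambda)$, so at this point I must invoke the induction-functor fact directly rather than Corollary~\ref{cor::reduction-path} itself.

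The main obstacle is the single base computation that $R^{k\Lambda_0}(\alpha_0+\delta)$ is wild. Here I would apply Lemma~\ref{lem::reduction-level} to the splitting $k\Lambda_0=2\Lambda_0+(k-2)\Lambda_0$, noting $\alpha_0\in\P^{2\Lambda_0}$, thereby reducing to wildness of $R^{2\Lambda_0}(\alpha_0+\delta)$, which falls under the already-settled level-two classification and is wild by \cite{Ar-rep-type}. Alternatively, to keep the argument internal one can compute the basic algebra of $R^{3\Lambda_0}(\alpha_0+\delta)$ explicitly: the graded dimensions $\dim_q e(\nu)R^{\Lambda}(\beta)e(\nu')$ furnished by Theorem~\ref{theo::graded-dim} determine its Gabriel quiver and relations, from which one exhibits a wild configuration (for instance a vertex carrying enough loops and arrows). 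Combining this base case with the $m$-induction of the second paragraph and the arrow propagation of the first yields wildness of $R^{\Lambda}(\beta_{\Lambda'}+m\delta)$ for every $\beta_{\Lambda'}\in\P^\Lambda\setminus\{0\}$ and every $m\ge 1$, which is the assertion.
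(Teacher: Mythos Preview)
Your proposal is correct and follows essentially the same route as the paper: reduce to $\Lambda=k\Lambda_0$ via $\sigma$, observe that every $\Lambda'\neq\Lambda$ lies downstream of $\Lambda_{0,0}$ with $\beta_{\Lambda_{0,0}}=\alpha_0$, and then establish wildness of $R^{k\Lambda_0}(\alpha_0+m\delta)$ by level reduction to $R^{2\Lambda_0}$ and the level-two results of \cite{Ar-rep-type}. The only difference is that the paper cites \cite[Lemma~7.3, Lemma~7.4, Proposition~11.9]{Ar-rep-type} once for all $m\ge1$ and applies Lemma~\ref{lem::reduction-level} directly, whereas you insert an explicit $\delta$-sweep induction (adding $\alpha_1,\ldots,\alpha_\ell,\alpha_0$ via the functor argument from the proof of Lemma~\ref{lem::reduction-arrow}) to reduce to the single base case $m=1$; this extra step is correct but unnecessary given what \cite{Ar-rep-type} already provides.
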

\begin{proof}
Since $|I(\Lambda)_0|=1$, $\Lambda=k\Lambda_i$ for some $i\in I$.
By Proposition \ref{prop::iso-sigma} and \eqref{equ:sigmaX}, it is equivalent to considering $\Lambda=k\Lambda_0$ with $k\geq 3$.
By Remark \ref{rem::-P-lambda} and $T(\Lambda)_0=\emptyset$,
$$
S(k\Lambda_0)_\rightarrow=\{\Lambda_1+\Lambda_\ell+(k-2)\Lambda_0\}.
$$
Noting that $y_i=\left <h_i,2\Lambda_0-\Lambda_1-\Lambda_\ell\right >$ implies $x_i=\delta_{i0}$, we have $\beta_{\Lambda_1+\Lambda_\ell+(k-2)\Lambda_0}=\alpha_0$.
Since $R^{2\Lambda_0}(\alpha_0+m\delta)$ is wild by \cite[Lemma 7.3, Lemma 7.4, Proposition 11.9]{Ar-rep-type} for all $m\geq 1$, $R^{k\Lambda_0}(\alpha_0+m\delta)$ is also wild by Lemma \ref{lem::reduction-level}.
For any $\Lambda'\neq k\Lambda_0, (k-2)\Lambda_0+\Lambda_1+\Lambda_\ell$, there is a directed path from $(k-2)\Lambda_0+\Lambda_1+\Lambda_\ell$ to $\Lambda'$ in $\vec{C}(k\Lambda_0)$.
It follows that $R^{k\Lambda_0}(\beta_{\Lambda'}+m\delta)$ is wild for any $\beta_{\Lambda'}\in\P^{k\Lambda_0}\setminus \{0, \alpha_0\}$, by Corollary \ref{cor::reduction-path}.
\end{proof}

\begin{Lemma}
If $|I(\Lambda)_0|=1$, then $R^\Lambda(m\delta)$ is wild for any $m\geq 2$.
\end{Lemma}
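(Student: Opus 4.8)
The plan is to reduce the statement to the already-settled level-two case and then pull the result back up by the level reduction lemma. First, since $|I(\Lambda)_0|=1$ forces $\Lambda=k\Lambda_i$ for a single $i\in I$, I would apply Proposition \ref{prop::iso-sigma} repeatedly. Because $\sigma$ permutes the simple roots cyclically we have $\sigma\delta=\delta$, hence $\sigma(m\delta)=m\delta$, and iterating the isomorphism $R^{\Lambda}(\beta)\simeq R^{\sigma\Lambda}(\sigma\beta)$ gives $R^{k\Lambda_i}(m\delta)\simeq R^{k\Lambda_0}(m\delta)$. Thus it suffices to treat $\Lambda=k\Lambda_0$ with $k\geq 3$, and one never leaves the source vertex of $\vec C(\Lambda)$, so the arrow-based reductions are unavailable here; the level reduction is the natural tool.

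Next I would split off a level-two summand: write $k\Lambda_0=\bar\Lambda+\tilde\Lambda$ with $\bar\Lambda=2\Lambda_0\in P^+_{cl,2}$ and $\tilde\Lambda=(k-2)\Lambda_0\in P^+_{cl,k-2}$, which is legitimate since $k\geq 3$. Here $0=\beta_{2\Lambda_0}\in\P^{2\Lambda_0}$, so Lemma \ref{lem::reduction-level} applies with $\beta=0$: for each fixed $m$, wildness of $R^{2\Lambda_0}(m\delta)$ forces wildness of $R^{k\Lambda_0}(m\delta)$. Consequently the whole statement follows once the base case is in hand, namely that $R^{2\Lambda_0}(m\delta)$ is of wild representation type for every $m\geq 2$. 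For this base case I would quote the level-two classification of the first author in \cite{Ar-rep-type}, exactly in the spirit of the proof of Lemma \ref{lem::step-1-11}, where the wildness of $R^{2\Lambda_0}(\alpha_0+m\delta)$ for $m\geq 1$ was imported from the same source; the companion statement for $R^{2\Lambda_0}(m\delta)$ with $m\geq 2$ is of precisely the same nature.

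The only delicate point, and the step I expect to demand the most care, is that the present lemma carries \emph{no} restriction on the parameter $t$ and must hold for both $\ell=1$ and $\ell\geq 2$, so the cited base case must be uniform in $t$ and in $\ell$. The relevant fact is that at level two the finite and tame phenomena occur only for $m\leq 1$ (the tame exception $R^{2\Lambda_i}(\delta)$ appearing in the Main Theorem is confined to $m=1$), so that wildness at $m\geq 2$ is insensitive to $t$. I would therefore verify that the results invoked from \cite{Ar-rep-type} indeed cover the degenerate rank-one situation $\ell=1$ together with the special values $t=\pm 2$ (and, for $\ell\geq 2$, $t=(-1)^{\ell+1}$), since those are the parameters at which the lower multiples of $\delta$ behave exceptionally.
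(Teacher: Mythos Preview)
Your proposal is correct and follows essentially the same route as the paper: reduce to $\Lambda=k\Lambda_0$ via Proposition~\ref{prop::iso-sigma}, then to level two via Lemma~\ref{lem::reduction-level}, and finally invoke the level-two wildness of $R^{2\Lambda_0}(m\delta)$ for $m\geq 2$ from \cite{Ar-rep-type}. The paper is slightly more specific in its citation, pointing to \cite[Lemma~7.3, Proposition~11.9]{Ar-rep-type} for the base case, which you may wish to record; your remarks on uniformity in $t$ and $\ell$ are a reasonable sanity check but are indeed absorbed by those cited results.
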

\begin{proof}
Similar to the proof of Lemma \ref{lem::step-1-11}, it suffices to show that $R^{2\Lambda_0}(m\delta)$ is wild for all $m\geq 2$. 
This is also covered by \cite[Lemma 7.3, Proposition 11.9]{Ar-rep-type}.
\end{proof}

\begin{Lemma}\label{lem::step-1-21}
If $|I(\Lambda)_0|\geq 2$, then $R^\Lambda(\beta_{\Lambda'}+m\delta) $ is wild for any $\beta_{\Lambda'}\in\P^\Lambda$ and $m\geq 1$.
\end{Lemma}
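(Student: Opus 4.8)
The plan is to collapse the whole statement to the single assertion that $R^\Lambda(m\delta)$ is wild for all $m\ge 1$, and then to obtain that assertion by descending to level $2$.

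Since $\beta_\Lambda=0$, the source vertex $\Lambda$ of $\vec C(\Lambda)$ carries the algebra $R^\Lambda(\beta_\Lambda+m\delta)=R^\Lambda(m\delta)$. By Lemma \ref{lem::path-arrow} there is a directed path from $\Lambda$ to every $\Lambda'\in\pcl(\Lambda)$, and by Corollary \ref{cor::reduction-path} wildness propagates along such a path at a fixed value of $m$. Hence, as soon as $R^\Lambda(m\delta)$ is known to be wild for every $m\ge 1$, it follows that $R^\Lambda(\beta_{\Lambda'}+m\delta)$ is wild for every $\beta_{\Lambda'}\in\P^\Lambda$ and every $m\ge 1$, which is exactly the claim (note that this includes the case $\beta_{\Lambda'}=0$, $m=1$, i.e.\ $R^\Lambda(\delta)$, the new phenomenon compared with $|I(\Lambda)_0|=1$). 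Thus the lemma reduces to the source computation, and importantly the source cannot be reached by Lemma \ref{lem::reduction-arrow} from any predecessor, so it must be treated directly.

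For the source I would use level reduction. As $|I(\Lambda)_0|\ge 2$, pick distinct $i\ne j\in I(\Lambda)_0$ and write $\Lambda=\bar\Lambda+\tilde\Lambda$ with $\bar\Lambda=\Lambda_i+\Lambda_j\in P^+_{cl,2}$ and $\tilde\Lambda\in P^+_{cl,k-2}$. Applying Lemma \ref{lem::reduction-level} with $\beta=\beta_{\bar\Lambda}=0$ reduces the problem to showing that the level-$2$ algebra $R^{\Lambda_i+\Lambda_j}(m\delta)$ is wild for all $m\ge 1$. Since $\bar\Lambda=\Lambda_i+\Lambda_j$ with $i\ne j$ is not of the form $2\Lambda_s$, this wildness is supplied by the level-$2$ classification of Ariki in \cite{Ar-rep-type} (together with \cite{Ar-tame-block}); here is exactly where the hypothesis $|I(\Lambda)_0|\ge 2$ enters, as the concentration of $\Lambda$ at a single node is what would otherwise permit tame behaviour at $\delta$.

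I expect the main obstacle to be the base case, namely checking from the level-$2$ references that $R^{\Lambda_i+\Lambda_j}(m\delta)$ is wild for \emph{every} admissible pair $i\ne j$ and \emph{all} $m\ge 1$, including the degenerate situations such as $\ell=1$, where $\Lambda_0+\Lambda_1$ is the only available pair and one must confirm that no tame exception arises for any value of the parameter $t$. Once the relevant statements in \cite{Ar-rep-type, Ar-tame-block} are pinned down, the reduction via Lemma \ref{lem::reduction-level} and the path-propagation via Corollary \ref{cor::reduction-path} are formal and need no further computation.
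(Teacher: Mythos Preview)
Your overall strategy is the same as the paper's: reduce to the source vertex $\Lambda$ (i.e.\ to $R^\Lambda(m\delta)$), then propagate via Corollary~\ref{cor::reduction-path}; and for the source, strip off fundamental weights via Lemma~\ref{lem::reduction-level} to land in level~$2$.

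The gap is exactly the ``main obstacle'' you flag but do not resolve. The level-$2$ classification does \emph{not} give what you need in the case $\ell=1$, $m=1$: by \cite[Theorem~7.5]{Ar-rep-type}, $R^{\Lambda_0+\Lambda_s}(m\delta)$ with $s\neq 0$ is wild \emph{unless} $\ell=1$ and $m=1$. So $R^{\Lambda_0+\Lambda_1}(\delta)$ at level $2$ is not wild, and your reduction $\Lambda\mapsto \Lambda_i+\Lambda_j$ breaks precisely there. This is not a matter of the parameter $t$; the exception is genuine.

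The paper's fix is to stop the level reduction one step earlier in this exceptional case: use Lemma~\ref{lem::reduction-level} to reduce to $k=3$ (and Proposition~\ref{prop::iso-sigma} to take $\Lambda=2\Lambda_0+\Lambda_1$), and then compute $R^{2\Lambda_0+\Lambda_1}(\delta)$ directly. One finds two nonzero idempotents $e(01),e(10)$, computes the graded dimensions $\dim_q e_iR^\Lambda(\delta)e_j$ via Theorem~\ref{theo::graded-dim}, and reads off from \cite[Lemma~1.3]{Ar-rep-type} that the Gabriel quiver already forces wildness by \cite[I.10.8(i)]{Er-tame-block}. Once this single level-$3$ computation is in place, your propagation argument finishes the proof.
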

\begin{proof}
It is proved in \cite[Theorem 7.5]{Ar-rep-type} that $R^{\Lambda_0+\Lambda_s}(m\delta)$ for $s\neq 0$ is wild unless $\ell=1$ and $m=1$. 
If $\ell>1$ or $\ell=1$ and $m>1$, then $R^\Lambda(\beta_\Lambda+m\delta)=R^{\Lambda}(m\delta)$ is wild by Proposition \ref{prop::iso-sigma} and Lemma \ref{lem::reduction-level}, and so is $R^{\Lambda}(\beta_{\Lambda'}+m\delta)$ by Corollary \ref{cor::reduction-path}.

Suppose $\ell=1$ and $m=1$.
It suffices to consider $k=3$ by Lemma \ref{lem::reduction-level}, and we may assume  $\Lambda=2\Lambda_0+\Lambda_1$ by Proposition \ref{prop::iso-sigma}.
In this case, $R^{\Lambda}(\delta)$ has two non-zero idempotent $e_1=e(01)$ and $e_2=e(10)$.
By Theorem \ref{theo::graded-dim} and the following patterns for adding nodes,
$$
\ytableausetup{centertableaux, smalltableaux}
\vcenter{\xymatrix@C=0.1cm@R=0.5cm{
&&&
(\emptyset,\emptyset, \emptyset) \ar[drr] \ar[dll]
&&&\\
&(\ytableaushort{0},\emptyset, \emptyset ) \ar[d]\ar[dl]\ar[dr]
&&&&
(\emptyset,\ytableaushort{0},\emptyset)\ar[dr]\ar[d]\ar[dl]
&\\
(\ytableaushort{0,1}, \emptyset, \emptyset)
&
(\ytableaushort{01}, \emptyset, \emptyset)
&
(\ytableaushort{0}, \emptyset, \ytableaushort{1})
&&
(\emptyset, \ytableaushort{0}, \ytableaushort{1})
&
(\emptyset, \ytableaushort{01}, \emptyset)
&
(\emptyset, \ytableaushort{0,1}, \emptyset)
}}
$$
$$
\vcenter{\xymatrix@C=0.1cm@R=0.5cm{
&&(\emptyset, \emptyset,\emptyset) \ar[d] &&\\
& & (\emptyset, \emptyset,\ytableaushort{1}) \ar[dr] \ar[dl]\ar[drr] \ar[dll]&& \\
(\emptyset, \emptyset,\ytableaushort{10})
&
(\ytableaushort{0}, \emptyset, \ytableaushort{1})
&&
(\emptyset, \ytableaushort{0}, \ytableaushort{1})
&
(\emptyset, \emptyset,
\ytableausetup{smalltableaux}
\ytableaushort{1,0})
}},
$$
we have
$$
\begin{aligned}
\dim_q e_1 R^{\Lambda}(\delta)e_1&=1+2q^2+2q^4+q^6,\\
\dim_q e_2 R^{\Lambda}(\delta)e_2&=1+q^2+q^4+q^6,\\
\dim_q e_i R^{\Lambda}(\delta)e_j&=q^2+q^4, i\neq j.
\end{aligned}
$$
By \cite[Lemma 1.3]{Ar-rep-type}, the quiver of $R^{\Lambda}(\delta)$ contains two loops on vertex $1$, one loop on vertex $2$, one arrow $1\rightarrow 2$ and one arrow $2\rightarrow 1$. 
Then, $R^{\Lambda}(\delta)$ is wild by \cite[I.10.8 (i)]{Er-tame-block}, and so is $R^{\Lambda}(\beta_{\Lambda'}+\delta)$ by Corollary \ref{cor::reduction-path}.
\end{proof}

\section{Proof of Step 2}\label{sec::step-2}
We determine the representation type of $R^{\Lambda}(\gamma)$ for $\gamma\in \mathcal T(\Lambda)\cup\{\delta\}$ case by case.

\subsection{Representation type of $R^{\Lambda}(\delta)$}

It remains to consider the case $|I(\Lambda)_0|=1$ by Lemma \ref{lem::step-1-21}.

\begin{Lemma}\label{lem::step-2-delta-1}
Suppose $\Lambda=k\Lambda_i$ with $k\geq 3$ and $\ell=1$.
Then, $R^{\Lambda}(\delta)$ is a symmetric local algebra of tame representation type if $t\neq \pm2$, and is of wild representation type otherwise.
\end{Lemma}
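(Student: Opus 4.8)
The plan is to reduce to a single weight, extract an explicit commutative presentation of $R^\Lambda(\delta)$, and then read off the representation type from the quadratic relation coming from $Q_{0,1}$. Since $k\Lambda_0$ and $k\Lambda_1$ are interchanged by the automorphism $\sigma$ of \eqref{def::sigma} (and $\sigma\delta=\delta$), Proposition \ref{prop::iso-sigma} together with \eqref{equ:sigmaX} lets me assume $\Lambda=k\Lambda_0$. With $\ell=1$ we have $I=\{0,1\}$, $e=2$, $\delta=\alpha_0+\alpha_1$, so $I^\delta=\{(0,1),(1,0)\}$. The cyclotomic relation $x_1^{\langle h_{\nu_1},\Lambda\rangle}e(\nu)=0$ of Definition \ref{def::cyclotomic-quiver} reads $e(10)=0$ for $\nu=(1,0)$, because $\langle h_1,k\Lambda_0\rangle=0$; hence $e(01)=1$ and $R^\Lambda(\delta)$ is local. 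The relation $\psi_1e(\nu)=e(s_1\nu)\psi_1$ then gives $\psi_1=\psi_1e(01)=e(10)\psi_1=0$, so $\psi_1$ vanishes. Substituting $\psi_1=0$ into $\psi_1^2e(01)=Q_{0,1}(x_1,x_2)e(01)$ yields $x_1^2+tx_1x_2+x_2^2=0$, while the surviving relations are merely commutativity of $x_1,x_2$ and $x_1^k=0$. Thus there is a surjection $\k[x_1,x_2]/(x_1^2+tx_1x_2+x_2^2,\,x_1^k)\twoheadrightarrow R^\Lambda(\delta)$.

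Next I would verify this surjection is an isomorphism by a dimension count. The source $A:=\k[x_1,x_2]/(x_1^2+tx_1x_2+x_2^2,\,x_1^k)$ is free of rank $2$ over $\k[x_1]/(x_1^k)$ with basis $\{1,x_2\}$ (reduce each $x_2^2$ by the quadratic relation; the two relations have coprime leading terms, so no further collapse occurs), whence $\dim_q A=(1+q^2+\cdots+q^{2(k-1)})(1+q^2)$. On the other hand, Theorem \ref{theo::graded-dim} computes $\dim_q e(01)R^\Lambda(\delta)e(01)$: the only standard tableaux of residue sequence $(0,1)$ sit on the shapes $(2)$ or $(1,1)$ placed in a single component of a $k$-multipartition, and a short degree computation returns exactly $1+2q^2+\cdots+2q^{2(k-1)}+q^{2k}=\dim_q A$. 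Hence $R^\Lambda(\delta)\cong A$, and it is symmetric by the general fact recalled in Subsection 4.1.

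Finally I would split on the discriminant $t^2-4$ of $Q:=x_1^2+tx_1x_2+x_2^2$. If $t\neq\pm2$ then $Q$ factors into two distinct linear forms $u,v$, so $Q=uv$ with $\{u,v\}$ a coordinate system in which $x_1$ is a linear form with both coefficients nonzero; reducing $x_1^k$ modulo $uv=0$ and rescaling gives $R^\Lambda(\delta)\cong\k[u,v]/(uv,\,u^k-v^k)$, which is the Brauer graph algebra of the single edge whose two vertices each carry multiplicity $k$. This is a symmetric special biserial algebra, hence tame, and its graph is not a Brauer tree (both vertices are exceptional), so it has infinite representation type; thus $R^\Lambda(\delta)$ is tame in the sense of \cite{Er-tame-block}. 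If $t=\pm2$ the discriminant vanishes and $Q=w^2$ for a linear form $w$; completing $\{x_1,w\}$ to a coordinate system gives $R^\Lambda(\delta)\cong\k[x_1,w]/(x_1^k,w^2)$, and I would invoke the classification of tame local algebras in \cite[I.10.8]{Er-tame-block} to conclude this is wild once $k\geq3$.

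The main obstacle is precisely this last step, namely reading the representation type off the two normal forms: recognizing the split case as a non-tree Brauer graph algebra (so as to certify tameness together with infinite type), and establishing the wildness of $\k[x,y]/(x^k,y^2)$ for $k\geq3$. By contrast, the reduction to $k\Lambda_0$, the vanishing of $e(10)$ and $\psi_1$, and the resulting commutative presentation are routine; the only mildly technical point among them is the graded-dimension computation confirming $\dim R^\Lambda(\delta)=2k$.
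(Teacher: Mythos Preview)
Your proposal is correct and matches the paper's proof essentially step for step: the reduction to $k\Lambda_0$, the vanishing of $e(10)$ and $\psi_1$, the commutative presentation $\k[x_1,x_2]/(x_1^k,\,x_1^2+tx_1x_2+x_2^2)$ confirmed by the graded-dimension count, and the discriminant split all appear identically. The only divergence is in the final citations: for tameness the paper invokes \cite[Theorem III.1(a)]{Er-tame-block} directly (equivalent to your Brauer-graph identification), and for wildness it passes to the quotient $\k[X,Y]/(X^3,Y^2,X^2Y)$ and cites \cite[(1.1)(c)]{Ringel-local-alg} rather than \cite[I.10.8]{Er-tame-block}, the latter being a quiver-based criterion not directly applicable to this local algebra.
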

\begin{proof}
By Proposition \ref{prop::iso-sigma}, it suffices to assume $i=0$.
Since $\delta=\alpha_0+\alpha_1$ and $\langle h_1, \Lambda_0\rangle=0$, there is the unique non-zero idempotent $e(01)$ of $R^{\Lambda}(\delta)$, i.e., $1=e(01)$. 
Considering the following patterns
$$
\ytableausetup{centertableaux, smalltableaux}
\vcenter{\xymatrix@C=-1.5cm@R=0.5cm{
&(\emptyset,\ldots,\emptyset) \ar[d] &\\
&(\emptyset, \ldots, \emptyset,\ytableaushort{0},
\emptyset, \ldots, \emptyset) \ar[dr] \ar[dl]& \\
(\emptyset, \ldots, \emptyset, \ytableaushort{01},
\emptyset, \ldots, \emptyset) & &(\emptyset, \ldots, \emptyset,
\ytableaushort{0,1},
\emptyset, \ldots, \emptyset)
}},
$$
we find that the graded dimension of $R^{\Lambda}(\delta)$ is given by
$$
\dim_q R^{\Lambda}(\delta) =1+2\ssum_{1\leq i\leq k-1}q^{2i}+q^{2k}.
$$
Hence, $R^{\Lambda}(\delta)$ is local. 
By Definition \ref{def::cyclotomic-quiver}, we have
$$
\psi_1=0,\quad  x_1^k=0, \quad x_1^2+t x_1x_2+ x_2^2=0.
$$
We conclude that $R^{\Lambda}(\delta)$ has $\{x_1^ax_2^b\mid 0\leq a\leq k-1, b=0,1 \}$ as basis. In particular, $R^{\Lambda}(\delta)$ is isomorphic to the bound quiver algebra $\k Q/\mathcal{I}$ with
$$
Q:\ \xymatrix@C=0.8cm{\bullet \ar@(dl,ul)^{\alpha} \ar@(ur,dr)^{\beta}}
\quad \text{and} \quad
\mathcal{I}:\  \left<\alpha^k, \alpha\beta-\beta\alpha, \alpha^2+\beta^2+t\alpha\beta \right>.
$$

Suppose $t\neq \pm2$.
Let $c^{\pm1}\in\k$ be the roots of the equation $z^2-t z+1=0$.
Then, $X:=\beta+c\alpha$ and $Y:=\beta+c^{-1}\alpha$ satisfy $XY=0=YX$, and $(X-Y)^k=0$ implies $X^k=(-1)^{k-1}Y^k$.  
We redefine $X$ by multiplying a suitable root of unity with $X$. Then, we may assume $XY=YX=0$ and $X^k=Y^k$.
By comparing the dimensions, we see that $R^{\Lambda}(\delta)$ is isomorphic to the bound quiver algebra defined by
$$
\xymatrix@C=0.8cm{\bullet \ar@(dl,ul)^{X} \ar@(ur,dr)^{Y}}
\quad \text{and}\quad  
\left<X^k-Y^k, XY, YX  \right>,
$$
which is a tame local symmetric algebra with $m=n=k\geq 3$ in \cite[Theorem III.1 (a)]{Er-tame-block}.

Suppose $t=\pm 2$.
Set $X:=\alpha$ and $Y:=\alpha \pm \beta$.
Then $R^{\Lambda}(\delta)$ is isomorphic to the bound quiver algebra defined by
$$
\xymatrix@C=0.8cm{\bullet \ar@(dl,ul)^{X} \ar@(ur,dr)^{Y}}
\quad \text{and}\quad \left<X^k, Y^2, XY-YX \right>,
$$
whose quotient algebra modulo $X^3$ and $X^2Y$ is exactly the wild local algebra labeled by $(c)$ in \cite[(1.1)]{Ringel-local-alg}. Thus, $R^{\Lambda}(\delta)$ is wild in this case.
\end{proof}

\begin{Lemma}\label{lem::step-2-delta-2}
Suppose $\Lambda=k\Lambda_i$ with $k\geq 3$ and $\ell\geq 2$. Then, $R^{\Lambda}(\delta)$ is of tame representation type if $t\neq (-1)^{e}$ and of wild representation type if $t=(-1)^{e}$.
\end{Lemma}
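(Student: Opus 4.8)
The plan is to mirror the strategy in the proof of Lemma \ref{lem::step-2-delta-1}, the difference being that for $\ell\ge 2$ the algebra is no longer local. First I would use Proposition \ref{prop::iso-sigma} (noting $\sigma\delta=\delta$) to reduce to $i=0$, so that $\Lambda=k\Lambda_0$ and $\delta=\sum_{m\in I}\alpha_m$. The cyclotomic relation (6) in Definition \ref{def::cyclotomic-quiver} forces $e(\nu)=0$ whenever $\nu_1\neq 0$, so every surviving $e(\nu)$ starts with residue $0$. Since $\beta=\delta$ uses each residue exactly once and the multicharge is $(0,\dots,0)$, each nonempty component of a contributing $k$-multipartition $\lambda$ carries its own residue-$0$ corner box; as $\alpha_0$ occurs with coefficient $1$ in $\delta$, this forces $\lambda$ to have a single nonempty component. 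Content $\delta$ then forces that component to be a hook $(e-c,1^{c})$ with $0\le c\le\ell$, because any non-hook contains the $(2,2)$-node, which repeats residue $0$. I would record the resulting residue sequences $\nu$ (the initial $0$ followed by a shuffle of the increasing arm-run and the decreasing leg-run) and hence the idempotents $e(\nu)$ of $R^{\Lambda}(\delta)$.

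Next I would apply Theorem \ref{theo::graded-dim} to these hooks to compute $\dim_q e(\nu)R^{\Lambda}(\delta)e(\nu')$ for all surviving $\nu,\nu'$. From these graded dimensions I read off the number of simple modules and the graded Cartan matrix, and, following the method of the first author in \cite{Ar-rep-type} (reading arrows from the lowest positive-degree parts of $e(\nu)R^{\Lambda}(\delta)e(\nu')$), the quiver of the basic algebra $A$ of $R^{\Lambda}(\delta)$. The relations of $A$ are then extracted from the defining relations in Definition \ref{def::cyclotomic-quiver}; crucially, for $\ell\ge 2$ the only place the parameter $t$ enters is through $Q_{\ell,0}(u,v)=u+tv$, i.e.\ through the $\psi$-relations at spots where the residues $\ell$ and $0$ are adjacent in $\nu$. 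Tracking this dependence, I would present $A$ explicitly as a symmetric algebra, which I expect to be a Brauer-graph-type algebra whose graph is a line with all multiplicities equal to $k$, consistent with the Corollary.

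Finally I would determine the representation type by comparison with known classifications. For $t\neq(-1)^{e}$ the quiver and relations should fall into a tame symmetric algebra of Erdmann's list \cite{Er-tame-block}, exactly as the presentation $\langle X^{k}-Y^{k},XY,YX\rangle$ did in Lemma \ref{lem::step-2-delta-1}. For the Hecke specialization $t=(-1)^{e}$, I expect the relation coming from $Q_{\ell,0}$ to degenerate, producing an extra arrow or collapsing a relation, so that a suitable quotient of $A$ contains a wild configuration in the sense of \cite{Ringel-local-alg} and \cite[I.10.8]{Er-tame-block}; this yields wildness, parallel to the $t=\pm2$ analysis in Lemma \ref{lem::step-2-delta-1}.

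I expect the main obstacle to be the middle step: determining the precise quiver and, above all, the $t$-dependent relations of $A$ uniformly in $k$ and $\ell$, and then explaining why the degeneration occurs precisely at $t=(-1)^{e}$ and nowhere else. Isolating the single wild configuration that appears only at the Hecke parameter, while verifying that every other value of $t$ keeps $A$ inside Erdmann's tame family, is the delicate heart of the argument.
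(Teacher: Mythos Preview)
Your overall plan is sound and matches the paper's approach in spirit, but the paper takes two shortcuts that dissolve exactly the obstacle you flag at the end.

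For the wild case $t=(-1)^{e}$, the paper does no direct computation at all: it simply cites \cite[Proposition~11.8]{Ar-rep-type}, which already established wildness of $R^{2\Lambda_0}(\delta)$ at this parameter, and then invokes the level-reduction surjection of Lemma~\ref{lem::reduction-level} to pass from $k=2$ to $k\ge 3$. Your proposed direct degeneration analysis would work but is unnecessary.

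For the tame case, the paper does \emph{not} work with all surviving idempotents (your shuffles of arm and leg runs give many more than $\ell$ nonzero $e(\nu)$). Instead it selects only the $\ell$ idempotents $e_i=e(0~1~\cdots~i-1~\ell~\ell-1~\cdots~i)$, $1\le i\le\ell$, computes the truncation $\mathcal{A}=e'R^{\Lambda}(\delta)e'$ with $e'=\sum_i e_i$, and shows $\mathcal{A}$ is the Brauer graph algebra on a line with $\ell+1$ vertices and all multiplicities $k$. The crucial extra step you are missing is how to know $\mathcal{A}$ is the full basic algebra: the paper uses the categorification theorem to count the simple $R^{\Lambda}(\delta)$-modules as the Kleshchev multipartitions $\{(\emptyset,\dots,\emptyset,(i,1^{e-i}))\mid 1\le i\le\ell\}$, giving exactly $\ell$ simples, which matches $\mathcal{A}$. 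Without this external count, your plan to ``read off the number of simple modules'' from graded dimensions alone does not quite work, since distinct nonzero $e(\nu)$ can be equivalent. The $t$-dependence you anticipate enters the relations exactly as you say, via $Q_{\ell,0}$, and the paper tracks it through explicit basis manipulations (the constants $c_i=(-1)^i$ and $d_i=(-1)^{\ell-i+1}t$), confirming that $t\ne(-1)^{\ell+1}=(-1)^e$ is precisely what makes the change of generators to Brauer-graph form possible.
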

\begin{proof}
It suffices to assume $i=0$ by Proposition \ref{prop::iso-sigma}.
If $t=(-1)^{e}$, then $R^{\Lambda}(\delta)$ is wild for any $k\geq 3$ by \cite[Proposition 11.8]{Ar-rep-type}, which proved the wildness for $k=2$. Next, we assume that $t\neq (-1)^{e}$.

(1) Suppose $\ell=2$.
There are two non-zero idempotents $e_1=e(012)$ and $e_2=e(021)$ of $R^{\Lambda}(\delta)$, since $\{012,021\}$ is the set of residue sequences of standard tableaux.
Considering the following patterns
$$
\vcenter{\xymatrix@C=-1.5cm@R=0.5cm{
&(\emptyset,\ldots,\emptyset) \ar[d] &\\
&(\emptyset, \ldots, \emptyset,
\ytableausetup{smalltableaux}
\ytableaushort{0},
\emptyset, \ldots, \emptyset) \ar[d] & \\
&(\emptyset, \ldots, \emptyset,
\ytableausetup{smalltableaux}
\ytableaushort{01},
\emptyset, \ldots, \emptyset) \ar[dr] \ar[dl]&\\
(\emptyset, \ldots, \emptyset,
\ytableausetup{smalltableaux}
\ytableaushort{012},
\emptyset, \ldots, \emptyset) & &(\emptyset, \ldots, \emptyset,
\ytableausetup{centertableaux, smalltableaux}
\ytableaushort{01,2},
\emptyset, \ldots, \emptyset)
}},
$$
$$
\vcenter{\xymatrix@C=-1.5cm@R=0.5cm{
&(\emptyset,\ldots,\emptyset) \ar[d] &\\
&(\emptyset, \ldots, \emptyset,
\ytableausetup{smalltableaux}
\ytableaushort{0},
\emptyset, \ldots, \emptyset) \ar[d] & \\
&(\emptyset, \ldots, \emptyset,
\ytableausetup{smalltableaux}
\ytableaushort{0,2},
\emptyset, \ldots, \emptyset) \ar[dr] \ar[dl]&\\
(\emptyset, \ldots, \emptyset,
\ytableausetup{smalltableaux}
\ytableaushort{01,2},
\emptyset, \ldots, \emptyset) & &(\emptyset, \ldots, \emptyset,
\ytableausetup{centertableaux, smalltableaux}
\ytableaushort{0,2,1},
\emptyset, \ldots, \emptyset)
}},
$$
we find that the graded dimension of $R^{\Lambda}(\delta)$ is given by
$$
\begin{aligned}
\dim_q  e_1R^{\Lambda}(\delta)e_2& =\dim_q  e_2R^{\Lambda}(\delta)e_1 =\ssum_{1\leq i\leq k}q^{2i-1},\\
\dim_q e_jR^{\Lambda}(\delta)e_j  & =1+2\ssum_{1\leq i\leq k-1}q^{2i}+q^{2k}
\ \ \text{for}\  j=1,2.
\end{aligned}
$$
We give a basis of $R^{\Lambda}(\delta)$ as follows. Note that $x_1^ke_j=0$ for $j=1,2$.
\begin{itemize}
    \item Since $\psi_1e_1=0$ and $\psi_1^2e_1=(x_1+x_2)e_1$, we have $x_2e_1=-x_1e_1$. Similarly, we obtain $x_2e_2=-tx_1e_2$. Since $\psi_2^2e_1=(x_2+x_3)e_1$ and $x_3\psi_2^2e_1=\psi_2 x_2 e_2\psi_2= -tx_1\psi_2^2e_1$, we have $x_3^2e_1=tx_1^2e_1+(1-t)x_1x_3e_1$. Similarly, $x_3^2e_2=t x_1^2e_2+(t-1)x_1x_3e_2$. If $e_j\psi_w e_j\neq 0$, then $\psi_w=1$ because we may choose $\{ \psi_w \mid 1\neq w\in \mathfrak S_3\}$ to be $\{\psi_1,\psi_2,\psi_1\psi_2,\psi_2\psi_1, \psi_1\psi_2\psi_1\}$ and $e_1\psi_2e_1=e_1e_2\psi_2=0$, $e_2\psi_2e_2=e_2e_1\psi_2=0$. Thus, $e_jR^{\Lambda}(\delta)e_j$ has basis $\{ x_1^ax_3^be_j\mid 0\leq a\leq k-1, 0\leq b\leq 1\}$ for $j=1,2$.

    \item Since $x_2\psi_2e_2=-x_1\psi_2e_2$,
    $x_3\psi_2e_2=\psi_2x_2e_2=-tx_1\psi_2e_2$ and $x_2\psi_2e_1=-tx_1\psi_2e_1$, $x_3\psi_2e_1=\psi_2x_2e_1=-x_1\psi_2e_1$, $e_2 R^{\Lambda}(\delta)e_1$ has basis $\{x_1^a\psi_2 e_1\mid 0\leq a\leq k-1\}$ and $e_1 R^{\Lambda}(\delta)e_2$ has basis $\{x_1^a\psi_2 e_2\mid 0\leq a\leq k-1\}$.
\end{itemize}

Let $\alpha=x_1e_1$, $\beta=x_1e_2$, $\mu=\psi_2e_2$ and $\nu=\psi_2e_1$.
By the above computation, we obtain
$$
\alpha^k=\beta^k=0, \alpha\mu=\mu\beta, \beta\nu=\nu\alpha,
\mu\nu\mu=-(1+t)\alpha\mu, \nu\mu\nu=-(1+t)\beta\nu.
$$
Since $t\neq -1$, we may use $\alpha'=\mu\nu+(1+t)\alpha$ and $\beta'=\nu\mu+(1+t)\beta$ instead of $\alpha$ and $\beta$.
We have $\alpha'\mu=0=\mu\beta'$ and $\nu\alpha'=0=\beta'\nu$. Also,
$$
(\alpha'-\mu\nu)^k=(\alpha')^k+(-1)^k(\mu\nu)^k=0 \text{ and } (\beta'-\nu\mu)^k=(\beta')^k+(-1)^k(\nu\mu)^k=0
$$
imply $(\alpha')^k=(-1)^{k-1}(\mu\nu)^k$ and $(\beta')^k=(-1)^{k-1}(\nu\mu)^k$. By comparing the dimensions, $R^{\Lambda}(\delta)$ is isomorphic to the Brauer graph algebra whose Brauer graph is as follows.
$$
\scalebox{0.8}{\xymatrix@C=0.9cm{*++[o][F]{k}\ar@{-}[r] &*++[o][F]{k}\ar@{-}[r]&*++[o][F]{k}}}
$$
Thus, $R^{\Lambda}(\delta)$ is of tame representation type.

(2) Suppose $\ell\geq 3$.
Let $e_i=e(\nu_i)$ with $\nu_{i}=(0~1~2~\ldots~i-1~\ell~\ell-1~\ldots~i)$, for any $1\leq i\leq \ell$.
Set $\mathcal{A}=e'R^{\Lambda}(\delta) e'$ with $e'=\sum_{i=1}^{\ell}e_i$.
By simple observation of adding nodes in the sequence of multipartitions, one finds that the graded dimension of $\mathcal{A}$ is given by
$$
\begin{aligned}
\dim_q e_i \mathcal{A}e_j&=0, \quad |i-j|>1,\\
\dim_q e_i \mathcal{A}e_i &=1+2\ssum_{1\leq j\leq k-1}q^{2j}+q^{2k}, \quad 1\leq i\leq \ell,\\
\dim_{q} e_i\mathcal{A}e_{i+1}&=\dim_q e_{i+1}\mathcal Ae_i=\ssum_{1\leq j\leq k}q^{2j-1}, \quad 1\leq i\le \ell-1.
\end{aligned}
$$
A basis of $\mathcal{A}$ is given as follows.
For any $1\leq i\leq \ell$, we have $x_1^ke_i=0$.
\begin{itemize}
    \item For any $\psi_j$ with $1\le j\le \ell$, we observe that $\psi_1e(\nu)=0$ and $(tx_1+x_2)e(\nu)=0$, whenever $e(\nu)\neq 0$ such that the first entry of $\nu$ is $0$ and the second entry of $\nu$ is $\ell$; $\psi_je_i=0, (x_j+x_{j+1})e_i=\psi_j^2e_i=0$ if $j\neq i, \ell$; $\psi_\ell^2 e_i=(x_\ell+x_{\ell+1})e_i$; $\psi_i^2 e_i=e_i$ if $2\le i\le \ell-1$. Then,
    \begin{equation}\label{equ::step-2-basis-computation}
    \begin{aligned}
    x_{i+1}\psi_i^2e_i
    &=\psi_i x_i e(0~1~\ldots~i-2~\ell~i-1~\ell-1~\ldots~i)\psi_i\\
    &=\psi_i x_i \psi_{i-1}^2 e(0~1~\ldots~i-2~\ell~i-1~\ell-1~\ldots~i)\psi_i\\
    &=\psi_i \psi_{i-1} x_{i-1} e(0~1~\ldots~\ell~i-2~i-1~\ell-1~\ldots~i) \psi_{i-1} \psi_i\\
    &=\ldots \ldots\\
    &=\psi_i \psi_{i-1}\cdots \psi_3 \psi_2 x_2 e(0~\ell~1~2~\ldots~i-1~\ell-1~\ldots~i) \psi_2 \cdots \psi_{i-1} \psi_i\\
    &=\psi_i \psi_{i-1}\cdots \psi_2 (-tx_1) e(0~\ell~1~2~\ldots~i-1~\ell-1~\ldots~i) \psi_2 \cdots \psi_{i-1} \psi_i\\
    &=-tx_1\psi_i \psi_{i-1}\cdots \psi_3\psi_2^2 e(0~1~\ell~2~\ldots~i-1~\ell-1~\ldots~i) \psi_3 \cdots \psi_{i-1} \psi_i\\
    &=-tx_1\psi_i^2e_i,
    \end{aligned}
    \end{equation}
    and this implies $x_{i+1}e_i=-tx_1e_i$ for $2\le i\le \ell-1$. By simple substitution, we find
    \begin{equation}\label{equ::step-2-basis-computation-2}
    x_je_i=\left\{\begin{array}{ll}
    (-1)^{j-1}x_1e_i  & \text{ if } 2\leq j\leq i, \\
    (-1)^{j-i}tx_1e_i & \text{ if } i+1\leq j\leq \ell.
    \end{array}\right.
    \end{equation}
    for any $1\le i\le \ell$. Besides, $x_{\ell+1}\psi_\ell^2 e_\ell=\psi_\ell x_\ell e_{\ell-1} \psi_\ell =-tx_1\psi_\ell^2 e_\ell$, and the similar computation as \eqref{equ::step-2-basis-computation} shows
    $$
    x_{\ell+1}\psi_{\ell}^2e_i
    =\psi_\ell\cdots \psi_{i+1}x_{i+1}e_{i+1}\psi_{i+1}\cdots\psi_\ell
    =(-1)^i x_1\psi_\ell^2e_i
    $$
    for any $1\leq i\leq \ell-1$. This implies that
    \begin{equation}\label{equ::step-2-basis-computation-3}
    (x_{\ell+1}-c_ix_1)(x_{\ell+1}-d_ix_1)e_i=0, 1\leq i\leq \ell,
    \end{equation}
    with $c_i=(-1)^{i}$ and $d_i=(-1)^{\ell-i+1}t$, i.e., $x_{\ell+1}^2e_i$ can be replaced by a linear combination of $x_1^2e_i, x_1x_{\ell+1}e_i$ for any $1\leq i\leq \ell$. On the other hand, since $\nu_i$ has no entry repeated, $w\nu_i=\nu_i$ yields $\psi_w=1$ if $e_i\psi_w e_i\neq 0$ for $w\in \mathfrak S_{\ell+1}$ and $1\leq i\leq \ell$. Thus, we see that $e_i\mathcal{A}e_i$ has basis $\{ x_1^ax_{\ell+1}^b e_i\mid 0\leq a\leq k-1, 0\leq b\leq 1\}$ for any $1\le i\le \ell$, by $\dim_q e_i \mathcal{A} e_i$ given above.

    \item Note that $e_{i+1}\psi_w e_i\neq 0$ and $e_i\psi_we_{i+1}\neq0$ yield $w=s_{i+1}s_{i+2}\cdots s_\ell$ and $w=s_{\ell}s_{\ell-1}\cdots s_{i+1}$, respectively. Using \eqref{equ::step-2-basis-computation-2}, we have
    \begin{center}
    $
    x_{\ell+1}\psi_{i+1}\psi_{i+2}\cdots\psi_\ell e_i=\psi_{i+1}\psi_{i+2}\cdots\psi_\ell x_{\ell}e_i=(-1)^{\ell-i}tx_1\psi_{i+1}\psi_{i+2}\cdots\psi_\ell e_i,
    $
    \end{center}
    and
    \begin{center}
    $
    x_{\ell+1}\psi_\ell\cdots \psi_{i+2}\psi_{i+1} e_{i+1}=\psi_\ell\cdots \psi_{i+2}\psi_{i+1}x_{i+1} e_{i+1} =(-1)^ix_1\psi_\ell\cdots \psi_{i+2}\psi_{i+1} e_{i+1},
    $
    \end{center}
    for $1\le i\le \ell-1$.
    By the graded dimensions, we conclude that $e_{i+1} \mathcal{A} e_i$ has basis $\{x_1^a\psi_{i+1}\psi_{i+2}\cdots \psi_\ell e_i\mid 0\leq a\leq k-1\}$ and $e_{i} \mathcal{A} e_{i+1}$ has basis $\{x_1^a\psi_\ell\cdots \psi_{i+2}\psi_{i+1} e_{i+1}\mid 0\leq a\leq k-1\}$.
\end{itemize}

Set $\alpha=x_1e_1$, $\beta=x_1e_{\ell}$, $\mu_i=\psi_\ell\cdots \psi_{i+2}\psi_{i+1}e_{i+1}$ and $\eta_i=\psi_{i+1}\psi_{i+2}\cdots\psi_\ell e_i$, for $1\leq i\leq \ell-1$.
Then, $\alpha^k=\beta^k=0$.
Since $\psi_{\ell-1}e_{i}=e_{i}\psi_{\ell-1}=0$ for $1\leq i \le \ell-2$, we have
$$
\begin{aligned}
\eta_{i+1}\eta_i&=\psi_{i+2}\psi_{i+3}\cdots\psi_{\ell-1}\psi_{i+1}\cdots \psi_{\ell-1}\psi_{\ell}\psi_{\ell-1}e_i=0,\\
\mu_i\mu_{i+1}&=e_i\psi_{\ell-1}\psi_{\ell}\psi_{\ell-1}\psi_{\ell-2}\cdots \psi_{i+1} \psi_{\ell-1}\cdots \psi_{i+2}=0.
\end{aligned}
$$
We can show $\eta_i\mu_i=(x_{\ell+1}-c_{i+1}x_1)e_{i+1}$ and $\mu_{i+1}\eta_{i+1}=(x_{\ell+1}-d_{i+1}x_1)e_{i+1}$ for $1\leq i\leq \ell-2$.
Then, $(\mu_{i+1}\eta_{i+1}-\eta_i\mu_i)^k=(c_{i+1}-d_{i+1})^kx_1^ke_{i+1}=0$ together with \eqref{equ::step-2-basis-computation-3} imply
$$
(\eta_i\mu_i)^k=(-1)^{k-1} (\mu_{i+1}\eta_{i+1})^k.
$$
Similarly, we obtain $\mu_1\eta_1 =(x_{\ell+1}-d_1x_1)e_1$ and $\eta_{\ell-1}\mu_{\ell-1}=(x_{\ell+1}-c_\ell x_1)e_{\ell}$.
Then,
$$
\eta_1\mu_1\eta_1=(c_1-d_1)\eta_1\alpha, \quad \mu_1\eta_1\mu_1=(c_1-d_1)\alpha\mu_1,
$$
$$
\mu_{\ell-1}\eta_{\ell-1}\mu_{\ell-1}=(d_{\ell}-c_\ell)\mu_{\ell-1}\beta,
\quad
\eta_{\ell-1}\mu_{\ell-1}\eta_{\ell-1}=(d_{\ell}-c_\ell)\beta\eta_{\ell-1}.
$$
Since $t\neq (-1)^{\ell+1}$, both $c_1-d_1$ and $d_{\ell}-c_\ell$ are not zero. We then use $\alpha'=\mu_1\eta_1-(c_1-d_1)\alpha$ and $\beta'=\eta_{\ell-1}\mu_{\ell-1}-(d_{\ell}-c_\ell)\beta$ to replace $\alpha$ and $\beta$, respectively.
It gives
$$
\alpha'\mu_1=\eta_1\alpha'=\beta'\eta_{\ell-1}=\mu_{\ell-1}\beta'=0,
(\alpha')^k=(-1)^{k-1}(\mu_1\eta_1)^k, (\beta')^k=(-1)^{k-1}(\eta_{\ell-1}\mu_{\ell-1})^k.
$$
By comparing the dimensions, we find that $\mathcal{A}$ is isomorphic to the Brauer graph algebra whose Brauer graph has $\ell+1$ vertices and is as follows.
$$
\begin{xy}
(0,0) *[o]+[Fo]{k}="A",
(15,0)*[o]+[Fo]{k}="B",
(30,0)*[o]+[Fo]{k}="C",
(45,0)*[o]+[Fo]{k}="D",
(60,0)*[o]+[Fo]{k}="E",
\ar@{-} "A";"B"
\ar@{-} "B";"C"
\ar@{.} "C";"D"
\ar@{-} "D";"E"
\end{xy}
$$

By the categorification theorem, the number of non-isomorphic simple modules of $R^\Lambda(\delta)$ is equal to the size of the $\Lambda-\delta$ part in the highest weight crystal of $V(\Lambda)$.
In particular, the size is given by the number of Kleshchev multipartitions which admits a standard tableau whose residue sequence belongs to $I^{\delta}$.
Since the set of Kleshchev multipartitions (e.g.,\cite[Section 1.1.1]{F-ariki-koike-alg}) corresponding to $\delta$ is $\{(\emptyset,\ldots,\emptyset, (i,1^{e-i}))\mid 1\leq i\leq \ell\}$, we conclude that the number of pairwise non-isomorphic simple modules of $R^\Lambda(\delta)$ is $\ell$, the same as that of $\mathcal{A}$. Thus, $\mathcal{A}$ is the basic algebra of $R^{\Lambda}(\delta)$ and $R^{\Lambda}(\delta)$ is tame.
\end{proof}

We summarize the result for $R^{\Lambda}(\delta)$ as follows.

\begin{Prop}\label{prop::step-2-result-delta} 
Suppose $\Lambda\in \pcl$ with $k\geq 3$.
Then, $R^{\Lambda}(\delta)$ is tame if $\Lambda=k\Lambda_i$, $\ell=1$ with $t\neq \pm 2$ or $\ell\geq 2$ with $t\neq (-1)^{e}$.
Otherwise, $R^{\Lambda}(\delta)$ is wild.
\end{Prop}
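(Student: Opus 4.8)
The plan is to assemble this proposition directly from the three lemmas just established, since the substantive work—the graded-dimension computations and the identification of the basic algebras of $R^\Lambda(\delta)$ with local algebras or Brauer graph algebras—has already been carried out in Lemmas \ref{lem::step-2-delta-1} and \ref{lem::step-2-delta-2}, and the wildness for $|I(\Lambda)_0|\geq 2$ is already contained in Lemma \ref{lem::step-1-21}. The only remaining task is to organize the case analysis so that the stated dichotomy between tame and wild is covered exactly once on each branch.

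First I would fix the notation. The hypothesis $\Lambda=k\Lambda_i$ is precisely the condition $|I(\Lambda)_0|=1$, while its negation is $|I(\Lambda)_0|\geq 2$, so these two regimes partition $\pcl$. I would also recall that $\beta_\Lambda=0$, so that $\delta=\beta_\Lambda+\delta$ is exactly of the form treated in Step 1, and that $e=\ell+1$, which is what lets the condition $t\neq(-1)^{\ell+1}$ for $\ell\geq 2$ be written as $t\neq(-1)^{e}$ as in the statement.

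Next I would treat the wild-by-default branch. When $|I(\Lambda)_0|\geq 2$, Lemma \ref{lem::step-1-21} applied with $\beta_{\Lambda'}=\beta_\Lambda=0$ and $m=1$ shows immediately that $R^\Lambda(\delta)$ is wild; this accounts for every $\Lambda$ not of the form $k\Lambda_i$ and hence matches the \emph{otherwise} clause of the proposition. For the complementary branch $\Lambda=k\Lambda_i$, I would split on the quantum characteristic: Lemma \ref{lem::step-2-delta-1} gives tameness for $\ell=1$ when $t\neq\pm 2$ and wildness otherwise, while Lemma \ref{lem::step-2-delta-2} gives tameness for $\ell\geq 2$ when $t\neq(-1)^{e}$ and wildness when $t=(-1)^{e}$. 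Combining these three cases yields exactly the stated trichotomy.

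I do not expect any genuine obstacle here, as the proof is a purely bookkeeping collation of results already in hand. The one point demanding care is verifying that the boundary parameter values ($t=\pm 2$ for $\ell=1$ and $t=(-1)^{e}$ for $\ell\geq 2$) are consistently assigned to the wild side across both of the $|I(\Lambda)_0|=1$ lemmas and the statement of the proposition, so that every parameter value is classified and none is double-counted.
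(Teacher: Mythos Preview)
Your proposal is correct and follows exactly the same route as the paper: the case $|I(\Lambda)_0|\geq 2$ is dispatched by Lemma \ref{lem::step-1-21}, and the case $|I(\Lambda)_0|=1$ splits into Lemmas \ref{lem::step-2-delta-1} and \ref{lem::step-2-delta-2}. The paper's version is simply terser, omitting the bookkeeping remarks you include.
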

\begin{proof}
If $|I(\Lambda)_0|\geq 2$, then $R^{\Lambda}(\delta)$ is wild by Lemma \ref{lem::step-1-21}. If $|I(\Lambda)_0|=1$, this is exactly the case of Lemma \ref{lem::step-2-delta-1} or Lemma \ref{lem::step-2-delta-2}.
\end{proof}

\subsection{Representation type on $\mathcal T(\Lambda)_0\cup \mathcal T(\Lambda)_1$}
Suppose $\Lambda\in \pcl$ with $k\geq 3$.
In the next proposition, we see that $R^{\Lambda}(\beta)$ is representation-finite if $\beta\in \mathcal T(\Lambda)_1=\{\alpha_i \mid i\in I(\Lambda)_1\}$.

\begin{Prop}\label{prop::step-2-result-T_1}
For any $\beta\in \{0\}\cup\mathcal T(\Lambda)_1$, $R^{\Lambda}(\beta)$ is of finite representation type.
Among them, $R^{\Lambda}(0)$ is a simple algebra.
\end{Prop}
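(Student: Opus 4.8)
The plan is to treat the two cases $\beta=0$ and $\beta=\alpha_i\in\mathcal T(\Lambda)_1$ separately, in both cases by reducing directly to the defining relations of Definition \ref{def::cyclotomic-quiver} in very small height, so that no deeper structure theory is needed.

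First I would dispose of $\beta=0$. Here $|\beta|=n=0$, so $I^0$ consists of the single empty sequence, and by Definition \ref{def::cyclotomic-quiver} the algebra $R^\Lambda(0)$ has the single idempotent generator $e(\emptyset)=1$, no generators $x_i$, and no generators $\psi_j$; moreover the cyclotomic relation (6) is vacuous, so $e(\emptyset)\neq 0$. Hence $R^\Lambda(0)=\k\,e(\emptyset)\cong\k$, which is a simple algebra. As a cross-check one may instead invoke Theorem \ref{theo::graded-dim}, which gives $\dim_q R^\Lambda(0)=1$ since the empty multipartition carries a unique standard tableau, of degree $0$.

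Next I would handle $\beta=\alpha_i$ with $i\in I(\Lambda)_1$, i.e. $m_i=\langle h_i,\Lambda\rangle\geq 2$. Now $n=1$, and the only $\nu\in I^1$ with $\alpha_{\nu_1}=\alpha_i$ is $\nu=(i)$, so $e(\alpha_i)=e((i))$. Since $n=1$ there are no generators $\psi_j$, whence relations (2)--(5) of Definition \ref{def::cyclotomic-quiver} are vacuous, and $R^\Lambda(\alpha_i)=e((i))R^\Lambda(1)e((i))$ is generated over $\k$ by $e((i))$ and $x_1$ subject only to the cyclotomic relation (6), namely $x_1^{m_i}e((i))=0$. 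Therefore $R^\Lambda(\alpha_i)\cong \k[x_1]/(x_1^{m_i})$ with $m_i\geq 2$. This is a commutative local self-injective Nakayama algebra whose indecomposable modules are exactly the $m_i$ cyclic quotients $\k[x_1]/(x_1^{j})$ for $1\leq j\leq m_i$; in particular it is of finite representation type, as required. (One checks consistency of the count via Theorem \ref{theo::graded-dim}: the $m_i$ components of $\Lambda$ with label congruent to $i$ each yield a single one-box multipartition with a unique standard tableau, giving total dimension $m_i$.)

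I do not expect a genuine obstacle here; the whole point is that at heights $0$ and $1$ no braid-type generators $\psi_j$ enter, which is precisely what collapses $R^\Lambda(\beta)$ to $\k$ or to a truncated polynomial ring. The only items requiring a little care are the bookkeeping of surviving idempotents (here $e((i))\neq 0$ exactly because $m_i\geq 1$) and the identification $R^\Lambda(\alpha_i)=e(\alpha_i)R^\Lambda(1)e(\alpha_i)$, legitimate because $e(\alpha_i)$ is central; the finite representation type of $\k[x]/(x^m)$ is classical and already appears in the Morita list of the corollary in the introduction.
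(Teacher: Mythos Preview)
Your proposal is correct and follows essentially the same approach as the paper: in both cases one identifies $R^\Lambda(0)\cong\k$ and $R^\Lambda(\alpha_i)\cong\k[x]/(x^{m_i})$ directly from the presentation in Definition~\ref{def::cyclotomic-quiver}. The paper's proof simply asserts these isomorphisms without the detailed justification you provide, so your version is a more explicit rendering of the same argument.
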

\begin{proof}
If $\beta=0$, then $R^{\Lambda}(\beta)=\k$ and it is simple.
If $\beta=\alpha_i$ for some $i\in I(\Lambda)_1$, then $R^{\Lambda}(\beta)\simeq \k[x]/(x^m)$ with $m=\langle h_i, \Lambda \rangle \ge 2$, and it is of finite representation type which is not a simple algebra.
\end{proof}

We look at the case of $\mathcal T(\Lambda)_0$.
Recall that $T(\Lambda)_0=\{\Lambda_{i,j}\mid i\neq j\in I(\Lambda)_0, [i,j]\neq I \}$ and $\mathcal T(\Lambda)_0= \{\sum_{m\in [i,j]}\alpha_m \mid
 i\neq j\in I(\Lambda)_0, [i,j]\neq I \}$ with $|I(\Lambda)_0|\geq 2$.
We enumerate the indices in $I(\Lambda)_0$ by
$$
I(\Lambda)_0=\{i_j\mid 1\leq j\leq h, i_1<i_2<\ldots<i_h\}
$$
as in Subsection 4.4 and  we write $\Lambda=m_{i_1}\Lambda_{i_1}+m_{i_2}\Lambda_{i_2}+\ldots +m_{i_h}\Lambda_{i_h}$.

\begin{Lemma}\label{lem::step-2-T_0-2i2j}
Let $\Lambda=2\Lambda_i+2\Lambda_j$ with $i\neq j \in I$.
Then, $R^\Lambda(\beta )$ is wild for any $\beta\in \mathcal T(\Lambda)_0$.
\end{Lemma}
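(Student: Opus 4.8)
The plan is to normalize the segment by a diagram rotation and then compute the Gabriel quiver of the basic algebra explicitly. Recall from Remark \ref{rem::T_Lambda-beta} that for $\Lambda=2\Lambda_i+2\Lambda_j$ we have $I(\Lambda)_0=\{i,j\}$, so $\mathcal T(\Lambda)_0$ consists of the two arcs $\sum_{m\in[i,j]}\alpha_m$ and $\sum_{m\in[j,i]}\alpha_m$ joining $i$ and $j$. Using Proposition \ref{prop::iso-sigma} together with \eqref{equ:sigmaX}, I would apply a suitable power of $\sigma$ to carry either arc to the standard position; this reduces both cases to $\Lambda=2\Lambda_0+2\Lambda_m$ with $\beta=\alpha_0+\alpha_1+\cdots+\alpha_m$, where $1\le m\le \ell-1$ (the upper bound encoding the condition $[i,j]\neq I$). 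A convenient feature of this normalization is that $\beta$ only involves the residues $0,1,\dots,m$, so the only polynomials entering Definition \ref{def::cyclotomic-quiver} are $Q_{r,r+1}=u+v$ for $0\le r<m$ (together with the trivial $Q_{r,r}=0$ and $Q_{r,s}=1$ for non-adjacent $r,s$); in particular neither the special polynomial $Q_{\ell,0}$ nor the parameter $t$ ever appears, and the cyclotomic relation reduces to $x_1^{2}e(\nu)=0$ when $\nu_1\in\{0,m\}$ and $x_1e(\nu)=0$ otherwise.

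Next I would determine the basic algebra of $R^{\Lambda}(\beta)$. Since $\beta$ is multiplicity free, the multipartitions $\lambda\in\mathcal P_{4,m+1}$ whose content equals $\beta$ and which lie below $\Lambda$ are severely constrained, so the standard tableaux contributing in Theorem \ref{theo::graded-dim} can be listed uniformly in $m$. Computing the degree of each such tableau by tracking its addable and removable nodes, and assembling the graded dimensions $\dim_q e(\nu)R^{\Lambda}(\beta)e(\nu')$, I would then invoke \cite[Lemma 1.3]{Ar-rep-type} to read off the vertices (the nonzero idempotents $e(\nu)$) and the arrows of the quiver of the basic algebra from the low-degree part of these polynomials.

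Finally, I would exhibit a wild configuration in this quiver. The role of having both coefficients $m_0=m_m=2$ is to force, at some vertex, more loops or arrows than any tame algebra can support: concretely I expect either a local subquotient carrying two loops whose relations match a wild local algebra in \cite[(1.1)]{Ringel-local-alg}, or a two-vertex subquiver of the kind excluded for tame blocks in \cite[I.10.8]{Er-tame-block}. Wildness of $R^{\Lambda}(\beta)$ would then follow, since replacing $R^{\Lambda}(\beta)$ by its basic algebra preserves representation type, and passing to a quotient or to an idempotent corner $eR^{\Lambda}(\beta)e$ cannot turn a wild algebra into a non-wild one. I expect the main obstacle to be the middle step: the uniform-in-$m$ bookkeeping of tableau degrees and the verification that \emph{every} arrow of the quiver is captured (an undercount here would invalidate the wildness criterion); once the quiver is correctly in hand, the final conclusion is a direct citation.
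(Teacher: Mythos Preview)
Your reduction via $\sigma$ to $\Lambda=2\Lambda_0+2\Lambda_s$ with $\beta=\alpha_0+\cdots+\alpha_s$ and $1\le s\le\ell-1$ is exactly what the paper does, and your overall strategy (graded-dimension computation followed by a wildness criterion) is sound. The difference is one of economy: you propose to determine the \emph{entire} basic algebra by computing $\dim_q e(\nu)R^\Lambda(\beta)e(\nu')$ for all pairs $\nu,\nu'$ and then reading off the full Gabriel quiver, whereas the paper bypasses this by looking at the single idempotent $e_0=e(0\,1\,2\,\ldots\,s)$ and its corner $\mathcal A=e_0R^\Lambda(\beta)e_0$. From $\dim_q\mathcal A=1+2q^2+2q^4+q^6$ together with $\psi_ie_0=0$, $x_1^2e_0=0$, and $x_je_0=(-1)^{j-1}x_1e_0$ for $2\le j\le s$, the paper sees that $\mathcal A$ is local with generators $x_1e_0$ and $x_{s+1}e_0$, and that $\mathcal A/\mathrm{rad}^3\mathcal A\cong \k[X,Y]/(X^2,Y^3,XY^2)$, a wild local algebra in Ringel's list. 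Your route would certainly reach a wild configuration too, but the bookkeeping you flag as the main obstacle (tracking all $e(\nu)$ uniformly in $s$) is genuinely heavier; the paper's shortcut avoids it entirely and needs only one tableau computation.
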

\begin{proof}
By Proposition \ref{prop::iso-sigma}, we can assume $\Lambda=2\Lambda_0+2\Lambda_s$ with $1\le s\leq \ell$.
In this case, $T(\Lambda)_0=\{\Lambda_{0,s}, \Lambda_{s,0}\}$, where $\Lambda_{0,s}$ (resp. $\Lambda_{s,0}$) appears only if $s<\ell$ (resp. $s>1$), and $\mathcal T(\Lambda)_0=\{\sum_{m=0}^s\alpha_m, \alpha_0+\sum_{m=s}^\ell\alpha_m\}$.
We only prove the case $\beta=\sum_{m=0}^s\alpha_m$ since the other case can be checked similarly.

Let $\mathcal{A}=e_0 R^\Lambda(\beta )e_0$ with $e_0=e(0~1~2~\ldots~s)$.
By Theorem \ref{theo::graded-dim}, $\dim_q \mathcal{A}= 1+2q^2+2q^4+q^6$.
On the other hand, $x_1^2e_0=0$ and $\psi_i e_0=0, 1\leq i\leq s-1$ imply $x_je_0=(-1)^{j-1}x_1e_0$ for any $2\leq j\leq s$. Moreover, $e_0\psi_w e_0\neq 0$ yields that the permutation $w\nu$ of $\nu=(0~1~2~\ldots~s)$ must coincide with $\nu$, and then, $w=1$. Hence, $\mathcal{A}$ is generated by $e_0, x_1e_0$ and $x_{s+1}e_0$. By the graded dimension of $\mathcal{A}$ and $x_1^2e_0=0$, the degree $2$ homogeneous component has basis $\{x_1e_0, x_{s+1}e_0\}$ and the degree $4$ homogeneous component has basis $\{x_1x_{s+1}e_0, x_{s+1}^2e_0\}$.
Then, $\mathcal{A}/\text{rad}^3 \mathcal{A}\simeq \k[X,Y]/(X^2,Y^3,XY^2)$ by sending $x_1e_0$ and $x_{s+1}e_0$ to $X$ and $Y$, respectively.
Since $\mathcal{A}/\text{rad}^3 \mathcal{A}$ is wild, $\mathcal{A}$ is wild and so is $R^\Lambda(\beta)$.
\end{proof}

Recall that $T(\Lambda)_0$ is defined only when $|I(\Lambda)_0|\ge 2$.
\begin{Defn}
Let $T'(\Lambda)_0:=\{ \Lambda_{i_j,i_{j+1}}\in T(\Lambda)_0\mid i_j,i_{j+1} \in I(\Lambda_0), 1\le j \le h\}$ and
$$
\mathcal T'(\Lambda)_0:=\left \{\textstyle \sum_{m\in[i_j,i_{j+1}]}\alpha_{m}\in \mathcal T(\Lambda)_0\mid i_j,i_{j+1} \in I(\Lambda_0), 1\le j \le h\right \}.
$$
Note that $T'(\Lambda)_0=T(\Lambda)_0$ if $|I(\Lambda)_0|=2$.
\end{Defn}

\begin{Lemma}\label{lem::step-2-T_0-exception}
Suppose $|I(\Lambda)_0|\geq3$ and $\beta\in \mathcal T(\Lambda)_0$.
Then, $R^{\Lambda}(\beta)$ is wild if $\beta\notin {\mathcal T'}(\Lambda)_0$.
\end{Lemma}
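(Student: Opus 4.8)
The plan is to reduce the statement to the level-$3$ case and there compute the basic algebra explicitly. Since $\beta\in\mathcal T(\Lambda)_0$, I write $\beta=\sum_{m\in[i,j]}\alpha_m$ with $i\neq j\in I(\Lambda)_0$ and $[i,j]\neq I$; as $[i,j]$ is a proper arc, its endpoints $i,j$ are recovered from $\operatorname{supp}\beta$. The hypothesis $\beta\notin\mathcal T'(\Lambda)_0$ says precisely that $i$ and $j$ are not cyclically consecutive in $I(\Lambda)_0$, so there is an index $i_p\in I(\Lambda)_0$ lying strictly inside the arc $[i,j]$. Set $\bar\Lambda:=\Lambda_i+\Lambda_{i_p}+\Lambda_j$; since $m_i,m_{i_p},m_j\geq 1$, we have $\Lambda=\bar\Lambda+\tilde\Lambda$ with $\tilde\Lambda\in P^+_{cl,k-3}$. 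Because $i\neq j\in I(\bar\Lambda)_0=\{i,i_p,j\}$ and $[i,j]\neq I$, Corollary~\ref{cor::find-arrow} produces an arrow $\bar\Lambda\xrightarrow{(i,j)}\bar\Lambda_{i,j}$ in $\vec C(\bar\Lambda)$ (here $X_{\bar\Lambda}=(0^e)$), and Lemma~\ref{lem::recurrence} gives $X_{\bar\Lambda,\bar\Lambda_{i,j}}=\Delta_{i,j}$, whence $\beta_{\bar\Lambda,\bar\Lambda_{i,j}}=\sum_{m\in[i,j]}\alpha_m=\beta$. Thus $\beta\in\mathcal T(\bar\Lambda)_0\setminus\mathcal T'(\bar\Lambda)_0$, the interior point $i_p$ being exactly what keeps $i,j$ from being consecutive in $\{i,i_p,j\}$. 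By Lemma~\ref{lem::reduction-level} it suffices to show $R^{\bar\Lambda}(\beta)$ is wild, so I may assume $k=3$ and $\Lambda=\Lambda_i+\Lambda_{i_p}+\Lambda_j$.

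Next I normalize. Applying a power of $\sigma$ (Proposition~\ref{prop::iso-sigma} together with \eqref{equ:sigmaX}) to send $i$ to $0$, I may assume $\Lambda=\Lambda_0+\Lambda_p+\Lambda_q$ with $0<p<q$; the conditions $[0,q]\neq I$ and ``$i_p$ interior'' become $q<\ell$ and $0<p<q$ (so $\ell\geq 3$), and $\beta=\alpha_0+\alpha_1+\cdots+\alpha_q$. It therefore remains to prove that $R^{\Lambda_0+\Lambda_p+\Lambda_q}(\alpha_0+\cdots+\alpha_q)$ is of wild representation type for all $0<p<q<\ell$.

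For this I would follow the method of Lemma~\ref{lem::step-2-T_0-2i2j}. Using Theorem~\ref{theo::graded-dim} I enumerate the $(q+1)$-node $3$-multipartitions whose residue content is $\{0,1,\dots,q\}$ together with their standard tableaux, group these by residue sequence to identify the nonzero idempotents $e(\nu)$, and read off the graded dimensions $\dim_q e(\nu)R^{\Lambda}(\beta)e(\nu')$. Via \cite[Lemma~1.3]{Ar-rep-type} these dimensions determine the Gabriel quiver and the lowest-degree relations of the basic algebra; since every fundamental weight occurs with multiplicity one we have $x_1e(\nu)=0$, and the defining relations of Definition~\ref{def::cyclotomic-quiver} then pin down a small bound-quiver presentation. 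Wildness is obtained by exhibiting an explicit wild quotient, either a local algebra $e(\nu)R^{\Lambda}(\beta)e(\nu)/\operatorname{rad}^3$ from Ringel's list \cite{Ringel-local-alg}, or a two-vertex subquotient from Erdmann's wild list \cite[I.10.8]{Er-tame-block}, exactly in the spirit of Lemma~\ref{lem::step-2-T_0-2i2j}.

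The main obstacle is the combinatorics of this level-$3$ computation. The shape of the quiver depends on the gap pattern of the charges $0<p<q<\ell$ — whether neighbouring charges, or a charge and the cyclic boundary, are at distance $1$ or at distance $\geq 2$ (for instance $p=1$, or $q=p+1$, or $q=\ell-1$) — because adjacent residues let the $\psi$-strands interact and thereby create extra arrows. I expect to split into sub-cases according to these gaps and, in each, to locate the idempotents carrying the wild subquotient. Conceptually the interior charge $i_p$ (here $p$) is precisely the additional occupied node that supplies the extra loops or arrows which are absent for the representation-finite weights in $\mathcal T'(\Lambda)_0$; making this uniform across the gap patterns is the delicate point.
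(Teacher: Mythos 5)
Your reduction to level $3$ is correct and is essentially the paper's: choosing an occupied index $i_p$ strictly inside the arc $[i,j]$, setting $\bar\Lambda=\Lambda_i+\Lambda_{i_p}+\Lambda_j$, and invoking Lemma~\ref{lem::embedding-Lambda} (to get $\beta_{\Lambda,\Lambda_{i,j}}=\beta_{\bar\Lambda,\bar\Lambda_{i,j}}$) together with Lemma~\ref{lem::reduction-level} is exactly how the paper handles $k\geq 4$, and the subsequent rotation by $\sigma$ is also used there. The problem is that everything after that point in your proposal is a plan rather than a proof: the lemma's actual content is the wildness of the level-$3$ algebras, and you never exhibit a wild quotient, you only describe the method you would use and explicitly flag the ``delicate point'' (uniformity across gap patterns) as unresolved. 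As written, the statement is not proved.

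The paper resolves that delicate point in a way worth noting, because it shows your anticipated case division by gap patterns is unnecessary. For $\Lambda=\Lambda_0+\Lambda_i+\Lambda_j$ with $0<i<j\leq\ell$ and the element $\Lambda_{j,i}$ (so $\beta=\alpha_0+\cdots+\alpha_i+\alpha_j+\cdots+\alpha_\ell$; your normalized case $\Lambda_{0,q}$ is a $\sigma$-rotation of this), one takes only the two idempotents $e_1=e(0\,1\,\cdots\,i\,j\,j{+}1\,\cdots\,\ell)$ and $e_2=e(j\,j{+}1\,\cdots\,\ell\,i\,0\,1\,\cdots\,i{-}1)$ and computes via Theorem~\ref{theo::graded-dim} that $\dim_q e_1\mathcal Ae_1=1+2q^2+q^4$ and $\dim_q e_1\mathcal Ae_2=\dim_q e_2\mathcal Ae_1=q^2$, where $\mathcal A=(e_1+e_2)R^\Lambda(\beta)(e_1+e_2)$. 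By \cite[Lemma 1.3]{Ar-rep-type} this already forces two loops at the first vertex and an arrow to the second, which is wild by \cite[I.10.8 (i)]{Er-tame-block}, uniformly in $i,j$ (only $\dim_q e_2\mathcal Ae_2$ depends on whether $i=1$, and that is irrelevant to the wild subquiver). So no determination of the full Gabriel quiver, and no splitting into sub-cases according to whether charges are adjacent, is needed; the single interior occupied index is what produces the second loop at $e_1$. To complete your argument you would need to carry out this (or an equivalent) computation; until then the lemma is only reduced, not proved.
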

\begin{proof}
Suppose $k=3$.
By Proposition \ref{prop::iso-sigma}, we assume $\Lambda=\Lambda_0+\Lambda_i+\Lambda_j$ with $0<i<j \leq \ell$.
In this case, $T(\Lambda)_0\setminus T'(\Lambda)_0=\{\Lambda_{0,j}, \Lambda_{j,i},\Lambda_{i,0}\}$, where $\Lambda_{j,i}$ (resp. $\Lambda_{0,j}$, resp. $\Lambda_{i,0}$) appears only if $i\leq j-2$ (resp. $j<\ell$, resp. $i>1$).
If $\Lambda_{j,i}$ appears, then $X_{\Lambda_{j,i}}=(1^{i+1},0^{j-i-1}, 1^{\ell-j+1})$ by Lemma \ref{lem::recurrence}, and then $\beta_{\Lambda_{j,i}}=\alpha_0+\ldots+\alpha_i+\alpha_j+\ldots+\alpha_\ell$.

Set $e_1=e(0~1~2~\ldots~i~j~j+1~\ldots~\ell)$ and $e_2=e(j~j+1~\ldots~\ell~i~0~1~\ldots~i-1)$.
We define $\mathcal{A}=(e_1+e_2)R^\Lambda(\beta_{\Lambda_{j,i}})(e_1+e_2)$.
By Theorem \ref{theo::graded-dim}, we have
$$
\dim_q e_1\mathcal Ae_1=1+2q^2+q^4, \quad \dim_q e_1\mathcal Ae_2=\dim_q e_2\mathcal Ae_1=q^2,
$$
$$
\dim_q e_2\mathcal Ae_2=1+2q^2+q^4 \text{ if } i>1, \quad \dim_q e_2\mathcal Ae_2= 1+q^2+q^4 \text{ if } i=1.
$$
By \cite[Lemma 1.3]{Ar-rep-type}, the quiver of $\mathcal{A}$ has a subquiver containing two loops on vertex 1 and one arrow $1\rightarrow2$. Then, $\mathcal{A}$ is wild deduced from \cite[I.10.8.(i)]{Er-tame-block}, and so is $R^\Lambda(\beta_{\Lambda_{j,i}})$.
Since the choice of $i,j$ is arbitrary, one can use Proposition \ref{prop::iso-sigma} to deduce that $R^\Lambda(\beta_{\Lambda'})$ is wild for $\Lambda'\in\{\Lambda_{0,j},\Lambda_{i,0} \}$. This implies that the assertion holds when $k=3$.

Suppose $k\geq 4$.
Let $\Lambda_{i_j,i_l}\in T(\Lambda)_0\setminus T'(\Lambda)_0$ with $i_l\neq i_{j+1}$, and $\bar\Lambda:= \Lambda_{i_j}+\Lambda_{i_{j+1}}+ \Lambda_{i_l}$. Then, $R^{\bar\Lambda}(\beta_{\bar\Lambda, {\bar\Lambda}_{i_j,i_l}})$ is wild by the result for $k=3$.
Since $\beta_{\Lambda, \Lambda_{i_j,i_l}}=\beta_{\bar\Lambda,{\bar\Lambda}_{i_j,i_l}}$ by Lemma \ref{lem::embedding-Lambda}, we conclude that $R^{\Lambda}(\beta_{\Lambda_{i_j,i_l}})$ is wild by Lemma \ref{lem::reduction-level}.
\end{proof}

Recall the subsets $\mathscr T(\Lambda)_0$ and $\mathscr T(\Lambda)_1$ of $\mathcal T(\Lambda)_0$ in Subsection \ref{sec::main-result}.

\begin{Prop}\label{prop::step-2-result-T_0}
Suppose $\Lambda\in \pcl$ with $k\geq 2$ and $\beta\in \mathcal T(\Lambda)_0$. Then, $R^{\Lambda}(\beta)$ is
\begin{enumerate}
    \item of finite representation type if $\beta\in \mathscr T(\Lambda)_0$,
    \item of tame representation type if $\beta\in \mathscr T(\Lambda)_1$.
\end{enumerate}
Otherwise, $R^{\Lambda}(\beta)$ is of wild representation type.
\end{Prop}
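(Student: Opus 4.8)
The plan is to dispose of the wild cases using the lemmas already established, and then to treat the two remaining families by a direct computation of the basic algebra. First, by Proposition \ref{prop::iso-sigma} together with \eqref{equ:sigmaX} we are free to rotate the interval by $\sigma$, so we may normalise the position of $\beta=\sum_{m\in[i,j]}\alpha_m$; recall that $\beta\in\mathcal T(\Lambda)_0$ forces $i\neq j\in I(\Lambda)_0$ and $[i,j]\neq I$. If $|I(\Lambda)_0|\geq 3$ and $i,j$ are not cyclically consecutive in $I(\Lambda)_0$, i.e. $\beta\notin\mathcal T'(\Lambda)_0$, then $R^\Lambda(\beta)$ is wild by Lemma \ref{lem::step-2-T_0-exception}. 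If $i,j$ are consecutive but both endpoint multiplicities $m_i,m_j$ exceed $1$, then writing $\Lambda=\bar\Lambda+\tilde\Lambda$ with $\bar\Lambda=2\Lambda_i+2\Lambda_j$ and using $\beta_{\Lambda,\cdot}=\beta_{\bar\Lambda,\cdot}$ from Lemma \ref{lem::embedding-Lambda}, Lemma \ref{lem::step-2-T_0-2i2j} shows $R^{\bar\Lambda}(\beta)$ is wild, whence $R^\Lambda(\beta)$ is wild by Lemma \ref{lem::reduction-level}. This leaves exactly the cases $\beta\in\mathscr T(\Lambda)_0$ (both endpoints of multiplicity $1$) and $\beta\in\mathscr T(\Lambda)_1$ (exactly one endpoint of multiplicity $1$).

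The key simplification is the following reduction in level. Since $i,j$ are consecutive in $I(\Lambda)_0$, every interior index of $[i,j]$ has multiplicity $0$ in $\Lambda$, and no residue outside $[i,j]$ occurs in any $\nu\in I^\beta$. As $e(\beta)=\sum_{\nu\in I^\beta}e(\nu)$ is a central idempotent and the only $\Lambda$-dependent relation in Definition \ref{def::cyclotomic-quiver}, namely $x_1^{\langle h_{\nu_1},\Lambda\rangle}e(\nu)=0$, involves $\langle h_{\nu_1},\Lambda\rangle$ only for $\nu_1\in[i,j]$, the block $R^\Lambda(\beta)$ depends on $\Lambda$ solely through the endpoint multiplicities $m_i,m_j$. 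Hence $R^\Lambda(\beta)\cong R^{\bar\Lambda}(\beta)$ with $\bar\Lambda=m_i\Lambda_i+m_j\Lambda_j$, reducing both remaining cases to low level and to a computation governed only by the length of $[i,j]$ and by $m_i,m_j$.

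For $\beta\in\mathscr T(\Lambda)_0$ we have $\bar\Lambda=\Lambda_i+\Lambda_j$ of level $2$. Using Theorem \ref{theo::graded-dim} one lists the nonzero idempotents $e(\nu)$ (those $\nu$ realisable as residue sequences of standard tableaux, necessarily with $\nu_1\in\{i,j\}$) and computes the graded dimensions $\dim_q e(\nu)R^{\bar\Lambda}(\beta)e(\nu')$; the outcome matches the level-$2$ analysis of \cite{Ar-rep-type} and identifies the basic algebra with a Brauer tree algebra whose tree is a straight line and which has no exceptional vertex. Such an algebra is representation-finite, proving $(1)$.

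For $\beta\in\mathscr T(\Lambda)_1$, say $m_i=1$ and $m_j=m\geq 2$, we have $\bar\Lambda=\Lambda_i+m\Lambda_j$ of level $m+1\geq 3$, and the plan is to run the same scheme as in the proof of Lemma \ref{lem::step-2-delta-2}: compute the graded dimensions via Theorem \ref{theo::graded-dim}, extract an explicit basis from the relations of Definition \ref{def::cyclotomic-quiver}, and then perform the change of generators that turns the KLR relations into Brauer graph relations, here for a tree (straight line) one of whose multiplicities equals $1$ and all of whose remaining multiplicities equal $m$. Tameness then follows from the Brauer graph and Erdmann classifications \cite{Er-tame-block}. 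The hardest part is precisely this last computation: isolating the correct basis of $R^{\bar\Lambda}(\beta)$ and finding the generator substitution that linearises the defining relations into those of a Brauer graph algebra, exactly the delicate bookkeeping carried out for $\delta$ in Lemma \ref{lem::step-2-delta-2}, now with the length of the interval $[i,j]$ as an additional parameter to be handled uniformly.
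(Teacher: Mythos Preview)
Your outline matches the paper's proof almost exactly: the same reduction to $\mathcal T'(\Lambda)_0$ via Lemma \ref{lem::step-2-T_0-exception}, the same use of Lemma \ref{lem::step-2-T_0-2i2j} plus Lemma \ref{lem::reduction-level} for the case of two large endpoint multiplicities, and the same level reduction $R^\Lambda(\beta)\cong R^{\bar\Lambda}(\beta)$ with $\bar\Lambda=m_i\Lambda_i+m_j\Lambda_j$. The paper justifies that isomorphism by invoking the surjection of Lemma \ref{lem::reduction-level} and comparing graded dimensions via Theorem \ref{theo::graded-dim}, which is cleaner than arguing directly from the cyclotomic relation as you do; your argument is fine in spirit but you should say explicitly that interior $\nu_1$ give $e(\nu)=0$ in both algebras.

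The only substantive gap is that you stop short of the computation you yourself identify as the hardest part. The paper does not separate the finite and tame cases: it normalises to $\bar\Lambda=(k-1)\Lambda_0+\Lambda_s$, chooses idempotents $e_i=e(0\,1\,\cdots\,i{-}1\;s\,s{-}1\,\cdots\,i)$ for $0\le i\le s$, computes the graded dimensions $\dim_q e_i\mathcal A e_j$, extracts explicit bases from the KLR relations, and performs the change of generators to identify $\mathcal A=e'R^{\bar\Lambda}(\beta)e'$ with the Brauer graph algebra on a line with $s+2$ vertices, one of multiplicity $1$ and the rest of multiplicity $k-1$. This single calculation yields both cases at once ($k=2$ gives the Brauer tree, $k\ge3$ the tame Brauer graph), and is completed by counting Kleshchev multipartitions to confirm that $\mathcal A$ is the basic algebra. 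Your plan to cite \cite{Ar-rep-type} for the level-$2$ case is legitimate, but for the tame case you still owe the actual basis and generator-substitution computation; the paper carries this out in full, and there is no shortcut around it.
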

\begin{proof}
By Lemma \ref{lem::step-2-T_0-exception}, $R^\Lambda(\beta)$ is wild for any $\beta\in \mathcal T(\Lambda)_0\setminus \mathcal T'(\Lambda)_0$.
In the following, we assume that $\beta=\sum_{m\in[i_j,i_{j+1}]}\alpha_m\in \mathcal T'(\Lambda)_0$.
Write $\Lambda=\bar\Lambda+\tilde\Lambda$ with $\bar\Lambda=m_{i_j}\Lambda_{i_j}+m_{i_{j+1}}\Lambda_{i_{j+1}}$.
Then, the surjective algebra homomorphism used in the proof of Lemma \ref{lem::reduction-level} induces an isomorphism
\begin{equation}\label{equ::step-2-alg-iso}
R^{\Lambda}(\beta)\simeq R^{\bar\Lambda}(\beta)
\end{equation}
because the graded dimensions coincide.
If $\beta\notin \mathscr T(\Lambda)_0\cup \mathscr T(\Lambda)_1$, then $m_{i_j}\geq 2$ and $m_{i_{j+1}}\geq 2$.
By Lemma \ref{lem::step-2-T_0-2i2j}, $R^{2\Lambda_{i_j}+2\Lambda_{i_{j+1}}}(\beta)$ is wild and so is $R^\Lambda(\beta)$ by Lemma \ref{lem::reduction-level}.

Suppose $\beta\in \mathscr T(\Lambda)_0\cup \mathscr T(\Lambda)_1$.
By Proposition \ref{prop::iso-sigma} and \eqref{equ::step-2-alg-iso}, we may assume that $\Lambda=(k-1)\Lambda_0+\Lambda_s$ for some $1\leq s\le \ell-1$, and either $\beta=\sum_{m=0}^s\alpha_m$ or $\beta=\alpha_0+\sum_{m=s}^\ell\alpha_m$. In this case, $k=2$ (resp. $k\ge 3$) if $\beta\in\mathscr T(\Lambda)_0$ (resp. $\beta\in\mathscr T(\Lambda)_1$).
We only show the case $\beta=\sum_{m=0}^s\alpha_m$, and the case $\alpha_0+\sum_{m=s}^\ell\alpha_m$ is left to the reader.
Let $\nu_i=(0~1~2~\ldots~i-1~s~s-1~\ldots~i+1~i)$ and denote $e_i=e(\nu_i)$, for $0\leq i\leq s$.
We understand $\nu_0=(s~s-1~\ldots~1~0)$.
For $e'=\sum_{i=0}^se_i$, we define $\mathcal{A}=e'R^\Lambda(\beta)e'$.
It is not difficult to check that
$$
\begin{aligned}
\dim_q e_i \mathcal{A} e_j&=0, \ |i-j|>1, \quad \dim_q e_0 \mathcal{A} e_0 =1+\ssum_{1\leq j\leq k-1}q^{2j},\\
\dim_q e_i \mathcal{A} e_i &=1+2\ssum_{1\leq j\leq k-2}q^{2j}+q^{2k-2}, \quad 1\leq i\leq s,\\
\dim_{q} e_i \mathcal{A} e_{i+1}&=\dim_q e_{i+1} \mathcal{A} e_i=\ssum_{1\leq j\leq k-1}q^{2j-1}, \quad 0\leq i\le s-1.
\end{aligned}
$$
These graded dimensions imply that $\{\mathcal{A}e_i\mid 0\le i\le s\}$ is a complete set of isomorphism classes of indecomposable projective $\mathcal{A}$-modules. We show that $\mathcal{A}$ is a Brauer graph algebra. To see this, we begin by giving an explicit basis of $\mathcal{A}$.
\begin{itemize}
    \item We show that $e_0\mathcal{A}e_0$ has basis $\{x_{s+1}^ae_0\mid 0\leq a\leq k-1\}$. In fact, $x_1e_0=\psi_ie_0=0$ and $\psi_i^2e_0=(x_i+x_{i+1})e_0$ for $1\leq i\leq s-1$ imply $x_je_0=0$ for any $2\le j \le s$. Then, $\psi_s^2e_0=x_{s+1}e_0$ and $x_{s+1}^{k-1}\psi^2_se_0=0$ imply $x_{s+1}^ke_0=0$. Lastly, $e_0\psi_we_0\neq 0$ yields $\psi_w=1$ as usual. Here, the same strategy as in \eqref{equ::step-2-basis-computation} is used to find $x_{s+1}^{k-1}\psi^2_se_0=0$ and $x_{s+1}\psi_s^2e_s=0$, which we will use below. We omit the details.

    \item We give a basis for other $e_i\mathcal{A}e_j$ when $i>0$ or $j>0$. Note that $x_1^{k-1}e_i=0$ for any $1\leq i\leq s$. Since $\psi_i^2 e_i=e_i$, we find $x_{i+1}e_i=\psi_{i}\cdots \psi_1 x_1e(s_1s_2\cdots s_i \nu_i)\psi_1\cdots \psi_i=0$ with $s_1, s_2, \ldots, s_i\in \mathfrak S_{s+1}$ and $1\leq i\leq s-1$. For any $1\leq i\leq s, 1\le j \le s-1$, $\psi_j e_i=0$ holds if $j\neq i$, and then $(x_j+x_{j+1})e_i=0$, for $j\neq i$, implies
    $$
    x_he_i=\left\{\begin{array}{cc}
    (-1)^{h-1}x_1e_i  & \text{ if } 2\leq h\leq i, \\
    0    & \text{ if } i+1\leq h\leq s.
    \end{array}\right.
    $$
    Using $x_{s+1}\psi_s^2 e_s=0$ and $x_{s+1}\psi_s^2e_i=(-1)^i x_1\psi_s^2e_i$ for $1\leq i\le s-1$, this together with $\psi_s^2e_i=(x_s+x_{s+1})e_i$ yield $x_{s+1}^2e_i=(-1)^ix_1x_{s+1}e_i$ for $1\leq i\leq s$.
    Similar to the proof of Lemma \ref{lem::step-2-delta-2}, we conclude that $e_i \mathcal{A}e_i$, for $1\leq i\leq s$, has basis $\{x_1^ax_{s+1}^be_i\mid 0\leq a\leq k-2, 0\leq b\leq 1 \}$. We know that $e_0\mathcal{A}e_1$ has basis $\{x_{s+1}^a\psi_s\cdots\psi_1e_1\mid 0\leq a\leq k-2\}$ by using $e_0\mathcal{A}e_0$ being spanned by $\{x_{s+1}^ae_0\mid 0\leq a\leq k-1\}$ and $\dim_q e_0\mathcal{A}e_1$. Moreover, $e_i\mathcal{A}e_{i+1}$, for $1\leq i\leq s-1$, has basis $\{x_1^a\psi_s\cdots\psi_{i+1}e_{i+1}\mid 0\leq a\leq k-2 \}$, and $e_j \mathcal{A}e_{j-1}$, for $1\leq j\leq s$, has basis $\{x_1^a\psi_{j}\cdots\psi_s e_{j-1}\mid 0\leq a\leq k-2 \}$. Note that $x_{s+1}\psi_s\cdots \psi_{i+1}e_{i+1}=\psi_s\cdots \psi_{s+1}x_{i+1}e_{i+1}=(-1)^ix_1\psi_s\cdots \psi_{i+1}e_{i+1}$ for $1\leq i\leq s-1$.
\end{itemize}

Let $\alpha=x_1e_s$, $\mu_i=\psi_s\cdots \psi_i e_i$ and $\eta_i=\psi_i\cdots \psi_s e_{i-1}$ for $1\leq i\leq s$.
Then, $\alpha^{k-1}=0$.
Since $\psi_{s-1}e_{i-1}=e_{i-1}\psi_{s-1}=0$, for $1\leq i\leq s-1$, we have
$$
\eta_{i+1}\eta_i=\psi_{i+1}\cdots\psi_{s-1}\psi_i\cdots \psi_{s-1}\psi_{s}\psi_{s-1}e_{i-1}=0,
$$
$$
\mu_i\mu_{i+1}=e_{i-1}\psi_{s-1}\psi_{s}\psi_{s-1}\cdots \psi_i\psi_{s-1}\cdots\psi_{i+1}=0.
$$
By direct calculation, we obtain $\eta_{i-1}\mu_{i-1}=((-1)^ix_1+x_{s+1})e_{i-1}$ for $2\leq i\leq s$ and $\mu_i\eta_i=x_{s+1} e_{i-1}$ for $1\leq i\leq s$, which gives
$$
(\mu_i\eta_i)^{k-1}=(-1)^k(\eta_{i-1}\mu_{i-1})^{k-1}
$$
for  $2\leq i\leq s$, and $(\mu_1\eta_1)^{k-1}\mu_1=0$.
Moreover, $\eta_s\mu_s=((-1)^{s-1}x_1+x_{s+1})e_s$ implies $\eta_s\mu_s\eta_s=(-1)^{s-1}\alpha\eta_s$ and $\mu_s\eta_s\mu_s=(-1)^{s-1}\mu_s\alpha$.
After replacing $\alpha$ by $\alpha'=\eta_s\mu_s+(-1)^s\alpha$, we obtain
$$
\alpha'\eta_s=\mu_s\alpha'=0, \quad (\alpha')^{k-1}=(-1)^k(\eta_s\mu_s)^{k-1}.
$$
We conclude that $\mathcal{A}$ is isomorphic to the Brauer graph algebra whose Brauer graph is
$$
\begin{xy}
(0,0) *[o]+[Fo]{\hphantom{m}}="A",
(15,0) *[o]+[Fo]{m}="B",
(30,0)*[o]+[Fo]{m}="C",
(45,0)*[o]+[Fo]{m}="D",
(60,0)*[o]+[Fo]{m}="E",
\ar@{-} "A";"B"
\ar@{-} "B";"C"
\ar@{.} "C";"D"
\ar@{-} "D";"E"
\end{xy}
$$
where $m=k-1$ and the number of vertices is $s+2$. In particular, if $k=2$, then $\mathcal{A}$ is actually a Brauer tree algebra.

Similar to the proof of Lemma \ref{lem::step-2-delta-2}, the set of Kleshchev multipartitions which belong to the block $R^\Lambda(\beta)$ is $\{(\emptyset,\ldots,\emptyset,(i),(1^{s+1-i}))\mid 0\leq i\leq s\}$.
This implies that $R^\Lambda(\beta)$ shares the same number of pairwise non-isomorphic simple modules with $\mathcal{A}$.
Thus, $\mathcal{A}$ is the basic algebra of $R^\Lambda(\beta)$. Then, $R^\Lambda(\beta)$ is of finite (resp. tame) representation type if $\beta\in\mathscr T(\Lambda)_0$ (resp. $\beta\in\mathscr T(\Lambda)_1$).
\end{proof}

\subsection{Representation type on $\mathcal T(\Lambda)_2$}
In this subsection, we assume $\ell\geq 3$ and $|I(\Lambda)_1|\geq 1$.
Recall that $T(\Lambda)_2=\{\Lambda'_{i-1,i+1}\mid \Lambda'=\Lambda_{i,i},i\in I(\Lambda)_1\}$ is defined only when $\ell\geq 3$, and
$$
\mathcal T(\Lambda)_2=\{2\alpha_i+\alpha_{i-1}+\alpha_{i+1} \mid i\in I(\Lambda)_1\}.
$$

\begin{Lemma}\label{lem::step-2-T_2-3i}
Let $\Lambda=3\Lambda_i$ for some $i\in I$. 
Then, $R^{\Lambda}(\beta)$ is of wild representation type for any $\beta\in \mathcal T(\Lambda)_2$.
\end{Lemma}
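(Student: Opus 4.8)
The plan is to follow the computational strategy used for the other wildness results in this section, notably Lemmas \ref{lem::step-1-21} and \ref{lem::step-2-T_0-2i2j}, by reducing everything to one concrete block and extracting its quiver from graded dimensions. First I would apply Proposition \ref{prop::iso-sigma} together with \eqref{equ:sigmaX} to reduce to $i=0$, so that $\Lambda=3\Lambda_0$ and the unique element of $\mathcal T(\Lambda)_2$ is $\beta=2\alpha_0+\alpha_1+\alpha_\ell$ (recall $\alpha_{-1}=\alpha_\ell$ under the mod-$e$ convention), a root of height $4$. Since $\langle h_{\nu_1},3\Lambda_0\rangle=0$ whenever $\nu_1\neq 0$, relation (6) of Definition \ref{def::cyclotomic-quiver} forces every nonzero idempotent $e(\nu)$ of $R^{\Lambda}(\beta)$ to have $\nu_1=0$. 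As the content of $\beta$ is the multiset $\{0,0,1,\ell\}$ and $\ell\geq 3$ keeps $0,1,\ell$ pairwise distinct, the residue sequences $\nu$ with $e(\nu)\neq 0$ are exactly the six sequences beginning with $0$ and carrying $\{0,1,\ell\}$ in the remaining three slots. I emphasize that a direct computation is genuinely needed here: the only directed path of length two reaching $\beta$ passes through $\Lambda_{0,0}$ with $\beta_{\Lambda_{0,0}}=\alpha_0$, whose block is representation-\emph{finite} by Proposition \ref{prop::step-2-result-T_1}, so Corollary \ref{cor::reduction-path} cannot be used to propagate wildness into this vertex.

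Next I would enumerate, via Theorem \ref{theo::graded-dim}, the standard tableaux contributing to this block. The $3$-multipartitions of $4$ with content $\{0,0,1,\ell\}$ fall into three shape types: a single component equal to $(2,2)$; a component $(2,1)$ together with a component $(1)$; and a component $(2)$ together with a component $(1,1)$. Accounting for the empty components and the placement among the three positions yields all such multipartitions. For a suitable idempotent $e$, taken as a sum of one or two of the six $e(\nu)$, I would compute the graded dimension of the truncation $\mathcal A:=eR^{\Lambda}(\beta)e$ by listing the standard tableaux of these shapes with the prescribed residue sequences and summing $q^{\deg S+\deg T}$, where $\deg$ is the degree function defined just before Theorem \ref{theo::graded-dim}. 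With the graded dimension of $\mathcal A$ in hand, \cite[Lemma 1.3]{Ar-rep-type} then determines the relevant part of the Gabriel quiver of the basic algebra of $\mathcal A$, exactly as in the proofs cited above.

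Finally I would read off a wild configuration. I expect to find, at one of these vertices, at least two loops together with an incident arrow, so that wildness follows from \cite[I.10.8 (i)]{Er-tame-block}; alternatively, passing to $\mathcal A/\operatorname{rad}^3\mathcal A$ should exhibit a wild local algebra in the sense of \cite[(1.1)]{Ringel-local-alg}, as was done in Lemma \ref{lem::step-2-T_0-2i2j}. Since an idempotent truncation $\mathcal A=eR^{\Lambda}(\beta)e$ being wild forces $R^{\Lambda}(\beta)$ itself to be wild, this settles $i=0$, and the general $i$ follows by Proposition \ref{prop::iso-sigma}. The main obstacle will be purely computational: carrying out the Brundan--Kleshchev degree bookkeeping across three stacked components, where the addable/removable node counts depend on which positions host the nonempty shapes, and then pinning down the multiplicities of loops and arrows precisely enough to certify wildness. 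I would also need the argument to be uniform in $\ell\geq 3$; this is guaranteed because the residues $0,1,\ell$ stay distinct and no further shapes enter for larger $\ell$, so the combinatorics feeding Theorem \ref{theo::graded-dim}, and hence the graded dimensions, are independent of $\ell$.
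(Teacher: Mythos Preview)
Your approach is correct and essentially matches the paper's. The paper is simply more direct: it takes the single idempotent $e_0=e(010\ell)$, computes $\dim_q e_0R^{3\Lambda_0}(\beta)e_0=1+3q^2+4q^4+3q^6+q^8$, and concludes immediately that this local truncation has at least three loops in its quiver (since the degree-$2$ piece is $3$-dimensional), hence is wild. So there is no need for a two-vertex configuration or a pass to $\mathcal A/\operatorname{rad}^3\mathcal A$; the single well-chosen idempotent already gives three loops.
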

\begin{proof}
By Proposition \ref{prop::iso-sigma}, $R^{\Lambda}(\beta)\simeq R^{3\Lambda_0}( 2\alpha_0+\alpha_1+\alpha_\ell)$.
Set $\mathcal{A}=e_0 R^{3\Lambda_0}(2\alpha_0+\alpha_1+\alpha_\ell) e_0$ with $e_0=e(010\ell)$.
It is not difficult to find that $\dim_q \mathcal{A}=1+3q^2+4q^4+3q^6+q^8$.
Then, $\mathcal{A}$ is a wild local algebra since $\dim \text{rad}\ \mathcal{A}/\text{rad}^2 \mathcal{A} \ge 3$, i.e., its quiver has at least $3$ loops.
Hence, $R^{\Lambda}(\beta)$ is also wild.
\end{proof}

\begin{Lemma}\label{lem::step-2-T_2-2ij}
Let $\Lambda=2\Lambda_i+\Lambda_j$ with $i\neq j\in I$ and $\beta\in \mathcal T(\Lambda)_2$. Then, $R^{\Lambda}(\beta)$ is of tame (resp. wild) representation type if $\ch\k \neq 2$ and $j\not\equiv_e i\pm 1$ (resp. otherwise).
\end{Lemma}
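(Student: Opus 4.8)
The plan is to follow the template already established in Lemma \ref{lem::step-2-delta-2}: normalize the weight by the $\sigma$-isomorphism, compute graded dimensions via Theorem \ref{theo::graded-dim}, extract an explicit basis of $R^\Lambda(\beta)$ from the relations in Definition \ref{def::cyclotomic-quiver}, and then recognize the basic algebra either as a Brauer graph algebra (in the tame case) or as an algebra whose quiver carries too many loops (in the wild cases). First I would use Proposition \ref{prop::iso-sigma} together with \eqref{equ:sigmaX} to reduce to $i=0$, so that $\Lambda=2\Lambda_0+\Lambda_j$ with $1\le j\le \ell$ and $\beta=2\alpha_0+\alpha_1+\alpha_\ell$, of height $4$ and with residue multiset $\{0,0,1,\ell\}$. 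Under this normalization the hypothesis $j\not\equiv_e i\pm 1$ reads $2\le j\le \ell-1$, and I would split the argument into the tame case ($\ch\k\neq 2$ and $2\le j\le \ell-1$) and the two wild families ($j\in\{1,\ell\}$, or $\ch\k=2$).

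The decisive structural observation is that $R^\Lambda(\beta)$ is assembled from $3$-multipartitions with component charges $(0,0,j)$ whose content equals $\{0,0,1,\ell\}$. When $2\le j\le\ell-1$ the third component is \emph{forced} to be empty, since its $(1,1)$-node must have residue $j\notin\{0,1,\ell\}$; the only contributing multipartitions are then supported on the two charge-$0$ components, namely $((2,2),\emptyset,\emptyset)$, $(\emptyset,(2,2),\emptyset)$, $((2,1),(1),\emptyset)$, $((1),(2,1),\emptyset)$, $((2),(1,1),\emptyset)$ and $((1,1),(2),\emptyset)$. Computing $\dim_q e(\nu)R^\Lambda(\beta)e(\nu')$ from the standard tableaux of these shapes and then reducing the generators $x_r,\psi_r$ with the relations of Definition \ref{def::cyclotomic-quiver} (as in the displays of Lemma \ref{lem::step-2-delta-2}), I expect the basic algebra to be the Brauer graph algebra whose Brauer graph is the tree on three vertices with all multiplicities equal to $2$, one of the tame algebras in our classification. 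To certify that this is genuinely the basic algebra and not a proper quotient, I would, exactly as in Lemma \ref{lem::step-2-delta-2}, count the Kleshchev multipartitions attached to $\beta$ via the categorification theorem and check that their number equals the number of edges of the tree (namely $2$); tameness is then immediate from the theory of Brauer graph algebras.

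For the wild family $j\in\{1,\ell\}$ (that is, $j\equiv_e\pm 1$) the third component is no longer forced to be empty, since its $(1,1)$-node now carries a residue in $\{1,\ell\}$; after discarding the idempotents that vanish because $\langle h_{\nu_1},\Lambda\rangle=0$, extra multipartitions and tableaux survive. I would isolate a single idempotent $e(\nu)$ and compute $\dim_q e(\nu)R^\Lambda(\beta)e(\nu)$: the additional tableaux inflate $\dim\operatorname{rad}/\operatorname{rad}^2$, so by \cite[Lemma 1.3]{Ar-rep-type} the quiver acquires at least three loops at a vertex (or two loops together with nontrivial incoming and outgoing arrows), whence wildness follows from \cite[I.10.8(i)]{Er-tame-block} or \cite[(1.1)]{Ringel-local-alg}, exactly as in Lemma \ref{lem::step-2-T_2-3i}. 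The characteristic-$2$ wildness (for $2\le j\le\ell-1$) I would obtain by revisiting the tame-case basis computation: the change of variables exhibiting the Brauer graph structure requires dividing by a discriminant-type structure constant which turns out to be a nonzero scalar multiple of $2$ and, unlike the situation of Lemma \ref{lem::step-2-delta-2}, is independent of $t$ (this is why no condition on $t$ enters the statement). In $\ch\k=2$ this constant vanishes, the relation that cut a generator down into $\operatorname{rad}^2$ degenerates, the basic algebra gains an extra loop, and wildness again follows from \cite[I.10.8(i)]{Er-tame-block}.

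The main obstacle I anticipate is the explicit basis computation in the tame case: tracking the $\psi_r$-action across the two charge-$0$ components and reducing the various $x_r e(\nu)$ to a canonical form calls for the same delicate nilHecke-type manipulations as in \eqref{equ::step-2-basis-computation}, and it is precisely within this computation that the characteristic-$2$ degeneration must be located, so the tame identification and the $\ch\k=2$ wildness are two faces of a single calculation. Checking that the surviving relations exactly match those of the Brauer graph algebra of the three-vertex tree with all multiplicities $2$, with no spurious relation overlooked, is the step most likely to require care.
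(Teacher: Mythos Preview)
Your approach is correct and your key combinatorial observation---that for $2\le j\le \ell-1$ the third component (charge $j$) is forced to be empty because $j\notin\{0,1,\ell\}$---is exactly what drives the paper's argument too. The difference is in how this observation is used. You propose to carry out the full basis computation and Brauer-graph identification from scratch, locating the characteristic-$2$ degeneration along the way. The paper instead notes that ``third component empty'' means the graded dimensions of $R^{2\Lambda_0+\Lambda_s}(\beta)$ and $R^{2\Lambda_0}(\beta)$ coincide, so the natural surjection $R^{2\Lambda_0+\Lambda_s}(\beta)\twoheadrightarrow R^{2\Lambda_0}(\beta)$ is an isomorphism, and then simply cites \cite[Proposition 11.4]{Ar-rep-type} for the level-$2$ algebra $R^{2\Lambda_0}(2\alpha_0+\alpha_1+\alpha_\ell)$, which was already shown there to be tame (Morita equivalent to the three-vertex Brauer tree with all multiplicities $2$) when $\ch\k\neq 2$ and wild when $\ch\k=2$. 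For the wild case $j\equiv_e i\pm 1$ your plan matches the paper's: pick a single idempotent (the paper uses $e(0\ell 10)$ for $s=1$), compute $\dim_q=1+3q^2+3q^4+q^6$, and read off three loops. So your route is self-contained but longer; the paper's route is a one-line reduction to existing literature, which your combinatorial observation already justifies.
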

\begin{proof}
We have $R^{\Lambda}(\beta)\simeq \mathcal{A}:= R^{2\Lambda_0+\Lambda_s}( 2\alpha_0+\alpha_1+\alpha_\ell)$, where $s\equiv_e j-i$.
If $j\equiv_e i+1$, i.e., $s=1$, then set $e_0=e(0 \ell 1 0)$ and we find $\dim_q e_0 \mathcal{A}e_0=1+3q^2+3q^4+q^6$, which implies that $e_0 \mathcal{A}e_0$ is a wild local algebra whose quiver has at least $3$ loops. Similarly, the case for $j\equiv_e i-1$ can be checked.

Suppose $j\not\equiv_e i\pm 1$, i.e., $ s\neq 1,\ell$.
By comparing the dimensions, we see that the surjective algebra homomorphism from $\mathcal{A}$ to $R^{2\Lambda_0}(2\alpha_0+\alpha_1+\alpha_\ell)$ is actually an isomorphism.
Then, the assertion follows from \cite[Proposition 11.4]{Ar-rep-type} that $R^{2\Lambda_0}(2\alpha_0+\alpha_1+\alpha_\ell)$ is wild if $\ch\k=2$ and 
is tame if $\ch\k\neq2$, and in the latter case, it is Morita equivalent to the Brauer graph algebra whose Brauer graph is
$$
\begin{xy}
(0,0) *[o]+[Fo]{2}="A", (15,0) *[o]+[Fo]{2}="B", (30,0) *[o]+[Fo]{2}="C",
\ar@{-} "A";"B"
\ar@{-} "B";"C"
\end{xy}\ .
$$
\end{proof}

Recall that $I(\Lambda)_0=\{i_j\mid 1\leq j\leq h, i_1<i_2<\ldots<i_h\}$ and $\Lambda=\sum_{1\leq j\leq h}m_{i_j}\Lambda_{i_j}$.
Recall the subset $\mathscr T(\Lambda)_2$ of $\mathcal T(\Lambda)_2$ in Subsection \ref{sec::main-result}.

\begin{Prop}\label{prop::step-2-result-T_2}
For any $\Lambda\in \pcl$ with $k\geq 3$ and $\beta\in \mathcal T(\Lambda)_2$,
$R^{\Lambda}(\beta)$ is of tame (resp. wild) representation type if $\beta\in \mathscr T(\Lambda)_2$ (resp. otherwise).
\end{Prop}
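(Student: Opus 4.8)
The plan is to fix $\Lambda=\sum_{1\le j\le h}m_{i_j}\Lambda_{i_j}\in\pcl$ with $k\ge 3$ and a typical element $\beta=2\alpha_i+\alpha_{i-1}+\alpha_{i+1}\in\mathcal T(\Lambda)_2$ attached to some $i=i_j\in I(\Lambda)_1$, and to split the argument according to exactly the three conditions defining $\mathscr T(\Lambda)_2$: the multiplicity $m_i$, whether a neighbour $i\pm1$ lies in $I(\Lambda)_0$, and $\ch\k$. By Proposition \ref{prop::iso-sigma} and \eqref{equ:sigmaX} I may normalise $i=0$ throughout (these conditions are $\sigma$-equivariant), so that $\beta=2\alpha_0+\alpha_1+\alpha_\ell$ and the only relevant cyclotomic constraints involve $m_0,m_1,m_\ell=\langle h_0,\Lambda\rangle,\langle h_1,\Lambda\rangle,\langle h_\ell,\Lambda\rangle$. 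The two computational inputs are Lemma \ref{lem::step-2-T_2-3i} (the case $3\Lambda_0$) and Lemma \ref{lem::step-2-T_2-2ij} (the case $2\Lambda_0+\Lambda_s$), which I will feed into the reduction machinery of Lemma \ref{lem::embedding-Lambda} and Lemma \ref{lem::reduction-level}.

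For the wild cases I would produce a level $\le 3$ sub-weight and invoke Lemma \ref{lem::reduction-level}. If $m_0\ge 3$, then $3\Lambda_0\le\Lambda$ and $\beta\in\mathcal T(3\Lambda_0)_2$ by Lemma \ref{lem::embedding-Lambda}, so $R^{3\Lambda_0}(\beta)$ is wild by Lemma \ref{lem::step-2-T_2-3i} and hence $R^{\Lambda}(\beta)$ is wild. If $m_0=2$ and some neighbour $1$ or $\ell$ lies in $I(\Lambda)_0$, then $2\Lambda_0+\Lambda_s\le\Lambda$ with $s\in\{1,\ell\}$ and $\beta\in\mathcal T(2\Lambda_0+\Lambda_s)_2$, so $R^{2\Lambda_0+\Lambda_s}(\beta)$ is wild by the $s\equiv_e\pm1$ branch of Lemma \ref{lem::step-2-T_2-2ij}, and thus so is $R^{\Lambda}(\beta)$. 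The remaining wild case, $\ch\k=2$ with $m_0=2$ and $m_1=m_\ell=0$, I fold into the isomorphism argument below, using that $R^{2\Lambda_0}(\beta)$ is already wild in characteristic $2$ by \cite[Proposition 11.4]{Ar-rep-type}. Together these three subcases cover every $\beta\in\mathcal T(\Lambda)_2\setminus\mathscr T(\Lambda)_2$.

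For the tame case, $\beta\in\mathscr T(\Lambda)_2$ forces $\ch\k\ne 2$, $m_0=2$ and $m_1=m_\ell=0$. Writing $\Lambda=2\Lambda_0+\tilde\Lambda$ with $\tilde\Lambda\in P^+_{cl,k-2}$, Lemma \ref{lem::reduction-level} gives a surjection $R^{\Lambda}(\beta)\twoheadrightarrow R^{2\Lambda_0}(\beta)$, and I claim it is an isomorphism, exactly as in \eqref{equ::step-2-alg-iso}. I would prove this by comparing graded dimensions through Theorem \ref{theo::graded-dim}: since $\beta$ is supported on the residues $\{0,1,\ell\}$ while every summand of $\tilde\Lambda$ sits at an index outside $\{0,1,\ell\}$ (as $m_1=m_\ell=0$ and the $2\Lambda_0$ part has been removed), the corner node of each $\tilde\Lambda$-component carries a residue outside $\mathrm{supp}(\beta)$; by row/column standardness no residue-$0,1,\ell$ node can be added to such a component without first filling that corner, so no standard tableau contributing to the block ever touches a $\tilde\Lambda$-component. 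Hence the graded dimensions agree and the surjection is an isomorphism, giving $R^{\Lambda}(\beta)\cong R^{2\Lambda_0}(2\alpha_0+\alpha_1+\alpha_\ell)$, which is tame for $\ch\k\ne2$ by \cite[Proposition 11.4]{Ar-rep-type}, Morita equivalent to the Brauer graph algebra on the three-vertex line of multiplicities $2,2,2$ recorded in Lemma \ref{lem::step-2-T_2-2ij}.

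The $\sigma$-normalisation and the graded-dimension bookkeeping are routine, and the case division is forced: the defining inequalities of $\mathscr T(\Lambda)_2$ are engineered so that ``$m_0\ge3$'', ``$m_0=2$ with an adjacent neighbour'', and ``$m_0=2$ with no neighbour'' partition $\mathcal T(\Lambda)_2$, the first two going to Lemmas \ref{lem::step-2-T_2-3i} and \ref{lem::step-2-T_2-2ij} and the third to the level-two result. The one step requiring genuine care is the no-leakage claim, i.e. that the extra summands of $\tilde\Lambda$ contribute nothing to the block so that the surjection of Lemma \ref{lem::reduction-level} is an isomorphism; this is where the hypothesis $m_1=m_\ell=0$ (equivalently $i_{j\pm1}\not\equiv_e i_j\pm1$) is used essentially, and it is the only place the argument is not a formal consequence of the cited small-rank lemmas.
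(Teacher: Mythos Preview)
Your proof is correct and follows essentially the same route as the paper: split on $m_{i_j}\ge 3$ (reduce to $3\Lambda_{i_j}$ via Lemma~\ref{lem::reduction-level} and apply Lemma~\ref{lem::step-2-T_2-3i}), $m_{i_j}=2$ with an adjacent index in $I(\Lambda)_0$ (reduce to $2\Lambda_{i_j}+\Lambda_{i_j\pm1}$ and apply Lemma~\ref{lem::step-2-T_2-2ij}), and $m_{i_j}=2$ with no adjacent index (establish $R^\Lambda(\beta)\simeq R^{2\Lambda_{i_j}}(\beta)$ and invoke \cite[Proposition~11.4]{Ar-rep-type}). Your no-leakage argument for the isomorphism is more explicit than the paper's, which simply asserts that the graded dimensions coincide; otherwise the arguments are identical.
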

\begin{proof}
Suppose $\beta=2\alpha_{i_j}+\alpha_{i_j-1}+\alpha_{i_j+1}\in \mathcal  T(\Lambda)_2$ for some $i_j\in I(\Lambda)_1$.
If $m_{i_j}\geq 3$, then $R^{3\Lambda_{i_j}}(\beta)$ is wild by Lemma \ref{lem::step-2-T_2-3i} and so is $R^{\Lambda}(\beta)$ by Lemma \ref{lem::reduction-level}.
If $m_{i_j}=2$, $i_{j-1}\not\equiv_e i_j-1$ and $i_{j+1}\not\equiv_e i_j+1$, then the surjective algebra homomorphism induces an isomorphism $R^{\Lambda}(\beta)\simeq R^{2\Lambda_{i_j}}(\beta)$ because the graded dimensions coincide.
Then, it is shown in \cite[Proposition 11.4]{Ar-rep-type} that $R^{2\Lambda_{i_j}}(\beta)\simeq R^{2\Lambda_0}(2\alpha_0+\alpha_1+\alpha_\ell)$ is tame if $\ch \k\neq 2$ and wild if $\ch \k=2$.
If $m_{i_j}=2$, $i_{j-1}\equiv_e i_j-1$ or $i_{j+1}\equiv_e i_j+1$, then $R^{2\Lambda_{i_j}+\Lambda_{i_{j-1}} }(\beta)$ or $ R^{2\Lambda_{i_j}+\Lambda_{i_{j+1}} }(\beta)$ is wild by Lemma \ref{lem::step-2-T_2-2ij} and so is $R^{\Lambda}(\beta)$ by Lemma \ref{lem::reduction-level}.
\end{proof}

\subsection{Representation type on $\mathcal T(\Lambda)_3$}
In this subsection, we assume $\ell\geq 2$ and $|I(\Lambda)_2|\geq 1$.
Recall that $T(\Lambda)_3=\{\Lambda'_{i, i+1}, \Lambda'_{i-1, i}\mid \Lambda'=\Lambda_{i,i}, i\in I(\Lambda)_2\}$ and
$$
\mathcal T(\Lambda)_3=\{2\alpha_i+\alpha_{i-1}, 2\alpha_i+\alpha_{i+1} \mid i\in I(\Lambda)_2\}.
$$

\begin{Lemma}\label{lem::step-2-T_3-4i}
Let $\Lambda=4\Lambda_i$ for some $i\in I$. Then, $R^{\Lambda}(\beta)$ is of wild representation type for $\beta\in \mathcal T(\Lambda)_3$.
\end{Lemma}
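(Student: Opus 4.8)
The plan is to mirror the arguments of Lemma \ref{lem::step-2-T_2-3i} and Lemma \ref{lem::step-2-delta-1}: reduce to a single weight, pass to a corner algebra, and read off its radical layers from the graded dimension. By Proposition \ref{prop::iso-sigma} together with \eqref{equ:sigmaX} I may assume $i=0$, so that $R^{\Lambda}(\beta)\simeq R^{4\Lambda_0}(\beta)$ with $\beta=2\alpha_0+\alpha_1$ or $\beta=2\alpha_0+\alpha_\ell$. Since $\ell\geq2$ makes the residues $0,1,\ell$ pairwise distinct, the two choices are mirror images of one another under the reflection of the Dynkin diagram fixing the node $0$; that reflection only moves the parameter $t$ inside $Q_{\ell,0}$, which enters in degree $\geq 4$ and is irrelevant to the computation below, so I treat $\beta=2\alpha_0+\alpha_1$ and leave the other case to the reader. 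Note that one cannot reduce to level $3$ here, since $R^{3\Lambda_0}(2\alpha_0+\alpha_1)$ is tame; the jump in multiplicity from $3$ to $4$ must be analysed directly.

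The only elements of $I^\beta$ that are residue sequences of standard tableaux are $(0,1,0)$ and $(0,0,1)$, the node filled first being necessarily of residue $0$. Set $e_0=e(010)$ and $\mathcal{A}=e_0R^{4\Lambda_0}(\beta)e_0$. Using Theorem \ref{theo::graded-dim} I would enumerate the $4$-multipartitions of $3$ of residue content $\beta$ — these are exactly the multipartitions with one component equal to $(2)$ and another equal to $(1)$ — and check that each such multipartition carries a unique standard tableau of residue sequence $(0,1,0)$; computing the degrees of these tableaux gives
$$
\dim_q\mathcal{A}=1+2q^{2}+3q^{4}+3q^{6}+2q^{8}+q^{10}.
$$
In particular $\mathcal{A}$ is a non-negatively graded local algebra, and from the relations $\psi_1e_0=0$ and $\psi_1^2e_0=(x_1+x_2)e_0$ one gets $x_2e_0=-x_1e_0$, so that the degree-$2$ part is spanned by the two commuting elements $\alpha=x_1e_0$ and $\gamma=x_3e_0$.

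These two elements generate $\text{rad}\,\mathcal{A}/\text{rad}^2\mathcal{A}$, and the coefficient $3$ of $q^4$ shows that $\dim \text{rad}^2\mathcal{A}/\text{rad}^3\mathcal{A}=3$, i.e. $\alpha^2,\alpha\gamma,\gamma^2$ are linearly independent; hence the quotient $\mathcal{A}/\text{rad}^3\mathcal{A}$ is isomorphic to the commutative local algebra $\k[\alpha,\gamma]/(\alpha,\gamma)^3$. As this algebra is wild (see \cite{Ringel-local-alg}), so is $\mathcal{A}$, and since the wildness of a corner algebra $e_0R^{4\Lambda_0}(\beta)e_0$ implies that of $R^{4\Lambda_0}(\beta)$, the conclusion follows, giving the wildness of $R^{\Lambda}(\beta)$. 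The delicate point is precisely the graded-dimension computation, and in particular the equality $\dim \text{rad}^2\mathcal{A}/\text{rad}^3\mathcal{A}=3$: it is exactly the absence of a quadratic relation between $\alpha$ and $\gamma$ that separates this wild case from the tame one at level $3$, where the analogous corner does satisfy such a relation (its $q^4$-coefficient being $2$), so the bookkeeping must be carried out carefully enough to confirm that no quadratic relation survives once the multiplicity is raised to $4$.
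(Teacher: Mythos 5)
Your proposal is correct and follows essentially the same route as the paper: reduce to $\Lambda=4\Lambda_0$ via Proposition \ref{prop::iso-sigma}, truncate by $e_0=e(010)$, and extract a wild local quotient from the graded dimension $1+2q^2+3q^4+3q^6+2q^8+q^{10}$ together with the fact that $e_0R^{4\Lambda_0}(\beta)e_0$ is generated by the commuting elements $x_1e_0,x_3e_0$. The only difference is in the endgame and is harmless: the paper works out the full $12$-dimensional basis $\{x_1^ax_3^be_0\}$ (including the degree-$6$ relation) and exhibits the wild quotient $\k[X,Y]/(X^2,Y^3,XY^2)$, whereas you stop at $\mathcal{A}/\operatorname{rad}^3\mathcal{A}\simeq\k[X,Y]/(X,Y)^3$, which surjects onto that same minimal wild algebra, so both arguments close in the same way.
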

\begin{proof}
It suffices to assume $i=0$ by Proposition \ref{prop::iso-sigma}.

Suppose $\beta=2\alpha_0+\alpha_1$.
We consider $\mathcal{A}=e_0R^{\Lambda}(\beta)e_0$ with $e_0=e(0 1 0)$. Then,
$$
\dim_q \mathcal{A}= 1+2q^2+3q^4+3q^6+2q^8+q^{10}.
$$
We show that $\mathcal{A}$ has basis $\{x_1^ax_3^be_0\mid 0\leq a\leq 3, 0\leq b\leq 2 \}$.
First, $e_0\psi_we_0\neq 0$ implies $\psi_w=1$ or $\psi_w=\psi_1\psi_2\psi_1$, but the latter one does not occur by $\psi_1e_0=0$.
Second, we have $x_1^4e_0=0$ and $x_2e_0=-x_1e_0$.
Third, $x_1^4e(001)=0$ implies
$$
\begin{aligned}
0&=\psi_1x_1e(001)x_1^3 =(x_2\psi_1-1)e(001)x_1^3=x_2(x_2\psi_1-1)e(001)x_1^2-x_1^3e(001)\\
&=\ldots=-(x_1^3+x_1^2x_2+x_1x_2^2+x_2^3)e(001)+x_2^4\psi_1e(001),
\end{aligned}
$$
and then,
$$
\begin{aligned}
x_3^3e_0
&=x_3^3\psi_2\psi_1\psi_2e_0=\psi_2x_2^3e(001)\psi_1\psi_2\\
&=-\psi_2(x_1^3+x_1^2x_2+x_1x_2^2)\psi_1\psi_2e_0=-(x_1^3+x_1^2x_3+x_1x_3^2)e_0.
\end{aligned}
$$
Lastly, we obtain the desired basis of $\mathcal{A}$ by using $\dim_q \mathcal{A}$. Observe that $\mathcal{A}$ has a wild quotient algebra $\k[X,Y]/(X^2,Y^3,Y^2X)$ by sending $x_3e_0$ and $x_1e_0$ to $X$ and $Y$, respectively.
Hence, $R^{\Lambda}(\beta)$ is wild.

Similarly, one can check that $R^{4\Lambda_0}( 2\alpha_{0}+\alpha_{\ell})$ has a wild idempotent truncation.
\end{proof}

\begin{Lemma}\label{lem::step-2-T_3-3ij}
Suppose $\Lambda=3\Lambda_i+\Lambda_j$ with $i\neq j\in I$. Then,
\begin{itemize}
\item $R^{\Lambda}(2\alpha_i+\alpha_{i+1})$ is wild if $j\equiv_e i+ 1$.
\item $R^{\Lambda}(2\alpha_i+\alpha_{i-1})$ is wild if $j\equiv_e i-1$.
\end{itemize}
\end{Lemma}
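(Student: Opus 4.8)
The plan is to mimic the idempotent-truncation strategy of Lemma \ref{lem::step-2-T_3-4i}, while isolating the structural feature that distinguishes the present situation from the case $\Lambda=4\Lambda_i$. First I would invoke Proposition \ref{prop::iso-sigma} together with \eqref{equ:sigmaX} to reduce to $i=0$, so that the two bullets become the assertions that $R^{3\Lambda_0+\Lambda_1}(2\alpha_0+\alpha_1)$ and $R^{3\Lambda_0+\Lambda_\ell}(2\alpha_0+\alpha_\ell)$ are wild. These two statements are mirror images of one another (the second involves the distinguished edge $Q_{\ell,0}(u,v)=u+tv$, but the conclusion is independent of $t$), so I would treat the first in detail and indicate that the second follows by the same computation, exactly as in the final sentence of the proof of Lemma \ref{lem::step-2-T_3-4i}.

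For the first case I would fix $\Lambda=3\Lambda_0+\Lambda_1$ and $\beta=2\alpha_0+\alpha_1$, and single out the idempotents $e_1=e(010)$ and $e_2=e(001)$ (or, if a local truncation turns out to suffice, $e_1$ alone). The crucial point, and the reason the algebra is wild rather than tame, is that the summand $\Lambda_1$ supplies a component whose $(1,1)$-node has residue $1$; consequently $e(100)\neq 0$, and hence $\psi_1 e_1\neq 0$, whereas the analogous idempotent vanished when $\Lambda=4\Lambda_0$ (there every component forced the first node to have residue $0$). This extra room produces additional radical generators in the truncation $\mathcal A:=(e_1+e_2)R^{\Lambda}(\beta)(e_1+e_2)$. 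I would then compute the graded dimensions $\dim_q e_i\mathcal A e_j$ from Theorem \ref{theo::graded-dim} by enumerating the standard tableaux of residue sequences $(010)$ and $(001)$ together with their degrees.

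With the graded dimensions in hand, I would read off the Gabriel quiver of $\mathcal A$ using \cite[Lemma 1.3]{Ar-rep-type}. I expect it to contain a wild configuration: either two loops at the vertex $e_1$ together with an arrow $e_1\to e_2$ (arising from the degree-one element $\psi_2 e_1\in e_2\mathcal A e_1$), or, should $e_1\mathcal A e_1$ already be local, at least three loops at $e_1$. In either case the wildness of $\mathcal A$ follows from \cite[I.10.8 (i)]{Er-tame-block}, precisely as in the proofs of Lemma \ref{lem::step-1-21} and Lemma \ref{lem::step-2-T_0-exception}; and since $\mathcal A$ is an idempotent truncation of $R^{\Lambda}(\beta)$, the latter is wild as well.

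The main obstacle is the bookkeeping in the second step: because of the additional $\Lambda_1$, several multipartitions now contribute to each $e_i\mathcal A e_j$ (residue-$0$ nodes may sit below a residue-$1$ node in the $\Lambda_1$-component), so the enumeration of standard tableaux and of their degrees $d_p$ is longer and more delicate than for $\Lambda=4\Lambda_i$. A secondary subtlety is confirming that the degree-zero word $\psi_1\psi_2\psi_1 e_1$, which is nonzero precisely because $\psi_1 e_1\neq 0$, does not spuriously collapse the two vertices; here relation (5) of Definition \ref{def::cyclotomic-quiver}, which gives $(\psi_2\psi_1\psi_2-\psi_1\psi_2\psi_1)e_1=e_1$, must be used to pin down the degree-zero part and hence the number of simple summands of $\mathcal A$.
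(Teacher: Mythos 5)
Your reduction to $i=0$, the choice of $e(010)$, and the observation that the presence of $\Lambda_1$ makes $e(100)\neq 0$ (hence $\psi_1 e(010)\neq 0$ and $\psi_1\psi_2\psi_1 e(010)$ survives in degree zero) all match the paper's proof, which also works with the single truncation $\mathcal A=e_0R^{\Lambda}(2\alpha_0+\alpha_1)e_0$ for $e_0=e(010)$ and dismisses the $j\equiv_e i-1$ case as symmetric. However, there is a genuine gap in your endgame. You plan to treat $e(010)$ (and $e(001)$) as vertices of the Gabriel quiver and read off ``two loops at $e_1$ plus an arrow $e_1\to e_2$'' or ``three loops'' from the graded dimensions via \cite[Lemma 1.3]{Ar-rep-type}. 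But $e(010)$ is \emph{not} primitive: $\dim_q\mathcal A=2+5q^2+7q^4+5q^6+2q^8$, so the degree-zero part is two-dimensional, spanned by $e_0$ and $\psi_1\psi_2\psi_1e_0$. The phenomenon you flag as a possible ``collapse of two vertices'' is actually the opposite — a \emph{splitting} of one: since $(\psi_1\psi_2\psi_1)^2e_0=-\psi_1\psi_2\psi_1e_0$, one gets orthogonal primitive idempotents $e_1=-\psi_1\psi_2\psi_1e_0$ and $e_2=e_0+\psi_1\psi_2\psi_1e_0$ inside $\mathcal A$. Neither of your two anticipated configurations is what occurs, and the criterion \cite[I.10.8 (i)]{Er-tame-block} is not directly applicable as you set things up.

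The paper closes the argument differently: the projectives $\mathcal Ae_1$ and $\mathcal Ae_2$ are non-isomorphic because $\dim\mathcal A=21$ is odd, so $\mathcal A$ is a basic two-point algebra; the ideal generated by the positive-degree part is the radical and $\dim\operatorname{rad}\mathcal A/\operatorname{rad}^2\mathcal A\geq 5$; a connected tame two-point algebra has at most $4$ arrows in its quiver by \cite[IV.2.2]{Er-tame-block} (and in the disconnected case one vertex carries at least $3$ loops), whence $\mathcal A$ is wild. To repair your proposal you would need to carry out this decomposition of $e(010)$, verify non-isomorphism of the two resulting projectives, and replace your loop-counting criterion by the bound on $\operatorname{rad}/\operatorname{rad}^2$ against the classification of tame two-point algebras; adding $e(001)$ to the truncation only enlarges the computation without avoiding this issue, since that idempotent is not primitive either.
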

\begin{proof}
We only show the case for $i=0$ and $j=1$ and one may check the case for $i=0$ and $j=\ell$ in a similar way.
Let $e_0=e(0 1 0)$ and $\mathcal{A}=e_0R^\Lambda(2\alpha_0+\alpha_1)e_0$.
Then,
$$
\dim_q \mathcal{A}=2+5q^2+7q^4+5q^6+2q^8.
$$
The degree 0 homogeneous component of $\mathcal{A}$ has basis $\{e_0, \psi_1\psi_2\psi_1e_0\}$.
Since $(\psi_1\psi_2\psi_1)^2e_0=-\psi_1\psi_2\psi_1e_0$, $e_0$ can be decomposed into a sum $e_0=e_1+e_2$ of two orthogonal primitive idempotents, namely, $e_1=-\psi_1\psi_2\psi_1e_0$ and $e_2=e_0+\psi_1\psi_2\psi_1e_0$.
In fact, if $\mathcal Ae_1\simeq \mathcal Ae_2$, then the dimension of $\mathcal{A}$ must be even and this contradicts with $\dim \mathcal{A}=21$.
Thus, $\mathcal{A}$ is a basic algebra.
Let $\mathcal{J}$ be the two-sided ideal generated by all positive degree elements of $\mathcal{J}$.
Then, $\mathcal{J}$ is nilpotent and $\mathcal{A}/\mathcal{J}\simeq \k\times \k$.
We deduce that $\mathcal{J}=\text{rad}\ \mathcal{A}$ and $\dim \mathcal{J}/\mathcal{J}^2\geq 5$.

If $\mathcal{A}$ is connected, then its quiver has at least $5$ arrows (or loops) which is not a quiver of tame two-point algebras (see \cite[IV.2.2]{Er-tame-block}).
If $\mathcal{A}$ is not connected, then its quiver has at least $3$ loops on one of the vertices, and $\mathcal{A}$ is also wild.
\end{proof}

Recall the subset $\mathscr T(\Lambda)_3$ of $\mathcal T(\Lambda)_3$ in Subsection \ref{sec::main-result}.

\begin{Prop}\label{prop::step-2-result-T_3}
For any $\Lambda\in \pcl$ with $k\geq 3$ and $\beta\in \mathcal T(\Lambda)_3$, $R^{\Lambda}(\beta)$ is of tame representation type if $\beta\in \mathscr T(\Lambda)_3$ and is of wild representation type otherwise.
\end{Prop}
\begin{proof}
Suppose $\Lambda=\sum_{1\leq j\leq h}m_{i_j}\Lambda_{i_j}$ and $\beta=2\alpha_{i_j}+\alpha_{i_j+1}$ for some $i_j\in I(\Lambda)_2$.
If $m_{i_j}\geq 4$, then $R^{\Lambda}(\beta)$ is wild since $R^{4\Lambda_{i_j}}(\beta)$ is wild by Lemma \ref{lem::step-2-T_3-4i}.
If $m_{i_j}=3$ and $i_{j+1}\equiv_e i_j+1$, then $R^{\Lambda}(\beta)$ is also wild since $R^{3\Lambda_{i_j}+\Lambda_{i_{j}+1}}(\beta)$ is wild by Lemma \ref{lem::step-2-T_3-3ij}.

Let $m_{i_j}=3$ and $i_{j+1}\not\equiv_e i_j+1$.
Then, $R^{\Lambda}(\beta)\simeq R^{3\Lambda_{i_j}}(\beta) \simeq R^{3\Lambda_0}( 2\alpha_0+\alpha_1)$.
We define $\mathcal{A}=e_0R^{3\Lambda_0}( 2\alpha_0+\alpha_1)e_0$ with $e_0=e(0 1 0)$. Then, $\dim_q \mathcal{A}=1+2q^2+2q^4+q^6$.
Moreover, $\mathcal{A}$ has basis $\{x_1^ax_3^be_0\mid 0\leq a\leq 2, 0\leq b\leq 1 \}$.
In fact, we can choose $\psi_w=1$ if $e_0\psi_we_0\neq 0$ by $\psi_1e_0=0$.
Then, $\mathcal{A}$ is generated by $x_1e_0$ and $x_3e_0$ since $(x_1+x_2)e_0=\psi_1^2e_0=0$.
Similar to the proof of Lemma \ref{lem::step-2-T_3-4i}, we may deduce $x_3^2e_0=-(x_1^2+x_1x_3)e_0$ from $x_1^3e(001)=0$.
We conclude that $\mathcal{A}$ is isomorphic to the local algebra $\k[X,Y]/(X^3,X^2+XY+Y^2,Y^3)$ by sending $x_1e_0, x_3e_0$ to $X$ and $Y$, respectively.
Lastly, we know by the categorification theorem that the number of simple modules of $R^{3\Lambda_0}( 2\alpha_0+\alpha_1)$ is $1$, and hence, $\mathcal{A}$ is the basic algebra of $R^{3\Lambda_0}( 2\alpha_0+\alpha_1)$.

If $\ch\k=3$, then $(X-Y)^2=-3XY=0$.
We choose $X$ and $Z:=X-Y$ as the new generators to obtain $\mathcal{A}\simeq \k[X,Z]/(X^3, Z^2)$.
It is easy to see that $\mathcal{A}$ has $\k[X,Z]/(X^3,Z^2,X^2Z)$ as a wild quotient algebra, and hence, $R^{3\Lambda_0}( 2\alpha_0+\alpha_1)$ is wild.

If $\ch\k\neq 3$, then $z^2+z+1=0$ has two different roots $\omega, \omega^{-1}$.
We choose $u=X-\omega Y$ and $v=X-\omega^{-1}Y$ as the new generators. Then, $\mathcal{A}\simeq \k[u,v]/(u^3-v^3,uv)$, which is a tame local algebra in \cite[Theorem III.1(a)]{EN-rep-type-Hecke}.
Thus, $R^{3\Lambda_0}( 2\alpha_0+\alpha_1)$ is tame.

We omit the details for $\beta=2\alpha_{i_j}+\alpha_{i_j-1}$ since the proof is similar to the above.
\end{proof}

\subsection{Representation type of $\mathcal T(\Lambda)_4$}
In this subsection, we assume $|I(\Lambda)_3|\geq 1$.
Recall that $T(\Lambda)_4=\{\Lambda'_{i, i}\mid \Lambda'=\Lambda_{i,i}, i\in I(\Lambda)_3\}$ and $\mathcal T(\Lambda)_4=\{2\alpha_i\mid i\in I(\Lambda)_3\}$.

\begin{Lemma}\label{lem::step-2-T_4-5i}
Let $\Lambda=5\Lambda_i$ for some $i\in I$.
Then, $R^\Lambda(2\alpha_i)$ is of wild representation type.
\end{Lemma}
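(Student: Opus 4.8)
The plan is to reduce to the single case $\Lambda=5\Lambda_0$, $\beta=2\alpha_0$ and then to analyse the resulting cyclotomic nilHecke algebra through its basic algebra. By Proposition \ref{prop::iso-sigma} together with \eqref{equ:sigmaX} we may assume $i=0$, so that $R:=R^{5\Lambda_0}(2\alpha_0)$. Since $\beta=2\alpha_0$ forces every $\nu\in I^{2\alpha_0}$ to equal $(0,0)$, there is a unique idempotent $e(00)=1$, and $R$ is generated by $x_1,x_2,\psi_1$ subject to $x_1^5=0$, $\psi_1^2=0$, $\psi_1x_1=x_2\psi_1-1$ and $\psi_1x_2=x_1\psi_1+1$ (Definition \ref{def::cyclotomic-quiver}). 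First I would compute $\dim_q R$ from Theorem \ref{theo::graded-dim}: the only multipartitions that contribute are the ten $5$-multipartitions carrying a single box in two distinct components (both of residue $0$), each of which admits exactly two standard tableaux, and a direct count yields $\dim R=40$ with the palindromic graded dimension $q^{-2}+3+5q^{2}+7q^{4}+8q^{6}+7q^{8}+5q^{10}+3q^{12}+q^{14}$.

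The key observation is that $R$ is \emph{not} local. Using $\psi_1x_1=x_2\psi_1-1$ and $\psi_1^2=0$ one checks $(x_1\psi_1)^2=x_1(x_2\psi_1-1)\psi_1=-x_1\psi_1$, so $\epsilon:=-x_1\psi_1$ is an idempotent. On the other hand $R$ has a single simple module, since the only crystal vertex of $B(5\Lambda_0)$ of weight $5\Lambda_0-2\alpha_0$ is $\tilde f_0^{\,2}\emptyset$; hence $\epsilon$ is primitive and $R$ is Morita equivalent to its basic algebra $\mathcal A:=\epsilon R\epsilon$. As the palindromic graded dimension factors as $[2]_q^{2}\,(1+q^{2}+2q^{4}+2q^{6}+2q^{8}+q^{10}+q^{12})$, the algebra $\mathcal A$ has dimension $10$ and is a graded commutative local algebra generated, modulo its radical square, by two elements $a,b$ of degrees $2$ and $4$. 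Taking $a,b$ to be the images of the symmetric polynomials $x_1+x_2$ and $x_1x_2$, one identifies $\mathcal A$ with $\k[a,b]/(h_4,h_5)\cong H^{*}(\mathrm{Gr}(2,5))$, where $h_4,h_5$ are the complete homogeneous symmetric polynomials in two variables; this reflects the general fact that cyclotomic nilHecke algebras are matrix algebras over the cohomology of a Grassmannian.

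Finally I would pass to the radical-cube-zero quotient. Reading $h_4=a^{4}-3a^{2}b+b^{2}$ modulo $\operatorname{rad}^{3}\mathcal A$ leaves only the relation $b^{2}=0$, whence $\mathcal A/\operatorname{rad}^{3}\mathcal A\cong \k[a,b]/(a^{3},a^{2}b,b^{2})$. This is precisely the wild local algebra labelled $(c)$ in \cite[(1.1)]{Ringel-local-alg} already encountered in Lemma \ref{lem::step-2-delta-1}; therefore $\mathcal A/\operatorname{rad}^{3}\mathcal A$, and hence $\mathcal A$ and $R$, is of wild representation type. The main obstacle is the step isolating the quadratic part of the defining relation: a two-generated radical-cube-zero local algebra sits exactly on the tame--wild boundary, and whether $\mathcal A/\operatorname{rad}^{3}\mathcal A$ is the wild algebra $(c)$ (rank-one quadric $b^{2}$) or the tame special biserial string algebra $\k[a,b]/(ab,a^{3},b^{2})$ (rank-two quadric $ab$) is decided entirely by this computation---the same dichotomy that distinguishes $\mathrm{Gr}(2,5)$ from the tame $\mathrm{Gr}(2,4)$ occurring for $4\Lambda_0$. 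Carrying this out rigorously from the KLR relations, deriving the needed identity directly from $x_1^{5}=0$ exactly as in the proof of Lemma \ref{lem::step-2-T_3-4i}, is where the real work lies.
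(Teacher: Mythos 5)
Your argument is correct and follows essentially the same route as the paper: reduce to $\Lambda=5\Lambda_0$, truncate by an explicit idempotent ($-x_1\psi_1$ here, $-\psi x_1$ in the paper) to the ten-dimensional commutative local basic algebra on two generators, and exhibit the wild quotient $\k[X,Y]/(X^3,X^2Y,Y^2)$ of \cite[(1.1)(c)]{Ringel-local-alg}. The only real difference is that you import the relation $a^4-3a^2b+b^2=0$ from the presentation of $H^*(\mathrm{Gr}(2,5))$, whereas the paper derives the equivalent relation $X^4+3X^2Y+Y^2=0$ (with $X=(x_1+x_2)e$, $Y=-x_1x_2e$, so $b=-Y$) directly from $ex_1^4e=0$ and $ex_2^5e=0$.
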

\begin{proof}
We only show the case $i=0$ as usual.
Note that $R^{5\Lambda_0}(2\alpha_0)$ is the nilHecke algebra generated by $x_1,x_2,\psi$ subject to the relations
$$
x_1^5=\psi^2=0, \quad x_1x_2=x_2x_1, \quad x_1\psi-\psi x_2=\psi x_1-x_2\psi=-1.
$$
Let $e=-\psi x_1$. Then, $\mathcal{A}=eR^{5\Lambda_0}(2\alpha_0)e$ is the basic algebra of $R^{5\Lambda_0}(2\alpha_0)$.
Since $\psi e=0$, $\mathcal{A}$ is spanned by $ex_1^ax_2^be$, for $a,b\ge0$.
However, if we set $\tau=\psi(x_1-x_2)+1$, then $\tau^2=1$ and $\tau x_1\tau=x_2$ hold, which imply $x_2^5=0$.
Define $X=ex_1e$. Then,
$$
X=\psi x_1^2\psi x_1=(x_2\psi-1)x_1\psi x_1=x_2(x_2\psi-1)\psi x_1+x_1e=(x_1+x_2)e.
$$
Since $ex_2e=0$, we define $Y=ex_2^2e$. Then,
$$
Y=\psi x_1x_2^2\psi x_1=x_1x_2(x_1\psi+1)\psi x_1=-x_1x_2e.
$$
It is clear that $XY=X(-x_1x_2e)=(-x_1x_2)Xe=YX$.
Using
\begin{center}
$ex_1^ae=\psi(x_1^{a+1})\psi x_1=(x_2\psi-1)x_1^{a+1}\psi x_1=\ldots=-(x_1^{a+1}+x_1^ax_2+\ldots +x_2^{a+1})e$,
\end{center}
\begin{center}
$ex_2^be=x_1x_2\psi x_2^{b-1}\psi x_1=x_1x_2(x_1\psi+1)x_2^{b-2}\psi x_1=\ldots=(x_1x_2^{b-1}+x_1^2x_2^{b-2}+\ldots +x_1^{b-1}x_2)e$,
\end{center}
we see that $\mathcal{A}$ is generated by $X$ and $Y$.

Now, $ex_1^4e=\psi x_1^5\psi x_1=0$ implies $X^4+3X^2Y+Y^2=0$, and $ex_2^5e=0$ implies $X^3Y+2XY^2=0$, i.e., $2X^5+5X^3Y=0$. We compute
$$
\dim_q R^{5\Lambda_0}(2\alpha_0)=(q+q^{-1})^2(1+q^2+2q^4+2q^6+2q^8+q^{10}+q^{12}).
$$
Hence, $\dim_q \mathcal{A}=1+q^2+2q^4+2q^6+2q^8+q^{10}+q^{12}$ and we know that
$$
\left \{ e, X, X^2, Y, X^3, XY, X^4, X^2Y, X^3Y (\text{or } X^5), X^4Y \right \}
$$
is a basis of $\mathcal{A}$.
It follows that $\k[X,Y]/(X^3, X^2Y, Y^2)$ is a quotient algebra of $\mathcal{A}$,
which is of wild representation type by \cite[(1.1)(c)]{Ringel-local-alg}.
\end{proof}

\begin{Lemma}\label{lem::step-2-T_4-4i}
Suppose $\Lambda=4\Lambda_i$ for some $i\in I$.
Then, $R^\Lambda(2\alpha_i)$ is of tame (resp. wild) representation type if $\ch \k\neq 2$ (resp. otherwise).
\end{Lemma}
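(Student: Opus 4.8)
The plan is to reduce to $i=0$ by Proposition \ref{prop::iso-sigma} and then to analyze the cyclotomic nilHecke algebra $R^{4\Lambda_0}(2\alpha_0)$, which is generated by $x_1,x_2,\psi$ subject to $x_1^4=\psi^2=0$, $x_1x_2=x_2x_1$ and $x_1\psi-\psi x_2=\psi x_1-x_2\psi=-1$, exactly as in the proof of Lemma \ref{lem::step-2-T_4-5i} but with $x_1^4=0$ in place of $x_1^5=0$. Since $(0,0)$ is the only residue sequence, the algebra is local with a single simple module, so I would pass to the basic algebra $\mathcal{A}=eR^{4\Lambda_0}(2\alpha_0)e$ for the primitive idempotent $e=-\psi x_1$. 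Setting $X=(x_1+x_2)e$ and $Y=-x_1x_2e$, the same manipulations as in Lemma \ref{lem::step-2-T_4-5i} show that $\mathcal{A}$ is commutative and generated by $X,Y$. The cyclotomic relation gives $ex_1^3e=\psi x_1^4\psi x_1=0$, while $x_2^4=0$ (which follows from $x_1^4=0$ via the symmetry $\tau=\psi(x_1-x_2)+1$, using $\tau^2=1$ and $\tau x_1\tau=x_2$) gives $ex_2^4e=0$. Translating these two identities into $X,Y$ (they are $h_3(x_1,x_2)e$ and $-x_1x_2\,h_2(x_1,x_2)e$ in terms of complete homogeneous symmetric functions) yields the presentation
$$\mathcal{A}\simeq \k[X,Y]/\left(X^3+2XY,\ X^2Y+Y^2\right),$$
and a dimension count as in Lemma \ref{lem::step-2-T_4-5i} shows that $\{1,X,X^2,Y,XY,X^2Y\}$ is a basis, so $\dim_q\mathcal{A}=1+q^2+2q^4+q^6+q^8$.

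For $\ch\k\neq 2$ I would complete the square by setting $W:=2Y+X^2$. The first relation gives $X^3=-2XY$, whence $XW=2XY+X^3=0$; the second relation gives $W^2=4Y^2+4X^2Y+X^4=X^4$, and then $X^5=XW^2=(XW)W=0$. Comparing dimensions, $\mathcal{A}$ is therefore isomorphic to
$$\k\langle X,W\rangle/\left(XW,\ WX,\ X^4-W^2\right),$$
which is a tame local symmetric algebra (with exponents $4$ and $2$) by \cite[Theorem III.1(a)]{Er-tame-block}. Hence $R^{\Lambda}(2\alpha_i)$ is of tame representation type.

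For $\ch\k=2$ I would observe that $X^3+2XY=X^3$, so the presentation collapses to $\mathcal{A}\simeq\k[X,Y]/(X^3,\ Y^2-X^2Y)$. Since $X^2Y\in\text{rad}^3\,\mathcal{A}$, passing to $\mathcal{A}/\text{rad}^3\,\mathcal{A}$ kills $X^2Y$ and turns the second relation into $Y^2=0$, so
$$\mathcal{A}/\text{rad}^3\,\mathcal{A}\simeq\k[X,Y]/\left(X^3,\ Y^2,\ X^2Y\right),$$
which is of wild representation type by \cite[(1.1)(c)]{Ringel-local-alg}. As wildness of a quotient forces wildness of the algebra, $\mathcal{A}$ and hence $R^{\Lambda}(2\alpha_i)$ is wild.

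The hard part will be pinning down the exact presentation of $\mathcal{A}$, and in particular the coefficient $2$ in the relation $X^3+2XY=0$ coming from $ex_1^3e$: this is precisely what makes the substitution $W=2Y+X^2$ available only when $2$ is invertible, so the single coefficient is the source of the tame/wild dichotomy. The symmetric-function computation of $ex_1^3e$ and $ex_2^4e$ in terms of $X,Y$ is the one genuinely computational step, but it parallels the $5\Lambda_0$ case of Lemma \ref{lem::step-2-T_4-5i} and should present no essential difficulty.
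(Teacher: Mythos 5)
Your proposal is correct and follows essentially the same route as the paper's proof: reduce to $i=0$, pass to the basic algebra $eR^{4\Lambda_0}(2\alpha_0)e$ with $e=-\psi x_1$, derive the presentation $\k[X,Y]/(X^3+2XY,\,X^2Y+Y^2)$ from $ex_1^3e=0$ and $ex_2^4e=0$, substitute $Z=2Y+X^2$ to get the tame algebra $\k[X,Z]/(XZ,\,X^4-Z^2)$ when $2$ is invertible, and exhibit the wild quotient $\k[X,Y]/(X^3,X^2Y,Y^2)$ of Ringel's list in characteristic $2$. The only differences are cosmetic (phrasing the key identities via complete homogeneous symmetric functions, and realizing the wild quotient as $\mathcal{A}/\mathrm{rad}^3\mathcal{A}$ rather than as the quotient by the ideal $(X^2Y)$).
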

\begin{proof}
It suffices to consider $i=0$.
Similar to the previous lemma, we set $e=-\psi x_1$, $\mathcal{A}=eR^{4\Lambda_0}(2\alpha_0)e$, $X=ex_1e$ and $Y=ex_2^2e$.
We obtain $X^3+2XY=0$ and $X^2Y+Y^2=0$.
Then, $X^3Y=-2XY^2=2X^3Y$ implies $X^3Y=0$. The graded dimensions are
$$
\dim_q R^{4\Lambda_0}(2\alpha_0)=(q+q^{-1})^2(1+q^2+2q^4+q^6+q^8)
$$
and $\dim_q \mathcal{A}=1+q^2+2q^4+q^6+q^8$.
Therefore,
$$
\left \{ e, X, X^2, Y, X^3 (\text{or } XY), X^4\right \}
$$
is a basis of $\mathcal{A}$.
We conclude that $\mathcal{A}\simeq \k[X,Y]/(X^3+2XY, X^2Y+Y^2)$. If $\ch\k=2$, then $2XY=0$ and $\mathcal{A}$ has $\mathcal{B}=\k[X,Y]/(X^3, X^2Y, Y^2)$ as a quotient algebra. Since $\mathcal{B}$ is wild by \cite[(1.1) (c)]{Ringel-local-alg}, $R^{4\Lambda_0}(2\alpha_0)$ is of wild representation type.

Suppose $\ch\k\neq 2$.
If we choose $X$ and $Z=2Y+X^2$ as the new generators, we obtain
$$
\mathcal{A}\simeq \k[X,Z]/(XZ, X^4-Z^2).
$$
Hence, $\mathcal{A}$ is tame by \cite[Theorem III.1 (a)]{EN-rep-type-Hecke}.
\end{proof}

Recall the subset $\mathscr T(\Lambda)_4$ of $\mathcal T(\Lambda)_4$ in Subsection \ref{sec::main-result}.

\begin{Prop}\label{prop::step-2-result-T_4}
For any $\Lambda\in \pcl$ with $k\geq 3$ and $\beta\in \mathcal T(\Lambda)_4$, $R^{\Lambda}(\beta)$ is of tame representation type if $\beta\in \mathscr T(\Lambda)_4$ and of wild representation type otherwise.
\end{Prop}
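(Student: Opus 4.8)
The plan is to mirror the arguments for Propositions~\ref{prop::step-2-result-T_2} and~\ref{prop::step-2-result-T_3}: reduce each instance of $\beta\in\mathcal T(\Lambda)_4$ to one of the two local algebras already analysed in Lemmas~\ref{lem::step-2-T_4-5i} and~\ref{lem::step-2-T_4-4i}, via the level-reduction Lemma~\ref{lem::reduction-level}. Since $\mathcal T(\Lambda)_4=\{2\alpha_i\mid i\in I(\Lambda)_3\}$, I would fix $\beta=2\alpha_{i_j}$ with $i_j\in I(\Lambda)_3$, so that $m_{i_j}=\langle h_{i_j},\Lambda\rangle\geq 4$, and then distinguish cases according to the value of $m_{i_j}$.

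First I would dispose of the wild cases, which are exactly those with $\beta\notin\mathscr T(\Lambda)_4$. If $m_{i_j}\geq 5$, then $k\geq m_{i_j}\geq 5$ and we may write $\Lambda=5\Lambda_{i_j}+\tilde\Lambda$ with $\tilde\Lambda\in P^+_{cl,k-5}$; since $R^{5\Lambda_{i_j}}(2\alpha_{i_j})$ is wild by Lemma~\ref{lem::step-2-T_4-5i}, Lemma~\ref{lem::reduction-level} yields the wildness of $R^\Lambda(\beta)$. If $m_{i_j}=4$ but $\ch\k=2$, then $\mathscr T(\Lambda)_4=\emptyset$, and writing $\Lambda=4\Lambda_{i_j}+\tilde\Lambda$ with $\tilde\Lambda\in P^+_{cl,k-4}$, the wildness of $R^{4\Lambda_{i_j}}(2\alpha_{i_j})$ furnished by Lemma~\ref{lem::step-2-T_4-4i} transfers to $R^\Lambda(\beta)$ through Lemma~\ref{lem::reduction-level}.

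It remains to treat the tame case $m_{i_j}=4$ with $\ch\k\neq 2$, which is precisely $\beta\in\mathscr T(\Lambda)_4$. Here I would promote the canonical surjection $R^\Lambda(\beta)\twoheadrightarrow R^{4\Lambda_{i_j}}(\beta)$ of Lemma~\ref{lem::reduction-level} to an isomorphism. The point is that $\beta=2\alpha_{i_j}$ is supported on the single residue $i_j$, so every $\nu\in I^\beta$ equals $(i_j,i_j)$ and the only cyclotomic relation $x_1^{\langle h_{\nu_1},\Lambda\rangle}e(\nu)=0$ entering the presentation reads $x_1^4e(i_j,i_j)=0$ in both algebras; equivalently, Theorem~\ref{theo::graded-dim} shows that the two graded dimensions coincide. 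Consequently $R^\Lambda(\beta)\simeq R^{4\Lambda_{i_j}}(2\alpha_{i_j})$, which is tame by Lemma~\ref{lem::step-2-T_4-4i}, completing the argument. I do not anticipate a genuine obstacle: the only step requiring care is this identification of the surjection with an isomorphism, and it is immediate precisely because $\beta$ involves a single residue, so no node of a component other than the $i_j$-th can ever be added.
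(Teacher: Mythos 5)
Your proposal is correct and follows essentially the same route as the paper: the paper likewise reduces to $R^{m_i\Lambda_i}(2\alpha_i)$ by observing that the canonical surjection is an isomorphism (graded dimensions coincide, since $\beta$ is supported on the single residue $i$), and then invokes Lemma~\ref{lem::step-2-T_4-5i} for $m_i\geq 5$ and Lemma~\ref{lem::step-2-T_4-4i} for $m_i=4$. The only cosmetic difference is that you use the surjection of Lemma~\ref{lem::reduction-level} directly in the wild cases and the isomorphism only in the tame case, whereas the paper states the isomorphism once up front; both are valid.
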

\begin{proof}
Write $\Lambda=\sum_{j\in I}m_j\Lambda_j$ and $\beta=2\alpha_{i}$ for some $i\in I(\Lambda)_3$.
Then, $R^\Lambda(\beta)\simeq R^{m_i\Lambda_i}(\beta)$ since the graded dimensions coincide.
If $m_{i}\geq 5$, then $R^\Lambda(\beta)$ is wild since $R^{5\Lambda_{i}}(\beta)$ is wild by Lemma \ref{lem::step-2-T_4-5i}.
If $m_{i}=4$, then by Lemma \ref{lem::step-2-T_4-4i}, $R^{\Lambda}(\beta)$ is tame if $\ch\k \neq 2$ and wild otherwise.
\end{proof}

\subsection{Representation type of $\mathcal T(\Lambda)_5$}
In this subsection, we assume that $\ell\geq 2$ and $|I(\Lambda)_1|\geq 2$.
Recall that $T(\Lambda)_5=\{\Lambda'_{j, j}\mid i\neq j\in I(\Lambda)_1,\Lambda'=\Lambda_{i,i}\}$ and $\mathcal T(\Lambda)_5=\{\alpha_i+\alpha_j \mid i\neq j\in I(\Lambda)_1\}$.
Note again that $T(\Lambda)_5$ is defined only when $\ell\geq 2$.

\begin{Lemma}\label{lem::step-2-T_5-3i2j}
Suppose $\Lambda=3\Lambda_i+2\Lambda_j$ with $i\neq j\in I$. Then,
$R^\Lambda(\alpha_i+\alpha_j)$ is wild.
\end{Lemma}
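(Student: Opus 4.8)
The plan is to reduce to a single normalized case by symmetry, compute a well-chosen idempotent truncation explicitly, and exhibit inside it one of Ringel's wild local algebras. First I would invoke Proposition \ref{prop::iso-sigma} together with \eqref{equ:sigmaX} to assume $i=0$, so that $\Lambda=3\Lambda_0+2\Lambda_j$ with $j\in I\setminus\{0\}$ and $\beta=\alpha_0+\alpha_j$. Since $|\beta|=2$ and $0\neq j$, the only residue sequences in $I^\beta$ are $(0,j)$ and $(j,0)$, giving idempotents $e(0j),e(j0)$; the cyclotomic relation (Definition \ref{def::cyclotomic-quiver}(6)) reads $x_1^3e(0j)=0$ and $x_1^2e(j0)=0$. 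I would then study the local truncation $\mathcal A:=e(0j)R^{\Lambda}(\beta)e(0j)$, whose degree-$0$ part is one-dimensional by Theorem \ref{theo::graded-dim}.

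Next I would pin down the algebra structure of $\mathcal A$. Because $\psi_1$ interchanges $e(0j)$ and $e(j0)$ while $\psi_1^2e(0j)=Q_{0,j}(x_1,x_2)e(0j)$ acts diagonally, every element of $\mathcal A$ can be rewritten in terms of $x_1e(0j)$ and $x_2e(0j)$, so $\mathcal A$ is commutative and generated by these two elements with $x_1^3=0$. Applying relation (4) of Definition \ref{def::cyclotomic-quiver} to the identity $\psi_1x_1^2e(j0)=0$ yields $x_2^2e(0j)\psi_1=0$, and right-multiplying by $\psi_1$ gives the key relation $x_2^2Q_{0,j}(x_1,x_2)e(0j)=0$. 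Here $Q_{0,j}=1$ when $j\not\equiv_e\pm1$, whereas $Q_{0,j}(x_1,x_2)=x_1+x_2$ when $j\equiv_e1$ and $Q_{0,j}(x_1,x_2)=x_2+tx_1$ when $j\equiv_e-1$ (the parameter $t$ entering through $Q_{\ell,0}$). In each case I would compute $\dim_q\mathcal A$ from Theorem \ref{theo::graded-dim} and match it against the two relations found, concluding $\mathcal A\cong\k[x_1,x_2]/(x_1^3,\,x_2^2Q_{0,j}(x_1,x_2))$.

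Finally, since $x_2^2Q_{0,j}(x_1,x_2)$ lies in the ideal $(x_1^3,x_1^2x_2,x_2^2)$ in every case, the assignment $x_1\mapsto X$, $x_2\mapsto Y$ defines a surjection of $\mathcal A$ onto $\k[X,Y]/(X^3,X^2Y,Y^2)$, which is wild by \cite[(1.1)(c)]{Ringel-local-alg}. As $\mathcal A$ is an idempotent truncation of $R^{\Lambda}(\beta)$, wildness of $\mathcal A$ forces wildness of $R^{\Lambda}(\beta)$, uniformly in $j$, in $t$, and in $\ch\k$, which is exactly the claim.

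The main obstacle I anticipate is the two \emph{adjacent} cases $j\equiv_e\pm1$, where $\psi_1^2e(0j)=Q_{0,j}(x_1,x_2)e(0j)$ genuinely couples $x_1$ and $x_2$: there the relation $x_2^2Q_{0,j}(x_1,x_2)=0$ is cubic rather than quadratic in the generators, so a short argument (as in the non-adjacent case, where $x_2^2e(0j)=0$ outright) is unavailable. Verifying that $\mathcal A$ has \emph{exactly} the presentation above — in particular that $x_1e(0j)$ and $x_2e(0j)$ remain linearly independent modulo the radical square, so that the quiver of $\mathcal A$ carries the two loops needed for the wild quotient — requires the full graded-dimension count of Theorem \ref{theo::graded-dim}; matching the relation ideal against Ringel's list to secure the surjection is the delicate point.
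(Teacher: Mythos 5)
Your proposal is correct, and on the non-adjacent residues $j\not\equiv_e\pm1$ it coincides with the paper's argument: truncate at $e(0j)$, note $e(0j)\psi_1e(0j)=0$ so the truncation is the commutative local algebra on $x_1e(0j),x_2e(0j)$, extract $x_1^3e(0j)=0$ and $x_2^2e(0j)=0$, and surject onto the wild local algebra $\k[X,Y]/(X^3,X^2Y,Y^2)$. Where you genuinely diverge is the adjacent case $j\equiv_e\pm1$: the paper does not compute there at all, but observes that $\alpha_0+\alpha_j$ then lies in $\mathcal T(\Lambda)_0$ with both relevant multiplicities at least $2$, so wildness follows from Proposition \ref{prop::step-2-result-T_0} (ultimately from the $2\Lambda_i+2\Lambda_j$ computation in Lemma \ref{lem::step-2-T_0-2i2j} plus Lemma \ref{lem::reduction-level}). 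Your uniform relation $x_2^2Q_{0,j}(x_1,x_2)e(0j)=0$ is correctly derived from $\psi_1x_1^2e(j0)=0$, and the delicate point you flag does check out: for $j\equiv_e1$ a count via Theorem \ref{theo::graded-dim} gives $\dim_q e(0j)R^{\Lambda}(\beta)e(0j)=1+2q^2+3q^4+2q^6+q^8$, which is exactly the Hilbert series of $\k[x,y]/(x^3,\,y^2(x+y))$ with generators in degree $2$ (total dimension $9$), and similarly for $j\equiv_e-1$ with $y^2(y+tx)$, $t\in\k^\times$; since $y^2Q_{0,j}\in(y^2)\subseteq(x^3,x^2y,y^2)$, the wild quotient survives in every case. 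Your route therefore buys a single case-free, self-contained computation at the cost of two extra graded-dimension verifications, while the paper buys brevity by recycling an earlier proposition; the reduction to $i=0$ via Proposition \ref{prop::iso-sigma}, the locality of the truncation, and the passage from wildness of an idempotent truncation to wildness of $R^{\Lambda}(\beta)$ are all in order. (Both your argument and the paper's implicitly use $\ell\geq2$, which is harmless since the lemma is only invoked under that hypothesis.)
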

\begin{proof}
We may assume $\Lambda=3\Lambda_0+2\Lambda_h$ with $h\equiv_e j-i$ such that $h\neq 0$.
If $h=1$ or $\ell$, then $\alpha_0+\alpha_h\in \mathcal T(3\Lambda_0+2\Lambda_h)_0$ and $R^{\Lambda}(\alpha_0+\alpha_h)$ is wild by Proposition \ref{prop::step-2-result-T_0}.

If $h\neq 1, \ell$, then set $\mathcal{A}=e_0R^{\Lambda}(\alpha_0+\alpha_h)e_0$ with $e_0=e(0h)$.
We compute $\dim_q \mathcal{A}=1+2q^2+2q^4+q^6$.
Moreover, $e_0\psi_1e_0=x_1^3e_0=0$, and $\psi_1^2 e_0=e_0$ implies $x_2^2e_0=x_2^2\psi_1^2e_0=\psi_1x_1^2e(h0)\psi_1=0$.
Thus, $\mathcal{A}$ has a basis $\{x_1^ax_2^be_0\mid 0\leq a\leq 2, 0\leq b\leq 1 \}$ and has $\k[X,Y]/(X^3,X^2Y,Y^2)$ as a wild quotient algebra.
Hence, $R^\Lambda(\alpha_0+\alpha_h)$ is wild.
\end{proof}

Recall the subset $\mathscr T(\Lambda)_5$ of $\mathcal T(\Lambda)_5$ in Subsection \ref{sec::main-result}.

\begin{Prop}\label{prop::step-2-result-T_5}
Suppose $\Lambda\in \pcl$ with $k\geq 3$ and $\beta \in \mathcal T(\Lambda)_5$.
Then, $R^{\Lambda}(\beta)$ is of tame (resp. wild) representation type if $\beta\in \mathscr T(\Lambda)_5$ (resp. otherwise).
\end{Prop}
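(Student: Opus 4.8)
The plan is to mirror the template of Propositions \ref{prop::step-2-result-T_2}--\ref{prop::step-2-result-T_4}: normalize by Proposition \ref{prop::iso-sigma}, dispose of the wild cases by the reduction lemmas, and then treat the unique remaining tame case by an explicit idempotent-truncation computation. Fix $\beta=\alpha_i+\alpha_j\in\mathcal T(\Lambda)_5$ with $i\neq j\in I(\Lambda)_1$, so that $m_i:=\langle h_i,\Lambda\rangle\geq 2$ and $m_j:=\langle h_j,\Lambda\rangle\geq 2$. I first separate the cases in which $\beta\notin\mathscr T(\Lambda)_5$. If $i$ and $j$ are adjacent, i.e. $j\equiv_e i\pm 1$, then (after possibly swapping $i$ and $j$) $\alpha_i+\alpha_j=\sum_{m\in[i,j]}\alpha_m$ lies in $\mathcal T(\Lambda)_0$, where $[i,j]\neq I$ because $\ell\geq 2$; since $m_i\geq 2$ and $m_j\geq 2$ force $\beta\notin\mathscr T(\Lambda)_0\cup\mathscr T(\Lambda)_1$, Proposition \ref{prop::step-2-result-T_0} yields that $R^\Lambda(\beta)$ is wild. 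If $i$ and $j$ are not adjacent but $\max\{m_i,m_j\}\geq 3$, say $m_i\geq 3$, then $\bar\Lambda:=3\Lambda_i+2\Lambda_j\leq\Lambda$, and $R^{\bar\Lambda}(\beta)$ is wild by Lemma \ref{lem::step-2-T_5-3i2j}, whence $R^\Lambda(\beta)$ is wild by Lemma \ref{lem::reduction-level}. These two cases exhaust $\mathcal T(\Lambda)_5\setminus\mathscr T(\Lambda)_5$.

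It remains to treat $\beta\in\mathscr T(\Lambda)_5$, namely $m_i=m_j=2$ and $j\not\equiv_e i\pm 1$. Writing $\Lambda=\bar\Lambda+\tilde\Lambda$ with $\bar\Lambda=2\Lambda_i+2\Lambda_j$, I expect the surjection of Lemma \ref{lem::reduction-level} to be an isomorphism $R^\Lambda(\beta)\simeq R^{2\Lambda_i+2\Lambda_j}(\beta)$, which I will confirm by checking that the two graded dimensions agree via Theorem \ref{theo::graded-dim}. Then Proposition \ref{prop::iso-sigma} lets me assume $\Lambda=2\Lambda_0+2\Lambda_h$ and $\beta=\alpha_0+\alpha_h$ with $h\not\equiv_e 0,\pm 1$. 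The set $I^\beta$ consists of the two sequences $(0,h)$ and $(h,0)$; because $Q_{0,h}(u,v)=1$, the relation $\psi_1^2e(\nu)=Q_{\nu_1,\nu_2}(x_1,x_2)e(\nu)$ gives $\psi_1^2e(0h)=e(0h)$ and $\psi_1^2e(h0)=e(h0)$, so $\psi_1$ identifies the primitive idempotents $e(0h)$ and $e(h0)$. Consequently $R^\Lambda(\beta)$ is Morita equivalent to (indeed a $2\times 2$ matrix algebra over) its corner $\mathcal A:=e(0h)R^\Lambda(\beta)e(0h)$, and it suffices to determine the representation type of $\mathcal A$.

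To identify $\mathcal A$, the cyclotomic relation $x_1^{\langle h_{\nu_1},\Lambda\rangle}e(\nu)=0$ gives $x_1^2e(0h)=0$ and $x_1^2e(h0)=0$. Using the relation $\psi_1x_2e(0h)=x_1\psi_1e(0h)$ together with $\psi_1^2e(0h)=e(0h)$, one computes $x_2^2e(0h)=\psi_1x_1^2e(h0)\psi_1=0$, exactly as in the proof of Lemma \ref{lem::step-2-T_5-3i2j}. Since $x_1,x_2$ commute and $e(0h)\psi_we(0h)\neq 0$ forces $w=1$, the elements $x_1e(0h)$ and $x_2e(0h)$ generate $\mathcal A$, and a dimension count against $\dim_q\mathcal A$ shows $\mathcal A\cong\k[X,Y]/(X^2,Y^2)$. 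This is a tame local symmetric algebra, one of the local algebras listed in Subsection 8.2, so $R^\Lambda(\beta)$ is tame, completing the proof.

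The main obstacle is the bookkeeping in the tame case: verifying that the reduction to $2\Lambda_i+2\Lambda_j$ is genuinely an isomorphism, carrying out the $\psi_1$-manipulations that pin down $\mathcal A\cong\k[X,Y]/(X^2,Y^2)$, and citing the precise entry of the classification of tame local algebras that certifies its tameness. By contrast, the wild cases are comparatively immediate once one observes the membership $\alpha_i+\alpha_j\in\mathcal T(\Lambda)_0$ in the adjacent situation and the embedding $3\Lambda_i+2\Lambda_j\leq\Lambda$ in the high-multiplicity situation.
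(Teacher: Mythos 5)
Your proposal is correct and follows essentially the same route as the paper's proof: the same reduction $R^\Lambda(\beta)\simeq R^{m_i\Lambda_i+m_j\Lambda_j}(\beta)$ via coinciding graded dimensions, wildness from Lemma \ref{lem::step-2-T_5-3i2j} together with Lemma \ref{lem::reduction-level} when some multiplicity is at least $3$ and from Proposition \ref{prop::step-2-result-T_0} in the adjacent case, and the identification of the corner $e(0h)R^\Lambda(\alpha_0+\alpha_h)e(0h)$ with the tame local algebra $\k[X,Y]/(X^2,Y^2)$. The only (valid) variation is at the last step: where the paper invokes the categorification theorem to count simple modules and conclude that this corner is the basic algebra, you instead use the invertible intertwiner $\psi_1 e(0h)$ (available because $Q_{0,h}=1$ when $h\not\equiv_e 0,\pm 1$) to exhibit $R^\Lambda(\beta)$ directly as a matrix algebra over the corner, which is a slightly more self-contained way to obtain the Morita equivalence.
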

\begin{proof}
Write $\Lambda=\sum_{s\in I}m_s\Lambda_s$ and $\beta=\alpha_i+\alpha_j$
for some $i\neq j\in I(\Lambda)_1$.
Then, $R^{\Lambda}(\beta)\simeq R^{\Lambda'}(\beta)$ with $\Lambda'=m_i\Lambda_i+m_j\Lambda_j$ since the graded dimensions coincide.
If $m_{i}\ge 3$ or $m_{j}\ge 3$, then $R^{\Lambda}(\beta)$ is wild by Lemma \ref{lem::step-2-T_5-3i2j}.

Suppose $m_{i}=m_{j}=2$. If $\beta\notin \mathscr T(\Lambda)_5$, i.e., $i\equiv_e j\pm1 $, then $\beta=\beta_{\Lambda_{i,j}}$ or $\beta_{\Lambda_{j,i}}\in \mathcal T(\Lambda)_0$ and hence, $R^{\Lambda}(\beta)$ is wild by Proposition \ref{prop::step-2-result-T_0}.
If $\beta\in \mathscr T(\Lambda)_5$, i.e., $i\not\equiv_e j\pm1 $, then $R^{\bar\Lambda}(\beta)\simeq R^{2\Lambda_0+2\Lambda_s}(\alpha_0+\alpha_1)$ by
Proposition \ref{prop::iso-sigma}, where $s\equiv_e j-i\not \equiv_e 0, \pm 1$.
Set $\mathcal{A}=e(0s) R^{2\Lambda_0+2\Lambda_s}(\alpha_0+\alpha_1) e(0s)$.
Similar computation as in the previous lemma shows that $\mathcal{A}$ has basis $\{x_1^ax_2^be(0s)\mid  0\leq a\leq 1,0\leq b\leq 1\}$ with relations $x_1^2e(0s)=x_2^2e(0s)=0$, which is isomorphic to the tame algebra $\k[X,Y]/( X^2, Y^2)$.
Moreover, $\mathcal{A}$ is the basic algebra of $R^{2\Lambda_0+2\Lambda_s}(\alpha_0+\alpha_1)$ since they share the same number of simple modules by the categorification theorem. Therefore, $R^{\Lambda}(\beta)$ is tame.
\end{proof}

\section{Proof of Step 3}
We give a proof of Step 3 in this section.
Recall that $T(\Lambda)=\{\Lambda\}\cup\bigcup_{0\leq s\leq 5}T(\Lambda)_s$ in Definition \ref{def::T_Lambda} and $NW(\Lambda)$ is the set consisting of $\Lambda'\in \pcl (\Lambda)$ such that $R^\Lambda(\beta_{\Lambda'})$ is not of wild representation type (see Definition \ref{def::non-wild-component}).
It is worth mentioning that $\mathcal {NW}(\Lambda) \subset \mathcal T(\Lambda)$ if and only if $NW(\Lambda)\subset T(\Lambda)$.
We shall show the latter in this section.

\subsection{Strategy for proving Step 3}
When we apply Corollary \ref{cor::cover-non-wild} with $S=T(\Lambda)$, we obtain $NW(\Lambda)\subset T(\Lambda)$ once we show that $R^{\Lambda}(\beta_{\Lambda''})$ is wild for any $\Lambda''\in S(\Lambda')_\rightarrow\setminus T(\Lambda)$ and $\Lambda'\in T(\Lambda)$. 
If $\Lambda'=\Lambda$, then $S(\Lambda')_\rightarrow\setminus T(\Lambda)=\emptyset$ 
since $S(\Lambda')_\rightarrow=T(\Lambda)_0\cup T(\Lambda)_1$ as was mentioned in Remark \ref{rem::-P-lambda}.
Therefore, it suffices to consider $\Lambda'\in \bigcup_{0\leq s\leq 5}T(\Lambda)_s$.
In fact, we will show the following stronger claim.

\begin{claim}\label{claim::step-3} 
$R^{\Lambda}(\beta_{\Lambda''})$ is wild for any $\Lambda''\in S(\Lambda')_\rightarrow\setminus T(\Lambda)$ if $s=1$,
and for any $\Lambda''\in S(\Lambda')_\rightarrow$ if $s\neq 1$.
\end{claim}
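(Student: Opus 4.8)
The plan is to prove Claim \ref{claim::step-3} by induction on the level $k$, using Lemma \ref{lem::reduction-step-3} for the inductive step and explicit idempotent-truncation computations for the finitely many base cases. The thresholds in Lemma \ref{lem::reduction-step-3} dictate the split: the reduction to level $k-1$ is available for $k\ge 5$ when $s\in\{0,1,2\}$, for $k\ge 6$ when $s=3$, and for $k\ge 7$ when $s\in\{4,5\}$. Accordingly I would treat $k\in\{3,4\}$ (for $s\le 2$), $k\in\{3,4,5\}$ (for $s=3$), and $k\in\{3,4,5,6\}$ (for $s\in\{4,5\}$) as base cases, so the direct verification never needs to go past $k=6$, and then run the induction above each threshold.

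For the inductive step, fix $\Lambda'\in T(\Lambda)_s$ and a successor $\Lambda''$, where $\Lambda''\in S(\Lambda')_\rightarrow\setminus T(\Lambda)$ if $s=1$ and $\Lambda''\in S(\Lambda')_\rightarrow$ otherwise, and assume $k$ exceeds the relevant threshold. Lemma \ref{lem::reduction-step-3} then supplies $r\in I(\Lambda)_0$ and $\bar\Lambda,\bar\Lambda',\bar\Lambda''\in P^+_{cl,k-1}$ with $\Lambda=\bar\Lambda+\Lambda_r$, $\Lambda'=\bar\Lambda'+\Lambda_r$, $\Lambda''=\bar\Lambda''+\Lambda_r$, $\bar\Lambda'\in T(\bar\Lambda)_s$ and $\bar\Lambda''\in S(\bar\Lambda')_\rightarrow$, and by Lemma \ref{lem::embedding-Lambda} we have $\beta_{\Lambda''}=\beta_{\bar\Lambda''}$. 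When $s=1$ I must still check that $\bar\Lambda''$ lies outside $T(\bar\Lambda)$ so that the induction hypothesis applies: were $\bar\Lambda''\in T(\bar\Lambda)$, then $\Lambda''=\bar\Lambda''+\Lambda_r\in T(\Lambda)$ by Lemma \ref{lem::embdding-subgraph}, contradicting $\Lambda''\notin T(\Lambda)$. Hence the induction hypothesis gives that $R^{\bar\Lambda}(\beta_{\bar\Lambda''})$ is wild, and Lemma \ref{lem::reduction-level} promotes this to wildness of $R^{\Lambda}(\beta_{\Lambda''})=R^{\Lambda}(\beta_{\bar\Lambda''})$, closing the step.

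For the base cases I would first trim the list of weights to inspect. The symmetry $T(\sigma\Lambda)=\sigma T(\Lambda)$ of \eqref{equ::iso-sigma-graph} together with Proposition \ref{prop::iso-sigma} normalizes one summand of $\Lambda$ (typically to $\Lambda_0$), while Lemma \ref{lem::reduction-level} combined with the embedding Lemmas \ref{lem::embedding-Lambda} and \ref{lem::embdding-subgraph} lets me replace $\Lambda$ by the smallest sub-weight $\bar\Lambda$ that carries the moves producing $\Lambda''$. For each resulting pair $(\bar\Lambda,\beta_{\bar\Lambda''})$ I would compute the graded dimension of a well-chosen truncation $e\,R^{\bar\Lambda}(\beta_{\bar\Lambda''})\,e$ via Theorem \ref{theo::graded-dim}, read off the quiver of the basic algebra using \cite[Lemma 1.3]{Ar-rep-type}, and conclude wildness from the criteria of \cite{Er-tame-block} (a vertex carrying too many loops or arrows, or a quiver that is not that of a tame two-point or local algebra) or by exhibiting an explicit wild quotient such as $\k[X,Y]/(X^2,Y^3,XY^2)$ in the spirit of \cite{Ringel-local-alg}; Corollary \ref{cor::reduction-path} then spreads wildness to any further successor reached along a directed path.

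The main obstacle will be organizing and executing these base-case computations: even after $\sigma$-symmetry and level reduction, many distinct successor vertices $\Lambda''$ remain across $s=0,\dots,5$ and $k=3,4,5,6$, each demanding a fresh graded-dimension calculation and quiver analysis. The most delicate bookkeeping is the case $s=1$, where precisely the successors of $\Lambda_{i,i}$ lying in $T(\Lambda)_2,\dots,T(\Lambda)_5$ are the ones that may fail to be wild and must be excluded; keeping the condition $\Lambda''\notin T(\Lambda)$ synchronized with the reduction of Lemma \ref{lem::reduction-step-3} is where the argument is most error-prone.
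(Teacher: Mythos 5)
Your proposal follows essentially the same route as the paper: the paper's own argument is exactly the reduction via Lemma \ref{lem::reduction-step-3} (applied repeatedly to descend to the thresholds $k\le 4$ for $s=0,1,2$, $k\le 5$ for $s=3$, $k\le 6$ for $s=4,5$), with $\beta_{\Lambda''}=\beta_{\bar\Lambda''}$ from Lemma \ref{lem::embedding-Lambda}, wildness transported by Lemma \ref{lem::reduction-level}, and the $s=1$ membership condition controlled by Lemma \ref{lem::embdding-subgraph}, exactly as you describe. The only caveat is that the base cases, which you correctly identify and outline (via $\sigma$-symmetry, quiver embeddings, graded dimensions, and the wildness criteria of Erdmann and Ringel) but do not execute, constitute the bulk of the actual work in the paper's Sections 7.2--7.4.
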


We will use the following notation in the proof below.
$$
S(T(\Lambda)_s)_\rightarrow=\{\Lambda''\in S(\Lambda')_\rightarrow \mid \Lambda'\in T(\Lambda)_s\}.
$$
In the subsections below, we distinguish the following cases,
$$
(\star): \left\{\begin{aligned}
&T(\Lambda)_0\cup T(\Lambda)_1 \cup T(\Lambda)_2 &\text{for }  k=3,4,\\
&T(\Lambda)_3                                    &\text{for } k=3,4,5,\\
&T(\Lambda)_4\cup T(\Lambda)_5                   &\text{for } k=4,5,6.
\end{aligned}\right.
$$
We remind the reader that $T(\Lambda)_4=T(\Lambda)_5=\emptyset$ if $k=3$.
Proposition \ref{prop::iso-sigma} is also frequently used below without further notice.
The next proposition is the reason why we distinguish the cases in $(\star)$.

\begin{Prop}
If Claim \ref{claim::step-3} holds for all $\Lambda'$ in $(\star)$, then it holds for arbitrary $k\geq 3$ and $0\leq s\leq 5$.
\end{Prop}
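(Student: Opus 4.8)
The plan is to prove Claim \ref{claim::step-3} for all $k\geq 3$ and all $0\leq s\leq 5$ by induction on the level $k$, using the cases collected in $(\star)$ as the base of the induction and combining Lemma \ref{lem::reduction-step-3} with Lemma \ref{lem::reduction-level} as the inductive mechanism. The thresholds appearing in $(\star)$ are arranged precisely so that, for each fixed $s$, they enumerate exactly those levels at which the relevant clause of Lemma \ref{lem::reduction-step-3} is not yet available: $k=3,4$ for $s\in\{0,1,2\}$, $k=3,4,5$ for $s=3$, and $k=4,5,6$ for $s\in\{4,5\}$ (with $k=3$ vacuous in the last case, since $T(\Lambda)_4$ and $T(\Lambda)_5$ are empty when $k=3$). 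Thus the base step of the induction is exactly the hypothesis that Claim \ref{claim::step-3} holds on $(\star)$.

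For the inductive step I would fix $s$ and assume the claim at level $k-1$, where $k$ exceeds the corresponding threshold, namely $k\geq 5$ for $s\in\{0,1,2\}$, $k\geq 6$ for $s=3$, and $k\geq 7$ for $s\in\{4,5\}$; in each range the matching clause of Lemma \ref{lem::reduction-step-3} applies. Given $\Lambda'\in T(\Lambda)_s$ and $\Lambda''\in S(\Lambda')_\rightarrow$, that lemma furnishes an index $r\in I(\Lambda)_0$ and weights $\bar\Lambda,\bar\Lambda',\bar\Lambda''\in P^+_{cl,k-1}$ with $\Lambda=\bar\Lambda+\Lambda_r$, $\Lambda''=\bar\Lambda''+\Lambda_r$, $\bar\Lambda'\in T(\bar\Lambda)_s$ and $\bar\Lambda''\in S(\bar\Lambda')_\rightarrow$. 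Since the value of $s$ is preserved, this is genuinely a reduction within the same case of the induction.

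I would then transport wildness across this reduction. Applying Lemma \ref{lem::embedding-Lambda} with $\tilde\Lambda=\Lambda_r$, the identity $\Lambda''=\bar\Lambda''+\Lambda_r$ yields $\beta_{\Lambda,\Lambda''}=\beta_{\bar\Lambda,\bar\Lambda''}$. By the inductive hypothesis at level $k-1$, the algebra $R^{\bar\Lambda}(\beta_{\bar\Lambda''})$ is wild, and Lemma \ref{lem::reduction-level} (taking $\beta=\beta_{\bar\Lambda''}$ and $m=0$) upgrades this to the wildness of $R^{\Lambda}(\beta_{\Lambda''})=R^{\Lambda}(\beta_{\bar\Lambda''})$, which is exactly the assertion of Claim \ref{claim::step-3} for $\Lambda''$.

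The one delicate point, which I expect to be the main obstacle, is the case $s=1$, where Claim \ref{claim::step-3} only demands wildness for $\Lambda''\in S(\Lambda')_\rightarrow\setminus T(\Lambda)$; to invoke the inductive hypothesis at level $k-1$ I must know that $\bar\Lambda''$ similarly lies outside $T(\bar\Lambda)$. This is resolved by Lemma \ref{lem::embdding-subgraph}: because $T(\bar\Lambda)$ embeds into $T(\Lambda)$ via the map sending a vertex $\Lambda^{\circ}$ to $\Lambda^{\circ}+\Lambda_r$, the assumption $\Lambda''=\bar\Lambda''+\Lambda_r\notin T(\Lambda)$ forces $\bar\Lambda''\notin T(\bar\Lambda)$, so the $s=1$ inductive hypothesis applies verbatim. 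With this observation the induction closes uniformly over $0\leq s\leq 5$, reducing the entirety of Step 3 to the finitely many small-rank verifications assembled in $(\star)$.
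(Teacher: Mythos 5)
Your proposal is correct and follows essentially the same route as the paper: the paper applies Lemma \ref{lem::reduction-step-3} repeatedly to drop the level down to the thresholds in $(\star)$ and then transports wildness back up via $\beta_{\Lambda,\Lambda''}=\beta_{\bar\Lambda,\bar\Lambda''}$ and Lemma \ref{lem::reduction-level}, handling $s=1$ exactly as you do with Lemma \ref{lem::embdding-subgraph}. Your one-step-at-a-time induction is just a reformulation of the paper's "apply the reduction as many times as possible," and your threshold bookkeeping matches the paper's.
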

\begin{proof}
Suppose $\Lambda'\in T(\Lambda)_s$ and $\Lambda''\in S(\Lambda')_{\rightarrow}$ such that $\Lambda'\notin (\star)$.
By applying Lemma \ref{lem::reduction-step-3}  repeatedly as many times as possible for reducing the level $k$ to $3\le k'<k$, we may reach 
$k'=4$ if $s=0,1,2$, $k'=5$ if $s=3$ and $k'=6$ if $s=4,5$. 
Then, we write $\Lambda'=\bar \Lambda' +\tilde \Lambda$, $\Lambda''=\bar \Lambda'' +\tilde \Lambda$ such that  $\bar\Lambda'\in (\star)$, $\bar \Lambda''\in S(\bar \Lambda')_{\rightarrow}$ and $\bar\Lambda, \bar \Lambda', \bar\Lambda''\in P^+_{cl,k'}$. 

According to our assumption, $R^{\bar \Lambda}(\beta_{\bar \Lambda,\bar \Lambda''})$ is wild if $s\neq 1$, and so is $R^\Lambda(\beta_{\Lambda,\Lambda''})$ by $\beta_{\Lambda,\Lambda''}=\beta_{\bar\Lambda,\bar\Lambda''}$ and Lemma \ref{lem::reduction-level}.
If $s=1$ and $\Lambda''\notin T(\Lambda)$, then $\bar\Lambda''\notin T(\bar\Lambda)$ by Lemma \ref{lem::embdding-subgraph}, and $R^\Lambda(\beta_{\Lambda,\Lambda''})$ is also wild.
\end{proof}

\subsection{Level 3}
We prove Claim \ref{claim::step-3} for $k=3$ in this subsection.
We divide the proof into three cases: $\Lambda=3\Lambda_i$, $2\Lambda_i+\Lambda_j$ for $i\neq j$, and $\Lambda_i+\Lambda_j+\Lambda_h$ for $i<j<h$.
\begin{Lemma}\label{lem::step-3-level3-3i}
Claim \ref{claim::step-3} holds for  all $\Lambda'\in \cup_{0\leq s\leq 3}T(\Lambda)_s$ if $\Lambda=3\Lambda_i$ for some $i\in I$.
\end{Lemma}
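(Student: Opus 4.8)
The plan is to first normalize $\Lambda=3\Lambda_0$ using Proposition \ref{prop::iso-sigma}, and then to make $\bigcup_{0\le s\le 3}T(\Lambda)_s$ and the relevant successor sets entirely explicit. Since $I(\Lambda)_0=I(\Lambda)_1=I(\Lambda)_2=\{0\}$ and $I(\Lambda)_3=\emptyset$, Definition \ref{def::T_Lambda} gives $T(\Lambda)_0=T(\Lambda)_4=T(\Lambda)_5=\emptyset$ and $T(\Lambda)_1=\{\Lambda_0+\Lambda_1+\Lambda_\ell\}$, while for $\ell\ge 3$ one has $T(\Lambda)_2=\{\Lambda_0+\Lambda_2+\Lambda_{\ell-1}\}$ and $T(\Lambda)_3=\{2\Lambda_\ell+\Lambda_2,\ 2\Lambda_1+\Lambda_{\ell-1}\}$. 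Each solution vector $X_{\Lambda'}$ needed below is produced by Lemma \ref{lem::recurrence} and each arrow is detected by Corollary \ref{cor::find-arrow}, so all of these are short mechanical computations.

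The case $s=0$ is empty. For $s=1$ I would show the claim is vacuous: from $\Lambda^{(1)}:=\Lambda_0+\Lambda_1+\Lambda_\ell$ the outgoing arrows (for $\ell\ge 3$) are $(0,1),(\ell,0),(\ell,1)$, whose targets are precisely the two vertices of $T(\Lambda)_3$ and the vertex of $T(\Lambda)_2$; hence $S(\Lambda^{(1)})_{\rightarrow}\subseteq T(\Lambda)$ and $S(\Lambda^{(1)})_{\rightarrow}\setminus T(\Lambda)=\emptyset$. The same bookkeeping settles the low-rank cases: for $\ell=1$ the set $\pcl(\Lambda)$ has two vertices with $\Lambda^{(1)}$ a sink, and for $\ell=2$ it has four vertices with $T(\Lambda)_2=\emptyset$ and the two $T(\Lambda)_3$ vertices $3\Lambda_1,3\Lambda_2$ being sinks, so all successor sets in question are empty or contained in $T(\Lambda)$.

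The substance is $s=2,3$, nonempty only for $\ell\ge 3$, and here I would bypass any idempotent-truncation computation by leaning on a single wild block. By Lemma \ref{lem::step-2-T_2-3i}, $R^{\Lambda}(\beta_{\Lambda^{\ast}})$ is wild for the $T(\Lambda)_2$ vertex $\Lambda^{\ast}:=\Lambda_0+\Lambda_2+\Lambda_{\ell-1}$, whose associated root is $\beta_{\Lambda^{\ast}}=2\alpha_0+\alpha_1+\alpha_\ell$. For $s=2$ the claim is then immediate: every $\Lambda''\in S(\Lambda^{\ast})_{\rightarrow}$ is wild by Lemma \ref{lem::reduction-arrow}. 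For $s=3$ I would list the successors of the two $T(\Lambda)_3$ vertices and verify, by matching the vectors of Lemma \ref{lem::unique-solution}, that each is either $\Lambda^{\ast}$ itself—reached by a $(\ell,\ell)$- or $(1,1)$-arrow—or one of $\Lambda^{\ast}_{0,2},\Lambda^{\ast}_{\ell-1,0}\in S(\Lambda^{\ast})_{\rightarrow}$ (for instance $(2\Lambda_\ell+\Lambda_2)_{\ell,2}=\Lambda^{\ast}_{0,2}$). In every case Corollary \ref{cor::reduction-path} propagates the wildness of $\Lambda^{\ast}$ to the successor, establishing Claim \ref{claim::step-3} here.

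The only real difficulty is the combinatorial bookkeeping: checking completeness of each successor set and, crucially, that every $s=3$ successor is dominated in the path order by the single wild vertex $\Lambda^{\ast}$. This forces a split into $\ell=3$, $\ell=4$ and $\ell\ge 5$, owing to coincidences among $\Lambda_0,\Lambda_1,\Lambda_2,\Lambda_{\ell-1},\Lambda_\ell$ (e.g. $\Lambda_2=\Lambda_{\ell-1}$ for $\ell=3$, which collapses several arrows into loops and makes $S(\Lambda^{\ast})_{\rightarrow}$ empty, so that the $s=3$ successors reduce to $\Lambda^{\ast}$ alone). Since the identifications of coinciding targets rest entirely on comparing these solution vectors, the whole lemma comes down to keeping the index arithmetic correct across the three subranges of $\ell$.
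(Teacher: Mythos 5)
Your proposal is correct and follows essentially the same route as the paper's proof: normalize to $3\Lambda_0$, observe that $S(T(\Lambda)_1)_\rightarrow\subseteq T(\Lambda)_2\cup T(\Lambda)_3$ so the $s=1$ case is vacuous, derive all wildness from the single $T(\Lambda)_2$ vertex $\Lambda_0+\Lambda_2+\Lambda_{\ell-1}$ via Lemma \ref{lem::step-2-T_2-3i}, and for $s=3$ identify each successor of $\Lambda_2+2\Lambda_\ell$ and $2\Lambda_1+\Lambda_{\ell-1}$ as either that wild vertex or one of its successors (e.g.\ $\Lambda_3+\Lambda_{\ell-1}+\Lambda_\ell=\Lambda^{\ast}_{0,2}$), then propagate with Lemma \ref{lem::reduction-arrow}. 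Your explicit treatment of the degenerate ranks $\ell=1,2$ (where the relevant vertices are sinks) is a harmless addition the paper leaves implicit.
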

\begin{proof}
It suffices to assume $\Lambda=3\Lambda_0$ by \eqref{equ::iso-sigma-graph}.
Then, $T(\Lambda)_0=T(\Lambda)_4=T(\Lambda)_5=\emptyset$, $T(\Lambda)_1=\{\Lambda_0+\Lambda_1+\Lambda_\ell\}$, $T(\Lambda)_2=\{\Lambda_0+\Lambda_2+\Lambda_{\ell-1}\}$ if $\ell\ge 3$,   $T(\Lambda)_3=\{2\Lambda_1+\Lambda_{\ell-1}, \Lambda_2+2\Lambda_\ell\}$ if $\ell\geq 2$. We find all arrows with source $\Lambda_0+\Lambda_1+\Lambda_\ell$.

Since $S(\Lambda_0+\Lambda_1+\Lambda_\ell)_\rightarrow= T(\Lambda)_2\cup T(\Lambda)_3$, we have $S(T(\Lambda)_1)_\rightarrow\setminus T(\Lambda)=\emptyset$ and there is nothing to prove.
If $\Lambda'=\Lambda_0+\Lambda_2+\Lambda_{\ell-1}$, then $R^\Lambda(\beta_{\Lambda'})$ is wild  by Lemma \ref{lem::step-2-T_2-3i}, and so is $R^\Lambda(\beta_{\Lambda''})$ for any $\Lambda''\in S(\Lambda')_\rightarrow$ by Lemma \ref{lem::reduction-arrow}.
Thus, it remains to prove the claim when $\Lambda'\in  T(\Lambda)_3$.

We only show the case $\Lambda'=\Lambda_2+2\Lambda_\ell$ and the other case can be checked similarly.
By noting that $X_{\Lambda'}=(2,1,0,\ldots,0)$ and $\min(X_{\Lambda'}+\Delta_{2,\ell})\neq 0$ and that $\min(X_{\Lambda'}+\Delta_{\ell,2})\neq 0$ if $\ell=3$, we find $S(\Lambda')_\rightarrow=\{\Lambda_0+\Lambda_2+\Lambda_{\ell-1}, \Lambda_3+\Lambda_{\ell-1}+\Lambda_\ell \}$,
where the former is in $T(\Lambda)_2$ and the latter appears only if $\ell\ge 4$, and we have an arrow 
$$
\xymatrix@C=1cm{
\boxed{\Lambda_0+\Lambda_2+\Lambda_{\ell-1}}_2 \ar[r]^-{(0,2)}&
\Lambda_3+\Lambda_{\ell-1}+\Lambda_\ell}
$$
in the latter case, where the box with subscript 2 means that $\Lambda_0+\Lambda_2+\Lambda_{\ell-1}$ belongs to $T(\Lambda)_2$.
Then, $R^\Lambda(\beta_{\Lambda_0+\Lambda_2+\Lambda_{\ell-1}})$ is wild by Lemma \ref{lem::step-2-T_2-3i} and so is $R^\Lambda(\beta_{\Lambda_3+\Lambda_{\ell-1}+\Lambda_\ell})$ by Lemma \ref{lem::reduction-arrow}.
\end{proof}

\begin{Lemma}\label{lem::step-3-level3-2ij}
Claim \ref{claim::step-3} holds for any $\Lambda'\in \cup_{0\leq s\leq 3}T(\Lambda)_s$ if $\Lambda=2\Lambda_i+\Lambda_j$ with $i\neq j\in I$.
\end{Lemma}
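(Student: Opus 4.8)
The plan is to follow the template of the preceding Lemma~\ref{lem::step-3-level3-3i}. First I would use \eqref{equ::iso-sigma-graph} (via Proposition~\ref{prop::iso-sigma}) to normalise the weight to $\Lambda=2\Lambda_0+\Lambda_p$ with $1\le p\le\ell$, where $p\equiv_e j-i$. Since $I(\Lambda)_0=\{0,p\}$, $I(\Lambda)_1=\{0\}$ and $I(\Lambda)_2=I(\Lambda)_3=\emptyset$, the only nonempty strata among $\bigcup_{0\le s\le 3}T(\Lambda)_s$ are $T(\Lambda)_0=\{\Lambda_{0,p},\Lambda_{p,0}\}$, $T(\Lambda)_1=\{\Lambda_{0,0}\}$, and (only for $\ell\ge 3$) $T(\Lambda)_2$; in particular $s=3$ is vacuous here. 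Using Lemma~\ref{lem::recurrence} and Remark~\ref{rem::T_Lambda-beta} I would record $X_{\Lambda'}$ and $\beta_{\Lambda'}$ for each of these vertices, so that Corollary~\ref{cor::find-arrow} produces all outgoing arrows explicitly.

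For the stratum $s=1$ I need only the successors of $\Lambda_{0,0}$ (with $X_{\Lambda_{0,0}}=(1,0^\ell)$) that lie outside $T(\Lambda)$; enumerating $S(\Lambda_{0,0})_\rightarrow$ by Corollary~\ref{cor::find-arrow}, several targets fall back into $T(\Lambda)_0$ or into $T(\Lambda)_2$, and the remaining ones are genuinely new. For the strata $s=0$ and $s=2$ I must instead show that \emph{every} successor is wild, so I would enumerate $S(\Lambda_{0,p})_\rightarrow$, $S(\Lambda_{p,0})_\rightarrow$ and the successor set of the $T(\Lambda)_2$ vertex in the same way and treat each target $\Lambda''$, computing $\beta_{\Lambda''}=\beta_{\Lambda'}+\Delta_{m,n}$ directly.

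The crux, and the main obstacle, is that wildness cannot be propagated from the sources. All three types of source are finite or tame: the two $T(\Lambda)_0$ vertices give $\beta\in\mathscr T(\Lambda)_1$, which is tame by Proposition~\ref{prop::step-2-result-T_0}; the $T(\Lambda)_1$ vertex gives $\beta=\alpha_0$, so $R^\Lambda(\alpha_0)\cong\k[x]/(x^2)$ is finite; and the $T(\Lambda)_2$ vertex gives $\beta\in\mathscr T(\Lambda)_2$, which is tame whenever $\ch\k\neq 2$ and $p\neq 1,\ell$ by Proposition~\ref{prop::step-2-result-T_2}. Thus in the generic range $2\le p\le\ell-1$ with $\ch\k\neq2$ the whole of $T(\Lambda)$ consists of non-wild vertices, so Lemma~\ref{lem::reduction-arrow} yields only infinite type and there is no wild vertex upstream through which Corollary~\ref{cor::reduction-path} could route; moreover, since $k=3$ is the base level, the level reduction of Lemma~\ref{lem::reduction-step-3} is unavailable and everything must be done at level three. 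Wildness of each new successor must therefore be established on its own: whenever $\Lambda$ and $\Lambda''$ share a common summand $\Lambda_r$ reducing $\beta_{\Lambda''}$ to an element of $\P^{\bar\Lambda}$ for a level-$2$ subweight $\bar\Lambda$ whose algebra is wild by \cite{Ar-rep-type}, I would invoke Lemma~\ref{lem::reduction-level} together with Lemma~\ref{lem::embedding-Lambda}; in the remaining cases, whose $\beta_{\Lambda''}$ do not arise from a level-$\le 2$ subweight, I would compute the graded dimension of a suitable idempotent truncation $eR^\Lambda(\beta_{\Lambda''})e$ by Theorem~\ref{theo::graded-dim}, read off the Gabriel quiver of its basic algebra via \cite[Lemma 1.3]{Ar-rep-type}, and conclude wildness from the appearance of at least three loops at a vertex (or of a two-point quiver excluded from the tame list) using \cite{Er-tame-block} and \cite{Ringel-local-alg}.

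To keep the case analysis finite despite the parameter $p$, I would split into the boundary values $p=1$ and $p=\ell$ and the generic range $2\le p\le\ell-1$. At the boundary values two of the summands $\Lambda_0,\Lambda_1,\Lambda_\ell,\Lambda_p$ collide and the relevant $T(\Lambda)_2$ vertex is already wild (as is the case for all $p$ when $\ch\k=2$), so these are handled more easily by routing through that wild vertex with Corollary~\ref{cor::reduction-path}. In the generic range the relevant truncations depend on $p$ and $\ell$ only through a bounded amount of local residue data around the moved nodes, so the graded-dimension computations are essentially uniform. I expect the difficulty to be concentrated entirely in this handful of explicit truncation computations, since that is precisely where the first genuinely wild algebras of $\vec C(\Lambda)$ are detected.
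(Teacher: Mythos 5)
Your proposal is correct and follows essentially the same route as the paper: normalise to $\Lambda=2\Lambda_0+\Lambda_h$, observe that the sources in $T(\Lambda)_0\cup T(\Lambda)_1\cup T(\Lambda)_2$ are finite or tame so wildness of the new successors must be established directly, and then do so either by reducing to a level-$2$ subweight that is wild by \cite{Ar-rep-type} (via Lemmas \ref{lem::embedding-Lambda} and \ref{lem::reduction-level}) or by a graded-dimension computation of an idempotent truncation whose quiver forces wildness, finally propagating forward along arrows with Lemma \ref{lem::reduction-arrow}. The paper implements exactly this by isolating five key successor vertices (four handled by truncations with graded dimension $1+3q^2+3q^4+q^6$, one by the level-$2$ reduction), which is the "handful of explicit truncation computations" you anticipate.
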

\begin{proof}
We assume $\Lambda=2\Lambda_0+\Lambda_h$ with $h \equiv_e j-i$ by \eqref{equ::iso-sigma-graph}.
In this case, $T(\Lambda)_3=T(\Lambda)_4=T(\Lambda)_5=\emptyset$ and 
we have
$$
\scalebox{0.8}{
\xymatrix@C=1.5cm@R=1cm{
&&\Lambda_{h+2}+2\Lambda_\ell
\ar[d]|{(\ell,\ell)}
&\\
&\boxed{\Lambda_0+\Lambda_{h+1}+\Lambda_\ell}_0
\ar[rd]|{(\ell,0)}\ar[r]|{(\ell,h+1)}\ar[ru]|{(0,h+1)}
&\Lambda_0+\Lambda_{h+2}+\Lambda_{\ell-1}&\\
&&\Lambda_1+\Lambda_{h+1}+\Lambda_{\ell-1}\ar[r]|{(\ell-1, 1)}\ar[u]|{(1, h+1)}
&\Lambda_2+\Lambda_{h+1}+\Lambda_{\ell-2}\\
2\Lambda_0+\Lambda_h
\ar[r]|-{(0,0)}\ar[uur]|{(0,h)}\ar[ddr]|{(h,0)}
&\boxed{\Lambda_1+\Lambda_h+\Lambda_\ell}_1
\ar[uu]|{(1,h)}\ar[dd]|{(h,\ell)}\ar[r]|{(\ell, 1)}\ar[ru]|{(\ell,h)}\ar[rd]|{(h,1)}
&\boxed{\Lambda_2+\Lambda_h+\Lambda_{\ell-1}}_2
\ar[r]|{(\ell-1, 2)}\ar[ru]|{(\ell-1, h)}\ar[rd]|{(h,2)}\ar[u]|{(2,h)}\ar[d]|{(h,\ell-1)}
&\Lambda_3+\Lambda_h+\Lambda_{\ell-2}
\ar[u]|{(3,h)}\ar[d]|{(h,\ell-2)}
\\
&&\Lambda_2+\Lambda_{h-1}+\Lambda_\ell \ar[r]|{(\ell,2)}\ar[d]|{(h,\ell-1)}
&\Lambda_3+\Lambda_{h-1}+\Lambda_{\ell-1}\\
&\boxed{\Lambda_0+\Lambda_1+\Lambda_{h-1}}_0
\ar[r]|{(h-1,1)}\ar[ru]|{(0,1)}\ar[rd]|{(h-1,0)}
&\Lambda_0+\Lambda_2+\Lambda_{h-2}&\\
&&2\Lambda_1+\Lambda_{h-2}\ar[u]|{(1,1)}
}}
$$
where the subscripts $0,1,2$ indicate that the element in the box belongs to   $T(\Lambda)_0$, $T(\Lambda)_1$, $T(\Lambda)_2$, respectively. 
We omit the conditions on $h$ (see Corollary \ref{cor::find-arrow}) that guarantee the existence of some arrows (and hence vertices) in the above quiver. For example, the arrow 
$\xymatrix@C=1.2cm{
\Lambda_1+\Lambda_h+\Lambda_\ell\ar[r]|-{(h,1)}&
\Lambda_2+\Lambda_{h-1}+\Lambda_\ell}$
exists only if $h\geq 3$.

It is easy to find $S(T(\Lambda)_s)_\rightarrow$ for $s=0,1,2$ from the above quiver. In particular,
$$
S(T(\Lambda)_1)_\rightarrow\setminus T(\Lambda)=\{\Lambda_2+\Lambda_{h-1}+\Lambda_\ell, \Lambda_1+\Lambda_{h+1}+\Lambda_{\ell-1}\}.
$$
We omit the lists of elements for $S(T(\Lambda)_0)_\rightarrow$ and $S(T(\Lambda)_2)_\rightarrow$.
To prove the claim, it suffices to show the wildness of $R^\Lambda(\beta_{\Lambda''})$ by Lemma \ref{lem::reduction-arrow}
for 
$$
\Lambda''\in\{\Lambda_{h+2}+2\Lambda_\ell, 
\Lambda_1+\Lambda_{h+1}+\Lambda_{\ell-1}, 
\Lambda_2+\Lambda_{h-1}+\Lambda_\ell, 
2\Lambda_1+\Lambda_{h-2}, 
\Lambda_3+\Lambda_h+\Lambda_{\ell-2}\}.
$$ 
\begin{itemize}
    \item Set $R=R^\Lambda(\beta_{\Lambda_{h+2}+2\Lambda_\ell})$. By Lemma \ref{lem::recurrence}, $\beta_{\Lambda_{h+2}+2\Lambda_\ell}=2(\alpha_0+\alpha_1+\ldots+\alpha_h)+\alpha_{h+1}$. We choose $e_0=e(h~0~1~2~\ldots~h~h+1~h-1~\ldots~1~0)$. Then, $\dim_q e_0Re_0=1+3q^2+3q^4+q^6$ so that $R$ is wild. Similarly, $R^\Lambda(\beta_{2\Lambda_1+\Lambda_{h-2}})$ is wild by $\beta_{2\Lambda_1+\Lambda_{h-2}}=\alpha_{h-1}+2(\alpha_h+\alpha_{h+1}+\ldots+\alpha_\ell+\alpha_0)$ and the same reasoning. 

    \item Set $R=R^{\Lambda}(\beta_{\Lambda_1+\Lambda_{h+1}+\Lambda_{\ell-1}})$. By Lemma \ref{lem::recurrence}, $\beta_{\Lambda_1+\Lambda_{h+1}+\Lambda_{\ell-1}}=2\alpha_0+\alpha_1+\ldots+\alpha_h+\alpha_\ell$. We choose $e_0=e(0~1~2~\ldots~h-1~0~h~ \ell)$, then $\dim_q e_0Re_0=1+3q^2+3q^4+q^6$, so that $R$ is wild. Similarly, $R^{\Lambda}(\beta_{\Lambda_2+\Lambda_{h-1}+\Lambda_\ell})$ has a wild idempotent truncation. 

    \item We find $\beta_{\Lambda,\Lambda_3+\Lambda_h+\Lambda_{\ell-2}}=\beta_{2\Lambda_0,\Lambda_3+\Lambda_{\ell-2}}=3\alpha_0+2\alpha_1+\alpha_2+\alpha_{\ell-1}+2\alpha_\ell$ by Lemma \ref{lem::embedding-Lambda} and Lemma \ref{lem::recurrence}. Then, one can apply \cite[Theorem B]{Ar-rep-type} to find that $R^{2\Lambda_0}(\beta_{\Lambda_3+\Lambda_{\ell-2}})$ is wild, and therefore, $R^\Lambda(\beta_{\Lambda_3+\Lambda_h+\Lambda_{\ell-2}})$ is wild by Lemma \ref{lem::reduction-level}.
\end{itemize}
We have proved the claim.
\end{proof}

\begin{Lemma}\label{lem::step-3-level3-ijp}
Let $\Lambda=\Lambda_i+\Lambda_j+\Lambda_h$ with $h<i<j\in I$. Then, Claim \ref{claim::step-3} is true for all $\Lambda'\in \cup_{0\leq s\leq 3}T(\Lambda)_s$.
\end{Lemma}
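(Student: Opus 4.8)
The plan is to reduce everything to the level-two classification of Ariki \cite{Ar-rep-type} together with Lemma \ref{lem::step-2-T_0-exception}, so that only finitely many genuinely new vertices remain, each disposed of by a direct graded-dimension computation. First I would use \eqref{equ::iso-sigma-graph} and Proposition \ref{prop::iso-sigma} to assume $h=0$, so that $\Lambda=\Lambda_0+\Lambda_i+\Lambda_j$ with $0<i<j\le\ell$. Since every multiplicity $m_0=m_i=m_j=1$, we have $I(\Lambda)_1=I(\Lambda)_2=I(\Lambda)_3=\emptyset$, hence $T(\Lambda)_s=\emptyset$ for $s\ne0$; thus the only case of Claim \ref{claim::step-3} to treat is $s=0$, where we must prove that $R^\Lambda(\beta_{\Lambda''})$ is wild for every $\Lambda''\in S(\Lambda')_\rightarrow$ with $\Lambda'\in T(\Lambda)_0$. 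I would then split $T(\Lambda)_0=T'(\Lambda)_0\sqcup(T(\Lambda)_0\setminus T'(\Lambda)_0)$. For $\Lambda'\in T(\Lambda)_0\setminus T'(\Lambda)_0$, Lemma \ref{lem::step-2-T_0-exception} already gives that $R^\Lambda(\beta_{\Lambda'})$ is wild, so every successor is wild by Lemma \ref{lem::reduction-arrow}; this disposes of all $\Lambda'$ except the (at most three) consecutive-merge vertices $\Lambda_{0,i},\Lambda_{i,j},\Lambda_{j,0}\in T'(\Lambda)_0$.

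For each of these three vertices $\Lambda'$ I would record its $X_{\Lambda'}$ (namely $\Delta_{0,i}$, $\Delta_{i,j}$, $\Delta_{j,0}$) and use Corollary \ref{cor::find-arrow} to list all arrows leaving $\Lambda'$, exactly as in the displayed quiver of Lemma \ref{lem::step-3-level3-2ij}. Each successor $\Lambda''$ then falls into one of three types. If $\Lambda''$ retains one of the three fundamental weights of $\Lambda$ and the remaining pair is the image of a length-one move, then $\Lambda''\in T(\Lambda)_0\setminus T'(\Lambda)_0$ and is wild by Lemma \ref{lem::step-2-T_0-exception}; if instead that pair is the image of a length-two move, then freezing the common summand and invoking Corollary \ref{cor::embedding-path}, Lemma \ref{lem::embedding-Lambda} and Lemma \ref{lem::reduction-level} reduces $R^\Lambda(\beta_{\Lambda''})$ to a level-two block $R^{\Lambda_a+\Lambda_b}(\beta)$ at distance two from its highest weight in $\vec C(\Lambda_a+\Lambda_b)$, which is wild by \cite{Ar-rep-type}. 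If $\Lambda''$ shares no fundamental weight with $\Lambda$ (all three indices having moved), I would exhibit it as a successor of one of the already-wild vertices of $T(\Lambda)_0\setminus T'(\Lambda)_0$, for instance as $(\Lambda_{i,0})_{i-1,j}$, so that Lemma \ref{lem::reduction-arrow} applies once more.

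The cases surviving both reductions form a short, explicit list of vertices $\Lambda''$ reached by the ``inward'' moves near the boundary indices $0,1,\ell$, together with the degenerate configurations $i=1$, $j=\ell$, $j=i+1$, and small $\ell$. For these I would compute, via Theorem \ref{theo::graded-dim}, the graded dimension of an idempotent truncation $e(\nu)R^\Lambda(\beta_{\Lambda''})e(\nu)$ for a carefully chosen residue sequence $\nu$, and read off from \cite[Lemma 1.3]{Ar-rep-type} that its quiver contains a wild configuration (at least three loops at a vertex, or a two-point subquiver excluded by \cite[I.10.8]{Er-tame-block}), or else exhibit a wild quotient of type $\k[X,Y]/(X^2,Y^3,XY^2)$ in the spirit of \cite[(1.1)]{Ringel-local-alg}; this is the same mechanism used in the bulleted computations of Lemma \ref{lem::step-3-level3-2ij}. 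The main obstacle is purely organizational: enumerating the successors of $\Lambda_{0,i},\Lambda_{i,j},\Lambda_{j,0}$ uniformly across all $0<i<j\le\ell$, tracking which arrows and vertices actually exist under the boundary constraints of Corollary \ref{cor::find-arrow}, and verifying that after the reductions above only a bounded number of truncation computations remain, independent of $i,j,\ell$.
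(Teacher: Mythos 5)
Your proposal is correct and follows essentially the same route as the paper: reduce to $h=0$ via $\sigma$, observe that only $s=0$ occurs, dispose of $T(\Lambda)_0\setminus T'(\Lambda)_0$ and its successors via Lemma \ref{lem::step-2-T_0-exception} and Lemma \ref{lem::reduction-arrow}, and for the three vertices of $T'(\Lambda)_0$ classify successors exactly as you do — into non-consecutive merges (already wild), successors of those, and the single ``double inward move'' vertex handled by freezing the common summand and invoking Lemma \ref{lem::embedding-Lambda} together with the level-two classification of \cite{Ar-rep-type}. The only difference is that your anticipated fallback of direct graded-dimension computations for the boundary/degenerate configurations turns out to be unnecessary: in those configurations the problematic successors simply fail to exist by the existence conditions of Corollary \ref{cor::find-arrow}, which is how the paper prunes them.
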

\begin{proof}
By \eqref{equ::iso-sigma-graph}, we assume $\Lambda=\Lambda_0+\Lambda_i+\Lambda_j$ with $0<i<j\in I$.
Then, $T(\Lambda)_s=\emptyset$ for $1\leq s\leq 5$ and $T(\Lambda)_0=\{\Lambda_{0,i}, \Lambda_{0,j}, \Lambda_{i,0}, \Lambda_{i,j}, \Lambda_{j,0}, \Lambda_{j,i}\}$, where $\Lambda_{0,j}$ (resp. $\Lambda_{j,i}$, $\Lambda_{i,0}$) appears only if $j<\ell$ (resp. $i+1<j$, $i>1$).
Recall $T'(\Lambda)_0=\{\Lambda_{0,i}, \Lambda_{i,j}, \Lambda_{j,0}\}$.
It is proved in Lemma \ref{lem::step-2-T_0-exception} that $R^\Lambda(\beta_{\Lambda'})$ is wild for $\Lambda'\in T(\Lambda)_0\setminus T'(\Lambda)_0$, and so is $R^\Lambda(\beta_{\Lambda''})$ for $\Lambda''\in S(\Lambda')_\rightarrow$ by Lemma \ref{lem::reduction-arrow}.
Hence, it remains to prove the claim when $\Lambda'\in T'(\Lambda)_0$.
In the following, we only show the case $\Lambda'=\Lambda_{i,j}$ and leave the other two cases to the reader.

All elements of $S(\Lambda')_\rightarrow$ are displayed as
$$
\scalebox{0.8}{\xymatrix@C=1.7cm@R=1.2cm{
\boxed{\Lambda_1+\Lambda_{i-1}+\Lambda_j}_0
\ar[d]|{(i-1, j)}
&\boxed{\Lambda_0+\Lambda_{i-1}+\Lambda_{j+1}}_0
\ar[l]|-{(j+1,0)}
\ar[ld]|{(i-1,0)}
\ar[d]|{(i-1,j+1)}
\ar[rd]|{(0,j+1)}
\ar[r]|-{(0,i-1)}
&
\boxed{\Lambda_{i}+\Lambda_{j+1}+\Lambda_\ell}_0
\ar[d]|{(i, j+1)}
\\
\Lambda_1+\Lambda_{i-2}+\Lambda_{j+1}
&\Lambda_0+\Lambda_{i-2}+\Lambda_{j+2}
&\Lambda_{i-1}+\Lambda_{j+2}+\Lambda_\ell
}}
$$
where $\Lambda_0+\Lambda_{i-2}+\Lambda_{j+2}$ (resp. $\Lambda_{i-1}+\Lambda_{j+2}+\Lambda_\ell$, resp. $\Lambda_1+\Lambda_{i-2}+\Lambda_{j+1}$) appears only if $j-i+2\leq \ell$ (resp. $j\leq \ell-2$, resp. $i\geq 3$). 
We observe that $\Lambda_{i,0}=\Lambda_1+\Lambda_{i-1}+\Lambda_j$ and $\Lambda_{0,j}=\Lambda_{i}+\Lambda_{j+1}+\Lambda_\ell$, namely, they belong to $T(\Lambda)_0\setminus T'(\Lambda)_0$.
Thus, $R^\Lambda(\beta_{\Lambda''})$ is wild for $\Lambda_0+\Lambda_{i-2}+\Lambda_{j+2}\neq \Lambda''\in S(\Lambda')_\rightarrow$ by Lemma \ref{lem::step-2-T_0-exception} and Lemma \ref{lem::reduction-arrow}.
Besides,
$$
\beta_{\Lambda,\Lambda_0+\Lambda_{i-2}+\Lambda_{j+2}}=\beta_{\Lambda_i+\Lambda_j,\Lambda_{i-2}+\Lambda_{j+2}}= \alpha_{i-1}+2(\alpha_i+\alpha_{i+1}+\ldots+\alpha_{j})+\alpha_{j+1}
$$
by Lemma \ref{lem::embedding-Lambda}. Then, $R^\Lambda(\beta_{\Lambda_0+\Lambda_{i-2}+\Lambda_{j+2}})$ is wild  because $R^{\Lambda_i+\Lambda_j}(\beta_{\Lambda_{i-2}+\Lambda_{j+2}})$ is wild \cite[Theorem A]{Ar-rep-type}.
\end{proof}

\subsection{Level 4}
In this subsection, we prove Claim \ref{claim::step-3} for $k=4$.
We divide the proof into five cases.
\begin{Lemma}\label{lem::step-3-level4-4i}
Claim \ref{claim::step-3} holds for all $\Lambda'\in \cup_{0\leq s\leq 5}T(\Lambda)_s$ if $\Lambda=4\Lambda_i$ for some $i\in I$.
\end{Lemma}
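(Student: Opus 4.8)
The plan is to reduce to $\Lambda=4\Lambda_0$ using Proposition \ref{prop::iso-sigma} together with \eqref{equ::iso-sigma-graph}, and then to run through the nonempty pieces $T(\Lambda)_s$ one at a time. Since $m_0=4$ and all other coordinates vanish, $I(\Lambda)_0=I(\Lambda)_1=I(\Lambda)_2=I(\Lambda)_3=\{0\}$ with $|I(\Lambda)_0|=|I(\Lambda)_1|=1$, so $T(\Lambda)_0=T(\Lambda)_5=\emptyset$, while $T(\Lambda)_1,T(\Lambda)_3,T(\Lambda)_4$ are nonempty and $T(\Lambda)_2\neq\emptyset$ precisely when $\ell\ge 3$. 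Running the recursion of Lemma \ref{lem::recurrence} from $X_\Lambda=(0^e)$, I would first record the explicit vertices, namely $T(\Lambda)_1=\{2\Lambda_0+\Lambda_1+\Lambda_\ell\}$, $T(\Lambda)_2=\{2\Lambda_0+\Lambda_2+\Lambda_{\ell-1}\}$, $T(\Lambda)_3=\{\Lambda_0+\Lambda_2+2\Lambda_\ell,\ \Lambda_0+2\Lambda_1+\Lambda_{\ell-1}\}$ and $T(\Lambda)_4=\{2\Lambda_1+2\Lambda_\ell\}$, together with their weights read off from Remark \ref{rem::T_Lambda-beta}.

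For $s=2,3$ the argument is immediate, because the source $R^\Lambda(\beta_{\Lambda'})$ is already wild: the weight $2\alpha_0+\alpha_1+\alpha_\ell$ ($s=2$) is wild by Lemma \ref{lem::step-2-T_2-3i} and Lemma \ref{lem::reduction-level} (writing $4\Lambda_0=3\Lambda_0+\Lambda_0$), and each of $2\alpha_0+\alpha_1$, $2\alpha_0+\alpha_\ell$ ($s=3$) is wild by Lemma \ref{lem::step-2-T_3-4i}; hence every element of $S(\Lambda')_\rightarrow$ is wild by Lemma \ref{lem::reduction-arrow}, which is exactly the $s\neq 1$ form of Claim \ref{claim::step-3}. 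For $s=1$ the source $R^\Lambda(\alpha_0)\simeq\k[x]/(x^4)$ is representation-finite, so propagation is unavailable; instead I would compute $S(2\Lambda_0+\Lambda_1+\Lambda_\ell)_\rightarrow$ directly from Corollary \ref{cor::find-arrow} and verify that all its members lie in $T(\Lambda)_2\cup T(\Lambda)_3\cup T(\Lambda)_4\subset T(\Lambda)$, so that $S(\Lambda_{0,0})_\rightarrow\setminus T(\Lambda)=\emptyset$ and the $s=1$ part of the claim holds vacuously.

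The one genuinely delicate case is $s=4$, which I expect to be the main obstacle: here the source $R^\Lambda(2\alpha_0)$ is \emph{tame}, not wild, when $\ch\k\neq 2$ by Lemma \ref{lem::step-2-T_4-4i}, so Lemma \ref{lem::reduction-arrow} yields nothing and every successor must be shown wild by hand. Enumerating $S(2\Lambda_1+2\Lambda_\ell)_\rightarrow$ via Corollary \ref{cor::find-arrow}, the admissible moves on the support $\{1,\ell\}$ produce $\Lambda_0+\Lambda_2+2\Lambda_\ell$ and $\Lambda_0+2\Lambda_1+\Lambda_{\ell-1}$, both lying in $T(\Lambda)_3$ and hence wild, and, only when $\ell\ge 3$, the additional vertex $\Lambda_1+\Lambda_2+\Lambda_{\ell-1}+\Lambda_\ell$. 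The key observation is that this last vertex is also a successor of the wild $T(\Lambda)_3$ vertex $\Lambda_0+\Lambda_2+2\Lambda_\ell$ (via the move $(\ell,0)$, both paths giving $\beta=3\alpha_0+\alpha_1+\alpha_\ell$), so its algebra is wild by Lemma \ref{lem::reduction-arrow}. I would finish by treating the degenerate small-$\ell$ cases: for $\ell=2$ the move $(\ell,1)$ becomes a loop and the extra vertex disappears, and for $\ell=1$ one checks $S(4\Lambda_1)_\rightarrow=\emptyset$ (the only available move $(1,1)$ gives a reverse arrow), so both are subsumed. Throughout, the only computations needed are the short graded-dimension and idempotent-truncation estimates already packaged into the Step 2 lemmas, so no new hard calculation arises.
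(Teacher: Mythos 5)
Your proposal is correct and follows essentially the same route as the paper: reduce to $4\Lambda_0$, observe $S(T(\Lambda)_1)_\rightarrow\subset T(\Lambda)_2\cup T(\Lambda)_3\cup T(\Lambda)_4$ so $s=1$ is vacuous, dispose of $s=2,3$ by wildness of the sources (Proposition \ref{prop::step-2-result-T_2} resp. Lemma \ref{lem::step-2-T_3-4i}) plus Lemma \ref{lem::reduction-arrow}, and handle the one extra successor $\Lambda_1+\Lambda_2+\Lambda_{\ell-1}+\Lambda_\ell$ of the $T(\Lambda)_4$ vertex by noting it is also a successor of the wild $T(\Lambda)_3$ vertex $\Lambda_0+\Lambda_2+2\Lambda_\ell$ via $(\ell,0)$. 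The only (harmless) imprecision is the claim that $T(\Lambda)_3$ is always nonempty — it requires $\ell\geq 2$ — but your separate treatment of $\ell=1$ covers this.
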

\begin{proof}
It suffices to consider $i=0$.
Then, $T(\Lambda)_0=T(\Lambda)_5=\emptyset$, $T(\Lambda)_1=\{2\Lambda_0+\Lambda_1+\Lambda_{\ell}\}$, $T(\Lambda)_2=\{2\Lambda_0+\Lambda_2+\Lambda_{\ell-1}\}$ if $\ell\geq 3$, $T(\Lambda)_3=\{ \Lambda_0+\Lambda_2+2\Lambda_{\ell}, \Lambda_0+2\Lambda_1+\Lambda_{\ell-1}\}$ if $\ell\geq 2$ and $T(\Lambda)_4=\{ 2\Lambda_1+2\Lambda_{\ell}\}$.
All elements of $S(2\Lambda_0+\Lambda_1+\Lambda_{\ell})_\rightarrow$ are displayed as
$$
\scalebox{0.8}{\xymatrix@C=0.3cm@R=1.2cm{
&&\boxed{2\Lambda_0+\Lambda_1+\Lambda_{\ell}}_1
\ar[lld]|{(0,0)}
\ar[ld]|{(0,1)}
\ar[rd]|{(\ell, 0)}
\ar[rrd]|{(\ell, 1)}
&&\\
\boxed{2\Lambda_1+2\Lambda_{\ell}}_4
&\boxed{\Lambda_0+\Lambda_2+2\Lambda_{\ell}}_3
&
&\boxed{\Lambda_0+2\Lambda_1+\Lambda_{\ell-1}}_3
&\boxed{2\Lambda_0+\Lambda_2+\Lambda_{\ell-1}}_2
}}
$$
where one finds $S(T(\Lambda)_1)_\rightarrow=\cup_{2\leq j\leq 4} T(\Lambda)_j$, i.e., $S(T(\Lambda)_1)_\rightarrow\setminus T(\Lambda)=\emptyset$. Hence there is nothing to prove for $s=1$.
We also observe that $R^\Lambda(\beta_{\Lambda'})$ is wild for any $\Lambda'\in T(\Lambda)_2\cup T(\Lambda)_3$ by Proposition \ref{prop::step-2-result-T_2} and Lemma \ref{lem::step-2-T_3-4i}, which implies  the claim  for $s=2,3$ by Lemma \ref{lem::reduction-arrow}.
It then remains to consider the wildness of $R^\Lambda (\beta_{\Lambda''})$ for $\Lambda''\in S(2\Lambda_1+2\Lambda_\ell)_\rightarrow$.

The following computation of arrows gives us all elements of $S(T(\Lambda)_4)_\rightarrow$.
$$
\scalebox{0.8}{
\xymatrix@C=1.5cm@R=1cm{
&\boxed{2\Lambda_1+2\Lambda_{\ell}}_4
\ar[dl]|{(1,1)} \ar[dr]|{(\ell,\ell)}\ar[d]|{(\ell,1)}&\\
\boxed{\Lambda_0+\Lambda_2+2\Lambda_\ell}_3 \ar[r]|-{(\ell,0)}
&\Lambda_1+\Lambda_2+\Lambda_{\ell-1}+\Lambda_\ell
&\boxed{\Lambda_0+2\Lambda_1+\Lambda_{\ell-1}}_3 \ar[l]|-{(0,1)}
}}
$$
Here, $\Lambda_1+\Lambda_2+\Lambda_{\ell-1}+\Lambda_\ell$ appears only if $\ell\geq 3$. Recall that we already know that $R^\Lambda(\beta_{\Lambda'})$ is wild if $\Lambda'\in T(\Lambda)_3$. Hence, $R^\Lambda (\beta_{\Lambda_1+\Lambda_2+\Lambda_{\ell-1}+\Lambda_\ell})$ is  wild by  Lemma \ref{lem::reduction-arrow} again.
\end{proof}

\begin{Lemma}\label{lem::step-3-level4-3ij}
Claim \ref{claim::step-3} holds for all $\Lambda'\in \cup_{0\leq s\leq 5}T(\Lambda)_s$ if
$\Lambda=3\Lambda_i+\Lambda_j$ with $i\neq j\in I$.
\end{Lemma}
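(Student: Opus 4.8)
The plan is to mirror the level-three analysis carried out in Lemma \ref{lem::step-3-level3-2ij}, now accommodating the heavier local structure forced by the coefficient $3$. First I would apply Proposition \ref{prop::iso-sigma} together with \eqref{equ::iso-sigma-graph} to reduce to $\Lambda = 3\Lambda_0 + \Lambda_h$ with $h \equiv_e j-i$ and $h \neq 0$. For this $\Lambda$ one has $I(\Lambda)_0 = \{0,h\}$, $I(\Lambda)_1 = I(\Lambda)_2 = \{0\}$ and $I(\Lambda)_3 = \emptyset$, so that $T(\Lambda)_4 = T(\Lambda)_5 = \emptyset$ and only $s = 0,1,2,3$ survive. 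Using Corollary \ref{cor::find-arrow} and Lemma \ref{lem::recurrence} I would record the four relevant layers explicitly: $T(\Lambda)_0 = \{\Lambda_{0,h}, \Lambda_{h,0}\}$ (with the existence conditions $h<\ell$ and $h>1$ respectively), $T(\Lambda)_1 = \{\Lambda_0+\Lambda_1+\Lambda_h+\Lambda_\ell\}$, $T(\Lambda)_2 = \{\Lambda_0+\Lambda_2+\Lambda_h+\Lambda_{\ell-1}\}$ when $\ell\geq 3$, and $T(\Lambda)_3 = \{\Lambda_2+\Lambda_h+2\Lambda_\ell,\ 2\Lambda_1+\Lambda_h+\Lambda_{\ell-1}\}$ when $\ell\geq 2$, whose associated $\beta_{\Lambda'}$'s are $\sum_{m=0}^h\alpha_m$, $\alpha_0+\sum_{m=h}^\ell\alpha_m$, $2\alpha_0+\alpha_1+\alpha_\ell$, $2\alpha_0+\alpha_1$ and $2\alpha_0+\alpha_\ell$.

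The cases split according to whether the source vertex $\Lambda'$ already carries a wild block. For $s=2$ this is automatic: since $m_0 = 3$, Proposition \ref{prop::step-2-result-T_2} (through Lemma \ref{lem::step-2-T_2-3i}) shows $R^\Lambda(\beta_{\Lambda'})$ is wild, whence every successor is wild by Lemma \ref{lem::reduction-arrow}. The same shortcut disposes of the degenerate $s=3$ vertices, namely those for which Proposition \ref{prop::step-2-result-T_3} (or Lemma \ref{lem::step-2-T_3-3ij}) already renders the source wild, as happens when $h \equiv_e 1$, $h \equiv_e \ell$, or $\ch\k = 3$. It then remains to treat the genuinely tame sources: the two vertices of $T(\Lambda)_0$ (which lie in $\mathscr T(\Lambda)_1$ and are tame by Proposition \ref{prop::step-2-result-T_0}), the vertex of $T(\Lambda)_1$ (where only $S(\Lambda')_\rightarrow \setminus T(\Lambda)$ must be covered), and the remaining $s=3$ vertices in the non-adjacent characteristic-$\neq 3$ situation.

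For each such source I would draw the local fan of outgoing arrows in $\vec C(\Lambda)$, compute the vectors $X_{\Lambda''}$ (hence the $\beta_{\Lambda''}$) along each arrow by Lemma \ref{lem::recurrence}, and then sort the successors into two groups. The first group consists of successors that either coincide with a vertex already known to be wild (so Corollary \ref{cor::reduction-path} applies) or whose defining weight displays a level-two or level-three sub-pattern; for the latter I would peel off an idle summand and combine Lemma \ref{lem::embedding-Lambda} with Lemma \ref{lem::reduction-level} to reduce to the level-two wildness statements of \cite[Theorem B]{Ar-rep-type} or to the level-three lemmas of the previous subsection. The second, residual group is where the real work lies.

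The main obstacle is precisely this residual group: a handful of boundary successors $\Lambda''$ for which no reduction applies and wildness must be established directly. For these I would follow the standard recipe used throughout Section \ref{sec::step-2}: choose a convenient residue sequence $\nu$, compute the graded dimension of the idempotent truncation $e(\nu)R^\Lambda(\beta_{\Lambda''})e(\nu)$ via Theorem \ref{theo::graded-dim}, and read off either that $\dim \mathrm{rad}/\mathrm{rad}^2 \geq 3$ (three loops, hence wild) or that a wild local quotient such as $\k[X,Y]/(X^2,Y^3,XY^2)$ occurs, concluding by \cite[I.10.8 (i)]{Er-tame-block} or Ringel's list \cite{Ringel-local-alg}. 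The delicate part is the bookkeeping: respecting the $h$-dependent existence conditions on arrows so that no successor is overlooked, and verifying that every successor outside $T(\Lambda)$ truly falls into one of the two groups above uniformly in $h \not\equiv_e 0$ and in $\ell \geq 2$.
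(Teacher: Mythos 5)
Your proposal follows essentially the same route as the paper's proof: reduce to $\Lambda=3\Lambda_0+\Lambda_h$, dispose of the already-wild sources ($T(\Lambda)_2$, and the degenerate $T(\Lambda)_3$ vertices) via Lemma \ref{lem::reduction-arrow}, and treat the successors of the tame sources by embedding lower-level quivers (Corollary \ref{cor::embedding-path}, Lemmas \ref{lem::embedding-Lambda} and \ref{lem::reduction-level}) together with direct graded-dimension computations for the residual vertices. The only caveat is that your write-up defers precisely those residual verifications (e.g.\ $\dim_q e_0 R e_0 = 1+3q^2+4q^4+3q^6+q^8$ for $\beta_{\Lambda_1+\Lambda_{h+1}+2\Lambda_\ell}=2\alpha_0+\alpha_1+\cdots+\alpha_h$, and the reduction of $\Lambda_3+\Lambda_h+\Lambda_{\ell-1}+\Lambda_\ell$ to the level-three Lemma \ref{lem::step-3-level3-3i}), which is where the paper does its actual work, so what you have is a correct outline of the same argument rather than a complete proof.
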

\begin{proof}
We assume $\Lambda=3\Lambda_0+\Lambda_h$ with $h\neq 0$ as usual.
We have $T(\Lambda)_4=T(\Lambda)_5=\emptyset$.
Since $R^\Lambda(\beta_{\Lambda'})$ is wild for any $\Lambda'\in T(\Lambda)_2$ by Proposition \ref{prop::step-2-result-T_2}, it remains to consider $\Lambda'\in T(\Lambda)_s$ for $s=0,1,3$.

Following Corollary \ref{cor::embedding-path}, we embed the quiver $\vec C(2\Lambda_0+\Lambda_h)$ into $\vec C(3\Lambda_0+\Lambda_h)$ by adding $\Lambda_0$ to each vertex.  
The wildness is preserved under the embedding by Lemma \ref{lem::reduction-level}. 
Observe that $T(\Lambda)_0=\{2\Lambda_0+\Lambda_{h+1}+\Lambda_\ell, 2\Lambda_0+\Lambda_1+\Lambda_{h-1}\}$ and $T(\Lambda)_1=\{\Lambda_0+\Lambda_1+\Lambda_h+\Lambda_\ell\}$ belong to this embedded $\vec C(2\Lambda_0+\Lambda_h)$. 
Then, utilizing the drawing of the quiver $\vec C(2\Lambda_0+\Lambda_h)$ in Lemma \ref{lem::step-3-level3-2ij}, we can show that all elements of $S(T(\Lambda)_0)_\rightarrow\cup S(T(\Lambda)_1)_\rightarrow$ appear in the following quiver.
Indeed, Corollary \ref{cor::embedding-path} asserts that no extra arrows appear among the vertices of the embedded quiver, which are in dotted boxes, and $X_{\Lambda'}=\Delta_{0,0}$ for $\Lambda'=\Lambda_0+\Lambda_1+\Lambda_h+\Lambda_\ell$ implies that we have at most 4 extra vertices in $S(\Lambda')_\rightarrow$, $X_{\Lambda'}=\Delta_{0,0}+\Delta_{1,h}$ or $\Delta_{0,0}+\Delta_{h,\ell}$ for $\Lambda'=2\Lambda_0+\Lambda_{h+1}+\Lambda_\ell$ or $2\Lambda_0+\Lambda_1+\Lambda_{h-1}$ implies that an extra vertex exists in $S(\Lambda')_\rightarrow$ only when it is reached by replacing $2\Lambda_0$ with $ \Lambda_1+\Lambda_\ell$.
$$
\scalebox{0.8}{
\xymatrix@C=2cm@R=1cm{
*++[F.]{\Lambda_0+\Lambda_{h+2}+2\Lambda_\ell}
\ar@/^1.2cm/[rr]|{(\ell,\ell)}
&*+[F.]{\boxed{2\Lambda_0+\Lambda_{h+1}+\Lambda_\ell}_0}
\ar[rd]|{(\ell,0)}\ar[r]|{(\ell,h+1)}\ar[l]|{(0,h+1)}\ar[dl]|{(0,0)}
&*++[F.]{2\Lambda_0+\Lambda_{h+2}+\Lambda_{\ell-1}}
\\
\Lambda_1+\Lambda_{h+1}+2\Lambda_\ell
\ar[u]|{(1,h+1)}
&&*++[F.]{\Lambda_0+\Lambda_1+\Lambda_{h+1}+\Lambda_{\ell-1}}
\ar[u]|{(1, h+1)}
\\
\boxed{\Lambda_2+\Lambda_h+2\Lambda_\ell}_3
\ar[u]|{(2,h)}
&&\\
&*+[F.]{\boxed{\Lambda_0+\Lambda_1+\Lambda_h+\Lambda_\ell}_1}
\ar[uuu]|{(1,h)}\ar[ddd]|{(h,\ell)}\ar[r]|{(\ell, 1)}
\ar[ruu]|{(\ell,h)}\ar[rdd]|{(h,1)}
\ar[ul]|{(0,1)}\ar[dl]|{(\ell,0)}\ar[uul]|{(0,h)}\ar[ddl]|{(h,0)}
&*+[F.]{\boxed{\Lambda_0+\Lambda_2+\Lambda_h+\Lambda_{\ell-1}}_2}
\ar[uu]|{(2,h)}\ar[dd]|{(h,\ell-1)}
\\
\boxed{2\Lambda_1+\Lambda_h+\Lambda_{\ell-1}}_3
\ar[d]|{(h,\ell-1)}
&&\\
2\Lambda_1+\Lambda_{h-1}+\Lambda_{\ell}
\ar[d]|{(h-1, \ell)}
&&*++[F.]{\Lambda_0+\Lambda_2+\Lambda_{h-1}+\Lambda_\ell}
\ar[d]|{(h,\ell-1)}
\\
*++[F.]{\Lambda_0+2\Lambda_1+\Lambda_{h-2}} \ar@/_1.2cm/[rr]|{(1,1)}
&*+[F.]{\boxed{2\Lambda_0+\Lambda_1+\Lambda_{h-1}}_0}
\ar[r]|{(h-1,1)}\ar[ru]|{(0,1)}\ar[l]|{(h-1,0)}\ar[ul]|{(0,0)}
&*++[F.]{2\Lambda_0+\Lambda_2+\Lambda_{h-2}}
}}
$$
The vertex $\Lambda_1+\Lambda_{h+1}+2\Lambda_\ell$ (resp. $2\Lambda_1+\Lambda_{h-1}+\Lambda_\ell$) appears only if $h<\ell$ (resp. $h>1$).
Then, we only need to check the wildness of $R^{\Lambda}(\beta_{\Lambda_1+\Lambda_{h+1}+2\Lambda_\ell})$ and $R^{\Lambda}(\beta_{2\Lambda_1+\Lambda_{h-1}+\Lambda_\ell})$.
If $h=1$, then $\Lambda_1+\Lambda_2+2\Lambda_\ell\in T(\Lambda)_3$ and $R^{\Lambda}(\beta_{\Lambda_1+\Lambda_2+2\Lambda_\ell})=R^\Lambda(2\alpha_0+\alpha_1)$ is wild by Lemma \ref{lem::step-2-T_3-3ij}.
If $h>1$, then $\beta_{\Lambda_1+\Lambda_{h+1}+2\Lambda_\ell}=2\alpha_0+\alpha_1+\ldots +\alpha_h$ and
$$
\dim_q e_0 R^{\Lambda}(\beta_{\Lambda_1+\Lambda_{h+1}+2\Lambda_\ell})e_0=1 +3q^2+4q^4+3q^6+q^8
$$
for $e_0=e(0~1~2~\ldots~h-1~h~0)$ implies that $R^{\Lambda}(\beta_{\Lambda_1+\Lambda_{h+1}+2\Lambda_\ell})$ is wild.
Similarly, we can show that $R^{\Lambda}(\beta_{2\Lambda_1+\Lambda_{h-1}+\Lambda_\ell})$ is wild.

Now we turn to $S(T(\Lambda)_3)_\rightarrow$, where $T(\Lambda)_3=\{\Lambda_2+\Lambda_h+2\Lambda_\ell,2\Lambda_1+\Lambda_h+\Lambda_{\ell-1}\}$.
By direct calculation, all elements of $S(T(\Lambda)_3)_\rightarrow$ are given as follows.
$$
\scalebox{0.8}{
\xymatrix@C=2cm@R=1cm{
*++[F.]{\Lambda_0+\Lambda_1+\Lambda_{h+1}+\Lambda_{\ell-1}}
\ar[ddr]|{(\ell-1, 0)}
&\boxed{2\Lambda_1+\Lambda_h+\Lambda_{\ell-1}}_3
\ar[l]|-{(1,h)}\ar[dddl]|{(1,1)}\ar[dr]|{(h,1)}\ar[r]|{(h,\ell-1)}
\ar[ddr]|{(\ell-1,1)}\ar[dd]|{(\ell-1,h)}
&2\Lambda_1+\Lambda_{h-1}+\Lambda_\ell
\ar[d]|{(\ell,1)}
\\
&&\Lambda_1+\Lambda_2+\Lambda_{h-1}+\Lambda_{\ell-1}
\\
&2\Lambda_1+\Lambda_{h+1}+\Lambda_{\ell-2}
&\Lambda_1+\Lambda_2+\Lambda_h+\Lambda_{\ell-2}
\ar[l]|{(2,h)}
\\
*+[F.]{\boxed{\Lambda_0+\Lambda_2+\Lambda_h+\Lambda_{\ell-1}}_2}
\ar[uuu]|{(2,h)}\ar[ddd]|{(h,\ell-1)}&
&\\
&\Lambda_3+\Lambda_{h-1}+2\Lambda_\ell
&\Lambda_3+\Lambda_h+\Lambda_{\ell-1}+\Lambda_\ell
\ar[l]|{(h,\ell-1)}
\\
&&\Lambda_2+\Lambda_{h+1}+\Lambda_{\ell-1}+\Lambda_\ell
\\
*++[F.]{\Lambda_0+\Lambda_2+\Lambda_{h-1}+\Lambda_\ell}
\ar[uur]|{(0,2)}
&\boxed{\Lambda_2+\Lambda_h+2\Lambda_\ell}_3
\ar[l]|-{(h,\ell)}\ar[uu]|{(h,2)}\ar[luuu]|{(\ell,\ell)}
\ar[r]|{(2,h)}\ar[ur]|{(\ell,h)}\ar[uur]|{(\ell-1,h)}
&\Lambda_1+\Lambda_{h+1}+2\Lambda_\ell
\ar[u]|{(\ell,1)}
}}
$$
where the restrictions on $h$ that guarantee the existence of some vertices are omitted.
Note that we have proved the wildness of $R^\Lambda(\beta_{\Lambda'})$ for $\Lambda'=2\Lambda_1+\Lambda_{h-1}+\Lambda_\ell$ and $\Lambda_1+\Lambda_{h+1}+2\Lambda_\ell$ just above. 
If $\Lambda'=\Lambda_0+\Lambda_2+\Lambda_h+\Lambda_{\ell-1}$, then $\beta_{\Lambda'}=2\alpha_0+\alpha_1+\alpha_\ell\in \mathcal T(\Lambda)_2$ and $R^\Lambda(\beta_{\Lambda'})$ is wild by Proposition \ref{prop::step-2-result-T_2}.
Therefore, we only need to show that $R^\Lambda(\beta_{\Lambda_1+\Lambda_2+\Lambda_h+\Lambda_{\ell-2}})$ and $R^\Lambda(\beta_{\Lambda_3+\Lambda_h+\Lambda_{\ell-1}+\Lambda_\ell})$ are wild.
Since $\Lambda_3+\Lambda_{\ell-1}+\Lambda_\ell\in S(T(3\Lambda_0)_3)_\rightarrow$,
$R^{3\Lambda_0}(\beta_{\Lambda_3+\Lambda_{\ell-1}+\Lambda_{\ell}})$ is wild  by  Lemma \ref{lem::step-3-level3-3i}.  We conclude that $R^\Lambda(\beta_{\Lambda_3+\Lambda_h+\Lambda_{\ell-1}+\Lambda_\ell})$ is wild by Lemma \ref{lem::reduction-level} and Lemma \ref{lem::embedding-Lambda}.
Similarly, $R^\Lambda(\beta_{\Lambda_1+\Lambda_2+\Lambda_h+\Lambda_{\ell-2}})$ is wild.
\end{proof}

\begin{Lemma}\label{lem::step-3-level4-2i2j}
Claim \ref{claim::step-3} holds for all $\Lambda'\in \cup_{0\leq s\leq 5}T(\Lambda)_s$ if
$ \Lambda=2\Lambda_i+2\Lambda_j$ with $i\neq j\in I$.
\end{Lemma}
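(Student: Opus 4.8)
The plan is to follow the template of Lemmas \ref{lem::step-3-level4-4i} and \ref{lem::step-3-level4-3ij}. First I would use \eqref{equ::iso-sigma-graph} together with Proposition \ref{prop::iso-sigma} to normalize to $\Lambda=2\Lambda_0+2\Lambda_h$ with $1\le h\le \ell$, where $h\equiv_e j-i$. Since $|I(\Lambda)_0|=|I(\Lambda)_1|=2$ and $|I(\Lambda)_2|=|I(\Lambda)_3|=0$, we have $T(\Lambda)_3=T(\Lambda)_4=\emptyset$, so only $s\in\{0,1,2,5\}$ must be treated. Using Lemma \ref{lem::recurrence} and Corollary \ref{cor::find-arrow} I would record the relevant vertices: $T(\Lambda)_0=\{\Lambda_{0,h},\Lambda_{h,0}\}$, $T(\Lambda)_1=\{\Lambda_\ell+\Lambda_1+2\Lambda_h,\ 2\Lambda_0+\Lambda_{h-1}+\Lambda_{h+1}\}$, while $T(\Lambda)_2$ (when $\ell\ge 3$) and the single vertex $T(\Lambda)_5=\{\Lambda_1+\Lambda_{h-1}+\Lambda_{h+1}+\Lambda_\ell\}$ (when $\ell\ge 2$) are obtained by applying one more move to the elements of $T(\Lambda)_1$.

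The case $s=0$ is immediate: $R^{\Lambda}(\beta_{\Lambda'})$ is wild for every $\Lambda'\in T(\Lambda)_0$ by Lemma \ref{lem::step-2-T_0-2i2j}, so Lemma \ref{lem::reduction-arrow} forces $R^{\Lambda}(\beta_{\Lambda''})$ wild for all $\Lambda''\in S(\Lambda')_\rightarrow$. For $s=2$ and $s=5$ I would split according to whether the source is tame. When $h\in\{1,\ell\}$ or $\ch\k=2$, the source $\beta_{\Lambda'}=2\alpha_0+\alpha_1+\alpha_\ell$ ($s=2$) lies outside $\mathscr T(\Lambda)_2$, and $\beta_{\Lambda'}=\alpha_0+\alpha_h$ ($s=5$) lies in $\mathcal T(\Lambda)_0$; in these subcases $R^{\Lambda}(\beta_{\Lambda'})$ is already wild by Propositions \ref{prop::step-2-result-T_2} and \ref{prop::step-2-result-T_0}, and Lemma \ref{lem::reduction-arrow} again disposes of the successors.

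The delicate situation is $h\notin\{1,\ell\}$ with $\ch\k\neq 2$, where by Propositions \ref{prop::step-2-result-T_2} and \ref{prop::step-2-result-T_5} the sources in $T(\Lambda)_2$ and $T(\Lambda)_5$ are themselves \emph{tame}, so wildness of the successors cannot be imported from the source via Lemma \ref{lem::reduction-arrow}. For these I would draw the full successor quivers $S(\Lambda')_\rightarrow$ exactly as in the level-$3$ and level-$4$ lemmas, and verify wildness of each $R^{\Lambda}(\beta_{\Lambda''})$ by one of three routes: (i) recognizing $\Lambda''$ as lying on a directed path out of one of the already-wild $T(\Lambda)_0$-vertices and applying Corollary \ref{cor::reduction-path}; (ii) when a summand $\Lambda_r$ of $\Lambda$ is preserved along the whole path, peeling it off by Lemma \ref{lem::embedding-Lambda} and reducing to a known wild block at level $2$ or $3$ (Lemmas \ref{lem::step-3-level3-3i}, \ref{lem::step-3-level3-2ij}, \ref{lem::step-3-level3-ijp} or \cite{Ar-rep-type}) through Lemma \ref{lem::reduction-level}; or (iii) computing $\dim_q e(\nu)R^{\Lambda}(\beta_{\Lambda''})e(\nu)$ from Theorem \ref{theo::graded-dim} for a well-chosen $\nu$ and reading off a wild local or two-point quotient via \cite[Lemma 1.3]{Ar-rep-type}, \cite[I.10.8 (i)]{Er-tame-block}, and \cite[(1.1)]{Ringel-local-alg}. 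For the remaining case $s=1$ I would list $S(T(\Lambda)_1)_\rightarrow\setminus T(\Lambda)$ from the same quiver and clear each such vertex by the same three routes.

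The main obstacle I anticipate is precisely the tame $T(\Lambda)_5$ source $\Lambda'=\Lambda_1+\Lambda_{h-1}+\Lambda_{h+1}+\Lambda_\ell$: the path $\Lambda\to\Lambda_{0,0}\to\Lambda'$ moves \emph{all four} summands of $\Lambda=2\Lambda_0+2\Lambda_h$, so no summand survives, and neither Lemma \ref{lem::reduction-step-3} nor the level reduction of route (ii) applies directly to its successors (this is exactly why Lemma \ref{lem::reduction-step-3} required $k\ge 7$ for $s=5$). Hence the successors of this vertex must be handled either by exhibiting an alternative wild predecessor reachable in $\vec C(\Lambda)$ (route (i)) or by an explicit idempotent-truncation computation (route (iii)). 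Keeping track of which arrows actually occur, depending on the residual congruences among $1,\,h-1,\,h+1,\,\ell \bmod e$, is the bookkeeping that will demand the most care.
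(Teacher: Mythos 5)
Your proposal is correct and follows essentially the same route as the paper: normalize to $2\Lambda_0+2\Lambda_h$, dispose of $T(\Lambda)_0$ via the wild sources from Lemma \ref{lem::step-2-T_0-2i2j} and Lemma \ref{lem::reduction-arrow}, handle $T(\Lambda)_1$ and $T(\Lambda)_2$ by embedding $\vec C(2\Lambda_0+\Lambda_h)$ (your route (ii)) plus the single extra $(h,h)$-arrow, which the paper clears exactly by your routes (iii) (the $\dim_q e(01\ell 0h)R^\Lambda(\beta)e(01\ell 0h)=1+3q^2+3q^4+q^6$ computation for generic $h$) and (i) (a wild $T(\Lambda)_0$-predecessor for $h=1,2,\ell-1,\ell$). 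You also correctly isolate the genuine difficulty — the $T(\Lambda)_5$ vertex, where all four summands move so no level reduction is available — and the paper resolves it precisely by your route (i), exhibiting for each successor a predecessor in $T(\Lambda)_0$ or reusing the already-settled successors of the $T(\Lambda)_2$ vertices.
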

\begin{proof}
We may assume $\Lambda=2\Lambda_0+2\Lambda_h$ as usual.
In this case, $T(\Lambda)_s=\emptyset$ for $s=3,4$, and $R^\Lambda(\beta_{\Lambda'})$ is wild for any $\Lambda'\in T(\Lambda)_0$ by Lemma \ref{lem::step-2-T_0-2i2j}.
Hence, it suffices to consider $T(\Lambda)_s$ for $s=1,2,5$.

(1) $T(\Lambda)_1=\{\Lambda_1+2\Lambda_h+\Lambda_\ell,2\Lambda_0+\Lambda_{h-1}+\Lambda_{h+1}\}$. 
We only show the first case since the second case is reduced to the first case by  Proposition \ref{prop::iso-sigma}.
We embed $\vec C(2\Lambda_0+\Lambda_h)$ into $\vec C(2\Lambda_0+2\Lambda_h)$ by adding $\Lambda_h$ to each vertex, in which the wildness is preserved by Lemma \ref{lem::reduction-level}. 
Then we have the unique extra arrow
$$
\xymatrix@C=1.2cm@R=1cm{
\boxed{\Lambda_1+2\Lambda_h+\Lambda_\ell}_1
\ar[r]^-{(h,h)}
&\boxed{\Lambda_1+\Lambda_{h-1}+\Lambda_{h+1}+\Lambda_\ell}_5
},
$$
if $\ell\geq2$, but the right hand side belongs to $T(\Lambda)$.
Hence, no extra vertex appears in $S(T(\Lambda)_1)_\rightarrow\setminus T(\Lambda)$, and there is nothing more to prove.

(2) $T(\Lambda)_2=\{\Lambda_2+2\Lambda_h+\Lambda_{\ell-1}, 2\Lambda_0+\Lambda_{h-2}+\Lambda_{h+2} \}$ if $\ell\geq 3$. 
We show that $R^\Lambda(\beta_{\Lambda''})$ for $\Lambda''\in S(\Lambda_2+2\Lambda_h+\Lambda_{\ell-1})_\rightarrow$ is wild as follows.
The proof for $\Lambda''\in S(2\Lambda_0+\Lambda_{h-2}+\Lambda_{h+2})$ is similar.
We embed $\vec C(2\Lambda_0+\Lambda_h)$ to $\vec C(2\Lambda_0+2\Lambda_h)$ as in (1). 
Then, we only need to consider the unique extra arrow
$$
\xymatrix@C=1.2cm@R=1cm{
\boxed{\Lambda_2+2\Lambda_h+\Lambda_{\ell-1}}_2
\ar[r]^-{(h,h)}
&\Lambda_2+\Lambda_{h-1}+\Lambda_{h+1}+\Lambda_{\ell-1}
}.
$$
Let $\Lambda''=\Lambda_2+\Lambda_{h-1}+\Lambda_{h+1}+\Lambda_{\ell-1}$.
Then $\beta:=\beta_{\Lambda''}=2\alpha_0+\alpha_1+\alpha_h+\alpha_\ell$.
If $3\le h \le \ell-2$, then $\dim_q e_0R^\Lambda(\beta)e_0=1+3q^2+3q^4+q^6$ for $e_0=e(01\ell0h) $, which implies that $R^{\Lambda}(\beta)$ is wild. 
Suppose $h=1,2,\ell-1$ or $\ell$. 
In each case, there is an arrow given below. 
$$
\begin{aligned}
&\xymatrix@C=1.2cm@R=1cm{
\boxed{\Lambda_{0,h}=\Lambda_0+\Lambda_1+\Lambda_2+\Lambda_\ell}_0
\ar[r]^-{(\ell,1)}
&\Lambda''=\Lambda_0+2\Lambda_2+\Lambda_{\ell-1}
} \quad \text{if } h=1,
\\
&\xymatrix@C=1.2cm@R=1cm{
\boxed{\Lambda_{0,h}=\Lambda_0+\Lambda_2+\Lambda_3+\Lambda_\ell}_0
\ar[r]^-{(\ell,0)}
&\Lambda''=\Lambda_1+\Lambda_2+\Lambda_3+\Lambda_{\ell-1}
} \quad \text{if } h=2,
\\
&\xymatrix@C=1.2cm@R=1cm{
\boxed{\Lambda_{h,0}=\Lambda_0+\Lambda_1+\Lambda_{\ell-2}+\Lambda_{\ell-1}}_0
\ar[r]^-{(0,1)}
&\Lambda''=\Lambda_2+\Lambda_{\ell-2}+\Lambda_{\ell-1}+\Lambda_\ell
} \quad \text{if } h=\ell-1,\\
&\xymatrix@C=1.2cm@R=1cm{
\boxed{\Lambda_{h,0}=\Lambda_0+\Lambda_1+\Lambda_{\ell-1}+\Lambda_\ell}_0
\ar[r]^-{(\ell,1)}
&\Lambda''=\Lambda_0+\Lambda_2+2\Lambda_{\ell-1}
} \quad \text{if } h=\ell.
\end{aligned}
$$
Since the left hand side is wild by Lemma \ref{lem::step-2-T_0-2i2j}, $R^\Lambda (\beta)$ for $h=1,2,\ell-1$ and $\ell$ is wild by Lemma \ref{lem::reduction-arrow}.

(3) $T(\Lambda)_5=\{\Lambda_1+\Lambda_{h-1}+\Lambda_{h+1}+\Lambda_\ell\}$ if $\ell\geq 2$.
If $h=1$ or $\ell$, then $\Lambda'=\Lambda_1+\Lambda_{h-1}+\Lambda_{h+1}+\Lambda_\ell\in T(\Lambda)_0$ and $R^{\Lambda}(\beta_{\Lambda'})$ is wild by Lemma \ref{lem::step-2-T_0-2i2j}.
Suppose $h\neq 1,\ell$.  
We compute the vertices of $S(\Lambda')_\rightarrow$ as follows.
$$
\scalebox{0.8}{
\xymatrix@C=1.7cm@R=1cm{
&\boxed{\Lambda_2+2\Lambda_h+\Lambda_{\ell-1}}_2
\ar[d]|{(h,h)}
&\Lambda_1+\Lambda_{h-1}+\Lambda_{h+2}+\Lambda_{\ell-1}
\\
\Lambda_2+\Lambda_{h-1}+\Lambda_h+\Lambda_\ell
\ar@/_3cm/[ddd]|{(h-1,h)}
&\Lambda_2+\Lambda_{h-1}+\Lambda_{h+1}+\Lambda_{\ell-1}
&\Lambda_0+\Lambda_{h-1}+\Lambda_{h+2}+\Lambda_\ell
\ar[u]|{(\ell,0)}
\\
\boxed{\Lambda_0+\Lambda_1+\Lambda_{h-1}+\Lambda_h}_0
\ar[u]|{(0,1)}\ar[d]|{(h-1,h)}
&\boxed{\Lambda_1+\Lambda_{h-1}+\Lambda_{h+1}+\Lambda_\ell}_5
\ar[u]|{(\ell,1)}\ar[d]|{(h-1,h+1)}\ar[r]|{(1,h-1)}\ar[l]|{(h+1,\ell)}
\ar[ur]|{(1,h+1)}\ar[uur]|{(\ell,h+1)}\ar[dr]|{(\ell,h-1)}
\ar[ul]|{(h+1,1)}\ar[dl]|{(h-1,\ell)}\ar[ddl]|{(h-1,1)}
&\boxed{\Lambda_0+\Lambda_h+\Lambda_{h+1}+\Lambda_\ell}_0
\ar[u]|{(h,h+1)}\ar[d]|{(\ell,0)}
\\
\Lambda_0+\Lambda_1+\Lambda_{h-2}+\Lambda_{h+1}
\ar[d]|{(0,1)}
&\Lambda_1+\Lambda_{h-2}+\Lambda_{h+2}+\Lambda_{\ell}
&\Lambda_1+\Lambda_h+\Lambda_{h+1}+\Lambda_{\ell-1}
\ar@/_3cm/[uuu]|{(h,h+1)}
\\
\Lambda_2+\Lambda_{h-2}+\Lambda_{h+1}+\Lambda_\ell
&\boxed{2\Lambda_0+\Lambda_{h-2}+\Lambda_{h+2}}_2
\ar[u]|{(0,0)}
&
}}
$$
Using the wildness criteria Proposition \ref{prop::step-2-result-T_0} for $T(\Lambda)_0$ and Proposition \ref{prop::step-2-result-T_2} for $T(\Lambda)_2$, 
we conclude that $R^\Lambda(\beta_{\Lambda''})$ is wild for any $\Lambda''\in S(T(\Lambda)_5)_\rightarrow$.
\end{proof}

\begin{Lemma}\label{lem::step-3-level4-2ijp}
Claim \ref{claim::step-3} holds for all $\Lambda'\in\cup_{0\leq s\leq 5}T(\Lambda)_s$ if $\Lambda=2\Lambda_p+\Lambda_i+\Lambda_j$ with distinct $p,i,j\in I$.
\end{Lemma}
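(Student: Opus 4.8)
The plan is to follow the template of the preceding level-$4$ lemmas (Lemmas \ref{lem::step-3-level4-4i}--\ref{lem::step-3-level4-2i2j}). First I would use \eqref{equ::iso-sigma-graph} together with Proposition \ref{prop::iso-sigma} to normalize $p=0$, so that $\Lambda=2\Lambda_0+\Lambda_i+\Lambda_j$ with $0<i<j\le\ell$. For this $\Lambda$ we have $I(\Lambda)_0=\{0,i,j\}$, $I(\Lambda)_1=\{0\}$ and $I(\Lambda)_2=I(\Lambda)_3=\emptyset$; consequently $T(\Lambda)_s=\emptyset$ for $s=3,4,5$ (recall $T(\Lambda)_5$ requires $|I(\Lambda)_1|\ge2$), and only $T(\Lambda)_0$, $T(\Lambda)_1=\{\Lambda_{0,0}\}$ and $T(\Lambda)_2=\{(\Lambda_{0,0})_{i-1,i+1}\}$ (the last only when $\ell\ge3$) remain to be treated.

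The organizing device is the embedding of $\vec C(\Lambda_0+\Lambda_i+\Lambda_j)$ into $\vec C(\Lambda)$ obtained by adjoining the extra summand $\Lambda_0$ to every vertex, as in Corollary \ref{cor::embedding-path}; by Lemma \ref{lem::reduction-level} wildness is preserved under this embedding, so the bulk of the successor analysis is inherited from the level-$3$ result Lemma \ref{lem::step-3-level3-ijp}. Concretely, for $T(\Lambda)_0$ I would first invoke Lemma \ref{lem::step-2-T_0-exception} (applicable since $|I(\Lambda)_0|=3$) to dispose of $\Lambda'\in T(\Lambda)_0\setminus T'(\Lambda)_0$ and, via Lemma \ref{lem::reduction-arrow}, of all their successors. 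It then remains to examine the successors of the three vertices in $T'(\Lambda)_0=\{\Lambda_{0,i},\Lambda_{i,j},\Lambda_{j,0}\}$: using Corollary \ref{cor::find-arrow} to enumerate $S(\Lambda')_\rightarrow$, most successors again fall into $T(\Lambda)_0\setminus T'(\Lambda)_0$ (hence wild), while each remaining ``deep'' successor $\Lambda''$ has $\beta_{\Lambda''}$ computable by Lemma \ref{lem::recurrence}. These I would reduce to an already-established wild case either through the $\beta$-identity of Lemma \ref{lem::embedding-Lambda} combined with Lemma \ref{lem::reduction-level} (landing in the level-$2$ or level-$3$ wild cases of \cite[Theorem A, Theorem B]{Ar-rep-type} or of Lemma \ref{lem::step-3-level3-ijp}), or, when no such reduction is available, by exhibiting a wild idempotent truncation via a graded-dimension computation through Theorem \ref{theo::graded-dim} and the quiver criterion \cite[Lemma 1.3]{Ar-rep-type}.

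For $T(\Lambda)_1=\{\Lambda_{0,0}\}$ only the successors lying outside $T(\Lambda)$ are needed (the $s=1$ clause of Claim \ref{claim::step-3}); since $S(\Lambda_{0,0})_\rightarrow$ contains $T(\Lambda)_2$ together with a controlled number of extra vertices arising from the surviving $\Lambda_0$, I would again use the embedding to isolate the genuinely new vertices and settle them by graded-dimension truncations or by the reductions above. For $T(\Lambda)_2$ (when $\ell\ge3$) I would first appeal to Proposition \ref{prop::step-2-result-T_2}: if $\beta_{\Lambda'}=2\alpha_0+\alpha_\ell+\alpha_1\notin\mathscr T(\Lambda)_2$, then $R^\Lambda(\beta_{\Lambda'})$ is already wild and Lemma \ref{lem::reduction-arrow} closes the claim; in the complementary tame subcase I must enumerate $S(\Lambda')_\rightarrow$ and prove wildness of each $R^\Lambda(\beta_{\Lambda''})$ directly. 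I expect the main obstacle to be the bookkeeping forced by the relative positions of $0,i,j$ on the cyclic quiver: the existence of individual arrows and vertices in each $S(\Lambda')_\rightarrow$ is governed by boundary conditions such as $j=i+1$, $i=1$, $j=\ell$, or $i,j$ being adjacent to $0$ (through Corollary \ref{cor::find-arrow}), and these are precisely the configurations in which the convenient reduction to a lower level degenerates and a bespoke graded-dimension computation of a wild idempotent truncation is required. The tame $T(\Lambda)_2$ subcase, where the reduction-arrow shortcut is unavailable, is the part demanding the most care.
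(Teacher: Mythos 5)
Your plan is correct and follows essentially the same route as the paper: normalize to $\Lambda=2\Lambda_0+\Lambda_i+\Lambda_j$, observe $T(\Lambda)_s=\emptyset$ for $s=3,4,5$, kill $T(\Lambda)_0\setminus T'(\Lambda)_0$ via Lemma \ref{lem::step-2-T_0-exception}, and for each remaining $\Lambda'$ inherit almost all of $S(\Lambda')_\rightarrow$ from embedded lower-level quivers (Corollary \ref{cor::embedding-path}, Lemmas \ref{lem::step-3-level3-2ij}, \ref{lem::step-3-level3-ijp}), leaving only the two extra arrows whose labels mix $i$ and $j$ to be settled by reductions or a bespoke idempotent truncation (the paper's one genuinely new computation is the two-point bound quiver for $\Lambda'_{i,j}$ with $i>1$, $j<\ell$ in the $T(\Lambda)_1$ step, identified with the minimal wild algebra (32) of \cite{H-wild-two-point}). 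The one place where the paper is slicker than your plan is $T(\Lambda)_2$: rather than splitting into tame/wild subcases of $\beta_{\Lambda'}=2\alpha_0+\alpha_1+\alpha_\ell$ and enumerating successors directly in the tame case, it observes that the two extra successors $\Lambda'_{i,j},\Lambda'_{j,i}$ are targets of arrows labelled $(\ell,1)$ out of the vertices $\Lambda_1+\Lambda_{i\mp1}+\Lambda_{j\pm1}+\Lambda_\ell$ already proven wild in the $T(\Lambda)_1$ step, so Lemma \ref{lem::reduction-arrow} finishes with no new computation.
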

\begin{proof}
We assume $\Lambda=2\Lambda_0+\Lambda_i+\Lambda_j$ with $0<i<j\leq \ell$.
Then, $T(\Lambda)_s=\emptyset$ for $3\leq s\leq 5$,
$$
T'(\Lambda)_0=\{\Lambda_{0,i}, \Lambda_{i,j}, \Lambda_{j,0}\}
\subseteq
T(\Lambda)_0=\{\Lambda_{0,i}, \Lambda_{0,j}, \Lambda_{i,0}, \Lambda_{i,j}, \Lambda_{j,0}, \Lambda_{j,i}\},
$$
where $\Lambda_{0,j}$ (resp. $\Lambda_{j,i}$, $\Lambda_{i,0}$) appears only if $j<\ell$ (resp. $i+1<j$, $i>1$).
It is proved in Lemma \ref{lem::step-2-T_0-exception} that $R^\Lambda(\beta_{\Lambda'})$ is wild for $\Lambda'\in T(\Lambda)_0\setminus T'(\Lambda)_0$.
Thus, it suffices to consider $S(\Lambda')_\rightarrow$ for $\Lambda'\in T'(\Lambda)_0, T(\Lambda)_1, T(\Lambda)_2$.

(1) Set $\Lambda'=\Lambda_{0,i}=\Lambda_0+\Lambda_{i+1}+\Lambda_j+\Lambda_\ell \in T'(\Lambda)_0$.
To prove the wildness of $R^\Lambda(\beta_{\Lambda''})$ for $\Lambda''\in S(\Lambda')_\rightarrow$, we embed $\vec C(2\Lambda_0+\Lambda_i)$
and $\vec C(\Lambda_0+\Lambda_i+\Lambda_j)$
into $\vec C(\Lambda)$, and then use Lemma \ref{lem::step-3-level3-2ij} and Lemma \ref{lem::step-3-level3-ijp} for wildness criteria.
In fact, the embedding of $\vec C(2\Lambda_0+\Lambda_i)$ and $\vec C(\Lambda_0+\Lambda_i+\Lambda_j)$ gives all but  two elements of $S(\Lambda')_\rightarrow$.
The extra two arrows are given by $(0,j)$ and $(j,0)$. We have
$$
\scalebox{0.8}{\xymatrix@C=1.4cm@R=1.2cm{
\boxed{\Lambda_{j,i}=2\Lambda_{0}+\Lambda_{i+1}+\Lambda_{j-1}}_0
\ar[d]|{(0,0)}
&\boxed{\Lambda'}_0
\ar[l]|-{(j,\ell)}
\ar[ld]|{(j,0)}
\ar[rd]|{(0,j)}
\ar[r]|-{(i,j)}
&
\boxed{\Lambda_{0,j}=\Lambda_{0}+\Lambda_{i}+\Lambda_{j+1}+\Lambda_{\ell}}_0
\ar[d]|{(0,i)}
\\
\Lambda_1+\Lambda_{i+1}+\Lambda_{j-1}+\Lambda_\ell
&
&\Lambda_{i+1}+\Lambda_{j+1}+2\Lambda_{\ell}
}}
$$
where $\Lambda'_{0,j}$ (resp. $\Lambda'_{j,0}$) appears only if $j<\ell$ (resp. $i+1<j$). 
Then, Proposition \ref{prop::step-2-result-T_0} implies that both $R^{\Lambda}(\beta_{\Lambda'_{0,j}})$ and $R^{\Lambda}(\beta_{\Lambda'_{j,0}})$ are wild by Lemma \ref{lem::reduction-arrow} and Lemma \ref{lem::step-2-T_0-exception}. We omit the details for the cases $\Lambda'=\Lambda_{i,j}, \Lambda_{j,0}\in T'(\Lambda)_0$.

(2) $T(\Lambda)_1=\{\Lambda':=\Lambda_1+\Lambda_i+\Lambda_j+\Lambda_\ell\}$.
After embedding $\vec C(2\Lambda_0+\Lambda_i)$ and $\vec C(2\Lambda_0+\Lambda_j)$ into $\vec C(2\Lambda_0+\Lambda_i+\Lambda_j)$, we only need to consider two extra arrows: $(i,j)$ and $(j,i)$, that is,
$$
\Lambda'_{i,j}=\Lambda_1+\Lambda_{i-1}+\Lambda_{j+1}+\Lambda_\ell,
\quad
\Lambda'_{j,i}=\Lambda_1+\Lambda_{i+1}+\Lambda_{j-1}+\Lambda_\ell,
$$
where $\Lambda'_{i,j}$ (resp. $\Lambda'_{j,i}$) appears only if $i>1$ or $j<\ell$ (resp. $i+1<j$). Since we have proved that $R^\Lambda(\beta_{\Lambda_{j,i}})$ is wild in (1) and $\Lambda'_{j,i}\in S(\Lambda_{j,i})_\rightarrow$, $R^{\Lambda}(\beta_{\Lambda'_{j,i}})$ is wild.

We show that $R^{\Lambda}(\beta_{\Lambda'_{i,j}})$ is wild.
If $i=1, j<\ell$ or $i>1, j=\ell$, then $\Lambda'_{i,j}=\Lambda_{0,j}$ or $ \Lambda'_{i,j}=\Lambda_{i,0}$, so that Lemma \ref{lem::step-2-T_0-exception} implies that $R^{\Lambda}(\beta_{\Lambda'_{i,j}})$ is wild because $\Lambda_{0,j}, \Lambda_{i,0} \in T(\Lambda)_0\setminus T'(\Lambda)_0$.
Suppose $i>1$ and  $j<\ell$. Since $\beta_{\Lambda'_{i,j}}=\alpha_0+\alpha_i+\alpha_{i+1}+\ldots +\alpha_j$, we choose $e_1=e(i~i+1~\ldots~j~0)$ and $e_2=e(i~i+1~\ldots~j-2~j~j-1~0)$. Set $\mathcal{A}=(e_1+e_2)R^\Lambda(\beta_{\Lambda'_{i,j}})(e_1+e_2)$. Then,
$$
\dim_q e_1\mathcal{A}e_1=\dim_q e_2Ae_2=1+2q^2+q^4,
\quad
\dim_q e_1\mathcal{A}e_2=\dim_q e_2Ae_1=q+q^3.
$$
On the other hand, we have $x_se_t=0, 1\leq  s\leq j-i$ and $x_{j-i+1}^2e_t =x^2_{j-i+2}e_t=0$ for $t=1,2$.
This implies that $e_t\mathcal{A}e_t$ has a basis $\{ x^a_{j-i+1}x^b_{j-i+2}e_t\mid 0\le a,b \le 1\}$, $e_1\mathcal{A}e_2$ has a basis $\{\psi_{j-i}e_2, x_{j-i+2}\psi_{j-i}e_2\}$ and $e_2\mathcal{A}e_1$ has a basis $\{\psi_{j-i}e_1, x_{j-i+2}\psi_{j-i}e_1\}$.
By setting $\alpha=x_{j-i+2}e_1$, $\beta=x_{j-i+2}e_2$, $\mu=\psi_{j-i}e_2$ and $\nu=\psi_{j-i}e_1$, $\mathcal{A}$ is isomorphic to the bound quiver algebra defined by
$$
\xymatrix@C=0.8cm{1 \ar@<0.5ex>[r]^{\mu}\ar@(dl,ul)^{\alpha}&2 \ar@<0.5ex>[l]^{\nu}\ar@(ur,dr)^{\beta}}
\quad \text{and}\quad 
\left<\alpha^2, \beta^2, \mu\nu\mu, \nu\mu\nu, \alpha\mu-\mu\beta, \beta\nu-\nu\alpha\right>.
$$
Then, $\mathcal{A}/\left<\nu\alpha\right>$ is the minimal wild algebra labeled by (32) in \cite{H-wild-two-point}.

(3) $T(\Lambda)_2=\{\Lambda':=\Lambda_2+\Lambda_i+\Lambda_j+\Lambda_{\ell-1}\}$ if $\ell\geq 3$.
Similar to the above, we only need to consider two extra elements: $$
\Lambda'_{i,j}=\Lambda_2+\Lambda_{i-1}+\Lambda_{j+1}+\Lambda_{\ell-1},
\quad \Lambda'_{j,i}=\Lambda_2+\Lambda_{i+1}+\Lambda_{j-1}+\Lambda_{\ell-1},
$$
where $\Lambda'_{i,j}$ (resp. $\Lambda'_{j,i}$) appears only if $i>2$ or $j<\ell-1$ (resp. $i+1< j$).
Observe that we have the following paths.
$$
\scalebox{0.8}{
\xymatrix@C=1.5cm@R=0.1cm{
&\Lambda_1+\Lambda_{i-1}+\Lambda_{j+1}+\Lambda_{\ell}
\ar[r]|-{(\ell,1)}
&\Lambda'_{i,j}\\
\boxed{\Lambda_1+\Lambda_i+\Lambda_j+\Lambda_\ell}_1
\ar[ur]|{(i,j)}\ar[dr]|{(j,i)}
&&\\
&\Lambda_{1}+\Lambda_{i+1}+\Lambda_{j-1}+\Lambda_{\ell}
\ar[r]|-{(\ell,1)}
&\Lambda'_{j,i}
}}
$$
Since we have proved in (2) that $R^\Lambda(\beta_{\Lambda''})$ is wild  for $\Lambda''\in\{ \Lambda_1+\Lambda_{i-1}+\Lambda_{j+1}+\Lambda_\ell, \Lambda_1+\Lambda_{i+1}+\Lambda_{j-1}+\Lambda_\ell\}$, 
we conclude that $R^\Lambda(\beta_{\Lambda''})$ for $\Lambda''\in\{ \Lambda'_{i,j},\Lambda'_{j,i}\}$ is wild by Lemma \ref{lem::reduction-arrow}.
\end{proof}

\begin{Lemma}\label{lem::step-3-level4-ijpn}
Claim \ref{claim::step-3} holds for all $\Lambda'\in \cup_{0\leq s\leq 5}T(\Lambda)_s$ if
$\Lambda= \Lambda_p+\Lambda_i+\Lambda_j+\Lambda_h$ with $0\leq p<i<j<h\leq \ell\in I$.
\end{Lemma}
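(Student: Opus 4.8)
The plan is to imitate the reduction scheme of Lemma \ref{lem::step-3-level3-ijp} and Lemma \ref{lem::step-3-level4-2ijp}, taking full advantage of the fact that here all four multiplicities equal $1$. Since $I(\Lambda)_0=\{p,i,j,h\}$ while $I(\Lambda)_1=I(\Lambda)_2=I(\Lambda)_3=\emptyset$, the sets $T(\Lambda)_s$ with $s\geq 1$ are all empty, so only $s=0$ in Claim \ref{claim::step-3} needs checking, and by \eqref{equ::iso-sigma-graph} I may normalize $p=0$, so $\Lambda=\Lambda_0+\Lambda_i+\Lambda_j+\Lambda_h$ with $0<i<j<h\leq\ell$. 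Writing $T'(\Lambda)_0=\{\Lambda_{0,i},\Lambda_{i,j},\Lambda_{j,h},\Lambda_{h,0}\}$, Lemma \ref{lem::step-2-T_0-exception} gives that $R^\Lambda(\beta_{\Lambda'})$ is wild for every $\Lambda'\in T(\Lambda)_0\setminus T'(\Lambda)_0$, whence $R^\Lambda(\beta_{\Lambda''})$ is wild for all $\Lambda''\in S(\Lambda')_\rightarrow$ by Lemma \ref{lem::reduction-arrow}. Thus the real content is the four vertices of $T'(\Lambda)_0$, and since these are cyclically $\sigma$-conjugate by \eqref{equ::iso-sigma-graph}, it suffices to treat one representative, say $\Lambda'=\Lambda_{i,j}=\Lambda_0+\Lambda_{i-1}+\Lambda_{j+1}+\Lambda_h$, for which $X_{\Lambda'}=\Delta_{i,j}$.

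Next I would enumerate $S(\Lambda')_\rightarrow$ using Corollary \ref{cor::find-arrow}: each arrow out of $\Lambda'$ moves two of the four summands sitting at positions $\{0,i-1,j+1,h\}$. The decisive observation is that along the length-two path $\Lambda\to\Lambda_{i,j}\to\Lambda''$ the summand $\Lambda_0$ (resp. $\Lambda_h$) is preserved unless the second arrow touches position $0$ (resp. $h$); hence both are moved only when that arrow is exactly $(0,h)$ or $(h,0)$. For every successor $\Lambda''$ fixing $\Lambda_0$ I write $\Lambda=\Lambda_0+\bar\Lambda$ with $\bar\Lambda=\Lambda_i+\Lambda_j+\Lambda_h$, and for every one fixing $\Lambda_h$ I write $\Lambda=\bar\Lambda+\Lambda_h$ with $\bar\Lambda=\Lambda_0+\Lambda_i+\Lambda_j$; in either case the path descends to $\vec C(\bar\Lambda)$ by Corollary \ref{cor::embedding-path}, where $(\bar\Lambda)_{i,j}\in T'(\bar\Lambda)_0$, so Lemma \ref{lem::step-3-level3-ijp} yields wildness of $R^{\bar\Lambda}(\beta_{\bar\Lambda''})$, and $\beta_{\Lambda,\Lambda''}=\beta_{\bar\Lambda,\bar\Lambda''}$ together with Lemma \ref{lem::reduction-level} transports it up to $R^\Lambda(\beta_{\Lambda''})$. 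This disposes of all successors except $(\Lambda_{i,j})_{0,h}$ and $(\Lambda_{i,j})_{h,0}$, for which neither $\Lambda_0$ nor $\Lambda_h$ survives and no level reduction is available.

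For these two exceptional successors I would argue directly. Using Lemma \ref{lem::recurrence} one finds, for instance, $\beta_{(\Lambda_{i,j})_{0,h}}=\sum_{m=0}^{i-1}\alpha_m+2\sum_{m=i}^{j}\alpha_m+\sum_{m=j+1}^{h}\alpha_m$; choosing a residue sequence $\nu$ of a standard tableau in this block and computing $\dim_q e(\nu)R^\Lambda(\beta)e(\nu)$ via Theorem \ref{theo::graded-dim}, I expect the degree-two part to have dimension at least three, so that \cite[Lemma 1.3]{Ar-rep-type} produces at least three loops at the corresponding vertex and \cite[I.10.8 (i)]{Er-tame-block} forces wildness; the successor $(\Lambda_{i,j})_{h,0}$ is handled symmetrically after applying $\sigma$. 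The hard part will not be this generic computation but the bookkeeping of the degenerate configurations — $i=1$, $j+1=h$, $h=\ell$, and their overlaps — where summands of $\Lambda_{i,j}$ collide, some of the arrows $(0,h),(h,0)$ cease to exist, and the shape of the level-three embedding changes. In each such sub-case I would either recognize the relevant successor as a vertex of $T(\Lambda)_0\setminus T'(\Lambda)_0$ already shown wild and invoke Lemma \ref{lem::reduction-arrow}, or run a small separate idempotent-truncation computation, exactly in the spirit of the boundary-value analysis of $h$ carried out in Lemma \ref{lem::step-3-level4-2i2j}.
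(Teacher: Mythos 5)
Your skeleton coincides with the paper's: normalize by $\sigma$, dispose of $T(\Lambda)_0\setminus T'(\Lambda)_0$ via Lemma \ref{lem::step-2-T_0-exception}, treat a single representative of $T'(\Lambda)_0$, and strip off a preserved summand to descend to the level-three case of Lemma \ref{lem::step-3-level3-ijp} for every successor except the two whose second arrow is labelled by the pair of untouched summands. The gap is in your treatment of those two exceptions. They are \emph{not} interchanged by $\sigma$: for $\Lambda'=\Lambda_{i,j}$ the arrow $(0,h)$ adds $\Delta_{0,h}$, whose support contains $[i,j]$, so $\beta_{(\Lambda_{i,j})_{0,h}}$ has coefficient $2$ along $[i,j]$; whereas the arrow $(h,0)$ adds $\Delta_{h,0}$, whose support is disjoint from $[i,j]$, so $\beta_{(\Lambda_{i,j})_{h,0}}=\alpha_0+\sum_{m=i}^{j}\alpha_m+\sum_{m=h}^{\ell}\alpha_m$ is multiplicity-free. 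Since $\sigma$ merely relabels indices, it carries a multiplicity-free exceptional successor of one representative to a multiplicity-free exceptional successor of another representative; ``applying $\sigma$'' therefore does not reduce $(\Lambda_{i,j})_{h,0}$ to the case you computed, and nothing in your proposal addresses it. Moreover, your mechanism of producing three loops at one idempotent cannot work there: in the generic sub-case the paper finds $\dim_q e_t\mathcal{A}e_t=1+2q^2+q^4$ for the natural idempotents, i.e.\ only two loops per vertex, and wildness is obtained only by assembling a two-idempotent truncation with off-diagonal graded dimension $q+q^3$ and identifying a quotient with the minimal wild two-point algebra (32) of \cite{H-wild-two-point}. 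That two-vertex computation is the hardest step of the paper's proof and is entirely absent from yours.

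For the doubled successor $(\Lambda_{i,j})_{0,h}$ your argument is only announced, not carried out (``I expect the degree-two part to have dimension at least three''), but here the computation is avoidable: there is an arrow $\Lambda_{0,h}\xrightarrow{(i,j)}(\Lambda_{i,j})_{0,h}$ from the vertex $\Lambda_{0,h}\in T(\Lambda)_0\setminus T'(\Lambda)_0$, already known to be wild by Lemma \ref{lem::step-2-T_0-exception}, so Lemma \ref{lem::reduction-arrow} settles it with no graded-dimension calculation (this is exactly the paper's route, in its labelling $\Lambda_{h,j}\xrightarrow{(0,i)}\Lambda'_{h,j}$). The degenerate configurations you flag for the multiplicity-free successor ($i=1$ or $h=\ell$, where it falls into $T(\Lambda)_0\setminus T'(\Lambda)_0$) are indeed handled as you suggest, but they do not cover the generic case, so the lemma remains unproved as written.
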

\begin{proof}
We assume $\Lambda=\Lambda_0+\Lambda_i+\Lambda_j+\Lambda_h$ with $0<i<j<h\leq \ell$.
Then,
$$
T(\Lambda)_0=\{\Lambda_{0,i}, \Lambda_{0,j}, \Lambda_{0,h}, \Lambda_{i,0}, \Lambda_{i,j}, \Lambda_{i,h}, \Lambda_{j,0}, \Lambda_{j,i}, \Lambda_{j,h}, \Lambda_{h,0}, \Lambda_{h,i}, \Lambda_{h,j}\},
$$
and $T(\Lambda)_s=\emptyset$ for $s\neq 0$. 
By Lemma \ref{lem::step-2-T_0-exception}, it is enough to show that  Claim \ref{claim::step-3} holds  for $\Lambda'\in T'(\Lambda)_0=\{\Lambda_{0,i}, \Lambda_{i,j}, \Lambda_{j,h}, \Lambda_{h,0}\}$.
We show the case $\Lambda':=\Lambda_{0, i}=\Lambda_{i+1}+\Lambda_j+\Lambda_h+\Lambda_\ell$ and we may deduce the claim for the other 3 cases by Proposition \ref{prop::iso-sigma}.

We embed  $\vec C(\Lambda_0+\Lambda_i+\Lambda_h)$ and $\vec C(\Lambda_{0}+\Lambda_i+\Lambda_j)$ into $\vec C(\Lambda_0+\Lambda_i+\Lambda_j+\Lambda_h)$.
Since the label of $\Lambda\rightarrow\Lambda'$ is $(0,i)$,
$\Lambda'$ belongs to both of the embedded subquivers. 
Moreover, 
if $\xymatrix@C=1cm{
\Lambda'\ar[r]|-{(a,b)}&
\Lambda''}$
and $\{a,b\}\cap \{j\}=\emptyset$ or $\{a,b\}\cap\{ h\}=\emptyset$, then $\Lambda''$ belongs to the embedded subquivers, and Lemma \ref{lem::step-3-level3-ijp} implies that $R^\Lambda(\beta_{\Lambda''})$ is wild. Therefore, 
we only need to consider two extra arrows to find the new elements of $S(\Lambda')_\rightarrow$, i.e.,
$$
\Lambda'_{j,h}=\Lambda_{i+1}+\Lambda_{j-1}+\Lambda_{h+1}+\Lambda_\ell,
\quad
\Lambda'_{h,j}=\Lambda_{i+1}+\Lambda_{j+1}+\Lambda_{h-1}+\Lambda_\ell,
$$
where $\Lambda'_{j,h}$ (resp. $ \Lambda'_{h,j} $) appears only if $i+1<j$ or $h<\ell$ (resp. $h>j+1$).
Then, $R^\Lambda(\beta_{\Lambda'_{h,j}})$ is wild by the existence of the arrow
$$
\xymatrix@C=1.2cm@R=1cm{
\boxed{\Lambda_{h,j}=\Lambda_0+\Lambda_i+\Lambda_{j+1}+\Lambda_{h-1}}_0
\ar[r]^-{(0,i)}
&\Lambda'_{h,j}
}.
$$

We show that $R^\Lambda(\beta_{\Lambda'_{j,h}})$ is also wild. In fact, we have
$$
\Lambda'_{j,h}=\Lambda_i+\Lambda_j+\Lambda_{h+1}+\Lambda_\ell=\Lambda_{0,h} \in T(\Lambda)_0\setminus T'(\Lambda)_0 \quad \text{if } i+1=j,
$$
$$
\Lambda'_{j,h}=\Lambda_0+\Lambda_{i+1}+\Lambda_{j-1}+\Lambda_h =\Lambda_{j,i}\in T(\Lambda)_0\setminus T'(\Lambda)_0 \quad \text{if } h=\ell,
$$
so that $R^\Lambda(\beta_{\Lambda'_{j,h}})$ is wild if $i+1=j$ or $h=\ell$. If $i+1<j$ and $h<\ell$, then
$$
\beta_{\Lambda'_{j,h}}=\alpha_0+\alpha_1+\ldots+\alpha_i+\alpha_j+\alpha_{j+1}+\ldots+\alpha_{h-1}+\alpha_h.
$$
We choose $e_1=e(0~1~\ldots~i~j~j+1~\ldots~h)$, $e_2=e(0~1~\ldots~i-2~i~i-1~j~j+1~\ldots~h)$, and define $\mathcal{A}=(e_1+e_2)R^{\Lambda}(\beta_{\Lambda'_{j,h}})(e_1+e_2)$.
Then, by direct calculation we have
$$
\dim_q e_1\mathcal{A}e_1=\dim_q e_2\mathcal{A}e_2=1+2q^2+q^4,
\quad
\dim_q e_1\mathcal{A}e_2=\dim_q e_2\mathcal{A}e_1=q+q^3.
$$
Moreover, $x_se_t=0$ and $x_{h-j+i+2}^2e_t=0$ for $t=1,2$, $1\leq s\leq i$ and $i+2\leq s\leq h-j+i+1$, which imply that $e_t\mathcal{A}e_t$ has a basis $\{x^a_{i+1}x^b_{h-j+i+2}e_t\mid 0\le a, b \le 1\}$, $e_1\mathcal{A}e_2$ has a basis $\{\psi_{i}e_2, x_{h-j+i+2}\psi_ie_2\}$ and $e_2\mathcal{A}e_1$ has a basis $\{\psi_{i}e_1, x_{h-j+i+2}\psi_ie_1\}$.
Set $\alpha=x_{h-j+i+2}e_1$, $\beta=x_{h-j+i+2}e_2$, $\mu=\psi_ie_2$ and $\nu=\psi_ie_1$. Then, $\mathcal{A}$ is wild by the proof which is similar to that of Lemma \ref{lem::step-3-level4-2ijp}.
\end{proof}

\subsection{Level 5 and 6}
In this subsection, we show the cases for $k=5,6$.

\begin{Lemma}\label{lem::step-3-T_3-level5}
Claim \ref{claim::step-3} holds for any $\Lambda'\in T(\Lambda)_3$ if $\Lambda$ is of level $k=5$.
\end{Lemma}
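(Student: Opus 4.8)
The plan is to isolate the single configuration in which the source $\Lambda'$ is not already wild, and then to treat its successors by embedding lower-level quivers together with a few explicit idempotent truncations. Note that level $5$ is exactly the base case for $s=3$ in $(\star)$: Lemma~\ref{lem::reduction-step-3} only permits lowering the level when $k\geq 6$, so a direct argument here is unavoidable. The reduction step is this: by Proposition~\ref{prop::step-2-result-T_3}, $R^{\Lambda}(\beta_{\Lambda'})$ is wild unless $\beta_{\Lambda'}\in\mathscr T(\Lambda)_3$ and $\ch\k\neq 3$, and in the wild case Lemma~\ref{lem::reduction-arrow} propagates wildness along every arrow $\Lambda'\to\Lambda''$, so $R^{\Lambda}(\beta_{\Lambda''})$ is wild for all $\Lambda''\in S(\Lambda')_\rightarrow$ and the claim holds. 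Hence I would assume $\beta_{\Lambda'}\in\mathscr T(\Lambda)_3$ with $\ch\k\neq 3$. Using \eqref{equ::iso-sigma-graph} I reduce the distinguished index to $0$ and take $\beta_{\Lambda'}=2\alpha_0+\alpha_1$ (the companion form $2\alpha_0+\alpha_\ell$ being entirely analogous). The definition of $\mathscr T(\Lambda)_3$ forces $m_0=3$; since $k=5$ this means $\Lambda=3\Lambda_0+\tilde\Lambda$ with $\tilde\Lambda\in P^+_{cl,2}$ carrying no $\Lambda_0$, i.e. $\tilde\Lambda=2\Lambda_a$ or $\tilde\Lambda=\Lambda_a+\Lambda_b$ with $a,b\in I\setminus\{0\}$; moreover the adjacency condition in $\mathscr T(\Lambda)_3$ excludes $1\in\{a,b\}$, as otherwise $R^{\Lambda}(\beta_{\Lambda'})$ would already be wild by Lemma~\ref{lem::step-2-T_3-3ij}.

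Next I would pin down $\Lambda'$ and enumerate its successors. Lemma~\ref{lem::recurrence} gives $X_{\Lambda'}=(2,1,0,\ldots,0)$ and $\Lambda'=\Lambda_2+2\Lambda_\ell+\tilde\Lambda$. By Corollary~\ref{cor::find-arrow}, the elements of $S(\Lambda')_\rightarrow$ are precisely the $\Lambda'_{c,d}$ with $c,d$ ranging over the indices of positive coefficient in $\Lambda'$ (so $c,d\in\{2,\ell\}\cup\{a\}$ or $\{2,\ell\}\cup\{a,b\}$, with $c=d$ allowed where the coefficient is $\geq 2$) and with $[d+1,c-1]$ meeting the zero-locus $\{2,\ldots,\ell\}$ of $X_{\Lambda'}$. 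This yields a short finite list for each shape of $\tilde\Lambda$. I would then split the list into the successors obtained without touching $\tilde\Lambda$, which lie in the subquiver obtained by embedding $\vec C(3\Lambda_0+\Lambda_a)$ (and, when $\tilde\Lambda=\Lambda_a+\Lambda_b$, also $\vec C(3\Lambda_0+\Lambda_b)$) via Corollary~\ref{cor::embedding-path}; for these, wildness is inherited from the level-$4$ Lemma~\ref{lem::step-3-level4-3ij} together with Lemma~\ref{lem::embedding-Lambda} and Lemma~\ref{lem::reduction-level}.

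For the genuinely new successors, reached by an arrow that merges a descendant of the $3\Lambda_0$-block with a component of $\tilde\Lambda$, I would compute $\beta_{\Lambda''}$ by Lemma~\ref{lem::recurrence}, choose an idempotent $e=e(\nu)$ (or a sum of two), read off $\dim_q e R^{\Lambda}(\beta_{\Lambda''})e$ from Theorem~\ref{theo::graded-dim}, and exhibit a wild quotient such as $\k[X,Y]/(X^2,Y^3,XY^2)$ or $\k[X,Y]/(X^3,X^2Y,Y^2)$, invoking \cite[(1.1)(c)]{Ringel-local-alg}, or else apply \cite[Lemma~1.3]{Ar-rep-type} together with \cite[I.10.8~(i)]{Er-tame-block} to the quiver of an idempotent truncation. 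Several borderline successors whose $\beta_{\Lambda''}$ happens to land in $\mathcal T(\Lambda)_0$ or $\mathcal T(\Lambda)_2$ can instead be dispatched at once by Proposition~\ref{prop::step-2-result-T_0} or Proposition~\ref{prop::step-2-result-T_2}.

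The hard part will be the bookkeeping in the last two steps rather than any single computation: one must enumerate $S(\Lambda')_\rightarrow$ across both shapes $\tilde\Lambda=2\Lambda_a$ and $\tilde\Lambda=\Lambda_a+\Lambda_b$ and across all residue coincidences among $\{0,1,2,\ell,a,b\}$, and then confirm wildness for every genuinely new successor. The most delicate points are the boundary positions of $a,b$ (near $0,1,2,\ell$), where components collide and alter the residue sequences, and the successors whose truncation has a two-dimensional $\text{rad}/\text{rad}^2$, for which tame-versus-wild must be decided from the dimension of $\text{rad}/\text{rad}^2$ and Erdmann's classification rather than from an obvious wild quotient.
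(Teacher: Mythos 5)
Your proposal is correct and follows essentially the same route as the paper: reduce to the sources $\Lambda'$ whose $\beta_{\Lambda'}$ is not already wild (which forces $\Lambda=3\Lambda_0+2\Lambda_a$ or $3\Lambda_0+\Lambda_a+\Lambda_b$ up to $\sigma$), embed the level-$4$ quivers via Corollary \ref{cor::embedding-path} so that Lemma \ref{lem::step-3-level4-3ij} disposes of all successors except the one or two extra ones $\Lambda'_{a,a}$ (resp.\ $\Lambda'_{a,b}$, $\Lambda'_{b,a}$), and treat those separately. The only cosmetic difference is that the paper handles most of the extra successors by exhibiting incoming arrows from vertices already proved wild (elements of $T(\Lambda)_0$, or vertices from the proof of Lemma \ref{lem::step-3-level4-2ijp}) and only computes a graded dimension at $e(010h)$ in the generic range $3\le h\le \ell-1$, whereas you lean more heavily on fresh truncation computations; both work.
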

\begin{proof}
Write $\Lambda=\sum_{i\in I(\Lambda)_0}m_i\Lambda_i$.
If $T(\Lambda)_3\neq \emptyset$, then $\ell\ge 2$ and $I(\Lambda)_2\ge 1$, i.e., there exists at least one $m_i\ge 3$.
By Remark \ref{rem::T_Lambda-beta}, we have $\mathcal T(\Lambda)_3=\{2\alpha_i+\alpha_{i-1}, 2\alpha_i+\alpha_{i+1}\}$.
If $m_i\geq 4$, then both $R^\Lambda(2\alpha_i+\alpha_{i-1})$ and $R^\Lambda(2\alpha_i+\alpha_{i+1})$ are wild by Proposition \ref{prop::step-2-result-T_3}. If $m_i=3$, we only need to consider $3\Lambda_0+2\Lambda_h$ with $0<h\leq \ell$ and $3\Lambda_0+\Lambda_i+\Lambda_j$ with $0<i<j\leq \ell$ (since $\sum_{i\neq 0}m_i=2$). 

(1) Suppose $\Lambda= 3\Lambda_0+ 2\Lambda_h$.
In this case, $T(\Lambda)_3=\{\Lambda_2+2\Lambda_h+2\Lambda_\ell, 2\Lambda_1+2\Lambda_h+\Lambda_{\ell-1}\}$.
We only show the case $\Lambda'=\Lambda_2+2\Lambda_h+ 2\Lambda_\ell$.
We embed $\vec C(3\Lambda_0+\Lambda_h)$ into $\vec C(3\Lambda_0+2\Lambda_h)$, and look at the second quiver in the proof of Lemma \ref{lem::step-3-level4-3ij}. Then, $\bar{\Lambda}':=\Lambda_2+\Lambda_h+2\Lambda_\ell$ appears in the middle of the bottom line and $R^{3\Lambda_0+\Lambda_h}(\beta_{\bar{\Lambda}'})$ is wild for all $\bar{\Lambda}''\in S(\bar {\Lambda}')_\rightarrow$. Thus, we only need to consider one extra arrow 
$$
\xymatrix@C=1.2cm@R=1cm{
\boxed{\Lambda'}_3
\ar[r]^-{(h,h)}
&\Lambda'_{h,h}=\Lambda_2+\Lambda_{h-1}+\Lambda_{h+1}+2\Lambda_\ell
}.
$$
If $3\leq h\leq \ell-1$, then one may easily check
$$
\dim_q e(010h) R^{\Lambda}(\beta_{\Lambda'_{h,h}})e(010h)=1+3q^2+4q^4+3q^6+q^8,
$$
and therefore, $R^{\Lambda}(\beta_{\Lambda'_{h,h}})$ is wild.
For the remaining cases $h=1,2,\ell$, we have
$$
\begin{aligned}
&\xymatrix@C=1.2cm@R=1cm{
\boxed{\Lambda_{0,h}=2\Lambda_0+\Lambda_1+\Lambda_2+\Lambda_\ell}_0
\ar[r]^-{(0,1)}
&\Lambda_0+2\Lambda_2+2\Lambda_{\ell}
} \quad \text{if } h=1,
\\
&\xymatrix@C=1.2cm@R=1cm{
\boxed{\Lambda_{0,h}=2\Lambda_0+\Lambda_2+\Lambda_3+\Lambda_\ell}_0
\ar[r]^-{(0,0)}
&\Lambda_1+\Lambda_2+\Lambda_3+2\Lambda_{\ell}
} \quad \text{if } h=2,
\\
&\xymatrix@C=1.2cm@R=1cm{
\boxed{\Lambda_{h,0}=2\Lambda_0+\Lambda_1+\Lambda_{\ell-1}+\Lambda_\ell}_0
\ar[r]^-{(0,1)}
&\Lambda_0+\Lambda_2+\Lambda_{\ell-1}+2\Lambda_\ell
} \quad \text{if } h=\ell,
\end{aligned}
$$
and $R^\Lambda(\beta_{\Lambda'_{h,h}})$ is also wild by Lemma \ref{lem::reduction-level}, Lemma \ref{lem::reduction-arrow} and Lemma \ref{lem::step-2-T_0-2i2j}.

(2) Suppose $\Lambda=3\Lambda_0+\Lambda_i+\Lambda_j$.
Then, $T(\Lambda)_3=\{2\Lambda_1+\Lambda_i+\Lambda_j+\Lambda_{\ell-1}, \Lambda_2+\Lambda_i+\Lambda_j +2\Lambda_\ell\}$.
We only show the case $\Lambda'=\Lambda_2+\Lambda_i+\Lambda_j +2\Lambda_\ell$.
Similar to the above, we embed  $\vec C(3\Lambda_0+\Lambda_j)$ and $\vec C(3\Lambda_0+\Lambda_i)$ into $\vec C(3\Lambda_0+\Lambda_i+\Lambda_j)$.
Since 
$\xymatrix@C=1.2cm{
\Lambda\ar[r]|-{(0,0)}&
\Lambda_{0,0}\ar[r]|-{(0,1)}&\Lambda'}$, $\Lambda'$ belongs to both of the embedded subquivers. If 
$\xymatrix@C=1cm{
\Lambda'\ar[r]|-{(a,b)}&
\Lambda''}$
and $\{a,b\}\cap\{i\}=\emptyset$ or $\{a,b\}\cap\{j\}=\emptyset$, then $\Lambda''$ is in the embedded  $\vec C(3\Lambda_0+\Lambda_j)$ or in the embedded $\vec C(3\Lambda_0+\Lambda_i)$. Therefore,    
it remains to consider
$$
\Lambda'_{i,j}=\Lambda_2+\Lambda_{i-1}+\Lambda_{j+1}+2\Lambda_\ell \quad  \text{if } i>2 \text{ or } j<\ell,
$$
and
$$
\Lambda'_{j,i}=\Lambda_2+\Lambda_{i+1}+\Lambda_{j-1}+2\Lambda_\ell \quad  \text{if } i+1<j.
$$
Observe that we have two arrows
$$
\xymatrix@C=1.2cm@R=1cm{
\Lambda_0+\Lambda_1+\Lambda_{i-1}+\Lambda_{j+1}+\Lambda_\ell
\ar[r]^-{(0,1)}
&\Lambda'_{i, j}
}, \quad
\xymatrix@C=1.2cm@R=1cm{
\Lambda_0+\Lambda_1+\Lambda_{i+1}+\Lambda_{j-1}+\Lambda_\ell
\ar[r]^-{(0,1)}
&\Lambda'_{j, i}
}.
$$
Since we showed that 
  $R^{2\Lambda_0+\Lambda_i+\Lambda_j}(\beta_{\Lambda_1+\Lambda_{i-1}+\Lambda_{j+1}+\Lambda_\ell})$ is wild in (2) of the proof of Lemma \ref{lem::step-3-level4-2ijp}, $R^{\Lambda}(\beta_{\Lambda'_{i, j}})$ is wild by Lemma \ref{lem::reduction-level} and  Lemma \ref{lem::reduction-arrow}. Similarly, $R^{\Lambda}(\beta_{\Lambda'_{j,i}})$ is also wild.
\end{proof}

\begin{Lemma}\label{lem::step-3-T_4-level56}
Claim \ref{claim::step-3} holds for any $\Lambda'\in T(\Lambda)_4$ if $\Lambda$ is of level $5$ or $6$.
\end{Lemma}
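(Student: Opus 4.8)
The plan is to mimic the embedding-and-reduction scheme already carried out for level $4$ in Lemma \ref{lem::step-3-level4-4i}, treating the additional summands as a spectator weight $\tilde\Lambda$. By \eqref{equ::iso-sigma-graph} and Proposition \ref{prop::iso-sigma} I may assume that the index witnessing $T(\Lambda)_4\neq\emptyset$ is $0$, i.e. $m_0\geq 4$. Writing $\Lambda=4\Lambda_0+\tilde\Lambda$ with $\tilde\Lambda\in P^+_{cl,k-4}$, the relevant vertex is obtained from the path $\Lambda\xrightarrow{(0,0)}\Lambda_{0,0}\xrightarrow{(0,0)}\Lambda'$, and a direct application of Lemma \ref{lem::recurrence} gives $X_{\Lambda'}=(2,0^\ell)$, whence $\Lambda'=2\Lambda_1+2\Lambda_\ell+\tilde\Lambda$ and $\beta_{\Lambda'}=2\alpha_0$, in agreement with Remark \ref{rem::T_Lambda-beta}. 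Since $s=4\neq1$, I must show that $R^\Lambda(\beta_{\Lambda''})$ is wild for every $\Lambda''\in S(\Lambda')_\rightarrow$.

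First I would dispose of the case $m_0\geq 5$, which for levels $5$ and $6$ means $\Lambda\in\{5\Lambda_0,\,6\Lambda_0,\,5\Lambda_0+\Lambda_j\}$. Here $2\alpha_0\notin\mathscr T(\Lambda)_4$, so $R^\Lambda(\beta_{\Lambda'})=R^\Lambda(2\alpha_0)$ is already wild by Proposition \ref{prop::step-2-result-T_4}. As each $\Lambda''\in S(\Lambda')_\rightarrow$ is reached from $\Lambda'$ by a single arrow, Lemma \ref{lem::reduction-arrow} propagates wildness to all successors and Claim \ref{claim::step-3} holds trivially.

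The substance is the case $m_0=4$, where $R^\Lambda(2\alpha_0)$ is only tame (Lemma \ref{lem::step-2-T_4-4i}), so the successors must be examined directly. This leaves exactly three families: $\Lambda=4\Lambda_0+\Lambda_j$ at level $5$, and $\Lambda=4\Lambda_0+2\Lambda_j$ or $\Lambda=4\Lambda_0+\Lambda_j+\Lambda_h$ at level $6$. In each family I would use Corollary \ref{cor::embedding-path} to embed $\vec C(4\Lambda_0)$ into $\vec C(\Lambda)$ by adjoining $\tilde\Lambda$ to every vertex, so that the path $\Lambda\to\Lambda_{0,0}\to\Lambda'$ lies in the embedded copy. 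Those successors $\Lambda''$ of the form $\bar\Lambda''+\tilde\Lambda$ with $\bar\Lambda''\in S(2\Lambda_1+2\Lambda_\ell)_\rightarrow\subseteq\vec C(4\Lambda_0)$ are then wild by Lemma \ref{lem::step-3-level4-4i} together with Lemma \ref{lem::reduction-level}. It remains to treat the finitely many extra successors, namely those reached by an arrow whose move alters a summand of $\tilde\Lambda$; I would locate these via Corollary \ref{cor::find-arrow}, compute the corresponding $\beta_{\Lambda''}$ by Lemma \ref{lem::recurrence}, and settle each one either by identifying $\beta_{\Lambda,\Lambda''}$ with the $\beta$ of a lower-level sub-weight already known to be wild (Lemma \ref{lem::embedding-Lambda} and Lemma \ref{lem::reduction-level}), by reducing along an incoming arrow from an already-wild vertex (Lemma \ref{lem::reduction-arrow}), or, failing those, by choosing an explicit idempotent $e_0=e(\nu)$ and reading off $\dim_q e_0R^\Lambda(\beta_{\Lambda''})e_0$ from Theorem \ref{theo::graded-dim} to exhibit a local truncation with at least three loops or a wild quotient such as $\k[X,Y]/(X^3,X^2Y,Y^2)$.

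The main obstacle I anticipate is the bookkeeping of these extra arrows in the degenerate configurations, where an index of $\tilde\Lambda$ collides with $1,2,\ell-1$ or $\ell$: there summands merge, the shape of $S(\Lambda')_\rightarrow$ changes, and the clean reductions above can break down, forcing separate short computations exactly in the spirit of the boundary cases $h=1,2,\ell-1,\ell$ handled in Lemma \ref{lem::step-3-level4-2i2j} and Lemma \ref{lem::step-3-T_3-level5}. A secondary point to check is that the handful of direct graded-dimension computations really do land on wild algebras and not on the tame boundary; since only wildness is needed here (not the precise Morita type), it suffices to exhibit a single wild idempotent truncation or quotient in each such subcase.
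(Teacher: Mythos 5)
Your plan is structurally the paper's proof: dispose of $m_i\geq 5$ via Proposition \ref{prop::step-2-result-T_4} and Lemma \ref{lem::reduction-arrow}, then treat the three families $4\Lambda_0+\Lambda_h$, $4\Lambda_0+2\Lambda_h$, $4\Lambda_0+\Lambda_i+\Lambda_j$ by embedding lower-level quivers and clearing the extra successors. Two remarks. First, a minor efficiency point: the paper embeds the already-settled level-$5$ quiver $\vec C(4\Lambda_0+\Lambda_h)$ into $\vec C(4\Lambda_0+2\Lambda_h)$ (and both $\vec C(4\Lambda_0+\Lambda_i)$, $\vec C(4\Lambda_0+\Lambda_j)$ into $\vec C(4\Lambda_0+\Lambda_i+\Lambda_j)$), rather than only $\vec C(4\Lambda_0)$ as you propose; this cuts the extra arrows down to the single label $(h,h)$ (resp. $(i,j)$ and $(j,i)$), whereas your coarser embedding leaves every arrow touching $h$ to be checked by hand.

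Second, and more substantively: your listed fallbacks do not obviously close the one hard successor, namely $\Lambda''=\Lambda'_{h,h}=2\Lambda_1+\Lambda_{h-1}+\Lambda_{h+1}+2\Lambda_\ell$ with $\beta_{\Lambda''}=2\alpha_0+\alpha_h$ in the family $\Lambda=4\Lambda_0+2\Lambda_h$, for $2\leq h\leq\ell-1$ and $\ch\k\neq 2$. There, wildness cannot be propagated from $\Lambda'$ (since $R^{4\Lambda_0}(2\alpha_0)$ is only tame by Lemma \ref{lem::step-2-T_4-4i}), the weight $2\alpha_0+\alpha_h$ does not lie in $\P^{\bar\Lambda}$ for any proper sub-weight $\bar\Lambda$ already known to be wild, and your graded-dimension fallback fails as stated: $e(00h)R^\Lambda(2\alpha_0+\alpha_h)e(00h)$ is not local or basic (it is a $2\times 2$ matrix algebra over a local algebra), so $\dim_q e(00h)Re(00h)$ does not directly exhibit loops. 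The paper's fix is a genuinely new ingredient: an explicit isomorphism $e(00h)R^{\Lambda}(2\alpha_0+\alpha_h)e(00h)\simeq R^{4\Lambda_0}(2\alpha_0)\otimes R^{2\Lambda_h}(\alpha_h)$, whose basic algebra $\k[X,Y]/(X^4-Y^2,XY)\otimes\k[Z]/(Z^2)$ has three loops, hence is wild. (Your "incoming arrow" option would in fact also work here, via the arrow $(0,0)$ from the $T(\Lambda)_5$-vertex $(\Lambda_{0,0})_{h,h}$ with $\beta=\alpha_0+\alpha_h$, which is wild by Proposition \ref{prop::step-2-result-T_5} since $m_0=4>2$; but the proposal neither identifies this arrow nor signals that this is the subcase where all the routine reductions break down.)
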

\begin{proof}
Write $\Lambda=\sum _{i\in I(\Lambda)_0}m_i\Lambda_i$.
If $T(\Lambda)_4\neq \emptyset$, then there exists at least one $m_i\ge 4$.
By Remark \ref{rem::T_Lambda-beta}, we have $\mathcal T(\Lambda)_4=\{2\alpha_i\}$.
If $m_i\ge 5$, then $R^\Lambda(2\alpha_i)$ is wild by Proposition \ref{prop::step-2-result-T_4}.
Hence, by Proposition \ref{prop::iso-sigma}, we may consider the following three cases. 

(1) Suppose $\Lambda=4\Lambda_0+\Lambda_h$ with $0<h\le \ell$.
Then, $T(\Lambda)_4=\{2\Lambda_1+\Lambda_h+2\Lambda_\ell\}$. We have
$$
\scalebox{0.8}{\xymatrix@C=2cm@R=1.2cm{
\Lambda_0+2\Lambda_1+\Lambda_{h-1}+\Lambda_\ell
\ar[d]|{(0,1)}
&\boxed{2\Lambda_1+\Lambda_h+2\Lambda_\ell}_4
\ar[r]|-{(1,h)}
\ar[ld]|{(h,1)}
\ar[d]|{ }
\ar[rd]|{(\ell, h)}
\ar[l]|-{(h, \ell)}
&\Lambda_0+\Lambda_1+\Lambda_{h+1}+2\Lambda_\ell
\ar[d]|{(\ell,0)}
\\
\Lambda_1+\Lambda_2+\Lambda_{h-1}+2\Lambda_\ell
& \ldots
&2\Lambda_1+\Lambda_{h+1}+\Lambda_{\ell-1}+\Lambda_\ell
}}
$$
in which the  middle dots in the second row is the second quiver in the proof of Lemma \ref{lem::step-3-level4-4i}
embedded to $\vec C(4\Lambda_0+\Lambda_h)$ via
 $\vec C(4\Lambda_0)\rightarrow\vec C(4\Lambda_0+\Lambda_h)$, and $\Lambda'_{1,h}$ (resp. $\Lambda'_{h,1}$, $\Lambda'_{\ell,h}$, $\Lambda'_{h,\ell}$) appears only if $h<\ell$ (resp. $h>2$, $h<\ell-1$, $h>1$ ). It is proved in Lemma \ref{lem::step-3-level4-3ij} that
$R^{3\Lambda_0+\Lambda_h}(\beta_{\Lambda_1+\Lambda_{h+1}+2\Lambda_\ell})$ and $R^{3\Lambda_0+\Lambda_h}(\beta_{2\Lambda_1+\Lambda_{h-1}+\Lambda_\ell})$ are wild, and hence, $R^\Lambda(\beta_{\Lambda''})$ is wild for any $\Lambda''\in S(2\Lambda_1+\Lambda_h+2\Lambda_\ell)_\rightarrow$ by Corollary \ref{cor::embedding-path}, Lemma \ref{lem::reduction-level} and Lemma \ref{lem::reduction-arrow}.

(2) Suppose $\Lambda=4\Lambda_0+2\Lambda_h$ with $0<h\le \ell$.
We have $T(\Lambda)_4=\{\Lambda':= 2\Lambda_1+2\Lambda_h+ 2\Lambda_\ell\}$.
After embedding the same $\vec C(4\Lambda_0+\Lambda_h)$ into $\vec C(4\Lambda_0+2\Lambda_h)$, we only need to check the wildness of
$R^\Lambda(\beta_{\Lambda'_{h,h}})$. Here, $\Lambda'_{h,h}=2\Lambda_1+\Lambda_{h-1}+\Lambda_{h+1}+2\Lambda_\ell$ and $\beta_{\Lambda'_{h,h}}=2\alpha_0+\alpha_h$.
If $h=1$ or $h=\ell$, then $\Lambda'_{h,h}\in T(\Lambda)_3$ and $R^{\Lambda}(\beta_{\Lambda'_{h,h}}) $ is wild by Proposition \ref{prop::step-2-result-T_3}.

We assume $2\le h\le \ell-1$.
If $\ch \k=2$, then $R^{\Lambda}(\beta_{\Lambda'})$ is wild by Lemma \ref{lem::step-2-T_4-4i} and hence, $R^{\Lambda}(\beta_{\Lambda'_{h,h}}) $ is wild by Lemma \ref{lem::reduction-arrow}.
In the following, we assume $\ch \k\neq 2$.
Using Definition \ref{def::cyclotomic-quiver}, we compute that $$
x_3^2e(00h)=x^2_3\psi_2^2e(00h)=\psi_2\psi_1 x_1^2e(h00)\psi_1\psi_2=0,
\quad
e(00h)\psi_2e(00h)=0.
$$
Then, $x_3^2e(00h)=0$ gives an algebra homomorphism
$$
\phi: R^{4\Lambda_0}(2\alpha_0)\otimes R^{2\Lambda_h}(\alpha_h)\longrightarrow e(00h)R^{\Lambda}(2\alpha_0+\alpha_h)e(00h)
$$
by sending $e(00)\otimes e(h)$, $x_se(00)\otimes e(h)$, $\psi_1 e(00)\otimes e(h)$,
$e(00)\otimes x_1e(h)$ to $e(00h)$, $x_se(00h)$, $\psi_1 e(00h)$, $x_3e(00h)$ for $s=1,2$, respectively.
Moreover, $\phi$ is surjective since $e(00h)R^\Lambda(2\alpha_0+\alpha_h)e(00h)$ is generated by $e(00h), x_1e(00h), x_2e(00h), x_3e(00h), \psi_1e(00h)$.
By comparing the graded dimensions computed by Theorem \ref{theo::graded-dim}, we find that
$\phi$ is actually an isomorphism.
It is shown in Lemma \ref{lem::step-2-T_4-4i} that the basic algebra of $R^{4\Lambda_0}(2\alpha_0)$ is isomorphic to $\k[X,Y]/(X^4-Y^2,XY)$.
Then, the basic algebra of $R^{4\Lambda_0}(2\alpha_0)\otimes R^{2\Lambda_h}(\alpha_h)$ is isomorphic to $\k[X,Y]/(X^4-Y^2,XY)\otimes \k[Z]/(Z^2)$, whose quiver has one vertex and three loops. Therefore, $R^{\Lambda}(2\alpha_0+\alpha_h)$ is wild.

(3) Suppose $\Lambda=4\Lambda_0+\Lambda_i+\Lambda_j$ with $0<i<j\le\ell$.
In this case, $T(\Lambda)_4=\{\Lambda':=2\Lambda_1+\Lambda_i+\Lambda_j+2\Lambda_\ell\}$.
Since 
$\xymatrix@C=1cm{
\Lambda\ar[r]|-{(0,0)}&
\Lambda_{0,0}\ar[r]|-{(0,0)}&\Lambda'}$, $\Lambda'$ is in the intersection of the image of $\vec C(4\Lambda_0+\Lambda_i)\rightarrow \vec C(\Lambda)$ and $\vec C(4\Lambda_0+\Lambda_j)\rightarrow\vec C(\Lambda)$. Hence, it suffices to consider
$$
\scalebox{0.8}{\xymatrix@C=1.5cm@R=1.2cm{
2\Lambda_0+\Lambda_1+\Lambda_{i-1}+\Lambda_{j+1}+\Lambda_\ell
\ar[d]|{(0,0)}
&\boxed{2\Lambda_1+\Lambda_i+\Lambda_j+2\Lambda_\ell}_4
\ar[ld]|{(i,j)}
\ar[d]|{ }
\ar[rd]|{(j,i)}
&2\Lambda_0+\Lambda_1+\Lambda_{i+1}+\Lambda_{j-1}+\Lambda_\ell
\ar[d]|{(0,0)}
\\
2\Lambda_1+\Lambda_{i-1}+\Lambda_{j+1}+2\Lambda_\ell
& \ldots
&2\Lambda_1+\Lambda_{i+1}+\Lambda_{j-1}+2\Lambda_\ell
}}
$$
where the dots stand for the union of 2 embedded subquivers, and some restrictions on $i,j$ are omitted.
Note that if 
$\xymatrix@C=1cm{
\Lambda'\ar[r]|-{(a,b)}&
\Lambda''}$
such that $\{a,b\}\cap\{i\}=\emptyset$ or $\{a,b\}\cap\{j\}=\emptyset$, then $\Lambda''$ belongs to one of the images and $R^\Lambda(\beta_{\Lambda''})$ is wild by part (1).
Since $R^{2\Lambda_0+\Lambda_i+\Lambda_j}(\beta_{\Lambda_1+\Lambda_{i-1}+\Lambda_{j+1}+\Lambda_\ell})$  and $R^{2\Lambda_0+\Lambda_i+\Lambda_j}(\beta_{\Lambda_1+\Lambda_{i+1}+\Lambda_{j-1}+\Lambda_\ell})$ were shown to be wild in (2) of the proof of Lemma \ref{lem::step-3-level4-2ijp}, $R^{\Lambda}(\beta_{\Lambda'_{i, j}})$ 
and $R^\Lambda(\beta_{\Lambda'_{j,i}})$ are wild by Lemma \ref{lem::reduction-arrow}.
\end{proof}

\begin{Lemma}\label{lem::step-3-T_5-level56}
Claim \ref{claim::step-3} holds for any $\Lambda'\in T(\Lambda)_5$ if $\Lambda$ is of level $k=5,6$.
\end{Lemma}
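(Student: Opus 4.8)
The plan is to mirror the structure of the preceding lemmas, reducing everything to the level-$4$ computation in Lemma \ref{lem::step-3-level4-2i2j} together with the reduction lemmas. Write $\Lambda=\sum_{i\in I(\Lambda)_0}m_i\Lambda_i$. Since $T(\Lambda)_5\neq\emptyset$ forces $\ell\geq 2$ and $|I(\Lambda)_1|\geq 2$, Remark \ref{rem::T_Lambda-beta} gives $\mathcal T(\Lambda)_5=\{\alpha_i+\alpha_j\mid i\neq j\in I(\Lambda)_1\}$, and for $\Lambda'\in T(\Lambda)_5$ we have $\beta_{\Lambda'}=\alpha_i+\alpha_j$ for some $i\neq j\in I(\Lambda)_1$. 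First I would dispose of the case $\beta_{\Lambda'}\notin\mathscr T(\Lambda)_5$: by Proposition \ref{prop::step-2-result-T_5} the algebra $R^\Lambda(\beta_{\Lambda'})$ is then already wild, so Lemma \ref{lem::reduction-arrow} makes $R^\Lambda(\beta_{\Lambda''})$ wild for every $\Lambda''\in S(\Lambda')_\rightarrow$ and there is nothing to prove. This leaves the genuinely tame core case $m_i=m_j=2$ and $i\not\equiv_e j\pm 1$; using Proposition \ref{prop::iso-sigma} and \eqref{equ::iso-sigma-graph} I normalize $i=0$, $j=h$ with $2\leq h\leq \ell-1$, so that $\Lambda=2\Lambda_0+2\Lambda_h+\tilde\Lambda$ with $\tilde\Lambda\in P^+_{cl,k-4}$ of level $1$ (for $k=5$) or level $2$ (for $k=6$), and $p,q\neq 0,h$ for any summands $\Lambda_p,\Lambda_q$ of $\tilde\Lambda$.

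Next I set $\bar\Lambda=2\Lambda_0+2\Lambda_h$ and embed $\vec C(\bar\Lambda)$ into $\vec C(\Lambda)$ by adding $\tilde\Lambda$ to every vertex, as permitted by Corollary \ref{cor::embedding-path} and Lemma \ref{lem::embdding-subgraph}; then $\Lambda'=\bar\Lambda'+\tilde\Lambda$ with $\bar\Lambda'=\Lambda_1+\Lambda_{h-1}+\Lambda_{h+1}+\Lambda_\ell\in T(\bar\Lambda)_5$, and by Lemma \ref{lem::embedding-Lambda} we have $\beta_{\Lambda,\Lambda''}=\beta_{\bar\Lambda,\bar\Lambda''}$ whenever $\Lambda''=\bar\Lambda''+\tilde\Lambda$. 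In Lemma \ref{lem::step-3-level4-2i2j} it was already shown that $R^{\bar\Lambda}(\beta_{\bar\Lambda''})$ is wild for every $\bar\Lambda''\in S(\bar\Lambda')_\rightarrow$; hence for every successor $\Lambda''$ lying in the embedded subquiver, Lemma \ref{lem::reduction-level} immediately yields that $R^\Lambda(\beta_{\Lambda''})$ is wild. Thus the whole content reduces to the \emph{extra} successors, namely those reached from $\Lambda'$ by an arrow $(a,b)$ (detected via Corollary \ref{cor::find-arrow}) that moves one of the summands of $\tilde\Lambda$; since $X_{\Lambda'}=X_{\bar\Lambda'}$ is unchanged by $\tilde\Lambda$, these are finite in number and can be listed explicitly.

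The last and main step is to dispatch each extra successor $\Lambda''$, and this is where the work lies. For each I would either recognize $\Lambda''$ as a successor of a vertex already proved wild at level $4$ or $5$ (so that Lemma \ref{lem::reduction-arrow}, possibly combined with Lemma \ref{lem::reduction-level}, applies), or compute $\beta_{\Lambda''}$ by Lemma \ref{lem::recurrence} and exhibit wildness of a suitable idempotent truncation $e_0 R^\Lambda(\beta_{\Lambda''})e_0$ directly, using Theorem \ref{theo::graded-dim} to read off its graded dimension and then identifying a wild quotient, e.g.\ a local algebra with at least three loops in the sense of \cite[(1.1)]{Ringel-local-alg}, or a two-point algebra excluded by \cite[IV.2.2]{Er-tame-block} or \cite{H-wild-two-point}, so that wildness propagates back to $R^\Lambda(\beta_{\Lambda''})$. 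Several extra successors will in fact fall into $\mathcal T(\Lambda)_0$ or $\mathcal T(\Lambda)_2$ and can be settled by Lemma \ref{lem::step-2-T_0-2i2j}, Proposition \ref{prop::step-2-result-T_0} and Proposition \ref{prop::step-2-result-T_2}. The main obstacle is the bookkeeping: the configuration of $\Lambda''$ depends on the positions of $p$ (and of $q$ for $k=6$) relative to the fixed special indices $0,1,h-1,h,h+1,\ell$, and the boundary coincidences (for instance $p$ adjacent to $h$, or equal to $1$ or $\ell$) must each be checked separately and shown to be covered, so that the list of extra successors is provably exhaustive across all of $k=5,6$.
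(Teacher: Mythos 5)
Your plan follows the paper's proof of this lemma essentially verbatim: reduce via Proposition \ref{prop::step-2-result-T_5} to the case $m_i=m_j=2$ with $i\not\equiv_e j\pm1$, embed lower-level quivers into $\vec C(\Lambda)$, dispose of the embedded successors by Lemma \ref{lem::step-3-level4-2i2j} combined with Lemmas \ref{lem::reduction-level} and \ref{lem::reduction-arrow}, and treat the finitely many extra successors case by case. Be aware, though, that the bookkeeping you defer is where essentially all of the paper's work lies; in particular, for $\Lambda=2\Lambda_0+2\Lambda_s+2\Lambda_p$ the one genuinely new successor has $\beta_{\Lambda''}=\alpha_0+\alpha_s+\alpha_p$ and is settled exactly by the kind of truncation you anticipate, namely $\dim_q e(0sp)R^{\Lambda}(\beta_{\Lambda''})e(0sp)=1+3q^2+3q^4+q^6$, a local algebra with three loops and hence wild.
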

\begin{proof}
Write $\Lambda=\sum_{i\in I(\Lambda)_0}m_i\Lambda_i$.
In this case, $T(\Lambda)_5\neq \emptyset$ yields $\ell\ge 2$ and $I(\Lambda)_1\ge 2$, say, $m_i,m_j\ge 2$ for $i\neq j$ such that  $\beta_{\Lambda'}=\alpha_i+\alpha_j$.
If $m_i\ge 3$ or $m_j\ge 3$, then $R^{\Lambda}(\alpha_i+\alpha_j)$ is wild by Proposition \ref{prop::step-2-result-T_5}. It suffices to consider the following three cases.

(1) Suppose $\Lambda=2\Lambda_0+2\Lambda_s+\Lambda_p$ with $0<s<p\leq \ell$. In this case,
$$
T(\Lambda)_5=\{\Lambda':=\Lambda_1+\Lambda_{s-1}+\Lambda_{s+1}+\Lambda_p+\Lambda_\ell\}.
$$
If $s=1$, then $\Lambda'=\Lambda_{0,1}\in T(\Lambda)_0$ and $\beta_{\Lambda'}=\alpha_0+\alpha_1$. Moreover, $R^\Lambda(\alpha_0+\alpha_1)$ is wild by Proposition \ref{prop::step-2-result-T_0} (since $m_0=m_1=2$).
Suppose $s\ge 2$. We embed $\vec C(2\Lambda_0+2\Lambda_s)$ into $\vec C(2\Lambda_0+2\Lambda_s+\Lambda_p)$. 
Since $\Lambda'$ is reached from $\Lambda$ by arrows labeled by $(0,0)$ and $(s,s)$, $\Lambda'$ belongs to the embedded $\vec C(2\Lambda_0+2\Lambda_s)$. 
If 
$\xymatrix@C=1cm{
\Lambda'\ar[r]|-{(a,b)}&
\Lambda''}$
such that $\{a,b\}\cap\{ p\}=\emptyset$, then $\Lambda''$ belongs to the embedded $\vec C(2\Lambda_0+2\Lambda_s)$. Thus, it remains to consider the following quiver,
$$
\scalebox{0.8}{\xymatrix@C=1.2cm@R=1.5cm{
\Lambda_1+\Lambda_{s-2}+\Lambda_{s+1}+\Lambda_{p+1}+\Lambda_\ell
&\Lambda_0+\Lambda_{s-1}+\Lambda_{s+1}+\Lambda_{p+1}+\Lambda_\ell
\ar[r]|-{(\ell,0)}
&\Lambda_1+\Lambda_{s-1}+\Lambda_{s+1}+\Lambda_{p+1}+\Lambda_{\ell-1}
\\
\Lambda_1+\Lambda_{s-1}+\Lambda_s+\Lambda_{p+1}+\Lambda_\ell
\ar[u]|-{(s-1,s)}
&\boxed{\Lambda_1+\Lambda_{s-1}+\Lambda_{s+1}+\Lambda_p+\Lambda_\ell}_5
\ar[r]|-{(p,\ell)}\ar[l]|-{(s+1,p)}
\ar[ld]|-{(p,s+1)}
\ar[d]|-{(p,s-1)}
\ar[rd]|-{(p,1)}
\ar[ur]|-{(\ell,p)}
\ar[u]|-{(1,p)}
\ar[ul]|-{(s-1,p)}
&\Lambda_0+\Lambda_1+\Lambda_{s-1}+\Lambda_{s+1}+\Lambda_{p-1}
\ar[d]|{(0,1)}
\\
\Lambda_1+\Lambda_{s-1}+\Lambda_{s+2}+\Lambda_{p-1}+\Lambda_\ell
&\Lambda_1+\Lambda_s+\Lambda_{s+1}+\Lambda_{p-1}+\Lambda_\ell
\ar[l]|-{(s,s+1)}
&\Lambda_2+\Lambda_{s-1}+\Lambda_{s+1}+\Lambda_{p-1}+\Lambda_\ell
}}
$$
where the restrictions on $s,p$ are omitted.
We also notice that
$$
\scalebox{0.8}{
\xymatrix@C=1.2cm@R=0.1cm{
&\boxed{2\Lambda_0+\Lambda_{s+1}+\Lambda_{p-1}}_0\ar[r]|-{(0,0)}&\Lambda_1+\Lambda_{s+1}+\Lambda_{p-1}+\Lambda_{\ell}
\\
2\Lambda_0+\Lambda_s+\Lambda_p
\ar[ur]|-{(p,s)}\ar[dr]|-{(s,p)}&&\\
&\boxed{2\Lambda_0+\Lambda_{s-1}+\Lambda_{p+1}}_0\ar[r]|-{(0,0)}
&\Lambda_{1}+\Lambda_{s-1}+\Lambda_{p+1}+\Lambda_{\ell}
}}
$$
$$
\scalebox{0.8}{
\xymatrix@C=1.2cm@R=0.1cm{
& \boxed{\Lambda_1+2\Lambda_s+\Lambda_{p-1}}_0\ar[r]|-{(s,s)}&\Lambda_1+\Lambda_{s-1}+\Lambda_{s+1}+\Lambda_{p-1}
\\
\Lambda_0+2\Lambda_s+\Lambda_p\ar[ur]|-{(p,0)}\ar[dr]|-{(0,p)}&&\\ &\boxed{2\Lambda_{s}+\Lambda_{p+1}+\Lambda_\ell}_0\ar[r]|-{(s,s)}&\Lambda_{s-1}+\Lambda_{s+1}+\Lambda_{p+1}+\Lambda_{\ell}
}}
$$
and all four cases in the rightmost column are wild by Lemma \ref{lem::step-3-level4-2ijp}. 
In fact, set $\bar{\Lambda}=2\Lambda_0+\Lambda_s+\Lambda_p$ or $\Lambda_0+2\Lambda_s+\Lambda_p$.
Then, by Lemma \ref{lem::reduction-level}, we deduce that
$R^\Lambda(\beta_{\Lambda''})$ is wild for 
$$
\begin{aligned}
\Lambda''&=\Lambda_1+\Lambda_s+\Lambda_{s+1}+\Lambda_{p-1}+\Lambda_{\ell}=(\Lambda_1+\Lambda_{s+1}+\Lambda_{p-1}+\Lambda_{\ell})+\Lambda_s ,\\
\Lambda''&=\Lambda_{1}+\Lambda_{s-1}+\Lambda_s+\Lambda_{p+1}+\Lambda_{\ell}=(\Lambda_{1}+\Lambda_{s-1}+\Lambda_{p+1}+\Lambda_{\ell})+\Lambda_s,\\
\Lambda''&=\Lambda_0+\Lambda_1+\Lambda_{s-1}+\Lambda_{s+1}+\Lambda_{p-1}=(\Lambda_1+\Lambda_{s-1}+\Lambda_{s+1}+\Lambda_{p-1})+\Lambda_0,\\
\Lambda''&=\Lambda_0+\Lambda_{s-1}+\Lambda_{s+1}+\Lambda_{p+1}+\Lambda_{\ell}=(\Lambda_{s-1}+\Lambda_{s+1}+\Lambda_{p+1}+\Lambda_{\ell})+\Lambda_0.
\end{aligned}
$$
Therefore, we conclude that $R^\Lambda(\beta_{\Lambda''})$ is wild for any $\Lambda''\in S(\Lambda')_\rightarrow$ by Lemma \ref{lem::reduction-arrow}.

(2) Suppose $\Lambda=2\Lambda_0+2\Lambda_s+2\Lambda_p$ with $0<s<p\leq \ell$. In this case,
$$
T(\Lambda)_5=\left\{
\begin{matrix}
\Lambda^{(1)}:=\Lambda_1+\Lambda_{s-1}+\Lambda_{s+1}+2\Lambda_p+\Lambda_\ell,
\\
\Lambda^{(2)}:=\Lambda_1+2\Lambda_s+\Lambda_{p-1}+\Lambda_{p+1}+\Lambda_\ell,
\\
\Lambda^{(3)}:=2\Lambda_0+\Lambda_{s-1}+\Lambda_{s+1}+\Lambda_{p-1}+\Lambda_{p+1}
\end{matrix}
\right \}.
$$
We embed $\vec C(2\Lambda_0+2\Lambda_s+\Lambda_p)$, $\vec C(2\Lambda_0+\Lambda_s+2\Lambda_p)$ and $\vec C(\Lambda_0+2\Lambda_s+2\Lambda_p)$ into $\vec C(\Lambda)$. We also observe the following subquiver of $\vec C(\Lambda)$,
$$
\scalebox{0.8}{
\xymatrix@C=3cm@R=1cm{
& \boxed{\Lambda_{0,0}}_1\ar[r]|-{(s,s)} \ar@/_0.5cm/[dr]|{(p,p)}
& \boxed{\Lambda^{(1)}}_5\ar[dr]|-{(p,p)} &
\\
\Lambda \ar[ur]|-{(0,0)}\ar[r]|-{(s,s)}\ar[dr]|-{(p,p)}
&\boxed{\Lambda_{s,s}}_1\ar[dr]|-{(p,p)}\ar[ur]|-{(0,0)}
& \boxed{\Lambda^{(2)}}_5\ar[r]|-{(s,s)}
& \Lambda''
\\ &\boxed{\Lambda_{p,p}}_1\ar[r]|-{(s,s)}\ar@/^0.5cm/[ur]|{(0,0)}
& \boxed{\Lambda^{(3)}}_5\ar[ur]|-{(0,0)}
&
}}
$$
where $\Lambda''=\Lambda^{(1)}_{p,p}=\Lambda^{(2)}_{s,s}=\Lambda^{(3)}_{0,0}$. 
It is obvious that $\Lambda^{(1)}$, $\Lambda^{(2)}$ and $\Lambda^{(3)}$ belong to the embedded $\vec C(2\Lambda_0+2\Lambda_s+\Lambda_p)$, $\vec C(2\Lambda_0+\Lambda_s+2\Lambda_p)$ and $\vec C(\Lambda_0+2\Lambda_s+2\Lambda_p)$, respectively.
Then, for $\Lambda'=\Lambda^{(1)}$ or $\Lambda^{(2)}$ or $\Lambda^{(3)}$, $\xymatrix@C=1cm{
\Lambda'\ar[r]|-{(a,b)}&
\Lambda''}$
satisfying $(a,b)\neq (p,p)$ or $(s,s)$ or $(0,0)$ implies that $\Lambda''$ belongs to the aforementioned 3 embedded subquivers, and $R^\Lambda(\beta_{\Lambda''})$ is wild by part (1). Hence, we only need to consider
$$
\Lambda''=\Lambda_1+\Lambda_{s-1}+\Lambda_{s+1}+\Lambda_{p-1}+\Lambda_{p+1}+\Lambda_\ell.
$$
Note that $\beta_{\Lambda^{(1)}}=\alpha_0+\alpha_s$. 
If $s=1$ or $p=\ell$ or $s=p-1$, then $R^\Lambda (\beta_{\Lambda''})$ is wild by Lemma \ref{lem::reduction-arrow} and Proposition \ref{prop::step-2-result-T_5}.

Suppose $s\neq1, s\neq p-1, p\neq\ell$. Then, $\beta_{\Lambda''}=\alpha_0+\alpha_s+\alpha_p$.
We have
$$
\dim_qe(0sp)R^{\Lambda}(\beta_{\Lambda''})e(0sp)=1+3q^2+3q^4+q^6,
$$
which implies the wildness of $R^\Lambda(\beta_{\Lambda''})$.

(3) Suppose $\Lambda=2\Lambda_0+2\Lambda_s+\Lambda_i+\Lambda_j$ with $0<s\neq i<j\leq \ell$.
Then,
$$
T(\Lambda)_5=\{\Lambda'=\Lambda_1+\Lambda_{s-1}+\Lambda_{s+1}+\Lambda_i+\Lambda_j+\Lambda_\ell\}.
$$
We embed $\vec C(2\Lambda_0+2\Lambda_s+\Lambda_i)$ and $\vec C(2\Lambda_0+2\Lambda_s+\Lambda_j)$ into $\vec C(2\Lambda_0+2\Lambda_s+\Lambda_i+\Lambda_j)$.  
Similar to the above, $\Lambda'$ belongs to the intersection of the two embedded subquivers. 
If $\xymatrix@C=1cm{\Lambda'\ar[r]|-{(a,b)}&\Lambda''}$ such that $\{a,b\}\cap\{i\}=\emptyset$ or $\{a,b\}\cap\{j\}=\emptyset$, then $\Lambda''$ belongs to one of the images and $R^\Lambda(\beta_{\Lambda''})$ is wild by part (1).
Hence, it suffices to consider
$$
\scalebox{0.8}{\xymatrix@C=-0.7cm@R=1.2cm{
\Lambda_1+2\Lambda_s+\Lambda_{i-1}+\Lambda_{j+1}+\Lambda_\ell
\ar[d]|{(s,s)}
&\boxed{\Lambda_1+\Lambda_{s-1}+\Lambda_{s+1}+\Lambda_i+\Lambda_j+\Lambda_\ell}_5
\ar[ld]|{(i,j)}
\ar[d]|{ }
\ar[rd]|{(j,i)}
&\Lambda_1+2\Lambda_s+\Lambda_{i+1}+\Lambda_{j-1}+\Lambda_\ell
\ar[d]|{(s,s)}
\\
\Lambda_1+\Lambda_{s-1}+\Lambda_{s+1}+\Lambda_{i-1}+\Lambda_{j+1}+\Lambda_\ell
& \ldots
&\Lambda_1+\Lambda_{s-1}+\Lambda_{s+1}+\Lambda_{i+1}+\Lambda_{j-1}+\Lambda_\ell
}}
$$
where the dots stand for the union of the two embedded subquivers, and the restrictions on $s,i,j$ are omitted.
Since both $R^{2\Lambda_0+\Lambda_{i}+\Lambda_j}(\beta_{\Lambda_1+\Lambda_{i-1}+\Lambda_{j+1}+\Lambda_\ell})$ and $R^{2\Lambda_0+\Lambda_{i}+\Lambda_j}(\beta_{\Lambda_1+\Lambda_{i+1}+\Lambda_{j-1}+\Lambda_\ell})$ were shown to be wild in (2) of the proof of   Lemma \ref{lem::step-3-level4-2ijp}, we obtain the desired wild algebras by Lemmas \ref{lem::reduction-level}, \ref{lem::reduction-arrow}. 
\end{proof}

\section{Decomposition matrices of cyclotomic Hecke algebras}

In Section \ref{sec::step-2}, we have obtained some representation-finite and tame $R^{\Lambda}(\gamma)$'s for the representatives of $W$-orbits of $P(\Lambda)$, i.e., $\gamma\in O(\Lambda)$, which give a complete description of representation-finite and tame $R^{\Lambda}(\beta)$'s with $\beta\in Q_+$ for $k\geq 3$ up to derived equivalence. 

In this section, we are aiming to give a complete description of representation-finite and tame $R^{\Lambda}(\beta)$'s with $\beta\in Q_+$ up to Morita equivalence. 
Besides, we notice that the decomposition matrix is well-defined for a cellular algebra, and the cyclotomic Hecke algebra $H^\Lambda_n$, i.e., $R^{\Lambda}(\beta)$ with $t=-2$ if $\ell=1$ and $t=(-1)^{\ell+1}$ if $\ell\geq2$, is known to be cellular (\cite{GL-cellular-alg}). 
It is also worth mentioning that $H^\Lambda_n$ is a graded cellular algebra in the sense of Hu and Mathas, see (\cite[Main Theorem]{HM-graded-cellular}).
Therefore, as an application, we find the decomposition matrices for all representation-finite and certain tame blocks of cyclotomic Hecke algebras $H^\Lambda_n$.

\begin{rem}
Suppose $t\neq -2$ if $\ell=1$ or $t\neq(-1)^{\ell+1}$ if $\ell\geq2$. We may conjecture that the cyclotomic quiver Hecke algebra $R^{\Lambda}(\beta)$ in affine type $A$ is graded cellular. 
\end{rem}

\subsection{Representation-finite case}
By Theorem \ref{theo::main-result}, if $R^{\Lambda}(\beta)$ is representation-finite, then it is derived equivalent to one of the following two algebras:
\begin{enumerate}
\item the symmetric local algebra $\k[X]/(X^m)$, for some $m\ge 1$ (see Proposition \ref{prop::step-2-result-T_1}), 
\item the Brauer tree algebra $\mathcal{S}_0$ whose Brauer tree is a straight line without exceptional vertex (see Proposition \ref{prop::step-2-result-T_0}), that is,
$$
\begin{xy}
(0,0) *[o]+[Fo]{\hphantom{3}}="A",
(15,0) *[o]+[Fo]{\hphantom{3}}="B",
(30,0)*[o]+[Fo]{\hphantom{3}}="C",
(45,0)*[o]+[Fo]{\hphantom{3}}="D",
(60,0)*[o]+[Fo]{\hphantom{3}}="E",
\ar@{-} "A";"B"
\ar@{-} "B";"C"
\ar@{.} "C";"D"
\ar@{-} "D";"E"
\end{xy}.
$$
\end{enumerate}

We have the following result.

\begin{Prop}\label{prop::result-finite-block}
Let $\mathcal{A}$ be the basic algebra of $R^{\Lambda}(\beta)$ and $\Lambda\in \pcl, k\ge 3, \beta\in Q_+$.
If $\mathcal{A}$ is representation-finite, then $\mathcal{A}$ is isomorphic to either $\k[X]/(X^m)$ for some $m\ge 1$ or a Brauer tree algebra without exceptional vertex. 
Moreover, if $R^\Lambda(\beta)$ is cellular, the Brauer graph is a straight line.
\end{Prop}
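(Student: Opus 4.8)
The plan is to reduce the statement for an arbitrary $\beta\in Q_+$ to the representatives $\gamma\in O(\Lambda)$ already treated in Step 2, and then to upgrade the resulting derived equivalence to a Morita equivalence by means of the Brauer graph machinery recalled in Subsection 2.7. First I would use the categorification result quoted in Subsection 4.1: for any $\beta\in Q_+$ there is $\gamma\in O(\Lambda)$ with $\Lambda-\beta$ and $\Lambda-\gamma$ in the same $W$-orbit, so $R^{\Lambda}(\beta)$ and $R^{\Lambda}(\gamma)$ are derived equivalent. Since both are symmetric, they have the same representation type (Rickard combined with Krause), whence $R^{\Lambda}(\gamma)$ is representation-finite. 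By Theorem \ref{theo::main-result} together with Propositions \ref{prop::step-2-result-T_1} and \ref{prop::step-2-result-T_0}, the basic algebra $\mathcal A_\gamma$ of $R^{\Lambda}(\gamma)$ is then either $\k[X]/(X^m)$ (the cases $\beta=0$ and $\beta\in\mathcal T(\Lambda)_1$) or the straight-line Brauer tree algebra $\mathcal S_0$ without exceptional vertex (the case $\beta\in\mathscr T(\Lambda)_0$). As $R^{\Lambda}(\beta)$ is derived equivalent to $\mathcal A_\gamma$, it remains to pin down the basic algebra $\mathcal A$ of $R^{\Lambda}(\beta)$ up to isomorphism.

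In the non-local case $\mathcal A_\gamma=\mathcal S_0$, I would argue as follows. Since $\mathcal S_0$ is a Brauer graph algebra, Theorem \ref{Theorem:brauer-graph-derived-closed} shows that $\mathcal A$ is itself (isomorphic to) a Brauer graph algebra. The number of edges of $\Gamma_{\mathcal A}$ equals the number of simple modules, which is a derived invariant and is at least $2$ here, so $\mathcal A$ is non-local. Corollary \ref{cor::tree-to-tree} then forces $\Gamma_{\mathcal A}$ to be a tree with the same number of vertices as $\Gamma_{\mathcal S_0}$, while Theorem \ref{thm-brauer-graph-condition}(2) guarantees that the multiset of multiplicities is preserved. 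Because $\mathcal S_0$ has all multiplicities equal to $1$, so does $\mathcal A$, i.e. $\mathcal A$ is a Brauer tree algebra without exceptional vertex.

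The local case $\mathcal A_\gamma=\k[X]/(X^m)$ is the main obstacle, and I would treat it separately. Here $\mathcal A$ has a single simple module (again because the number of simples is a derived invariant), hence is local; the subtlety is that a derived equivalence could a priori turn a Nakayama local algebra into a genuinely two-loop local symmetric algebra such as a single-edge Brauer graph algebra $\k[a,b]/(ab,\,a^p-b^q)$ with $p,q\ge 2$. This is exactly why $\k[X]/(X^m)$ must be listed separately, and the non-local classification of Theorem \ref{thm-brauer-graph-condition} is unavailable. To close the gap I would invoke two standard derived invariants. The zeroth Hochschild cohomology $HH^0(\mathcal A)\cong Z(\mathcal A)$ gives $Z(\mathcal A)\cong Z(\k[X]/(X^m))=\k[X]/(X^m)$, of dimension $m$; and the Cartan determinant, which for a basic local algebra is the single entry $\dim_{\k}\mathcal A$ of its $1\times 1$ Cartan matrix, gives $\dim_{\k}\mathcal A=m$. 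Since $Z(\mathcal A)\subseteq\mathcal A$ and both have dimension $m$, the algebra $\mathcal A$ is commutative and $\mathcal A=Z(\mathcal A)\cong\k[X]/(X^m)$.

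Finally, for the cellular addendum I would note that when $R^{\Lambda}(\beta)$ is cellular its quiver, which coincides with that of $\mathcal A$ by Morita invariance, has a symmetric shape by \cite[Proposition 2.8]{X-cellular-alg}; the argument in the proof of Corollary \ref{cor::tree-to-tree} then forces $\Gamma_{\mathcal A}$ to be a straight line in the non-local case, the single-edge graph of $\k[X]/(X^m)$ being trivially a straight line. Apart from the local-case dichotomy just described, the remaining steps are bookkeeping, and the only genuinely delicate point is the center-and-Cartan-determinant computation that distinguishes $\k[X]/(X^m)$ from the other representation-finite local symmetric algebras.
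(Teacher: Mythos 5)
Your proof is correct, and its overall architecture (reduce to a representative $\gamma\in O(\Lambda)$ by derived equivalence, then use the Brauer graph machinery of Subsection 2.7 for the non-local case and the symmetric-quiver argument for the cellular addendum) coincides with the paper's. The one place where you genuinely diverge is the local case. The paper disposes of it in one line by citing \cite[Corollary 2.13]{RZ-picard-group}: two derived equivalent local algebras are Morita equivalent, hence isomorphic. You instead give a hands-on argument using two derived invariants: $HH^0$ preserves the center as an algebra, so $Z(\mathcal A)\cong\k[X]/(X^m)$, and the Cartan determinant (which for a basic local algebra is just $\dim_\k\mathcal A$, since the number of simples is itself a derived invariant and forces $\mathcal A$ to be local with a $1\times1$ Cartan matrix) gives $\dim_\k\mathcal A=m$; the containment $Z(\mathcal A)\subseteq\mathcal A$ then forces equality, so $\mathcal A$ is commutative and isomorphic to $\k[X]/(X^m)$. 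This is valid and arguably more self-contained than the citation, exploiting the fact that one side of the derived equivalence is commutative; the price is that it only proves the special case needed here rather than the general Rouquier--Zimmermann statement. In the non-local case your explicit invocation of Theorem \ref{Theorem:brauer-graph-derived-closed} before applying Corollary \ref{cor::tree-to-tree} and Theorem \ref{thm-brauer-graph-condition}(2) is in fact a slightly more careful ordering than the paper's, which leaves the step ``$\mathcal A$ is Morita equivalent to a Brauer graph algebra'' implicit.
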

\begin{proof}
It is mentioned in \cite[Corollary 2.13]{RZ-picard-group} that two local algebras are derived equivalent if and only if they are Morita equivalent, and hence, if and only if they are isomorphic. 
This can also be verified via silting theory (see \cite{AI-silting} for more details). 
If $\mathcal{A}$ is derived equivalent to $\mathcal{S}_0$, then the assertion follows from Theorem \ref{thm-brauer-graph-condition} and Corollary \ref{cor::tree-to-tree}.
\end{proof}

Note that the decomposition matrix of $\mathcal{S}_0$ is already mentioned in \cite[Theorem C]{Ar-tame-block} for representation-finite blocks of $H^\Lambda_n$ of level $2$.

\begin{Cor}
If $\mathcal{A}$ is a representation-finite block algebra of $H^\Lambda_n$ with $\Lambda\in \pcl, k\ge 3$, then it is either isomorphic to $\k[X]/(X^m)$ for some $m\ge 1$, or Morita equivalent to $\mathcal{S}_0$. 
In the latter case, the decomposition matrix of $\mathcal{A}$ is given by
\begin{center}
\scalebox{0.9}{$\begin{pmatrix}
1 & 0 & 0 & \ldots & 0 & 0 \\
1 & 1 & 0 & \ldots & 0 & 0 \\
0 & 1 & 1 & \ldots & 0 & 0 \\
\vdots & \vdots & \vdots &  & \vdots & \vdots \\
0 & 0 & 0 & \ldots & 1 & 1 \\
0 & 0 & 0 & \ldots & 0 & 1
\end{pmatrix}$}. 
\end{center}
\end{Cor}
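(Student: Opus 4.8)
The plan is to separate the two assertions and treat the classification as essentially settled by earlier results, reserving the real work for the decomposition matrix in the $\mathcal{S}_0$ case. The classification of the basic algebra $\mathcal{A}$ is immediate from Proposition \ref{prop::result-finite-block}: a representation-finite block of $H_n^\Lambda$ is either local, with basic algebra $\k[X]/(X^m)$, or its basic algebra is a Brauer tree algebra without exceptional vertex. Since $H_n^\Lambda$ is cellular by \cite{GL-cellular-alg}, so is each of its blocks, and Corollary \ref{cor::tree-to-tree} forces the Brauer tree to be a straight line, i.e. $\mathcal{A}$ is Morita equivalent to $\mathcal{S}_0$. Hence only the decomposition matrix in the latter case remains to be computed.

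First I would fix the size of the matrix. By the reduction carried out in the proof of Proposition \ref{prop::step-2-result-T_0}, such a block is Morita equivalent to the straight-line Brauer tree algebra $\mathcal{S}_0$ with $n$ edges, where $n$ equals the number of pairwise non-isomorphic simple modules, already identified there through the count of Kleshchev multipartitions in the block; correspondingly there are $n+1$ multipartitions in the block, hence $n+1$ cell (Specht) modules. Since the decomposition matrix is an invariant of the cellular algebra, it suffices to determine the $(n+1)\times n$ matrix attached to $\mathcal{S}_0$.

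The core of the argument is to pin this matrix down, and I would do so through the Cartan matrix. Reading off the projective indecomposable modules of $\mathcal{S}_0$ from the quiver and relations produced in the proof of Proposition \ref{prop::step-2-result-T_0}, the Cartan matrix $C$ is tridiagonal with $2$ on the diagonal and $1$ on the two adjacent diagonals, reflecting that consecutive edges of the line share exactly one vertex while non-consecutive edges share none. For a cellular algebra one has $C = D^{\mathrm{tr}}D$, the entries of $D$ are non-negative integers, and the square submatrix indexed by the Kleshchev classes is unitriangular for the dominance order. The diagonal relation $\sum_i D_{ij}^2 = 2$ then forces each column of $D$ to contain exactly two entries equal to $1$; the off-diagonal relations $\sum_i D_{ij}D_{i,j+1}=1$ and $\sum_i D_{ij}D_{ij'}=0$ for $|j-j'|\ge 2$ force consecutive columns to overlap in a single row and non-consecutive columns to be disjoint. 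Connectedness of $\mathcal{S}_0$, equivalently the vanishing of the corner entries of the tridiagonal $C$, rules out a cyclic overlap pattern, so that $D$, after reordering rows and columns, is exactly the claimed bidiagonal matrix with all $n+1$ rows occurring. To fix the ordering I would take the dominance order on the block's multipartitions listed in Proposition \ref{prop::step-2-result-T_0}, under which the two endpoint Specht modules are simple and each intermediate one has precisely the two adjacent composition factors; alternatively, since the decomposition matrix of $\mathcal{S}_0$ is independent of the level at which it arises, I would match it directly with the level-$2$ computation of \cite[Theorem C]{Ar-tame-block}.

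The main obstacle I expect is the bookkeeping that upgrades the shape-up-to-permutation to the precise bidiagonal matrix in the stated order: one must check that transporting the cellular structure of the Hecke block through the Morita equivalence yields $\mathcal{S}_0$ with its standard modules in the linear order of the edges, so that the unique non-Kleshchev Specht module occupies an endpoint position and the unitriangular normalization is respected. Confirming that every decomposition number is $0$ or $1$ and that exactly one cell form degenerates is where the genuine content lies, since the identity $C = D^{\mathrm{tr}}D$ by itself leaves only the combinatorial ambiguity removed by these compatibility checks.
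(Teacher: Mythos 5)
Your proposal is correct and follows essentially the same route as the paper: the classification is read off from Proposition \ref{prop::result-finite-block} together with cellularity and Corollary \ref{cor::tree-to-tree}, and the decomposition matrix is extracted from the tridiagonal Cartan matrix of $\mathcal{S}_0$ via $D^{t}D=C$, which is exactly the method the paper applies to the tame blocks in Subsection 8.3 (for the finite case the paper simply cites the level-$2$ computation of \cite[Theorem C]{Ar-tame-block}). Your combinatorial argument that the columns of $D$, viewed as $2$-subsets of rows overlapping like a path, force the bidiagonal shape up to row permutation is sound and fills in the detail the paper leaves implicit.
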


\subsection{Tame case}
Let $\Lambda\in \pcl, \beta\in Q_+$ and $k\ge 3$. 
If $R^{\Lambda}(\beta)$ is tame, then it is derived equivalent to one of the following algebras:
\begin{enumerate}
    \item the local algebra $\k[X,Y]/(X^k-Y^k,XY)$, for $k\ge 3, t\neq \pm 2$ (see Lemma \ref{lem::step-2-delta-1}),
    
    \item the local algebra $\k[X,Y]/(X^3-Y^3,XY)$ with $\ch \k\neq3$ (see Proposition \ref{prop::step-2-result-T_3}),
    
    \item the local algebra $\k[X,Y]/(X^4-Y^2,XY)$ with $\ch \k\neq2$ (see Lemma \ref{lem::step-2-T_4-4i} and Proposition \ref{prop::step-2-result-T_4}), 
    
    \item the local algebra $\k[X,Y]/(X^2,Y^2)$ (see Proposition \ref{prop::step-2-result-T_5}),
    
    \item the Brauer graph algebra $\mathcal{S}_1$\footnote{This also appears when $k=2$ and $\ell\ge3$. In the statement of \cite[Theorem 1.2 (2) (b)]{Ar-tame-block},  the word $'$even$'$ must be dropped.} with $\ch \k \neq 2$ (see Lemma \ref{lem::step-2-T_2-2ij} and Proposition \ref{prop::step-2-result-T_2}) whose Brauer graph is
    $$
    \begin{xy}
    (0,0) *[o]+[Fo]{2}="A", (15,0) *[o]+[Fo]{2}="B", (30,0) *[o]+[Fo]{2}="C",
    \ar@{-} "A";"B"
    \ar@{-} "B";"C"
    \end{xy}\ ,
    $$
    
    \item the Brauer graph algebra $\mathcal{S}_2$ (see Lemma \ref{lem::step-2-delta-2}) whose Brauer graph is
    $$
    \begin{xy}
    (0,0) *[o]+[Fo]{k}="A",
    (15,0)*[o]+[Fo]{k}="B",
    (30,0)*[o]+[Fo]{k}="C",
    (45,0)*[o]+[Fo]{k}="D",
    (60,0)*[o]+[Fo]{k}="E",
    \ar@{-} "A";"B"
    \ar@{-} "B";"C"
    \ar@{.} "C";"D"
    \ar@{-} "D";"E"
    \end{xy}
    $$
    with $\ell+1$ vertices, and $t\neq (-1)^{\ell+1}$,
    
    \item the Brauer graph algebra $\mathcal{S}_3$ (see Proposition \ref{prop::step-2-result-T_0}) whose Brauer graph is
    $$
    \begin{xy}
    (0,0) *[o]+[Fo]{\hphantom{m}}="A",
    (15,0)*[o]+[Fo]{m}="B",
    (30,0)*[o]+[Fo]{m}="C",
    (45,0)*[o]+[Fo]{m}="D",
    (60,0)*[o]+[Fo]{m}="E",
    \ar@{-} "A";"B"
    \ar@{-} "B";"C"
    \ar@{.} "C";"D"
    \ar@{-} "D";"E"
    \end{xy}
    $$
    with $m=m_{i_j}$, in which the number of vertices is $(i_{j+1}-i_j+2)$ in the sense of $\Lambda=\sum_{i\in I(\Lambda)_0}m_i\Lambda_i$, $I(\Lambda)_0=\{i_j\mid i_1<i_2<\ldots<i_h\}$, $m_{i_j}\geq 2$, $m_{i_{j+1}}= 1$ and $\beta=\sum_{i\in[i_j,i_{j+1}]}\alpha_i$.
\end{enumerate}
Similar to the proof of Proposition \ref{prop::result-finite-block}, 
we see that derived equivalence implies Morita equivalence in (1)-(4).
The case (5) was proved by the first author in \cite[Proposition 5.3]{Ar-tame-block}, i.e., any finite-dimensional algebra which is derived equivalent to $\mathcal{S}_1$ is Morita equivalent to $\mathcal{S}_1$.

\begin{Prop}\label{prop:tame-block}
Let $\mathcal{A}$ be the basic algebra of $R^{\Lambda}(\beta)$ with $\Lambda\in \pcl, \beta\in Q_+$ and $k\ge 3$. 
\begin{enumerate}
    \item If $\mathcal{A}$ is derived equivalent to $\mathcal{S}_2$, then it is isomorphic to a Brauer graph algebra whose Brauer graph is a tree, and all multiplicities are $k$.
    
    \item If $\mathcal{A}$ is derived equivalent to $\mathcal{S}_3$, then it is isomorphic to a Brauer graph algebra whose Brauer graph is a tree, one multiplicity is $1$ and all the other multiplicities are $m$, where $m$ is as above.
\end{enumerate}
Moreover, if $R^\Lambda(\beta)$ is cellular, the Brauer graph is a straight line in both cases.
\end{Prop}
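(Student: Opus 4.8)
The plan is to obtain both assertions as consequences of the three facts on Brauer graph algebras collected in Subsection 2.7: Theorem \ref{Theorem:brauer-graph-derived-closed}, Theorem \ref{thm-brauer-graph-condition}, and Corollary \ref{cor::tree-to-tree}. First I would record the relevant features of the target algebras. Both $\mathcal{S}_2$ and $\mathcal{S}_3$ are Brauer graph algebras whose graphs are straight lines, hence trees; moreover they are non-local, since $\mathcal{S}_2$ has $\ell+1\ge 3$ vertices (here $\ell\ge 2$) and $\mathcal{S}_3$ has $i_{j+1}-i_j+2\ge 3$ vertices, so each graph has at least two edges, i.e. at least two simple modules. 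As a derived equivalence preserves the number of simple modules, $\mathcal{A}$ also has at least two simple modules and is therefore non-local as well.

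Next I would show $\mathcal{A}$ is itself a Brauer graph algebra and identify its graph as a tree. Because $\mathcal{A}$ is derived equivalent to the Brauer graph algebra $\mathcal{S}_2$ (resp. $\mathcal{S}_3$), Theorem \ref{Theorem:brauer-graph-derived-closed} yields a Brauer graph algebra Morita equivalent to $\mathcal{A}$; since $\mathcal{A}$ is basic and Brauer graph algebras are basic by construction, this Morita equivalence is an isomorphism, so $\mathcal{A}$ is a Brauer graph algebra with some graph $\Gamma_{\mathcal{A}}$. Feeding this into Corollary \ref{cor::tree-to-tree}, with $\mathcal{S}_2$ (resp. $\mathcal{S}_3$) playing the role of the tree algebra there, I conclude that $\Gamma_{\mathcal{A}}$ is a tree having the same number of vertices as the straight line of $\mathcal{S}_2$ (resp. $\mathcal{S}_3$).

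The multiplicities are then forced by the invariance clause Theorem \ref{thm-brauer-graph-condition}(2): two derived equivalent non-local Brauer graph algebras have the same multiset of vertex multiplicities. For $\mathcal{S}_2$ this multiset consists entirely of $k$'s, so every multiplicity of $\Gamma_{\mathcal{A}}$ equals $k$, proving (1); for $\mathcal{S}_3$ the multiset is one copy of $1$ together with copies of $m=m_{i_j}$, so $\Gamma_{\mathcal{A}}$ has exactly one multiplicity equal to $1$ and all others equal to $m$, proving (2). For the final cellular clause I would argue that if $R^{\Lambda}(\beta)$ is cellular then its Ext-quiver is symmetric by \cite[Proposition 2.8]{X-cellular-alg}, and since the Ext-quiver is a Morita invariant the basic algebra $\mathcal{A}$ has a symmetric quiver too; the valency computation in the proof of Corollary \ref{cor::tree-to-tree} then shows that every vertex of $\Gamma_{\mathcal{A}}$ has valency at most $2$, so the tree $\Gamma_{\mathcal{A}}$ must be a straight line.

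Most of this is bookkeeping with the quoted theorems; the one point requiring care is the passage of the cellular hypothesis from $R^{\Lambda}(\beta)$, which is where cellularity is assumed, to the basic algebra $\mathcal{A}$, on which the Brauer graph lives. I expect no serious difficulty here because only the symmetry of the Ext-quiver is needed, and that property is Morita invariant; but it is the step I would be most careful to phrase correctly, since $\mathcal{A}$ need not be handed to us as a cellular algebra.
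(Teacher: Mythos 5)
Your proposal is correct and follows essentially the same route as the paper, which simply cites Theorem \ref{thm-brauer-graph-condition} and Corollary \ref{cor::tree-to-tree}; you have merely filled in the details (non-locality, the use of Theorem \ref{Theorem:brauer-graph-derived-closed} to realize $\mathcal{A}$ as a Brauer graph algebra, and the Morita-invariance of quiver symmetry for the cellular clause), all of which are the intended steps. Your care about transferring the cellular hypothesis from $R^{\Lambda}(\beta)$ to its basic algebra via the symmetry of the Ext-quiver is exactly the right way to justify the last sentence.
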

\begin{proof}
It is easy to see by Theorem \ref{thm-brauer-graph-condition} and Corollary \ref{cor::tree-to-tree}.
\end{proof}

Note that the cases (1) and (6) above will not happen if $R^{\Lambda}(n)\simeq H^\Lambda_n$.
If one wants to specify the Morita equivalence classes for tame blocks of cyclotomic Hecke algebra $H^\Lambda_n$, it remains to consider the algebras which are derived equivalent to $\mathcal{S}_3$. 

We introduce the following Brauer graphs. 
Set $s=i_{j+1}-i_j$ (in the sense of $\mathcal{S}_3$). 
We define $\Gamma_a^s(m)=(V,E,\mathfrak m,\mathfrak o)$ to be the Brauer graph such that $(V,E)$ is a straight line with $s+2$ vertices, the multiplicity of each vertex is $m$ except for the $a$-th vertex whose multiplicity is $1$. 
Namely, $\Gamma_a^s(m)$ is of form
$$
\begin{xy}
(0,0) *[o]+[Fo]{m}="A", (0,-8) *{1}="G", 
(15,0) *[o]+[Fo]{m}="B", (15,-8) *{2}="H", 
(30,0) *[o]+[Fo]{m}="F", (30,-8) *{a-1}="I", 
(45,0)*[o]+[Fo]{\hphantom{m}}="C", (45,-8) *{a}="J",
(60,0)*[o]+[Fo]{m}="D", (60,-8) *{a+1}="K",
(75,0)*[o]+[Fo]{m}="E", (75,-8) *{s+2}="L",
\ar@{-} "A";"B"
\ar@{.} "B";"F"
\ar@{-} "F";"C"
\ar@{-} "C";"D"
\ar@{.} "D";"E"
\end{xy}\ .
$$ 

Similar to the representation-finite case, we have the following corollary.
\begin{Cor}\label{prop:tame-cyc-hecke-local}
Let $\mathcal{A}$ be a tame block algebra of $H^\Lambda_n$ with $\Lambda\in \pcl, k\ge 3$. 
Then, $\mathcal{A}$ is Morita equivalent to a local algebra in (2), (3), and (4) listed above, or $\mathcal S_1$, or a Brauer graph algebra associated to $\Gamma^s_a(m)$ for $1\leq a\leq \frac{s+3}{2}$.
\end{Cor}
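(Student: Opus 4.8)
The plan is to combine the derived-equivalence classification of tame blocks already established in this subsection with the two structural features that are special to $H^\Lambda_n$: the fixed value of the parameter $t$, and the cellularity of $H^\Lambda_n$. First I would invoke Theorem \ref{theo::main-result} together with the list (1)--(7) opening this subsection to conclude that any tame block $\mathcal A$ of $H^\Lambda_n$ is derived equivalent to exactly one of the seven algebras $\k[X,Y]/(X^k-Y^k,XY)$, $\k[X,Y]/(X^3-Y^3,XY)$, $\k[X,Y]/(X^4-Y^2,XY)$, $\k[X,Y]/(X^2,Y^2)$, $\mathcal S_1$, $\mathcal S_2$, $\mathcal S_3$.

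Next I would eliminate the two cases that cannot occur for Hecke algebras. Since $R^\Lambda(n)\simeq H^\Lambda_n$ forces $t=-2$ when $\ell=1$ and $t=(-1)^{\ell+1}$ when $\ell\ge 2$, case (1) (which requires $\ell=1$ and $t\neq\pm2$) and case (6) (which requires $\ell\ge 2$ and $t\neq(-1)^{e}$) are excluded; indeed, by Lemmas \ref{lem::step-2-delta-1} and \ref{lem::step-2-delta-2} the block $R^\Lambda(\delta)$ is wild at precisely the Hecke parameter, so no tame $\delta$-block survives. This leaves cases (2),(3),(4),(5),(7). For the local algebras (2),(3),(4) I would use the fact, already recalled in the proof of Proposition \ref{prop::result-finite-block} via \cite[Corollary 2.13]{RZ-picard-group}, that derived-equivalent local algebras are Morita equivalent (in fact isomorphic); hence $\mathcal A$ is Morita equivalent to the corresponding local algebra verbatim. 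For case (5), where $\mathcal A$ is derived equivalent to $\mathcal S_1$, I would cite \cite[Proposition 5.3]{Ar-tame-block}, which guarantees that every algebra derived equivalent to $\mathcal S_1$ is Morita equivalent to $\mathcal S_1$.

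The remaining, and principal, case is (7), where $\mathcal A$ is derived equivalent to $\mathcal S_3$. Here I would apply Proposition \ref{prop:tame-block}(2): $\mathcal A$ is Morita equivalent to a Brauer graph algebra whose graph is a tree with one multiplicity equal to $1$ and all others equal to $m$. Since $H^\Lambda_n$ is cellular, Corollary \ref{cor::tree-to-tree} forces this tree to be a straight line. By Theorem \ref{thm-brauer-graph-condition} the number of vertices $s+2$ and the multiset of multiplicities $\{1,m,\dots,m\}$ are derived invariants, so such a straight line is precisely $\Gamma^s_a(m)$, with $a$ the position of the unique multiplicity-$1$ vertex. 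Finally I would observe that reversing the straight line gives a graph isomorphism $\Gamma^s_a(m)\cong\Gamma^s_{s+3-a}(m)$, so after replacing $a$ by $s+3-a$ if necessary we may assume $1\le a\le\frac{s+3}{2}$, which completes the proof.

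The step requiring the most care is this last reflection: I must verify that the graph automorphism reversing the straight line genuinely induces an isomorphism of the associated Brauer graph algebras, i.e.\ that it is compatible with the cyclic orderings $\mathfrak o$ at the vertices. This is true because reversing the line simultaneously reverses the (at most two) edges incident at every vertex, hence preserves the admissible ideal $\mathcal I_\Gamma$ up to relabelling; I would spell this out to be certain that $\Gamma^s_a(m)$ and $\Gamma^s_{s+3-a}(m)$ define the same algebra, and not merely two derived-equivalent ones. Everything else is a direct assembly of the cited results.
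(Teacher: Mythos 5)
Your proposal is correct and follows essentially the same route as the paper: the paper's own (very terse) proof likewise reduces to the list (1)--(7), discards cases (1) and (6) at the Hecke parameter, quotes the local-algebra and $\mathcal S_1$ rigidity results, and applies Proposition \ref{prop:tame-block} together with Corollary \ref{cor::tree-to-tree} to force a straight line of type $\Gamma^s_a(m)$. Your explicit justification of the reflection $\Gamma^s_a(m)\cong\Gamma^s_{s+3-a}(m)$, which the paper leaves implicit in the normalization $1\le a\le\frac{s+3}{2}$, is a welcome addition but not a departure in method.
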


\begin{rem}\label{tame-cyc-hecke}
We have not yet checked whether every $\Gamma_{a}^s(m)$ for $a\geq 2$ occurs as the Brauer graph of some tame $R^\Lambda(\beta)$'s. 
\end{rem}

\subsection{Decomposition matrix of tame blocks of cyclotomic Hecke algebras}
Since we have determined the quiver presentation of tame $R^\Lambda(\beta)$'s, we are able to find the decomposition matrices for some (but infinitely many) tame block algebras $\mathcal{A}$ of $H^\Lambda_n$ via the equation $D_\mathcal{A}^tD_\mathcal{A}=C_\mathcal{A}$, where $C_\mathcal{A}$ is the Cartan matrix of $\mathcal{A}$.

Let $\mathcal{A}$ be a tame block of $H^\Lambda_n$ with $\Lambda\in \pcl, k\ge 3$. 
\begin{enumerate}
    \item If $\mathcal{A}$ is Morita equivalent to $\mathcal{S}_1$, then we have 
    \begin{center}
    $C_\mathcal{A}=$\scalebox{0.9}{$\begin{pmatrix}
    4 & 2 \\
    2 & 4 
    \end{pmatrix}$}, $\qquad$
    $D_\mathcal{A}=$\scalebox{0.9}{$\begin{pmatrix}
    1 & 0 \\
    1 & 0 \\
    1 & 1 \\
    1 & 1 \\
    0 & 1 \\
    0 & 1 
    \end{pmatrix}$},
    \end{center}
    where $D_\mathcal{A}$ is unique, up to permutations on rows. We mention that $D_\mathcal{A}$ has appeared in the case of level $2$, see \cite{Ar-tame-block}.

    \item If $\mathcal{A}$ is Morita equivalent to $\mathcal{S}_3$, then set $s=i_{j+1}-i_j$, we have 
    \begin{center}
    $C_\mathcal{A}=$\scalebox{0.9}{$\begin{pmatrix}
    m+1 & m & 0 & \ldots &0 &0 \\
    m & 2m & m & \ldots & 0&0 \\
    0 & m & 2m & \ldots &0& 0 \\
    \vdots &\vdots&\vdots & &\vdots&\vdots \\
    0 &0&0&\ldots &2m& m \\
    0 &0&0& \ldots &m& 2m
    \end{pmatrix}_{(s+1)\times (s+1)}$}.
    \end{center}
    If $m=1$, then $\mathcal{S}_3$ is isomorphic to $\mathcal{S}_0$ which is not tame. If $m=2$, then $D_\mathcal{A}=(d_{ij})_{(m(s+1)+1)\times (s+1)}$ is uniquely determined, up to permutations on rows, by 
    $$
    d_{ij}= \left\{\begin{array}{ll}
    1 & \hbox{ if } m(j-2)+1<i\leq mj+1 \hbox{ and } 2\leq j\leq s+1, \\
    1 & \hbox{ if } j=1 \hbox{ and } 0<i\leq m, \\
    0 & \hbox{ otherwise}.
    \end{array} \right.
    $$
    However, the decomposition matrix can not be determined by the equation $D_\mathcal{A}^tD_\mathcal{A}=C_\mathcal{A}$ if $m\ge 3$.
\end{enumerate}

%%%%%%%%%%%%%%%%%%%%%%%%%%%%%%%%%%%%%%%%%%%%%%%%%%%%%%%%%%%%%%%%%%%%%%%%%%%%%%%%%%%
\section*{Acknowledgements}
The first author is supported partially by JSPS KAKENHI (Grant No. 21K03163). 
The second author is supported partially by NSFC (Grant No. 12071346) and China Scholarship Council (Grant No. 202006265007). 
The third author is supported partially by National Key Research and Development Program of China (Grant No. 2020YFA0713000) and China Postdoctoral Science Foundation (Grant No. 315251 and No. 2023M731988).
The second author wishes to express his sincere gratitude to Professor Ariki, who kindly invited him to visit Osaka University and provided invaluable guidance and support during his stay which is funded by China Scholarship Council. He is also grateful to the Graduate School of Information Science and Technology for their hospitality and support during his one-year visit that began in May 2022 and during which this research was initiated.

%%%%%%%%%%%%%%%%%%%%%%%%%%%%%%%%%%%%%%%%%%%%%%%%%%%%%%%%%%%%%%%%%%%%%%%%%%%%%%%%%%%

%%%%%%%%%%%%%%%%%%%%%%%%%%%%%%%%%%%%%%%%%%%%%%%%%%%%%%%%
\end{document}